\documentclass[11pt,a4paper,twosides,reqno]{amsart}
\usepackage{amsmath, amsthm, amssymb, enumerate, array, enumitem,mathtools,epsfig,graphics,graphicx,amsfonts,tabularx}
\usepackage{xcolor} \usepackage{hyperref} \usepackage{tikz-cd}
\usepackage{cancel} 
\usepackage{mathabx}
\usepackage[scr=boondox,cal=esstix]{mathalpha}
\usepackage[margin=3cm]{geometry}
\usepackage{comment}

\usepackage{enumitem}
\setlist{leftmargin=20pt}
\theoremstyle{plain}
\newtheorem{theorem}{Theorem}[section]
\newtheorem{lemma}[theorem]{Lemma}
\newtheorem{example}[theorem]{Example}
\newtheorem{corollary}[theorem]{Corollary}
\newtheorem{claim}[theorem]{Claim}
\newtheorem{addendum}[theorem]{Addendum}
\newtheorem{proposition}[theorem]{Proposition}

\newtheorem{question}[theorem]{Question}
\newtheorem{problem}[theorem]{Problem}
\newtheorem{fact}[theorem]{Fact}
\newtheorem{observation}[theorem]{Observation}

\theoremstyle{definition}
\newtheorem{definition}[theorem]{Definition}

\newtheorem{convention}[theorem]{Convention}
\newtheorem{notation}[theorem]{Notation}
\newtheorem{notations}[theorem]{Notations}
\newtheorem{remark}[theorem]{Remark}
\newtheorem{remarks}[theorem]{Remarks}

\newcommand{\Emat}[2]{E_{#1,#2}} 

   \def\DD{{\mathbb D}}

   \def\PP{{\mathbb P}}
 \def\RR{{\mathbb R}} \def\SS{{\mathbb S}} 
   
 \def\ZZ{{\mathbb Z}}
\DeclareMathOperator\M{Mod}
\DeclareMathOperator\Sp{Sp}

\def\cF{{\mathcal F}}

\title[BBMY2025]{Construction of Anosov Flows  \\ 
on Fibered Hyperbolic 3-Manifolds}
\author{François Béguin, Christian Bonatti, Biao Ma and Bin Yu}
\date{\today}

\begin{document}
\sloppy

\begin{abstract}
We prove that fibered hyperbolic $3$-manifolds carrying transitive Anosov flows are abundant. More precisely, for every $g\geq 2$, there is a finite index subgroup~$\Gamma$ of \( \operatorname{Mod}(S_g)/\operatorname{Tor}(S_g) \simeq \mathrm{Sp}(2g,\ZZ) \) so that every element of \(\Gamma\) has a representative \hbox{\(\varphi \in \operatorname{Mod}(S_g)\)} such that the mapping torus \hbox{\( M_\varphi := S_g \times [0,1]/(x,1) \sim (\varphi(x),0) \)} carries a transitive Anosov flow. The manifold $M_\varphi$ is hyperbolic for almost every element of~$\Gamma$. This shows in particular that, in the set of all fibered hyperbolic manifolds, the subset made of the manifolds carrying Anosov flows has positive density up to trivial linear monodromy. Moreover, the subgroup $\Gamma$ is defined by an explicit set of generators, and our construction yields many examples of simple fibered hyperbolic manifolds carrying Anosov flows.
\end{abstract}
\subjclass[2020]{37D20, 37E99, 55K32}
\keywords{Anosov flows, hyperbolic $3$-manifolds, Dehn surgeries}
\maketitle

\section{Introduction} 
\label{s.Int}
\subsection{Context.}\label{ss.context} 
The interplay between dynamics, topology, and geometry in dimension~$3$ is very well illustrated by the study of \textit{Anosov flows}. These flows, characterized by uniform hyperbolic behavior, play a important role in smooth dynamics and low-dimensional topology. A central problem is to understand which closed 3-manifolds admit Anosov flows. While such flows exist on many solvmanifolds and Seifert manifolds, their presence in the geometrically richest setting — \textit{closed hyperbolic 3-manifolds} — remains incompletely understood. 

The first construction of an Anosov flow on a closed hyperbolic 3-manifold is due to Goodman \cite{Goodman1983}. Subsequently, further examples were constructed by \cite{Fenley}. More recently, Bowden-Mann \cite{BM2022} and Béguin-Yu \cite{BY2024} have constructed arbitrarily large numbers of inequivalent Anosov flows on closed hyperbolic manifolds. 

Agol's virtual fibering theorem \cite{Agol2013} shows that every closed hyperbolic 3-manifold \(M\) admits a finite cover \(\widehat{M}\) that fibers over the circle. Consequently, if a closed hyperbolic \(3\)-manifold \(M\) admits an Anosov flow \(X\), then, by the virtual fibering theorem, one can lift the flow \(X\) to get an Anosov flow on a fibered hyperbolic manifold \(\widehat{M}\). However, the proof of the virtual fibering theorem provides no explicit description of the cover \(\widehat{M} \to M\). These considerations motivate the following two natural and compelling questions:

\begin{question}\label{q.fundq1}
Which closed fibered hyperbolic 3-manifolds carry Anosov flows? Do most (or almost all) closed fibered hyperbolic 3-manifolds carry an Anosov flow?
 \end{question}

\begin{problem}\label{q.fundq2}  
Can one build simple, explicit examples of closed fibered hyperbolic 3-manifolds that carry Anosov flows? 
\end{problem}

Here, by ``simple" fibered hyperbolic manifolds, we mean manifolds whose fiber has small genus (if possible $2$) and whose monodromy is given by a product of only a few Dehn twists along simple closed curves.

Recently, Matthew Hedden, Katherine Raoux and Jeremy Van Horn-Morris~\cite{HeddenRaouxVanHornMorris} announced that some fibered closed hyperbolic 3-manifolds -- including the manifolds obtained by zero surgery on hyperbolic $L$-space knots in $\SS^3$ -- do not admit Anosov flows with co-orientable stable and unstable foliations. This result can be regarded as significant progress towards Question~\ref{q.fundq1}.

In addition, the construction of Anosov flows on hyperbolic manifolds could lead progress on questions and conjectures concerning the existence of taut foliations. Let $X$ be an Anosov flow on a closed orientable $3$-manifold $M$. The weak stable foliation $\mathcal{F}^s$ of $X$ is a \emph{taut foliation}, since every leaf of $\mathcal{F}^s$ is non-compact. Moreover, when $\mathcal{F}^s$ is co-orientable, its Euler class satisfies $e(\mathcal{F}^s)=0$, as the Anosov vector field $X$ is tangent to $\mathcal{F}^s$. Therefore, the two questions above concerning Anosov flows are directly connected to a fundamental problem in foliation theory:

\begin{question}[Question $9.2$ of \cite{Yazdi2020}]
\label{q.fund-0-top} 
Which $3$-manifolds with a positive first Betti number admit a taut foliation with trivial Euler class?
\end{question}

Question~\ref{q.fund-0-top} is directly related to Thurston's \emph{Euler class-one conjecture}. Let $M$ be a closed orientable $3$-manifold equipped with a co-orientable taut foliation $\cF$. Thurston's foundational work \cite{Thurston1986} established that the dual Thurston norm \(x^\ast\) of the Euler class \(e(\cF)\) satisfies:
\[ x^\ast(e(\cF)) \in B^\ast(M) \quad \text{with} \quad x^\ast(e(\cF)) \leq 1, \]
where \(B^\ast(M)\) is the Thurston unit dual ball of \(M\). Thurston's \emph{Euler class-one conjecture} states that, for any integral class $a \in H^2(M;\ZZ)$ with $x^\ast(a) = 1$, there should exists a taut foliation $\cF$ on $M$ such that $e(\cF) = a$. After partial positive results by Gabai (\cite{Gabai1983}), the conjecture was disproved by Yazdi (\cite{Yazdi2020}; see also~\cite{Liu2024}). This motivates the following refined version of Question~\ref{q.fund-0-top}:

\begin{question}[Question $9.1$ of \cite{Yazdi2020} ]\label{q.fund-top} 
Which lattice points in $B^\ast(M)$ correspond to co-orientable taut foliations?
\end{question}

Progress on Questions~\ref{q.fundq1} or Problem~\ref{q.fundq2} directly yields progress on Question~\ref{q.fund-0-top}, hence on Question \ref{q.fund-top}.

\subsection{Main results of the paper.}\label{ss.con-pap}  

The purpose of the present paper is to provide partial answers to Question~\ref{q.fundq1} and Problem~\ref{q.fundq2}. For \( g \geq 2 \), we denote by \( S_g \) a genus \( g \) closed orientable surface, by \( \operatorname{Mod}(S_g) \) the mapping class group of \( S_g \), and by \( \operatorname{Tor}(S_g) \) the Torelli subgroup of \( \operatorname{Mod}(S_g) \), i.e. the subgroup of \( \operatorname{Mod}(S_g) \) made of the elements that act by identity on \( H_1(S_g, \mathbb{Z}) \). Recall that any choice of a basis of \( H_1(S_g, \mathbb{Z}) \) yields an identification of \( \operatorname{Mod}(S_g)/\operatorname{Tor}(S_g) \) with the symplectic group \( \operatorname{Sp}(2g, \mathbb{Z}) \). We will prove the following:

\begin{theorem}\label{t.findex}
There exists a finite index subgroup \(\Gamma\) of \( \operatorname{Mod}(S_g)/\operatorname{Tor}(S_g) \) such that every element of \(\Gamma\) has a representative \(\varphi \in \operatorname{Mod}(S_g)\) for which the mapping torus \hbox{\( M_\varphi := S_g \times [0,1]/(x,1) \sim (\varphi(x),0) \)} carries a transitive Anosov flow.
\end{theorem}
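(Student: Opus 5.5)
Start from a suspension Anosov flow and perform Goodman--Fried type Dehn surgeries along periodic orbits that lie on a fiber, so that each surgery changes the monodromy by a Dehn twist. The starting point is the mapping torus of a hyperbolic (Anosov) linear automorphism $A$ of the torus, or a ``DA-type'' or Franks--Williams / Bonatti--Langevin building block adapted to genus $g$. A cleaner version: take a transitive Anosov flow $X_0$ on a mapping torus $M_{\varphi_0}$ with fiber $S_g$ transverse to $X_0$. Such flows exist by known constructions, for instance by gluing hyperbolic plugs as in Béguin--Bonatti--Yu, or by taking a branched/finite cover of a torus suspension.

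The key mechanism is as follows. If $\gamma$ is a periodic orbit of $X_0$ and $c\subset S_g$ is a simple closed curve lying in a transverse annulus (a Birkhoff annulus) bounded by periodic orbits, then a Fried/Goodman surgery of slope $\pm 1$ along the annulus gives a new Anosov flow on the mapping torus of $T_c^{\pm1}\circ\varphi_0$. For the suitable sign this is Goodman's surgery, and Shannon/Fried show that it preserves the Anosov property and transitivity. Iterating over a collection of disjoint such annuli (or over annuli appearing at different ``heights'' of the fibration), we obtain Anosov flows on $M_\varphi$ for every $\varphi$ in the subsemigroup generated by $\varphi_0$ and the twists $T_{c_i}^{\epsilon_i}$, where each $c_i$ comes from an available annulus and has its admissible sign $\epsilon_i$.

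The homological step is to choose $X_0$ so that the curves $c_i$ realizable this way have homology classes spanning enough of $H_1(S_g,\ZZ)$. Their transvections $x\mapsto x\pm\langle x,[c_i]\rangle[c_i]$ then generate, modulo Torelli, a Zariski-dense subgroup of $\Sp(2g,\ZZ)$ containing elementary unipotents. By results on congruence subgroups for $g\ge 2$ (Bass--Milnor--Serre, Tits), such a group has finite index. Two points need care. First, only a semigroup is obtained, because of the sign constraint and the factor $\varphi_0$. Second, both signs are needed. To handle the first, I would use the fact that Goodman surgery along the two boundary orbits of a Birkhoff annulus with opposite orientations allows both $T_c$ and $T_c^{-1}$ (Fried surgery on a transverse annulus works for any integer slope $n$ with large $|n|$, or for suitable signs), and then absorb $\varphi_0$ into the group. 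To handle the second, I would show that a semigroup generated by $T_{c_i}^{\pm N}$ is in fact a group, and that it contains the level-$N$ congruence subgroup. The representative $\varphi$ is the surgered monodromy; it may differ from a chosen lift only by Torelli, which is allowed.

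The main obstacle is the geometric construction of a single transitive Anosov flow on a surface bundle that possesses enough fiber-compatible Birkhoff annuli in independent homology directions, with the surgeries remaining Anosov for both twist directions. I would first try a finite cover of the geodesic flow of a hyperbolic orbifold or of a torus suspension, arranging that lifts of a fixed Birkhoff annulus project to homologically independent curves on the fiber. Failing that, I would glue hyperbolic plugs with controlled boundary curves. The algebraic finite-index step is then standard.
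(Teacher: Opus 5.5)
\textbf{There is a genuine gap, and the proposed starting point cannot exist.} Suppose a transitive Anosov flow $X_0$ were transverse to the fibers of $M_{\varphi_0}\to\SS^1$. Then every fiber would be a global cross section, and the first-return map would be an Anosov diffeomorphism of $S_g$. No such map exists for $g\geq 2$. Your suggested sources do not help either:
\begin{itemize}
\item Finite covers of torus suspensions are Sol manifolds, whose only fibrations have torus fibers.
\item Branched covers along orbits give pseudo-Anosov flows, not Anosov flows.
\end{itemize}
Your mechanism also contradicts your setup: if $c$ lies in a Birkhoff annulus inside a fiber, the boundary periodic orbits lie in that fiber, so the fiber is not transverse to the flow.

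What is actually needed is an Anosov flow on a genus $g$ surface bundle whose periodic orbits lie \emph{inside} fibers, in enough homology directions. You label this ``the main obstacle'' and leave it open, but it is the core of the paper. Your fallback of starting from a geodesic flow is the right instinct. The paper's construction runs as follows:
\begin{itemize}
\item Start from the geodesic flow on $T^1S_2$ and take explicit sections of $T^1(S\setminus d)$, with $d$ a separating geodesic.
\item Index $+1$ Dehn--Fried surgeries on the two lifts $\tilde d^\pm$ turn the closures of these sections into the fibers of a fibration with monodromy $\tau_d^{-2}$.
\item The lifts of $a_\ell,a_r,b_\ell,b_r,c$ become periodic orbits lying in fibers.
\item A covering $S_g\to S_2$ gives genus $g$, and cyclic covers in the $\SS^1$-direction realize arbitrary words.
\end{itemize}

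\textbf{Several further steps are asserted rather than proved.}

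First, for a periodic orbit $\gamma$ in a fiber $S$, the index $k$ Dehn--Fried surgery has new meridian $\mu+k\lambda^s$, where $\lambda^s$ is the longitude given by $W^s_{\mathrm{loc}}(\gamma)$. The fiber Dehn twist $\tau_\gamma^\ell$ has meridian $\pm(\mu+\ell\lambda_S)$. These agree for every $k$ only when the twist number of $W^s_{\mathrm{loc}}(\gamma)$ relative to $S$ vanishes. Otherwise they agree only when $k\cdot\mathrm{Twist}=2$, and then $\ell=-k$. This is why the paper gets arbitrary powers of $\tau_{a_i},\tau_{b_j}$ (twist $0$) but only $\tau_{c_k}^{-2}$ (twist $+1$). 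Your claim of $T_c^{\pm1}$ from an annulus has no such control.

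Second, the reachable monodromies are words $\varphi_0w_1\varphi_0w_2\cdots$. So ``absorbing $\varphi_0$'' requires $\varphi_0\in\operatorname{Tor}(S_g)$, while $\varphi_0=\mathrm{id}$ is excluded by Ghys' theorem. The paper's $\widehat\tau_d$ is Torelli precisely because the $d_l$ separate.

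Third, the algebraic step is not ``Zariski dense plus some unipotents''. Your plan to show that ``a semigroup generated by $T_{c_i}^{\pm N}$ is a group'' is vacuous, since that generating set is already symmetric; it does not address the real sign asymmetry. The paper removes the asymmetry with the chain relation $(\tau_{c_i}^2\tau_{b_i}\tau_{a_i})^3=\tau_d\tau_{d'}$, where $d,d'$ are homologous to $\pm[a_{i+1}]$, so that $\bar\tau_{c_i}^{2}$ lies in the semigroup. Finite index then comes from explicitly producing the $2^{g-1}$-th powers of all root elements of $\Sp(2g,\ZZ)$ and applying Tits' theorem.
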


\begin{remark}\label{remark:t.findex}
\begin{enumerate}
 \item The finite index subgroup \( \Gamma \) provided by Theorem \ref{t.findex} is known explicitly (by a set of generators). It is not a principal congruence  subgroup of \( \operatorname{Sp}(2g, \mathbb{Z}) \). Nevertheless, we were able to prove that \( \Gamma \) contains the principal congruence subgroup of level $2^{2g-2}$ of \( \operatorname{Sp}(2g, \mathbb{Z}) \) (see Section~\ref{s.F-index-subgp}). This implies that the index of $[\mathrm{Sp}(2g,\ZZ):\Gamma]$ is bounded from above by $2^{8g^3}$ (which is for sure very far from being an optimal bound). It is then natural to wonder whether $\Gamma$ can be taken to be the full group $\operatorname{Sp}(2g, \mathbb{Z})$. 
 
 \item Note that it is not possible that for every element \(g\) of \(\Gamma\), and every \(\varphi_g \in \operatorname{Mod}(S_g)\) representing \(g\), the corresponding mapping torus \(M_{\varphi_g}\) admits a transitive Anosov flow. For instance, let  \(g\) to be the unit element of  \(\Gamma\) and \(\varphi_g= \text{Id}_{S_g}\), then \(M_{\varphi_g}\) is homeomorphic to \(S_g \times \SS^1\), which does not carry any Anosov flow by \cite{Ghys1984}.
\end{enumerate}
\end{remark}

Actually, Theorem \ref{t.findex} will appear as a consequence of a more precise and explicit statement. Consider the system of simple essential closed curves on \( S_g \) and the orientation of $S_g$ represented on Figure~\ref{f.abcd}:
\begin{figure}[htb]
    \centering
    \includegraphics[scale=0.3]{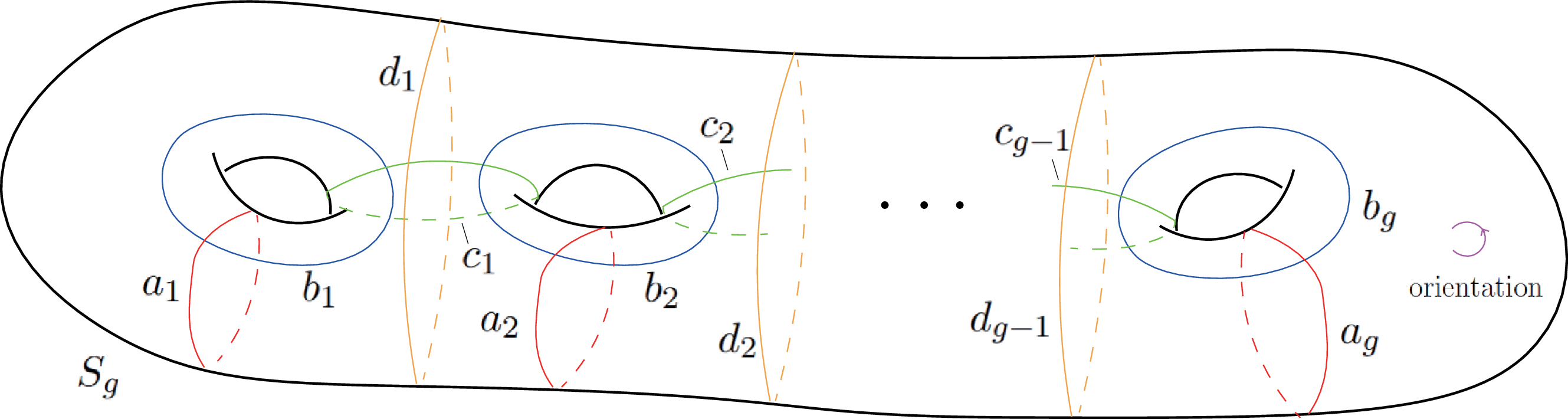}
    \caption{The simple closed curves  $ a_i,b_j, c_k, d_l$ on $ S_g $.}
    \label{f.abcd}
\end{figure}
Denote by \( \tau_{a_i} \), \( \tau_{b_j} \), \( \tau_{c_k} \), \( \tau_{d_l} \) the left Dehn twists along the curves \( a_i, b_j, c_k, d_l \) for \( 1 \leq i, j \leq g \) and \( 1 \leq k, l \leq g-1 \). 

\begin{remarks}
\begin{enumerate}
\item According to Lickorish-Wallace theorem, the Dehn twists $\{\tau_{a_i}\}_{1\leq i\leq g}$, $\{\tau_{b_j}\}_{1\leq j\leq g}$ and $\{\tau_{c_k} \}_{1\leq k\leq g-1}$ generate the mapping class group of the surface $S_g$. 
\item Since Dehn twists along disjoint curves commute, the Dehn twists $ \{\tau_{a_i}\}_{1\leq i\leq g}$, $\{\tau_{b_j}\}_{1\leq j\leq g}$, $\{\tau_{c_k}\}_{1\leq k\leq g-1}$, $\{\tau_{d_l}\}_{1\leq l\leq g-1}$ commute except for the pairs $(\tau_{a_{m}},\tau_{b_m})$, $(\tau_{b_n},\tau_{c_n})$, $(\tau_{b_{n+1}},\tau_{c_n})$, and $(\tau_{c_{n}},\tau_{d_n})$, with $1\leq m\leq g$ and $1\leq n\leq g-1$.
\end{enumerate}
\end{remarks}

We consider the following infinite family of products of Dehn twists
$$\mathcal{T}:=\left\{\tau_{d_1}^{\small{-2}}\dots\tau_{d_{g-1}}^{\small{-2}}
\tau_{b_1}^{q_1}\dots\tau_{b_{g}}^{q_g}
\tau_{c_1}^{r_1}\dots\tau_{c_{g-1}}^{r_{g-1}}
\tau_{a_1}^{p_1}\dots\tau_{a_{g}}^{p_g}\right\}_{\substack{r_1,\dots,r_{g-1}\,=\,0\; \mathrm{or}\;-2 \\
q_1,\dots,q_g\,\in\,\ZZ \hfill \\  p_1,\dots,p_q\,\in\,\ZZ\hfill }}$$
We insist on the fact that, in every element of the family $\mathcal{T}$,
the powers of the left Dehn twists along the curves $a_1,\dots,a_g,b_1,\dots,b_g$ are arbitrary integers, but the powers of the left Dehn twists along the curves $c_1,\dots,c_{g-1}$ can only be equal to $0$ or $-2$, and the powers of the left Dehn twists along the curves $d_1,\dots,d_{g-1}$ are equal to $-2$. Observe that the family $\mathcal{T}$ contains 
$$\mathcal{T}_0:=\{\widehat\tau_d\}\cup \{\widehat\tau_d\tau_{a_i}^{\pm 1}\}_{1 \leq i \leq g}\cup \{\widehat\tau_d\tau_{b_j}^{\pm 1}\}_{1 \leq j \leq g}\cup \{\widehat\tau_d\tau_{c_k}^{-2 }\}_{1 \leq k \leq g-1},$$
where 
$$\widehat\tau_d = \tau_{d_1}^{-2} \tau_{d_2}^{-2} \cdots \tau_{d_{g-1}}^{-2} .$$

\begin{theorem}
\label{t.generators}
If $\varphi$ is a non-trivial product of elements of $\mathcal{T}$ then the mapping torus 
\[ M_\varphi = S_g \times [0,1]_{/((x,1) \sim (\varphi(x),0))} \] 
carries a transitive Anosov flow.
\end{theorem}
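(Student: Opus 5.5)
The strategy is to start from the suspension of a hyperbolic toral automorphism, or rather from a geodesic-flow-type model, and to realize each element of $\mathcal{T}$ as a composition of Dehn surgeries along periodic orbits of an Anosov flow. Each surgery is of Goodman/Fried type along a curve tangent to a weak stable or weak unstable leaf, and the surface $S_g$ stays a global cross-section throughout.

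\textbf{Step 1: a model flow with a genus $g$ Birkhoff section.} Start from an Anosov flow $X_0$ on a mapping torus $M_{\psi}$ with fiber $S_g$ transverse to the flow. A natural choice is a Bonatti--Langevin / Béguin--Bonatti--Yu type flow built by gluing hyperbolic plugs. We want $X_0$ to have the following properties:
\begin{itemize}
\item every curve $a_i$, $b_j$, $c_k$ is realized, on suitable fibers $S_g\times\{t\}$, as a closed orbit of $X_0$ (or a curve isotopic to one);
\item these orbits lie in distinct levels $t$;
\item the orbits are ordered as in the word defining the elements of $\mathcal{T}$.
\end{itemize}
The factor $\widehat\tau_d$ should be exactly the monodromy $\psi$ of this model. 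The twists $\tau_{d_l}^{-2}$ are the gluing twists that make $M_\psi$ carry the flow, in the spirit of the classical Handel--Thurston construction, and the $-2$ is the twist coming from the standard geodesic-flow/DA gluing. Transitivity of $X_0$ is arranged by the usual criterion for glued plugs: the gluing map sends the unstable lamination to a lamination strongly transverse to the stable one, and the combinatorics is filling.

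\textbf{Step 2: surgeries realizing arbitrary powers.} Realize arbitrary powers $\tau_{a_i}^{p_i}$ and $\tau_{b_j}^{q_j}$ by Goodman--Fried surgery. A Dehn twist of the fibration along a curve $\gamma\subset S_g\times\{t\}$ that is a closed orbit (or an annulus transverse to the flow whose core is a periodic orbit) corresponds to an integral Dehn surgery on that orbit. The twist direction relative to the fiber framing matches the stable/unstable framing. Fried's theorem gives an Anosov flow for all surgeries along a periodic orbit using a Birkhoff annulus, but only with the correct sign. To get both signs, each $a_i$ and $b_j$ should be represented by two orbits, one on a "positive" and one on a "negative" Birkhoff annulus, since the twist direction is dictated by the side on which the orbit bounds.

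The restricted exponents $r_k\in\{0,-2\}$ indicate that the $c_k$ are handled differently. Here $\tau_{c_k}^{-2}$ comes from a change of gluing of plugs rather than a Fried surgery, exactly like the $d_l$. So for each $k$ one has two model flows, and the choice can be made independently for each $k$.

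\textbf{Step 3: products.} For products of several elements of $\mathcal{T}$, take the $n$-fold cyclic cover of the model along the fibration, i.e.\ the flow on $M_{\psi^n}$. Perform each factor's surgeries in the $m$-th level block. Surgeries along distinct orbits commute and preserve the global section, so the resulting monodromy is the product.

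Finally, transitivity of the surgered flow must be checked. Goodman surgery preserves transitivity when done on orbits of a transitive flow with a Birkhoff section, since the section persists and every orbit still crosses it. The case "non-trivial product" only excludes $\varphi=\mathrm{id}$, which cannot arise here anyway because $\widehat\tau_d\neq \mathrm{id}$ always appears.

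\textbf{Main obstacle.} The hardest step is Step 1: building the model flow so that all the curves $a_i, b_j, c_k$ simultaneously appear as periodic orbits lying in fibers with the correct framing and both twist signs, with monodromy exactly $\widehat\tau_d$. I would attempt this with the gluing of $g-1$ (or $2g-2$) copies of a basic hyperbolic plug along tori containing the $d_l$. I would verify the framing by computing the return map on an explicit Markov partition of the fiber.
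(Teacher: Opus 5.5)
There is a genuine gap, and it sits in Step~1, on which everything else rests. You ask that the fiber $S_g$ be transverse to $X_0$ and also that $a_i,b_j,c_k$ be closed orbits lying in fibers. These are incompatible. If $S_g$ is a global cross-section, every closed orbit has positive algebraic intersection with the fiber class, while a curve in a fiber has intersection $0$, so it cannot even be homologous to an orbit. Moreover, for $g\geq 2$ no Anosov flow admits $S_g$ as a global cross-section at all: the first-return map would be an Anosov diffeomorphism of $S_g$. So the picture ``the surface $S_g$ stays a global cross-section throughout'' fails, and so does your transitivity argument based on it.

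The missing idea is the opposite setup. The fibers are \emph{not} transverse to the flow. They contain \emph{horizontal} periodic orbits, and the relevant invariant is the twist number $\mathrm{Twist}_\gamma(W^s_{\mathrm{loc}}(\gamma),S)$ of the local stable manifold with respect to the fiber. An index $k$ Dehn--Fried surgery on $\gamma$ is the Dehn surgery with meridian $\pm(\mu+k\lambda^s)$. A Dehn twist of order $\ell$ in the fiber is the Dehn surgery with meridian $\pm(\mu+\ell\lambda_S)$. When the twist is $0$ these agree for every $k=\ell$. Otherwise they agree only when $\ell=-k$ and $k\cdot\mathrm{Twist}=2$.

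This is exactly why the exponents on $\tau_{c_k}$ are restricted. The $c_k$-orbits have twist number $+1$, so an index $+2$ surgery on them produces $\tau_{c_k}^{-2}$. It is not a ``change of gluing of plugs''. Your worry about signs is also unfounded. Fried's blow-up construction allows any integer index on an orbit with positive multipliers, and Shannon shows the result is a transitive Anosov flow. Hence index $\pm1$ surgeries on the same horizontal orbit give $\tau_{a_i}^{\pm1}$ and $\tau_{b_j}^{\pm1}$, and no second orbit is needed.

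Second, the model flow is the real content, and ``glue $g-1$ copies of a plug'' does not supply it. The paper builds it concretely in four steps.
\begin{enumerate}
\item Start from the geodesic flow of a symmetric hyperbolic genus-$2$ surface in which $a_\ell,b_\ell,c,d,a_r,b_r$ are geodesics and $d$ is separating.
\item Write explicit families of partial sections $S_{\ell,\nu}$, $S_{r,\nu}$ of $T^1S$ over the two punctured tori. After index $+1$ Dehn--Fried surgeries on both lifts $\tilde d^{\pm}$, their closures collapse to closed fibers of a fibration with monodromy $\tau_d^{-2}$. Transversality of the blown-up flow to the collapsed curves holds with no margin.
\item Show the lifts of $a$ and $b$ are horizontal with twist $0$, since both $W^s$ and the sections are transverse to the circle fibers of $T^1S$. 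Compute the twist of the lift of $c$ as $+1$ by comparing with the fiber direction at $\tilde p,\tilde q$.
\item Pass to genus $g$ through an explicit covering $S_g\to S_2$, which yields monodromy $\widehat\tau_d$.
\end{enumerate}
Your Step~3, taking a cyclic cover along the base and doing one factor's surgeries per level, matches the paper. Steps~1 and~2, however, have to be replaced by this horizontal-orbit and twist-number mechanism.
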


In particular, for any non-trivial element $\varphi$ of the subsemigroup of $\mathrm{Mod}(S_g)$ generated by $\{\widehat\tau_d\}$, $\{\widehat\tau_d\tau_{a_i}^{\pm 1}\}_{1 \leq i \leq g}$, $\{\widehat\tau_d\tau_{b_j}^{\pm 1}\}_{1 \leq j \leq g}$ and $\{\widehat\tau_d\tau_{c_k}^{-2 }\}_{1 \leq k \leq g-1}$, the mapping torus of $\varphi$ carries a transitive Anosov flow.

The curves \(d_1, \ldots, d_{g-1}\) being homologous to zero, the product of Dehn twists $\widehat\tau_d=\tau_{d_1}^{-2} \tau_{d_2}^{-2} \cdots \tau_{d_{g-1}}^{-2}$ acts trivially on \(H_1(S_g, \mathbb{Z})\), i.e., belongs to the Torelli subgroup. Hence the following statement is a straightforward consequence of Theorem~\ref{t.generators}.

\begin{corollary}
\label{corollary:generators}
Let \(\Gamma\) be the subsemigroup of \(\text{Mod}(S_g)/ \operatorname{Tor}(S_g) \cong \operatorname{Sp}(2g,\mathbb{Z})\) generated by the equivalence classes of the Dehn twists
\[
\tau_{a_1}^{\pm 1}, \ldots, \tau_{a_g}^{\pm 1}, \tau_{b_1}^{\pm 1}, \ldots, \tau_{b_g}^{\pm 1}, \tau_{c_1}^{-2}, \ldots, \tau_{c_{g-1}}^{-2}.
\]
Then every element of $\Gamma$ has a representative \(\varphi \in \text{Mod}(S_g)\) so that the mapping torus of $\varphi$ carries an Anosov flow. 
\end{corollary}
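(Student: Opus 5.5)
The corollary is a direct translation of Theorem~\ref{t.generators} through the quotient map $\pi:\operatorname{Mod}(S_g)\to\operatorname{Mod}(S_g)/\operatorname{Tor}(S_g)$. The idea is to lift each word in the generators of $\Gamma$ to a product of elements of $\mathcal{T}_0$, inserting one factor $\widehat\tau_d$ per letter. These factors lie in the Torelli group, so they do not change the class in $\operatorname{Sp}(2g,\ZZ)$. They do, however, make the lift a product of elements of $\mathcal{T}$.

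Let $\gamma\in\Gamma$. Since $\Gamma$ is the subsemigroup generated by the listed classes, we can write
\[
\gamma=\pi(\sigma_1)\pi(\sigma_2)\cdots\pi(\sigma_n)
\]
with $n\geq 1$. Each $\sigma_m$ is one of $\tau_{a_i}^{\pm1}$, $\tau_{b_j}^{\pm1}$, or $\tau_{c_k}^{-2}$. If one allows the empty word for the identity, take $n=1$ and $\sigma_1=\tau_{a_1}\tau_{a_1}^{-1}$. More simply, for the identity use the representative $\varphi=\widehat\tau_d$, which is itself an element of $\mathcal{T}$.

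Now set
\[
\varphi:=(\widehat\tau_d\sigma_1)(\widehat\tau_d\sigma_2)\cdots(\widehat\tau_d\sigma_n).
\]
Each factor $\widehat\tau_d\sigma_m$ belongs to $\mathcal{T}_0\subset\mathcal{T}$. Indeed, it is obtained from the defining formula of $\mathcal{T}$ by choosing all exponents $q_j$, $p_i$ equal to $0$ and all $r_k$ equal to $0$, except for the single exponent corresponding to $\sigma_m$, which is $\pm1$ for an $a$- or $b$-twist and $-2$ for a $c$-twist. The ordering in the formula causes no problem because only one of these twists appears.

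The curves $d_l$ are separating, hence null-homologous, so $\tau_{d_l}$ acts trivially on $H_1(S_g,\ZZ)$ and $\pi(\widehat\tau_d)=1$. Therefore
\[
\pi(\varphi)=\prod_m\pi(\widehat\tau_d)\,\pi(\sigma_m)=\prod_m\pi(\sigma_m)=\gamma,
\]
so $\varphi$ represents $\gamma$.

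It remains to check that $\varphi$ is a nontrivial mapping class, so that Theorem~\ref{t.generators} applies and $M_\varphi$ carries a (transitive) Anosov flow. The product is nonempty by construction. If one wants to be careful that the product is not the identity as a mapping class, note that Theorem~\ref{t.generators} as stated only requires a non-trivial product of elements of $\mathcal{T}$, meaning a nonempty word. If instead it requires $\varphi\neq\mathrm{id}$, I would argue that any product of elements of $\mathcal{T}$ has a mapping torus different from $S_g\times\SS^1$. Failing that, one can replace $\varphi$ by $\widehat\tau_d\varphi$, which still represents $\gamma$, and at most one of $\varphi$ and $\widehat\tau_d\varphi$ can be the identity, since $\widehat\tau_d\neq\mathrm{id}$ (it is a product of twists along disjoint nontrivial separating curves, so it has infinite order). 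This last nontriviality point is the only potential subtlety; the rest is immediate.
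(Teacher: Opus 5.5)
Your proposal is correct and matches the paper's argument: since $\widehat\tau_d\in\operatorname{Tor}(S_g)$, each generator $\sigma$ of $\Gamma$ lifts to $\widehat\tau_d\sigma\in\mathcal{T}_0$, so every element of $\Gamma$ is represented by a nonempty product of elements of $\mathcal{T}_0$, and Theorem~\ref{t.generators} applies. Your extra care about the identity and nontriviality is harmless but not needed, because the proof of Theorem~\ref{t.generators} works for any nonempty product of elements of $\mathcal{T}$.
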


The subsemigroup \(\Gamma\) in the statement of Corollary~\ref{corollary:generators}  is actually a subgroup (not just a subsemigroup) (Proposition~\ref{proposition:subgroup}), and this subgroup has finite index in \(\text{Sp}(2g, \mathbb{Z})\) (Proposition~\ref{proposition:finiteindex}). Theorem \ref{t.findex} will therefore appear as a consequence of Theorem~\ref{t.generators}, Proposition~\ref{proposition:subgroup} and Proposition~\ref{proposition:finiteindex}.

\subsection{Examples and discussion}

Theorem \ref{t.generators} allows us to construct simple examples of fibered hyperbolic 3-manifolds carrying Anosov flows. 

\begin{example}\label{example:homologycri}
Using the notations of Subsection~\ref{ss.con-pap}, consider the homeomorphism 
\[
\varphi = \tau_{d_1}^{-2} \tau_{c_1}^{-2} \tau_{a_1}\tau_{d_1}^{-2} \tau_{b_2}\tau_{b_1}
\]
of the genus 2 surface $S_2$. The matrix of the action of \(\varphi\) on \(H_1(S_2, \mathbb{Z})\) in the canonical basis $(a_1,a_2,b_1,b_2)$ is
$$\left(\begin{array}{cccc}
1 & 0 & 3 & -2\\
0 & 1 & -2 & 2\\
-1 & 0 & -2 & 2\\
0 & -1 & 2 & -1 
\end{array}\right).$$
The characteristic polynomial of this matrix is \(\chi_{\varphi}(x) = x^4 +x^3 -2x^2 +x + 1\). This polynomial is symplectically irreducible over \(\mathbb{Z}\), not cyclotomic, and not a polynomial in \(x^2\). According to \cite[Theorem 14.5]{FarbMargalit12}, this implies that the mapping class of \(\varphi\) is of pseudo-Anosov type. Therefore the mapping torus of \(\varphi\) is a hyperbolic manifold. Since $\varphi$ is a product of elements of the family $\mathcal{T}$ defined in Subsection~\ref{ss.con-pap}, Theorem~\ref{t.generators} asserts that this manifold carries a transitive Anosov flow. 
\end{example}

The method presented in Example \ref{example:homologycri} is quite general, and many other examples can be produced in a similar manner. One can also apply a result of Papadopoulos (\cite{Papa82}) to get more examples as explained below. 

\begin{observation}
Let $\varphi$ and $\psi$ be two products of elements of $\mathcal{T}$. Assume that the mapping class of $\varphi$ is of pseudo-Anosov type, and that $\psi(f^u)$ is not isotopic to $f^s$ where $f^s$ and $f^u$ are the stable and unstable foliations of $\psi$. Then, by \cite[Théorème 1]{Papa82}, for all sufficiently large $k$, the composition \(\varphi^k \circ \psi\) is pseudo-Anosov, hence the mapping torus of \(\varphi^k \circ \psi\) is a fibered hyperbolic manifold . As $\varphi^k \circ \psi$ is a product of elements of $\mathcal{T}$, Theorem \ref{t.generators} ensures that this fibered hyperbolic manifold carries a transitive Anosov flow.
\end{observation}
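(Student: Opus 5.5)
The statement just above is the Observation. Its argument consists entirely of assembling results already cited, so the plan is short. I will check three things in turn: that $\varphi^k\circ\psi$ is pseudo-Anosov for large $k$; that its mapping torus is hyperbolic; and that it is a nontrivial product of elements of $\mathcal{T}$, so that Theorem~\ref{t.generators} applies.

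\emph{Step 1 (pseudo-Anosov).} Let $f^s,f^u$ denote the stable and unstable measured foliations of the pseudo-Anosov class $\varphi$; the hypothesis should be read with these, since $\psi$ need not be pseudo-Anosov. The hypothesis says that $\psi(f^u)$ is not isotopic to $f^s$. This is exactly the genericity condition in \cite[Théorème 1]{Papa82}.

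That theorem then gives $k_0$ such that $\varphi^k\circ\psi$ is pseudo-Anosov for all $k\ge k_0$. The heuristic is that, in Thurston's compactification, $\varphi^k$ pushes every projective measured foliation other than $[f^s]$ towards $[f^u]$. The hypothesis guarantees that $\psi$ does not map the attracting point $[f^u]$ onto the repelling point $[f^s]$. A north--south dynamics argument then shows that $\varphi^k\psi$ has a source--sink pair near $([f^s],[f^u])$, so it is pseudo-Anosov.

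\emph{Step 2 (hyperbolicity).} Apply Thurston's hyperbolization theorem for fibered $3$-manifolds: the mapping torus of a pseudo-Anosov class is a closed hyperbolic $3$-manifold. Hence $M_{\varphi^k\circ\psi}$ is a closed fibered hyperbolic manifold.

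\emph{Step 3 (membership in the semigroup).} Products of elements of $\mathcal{T}$ form a subsemigroup of $\operatorname{Mod}(S_g)$, so $\varphi^k\circ\psi$ is again such a product. It is nontrivial, because it is pseudo-Anosov. Theorem~\ref{t.generators} therefore supplies a transitive Anosov flow on $M_{\varphi^k\circ\psi}$.

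The only non-formal ingredient is Step 1, and there the main task is bookkeeping. One must make sure the hypothesis is stated with the foliations of the pseudo-Anosov map $\varphi$, and with the composition order matching Papadopoulos's convention. If the orders or conventions differ, I would conjugate by $\psi$, or replace the condition by its symmetric form $\psi^{-1}(f^s)\neq f^u$, and use the fact that conjugate maps have homeomorphic mapping tori. Everything else is immediate.
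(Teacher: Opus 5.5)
Your proposal is correct and follows the paper's argument exactly. Papadopoulos's Th\'eor\`eme~1 makes $\varphi^k\circ\psi$ pseudo-Anosov for large $k$, and Thurston's hyperbolization then makes its mapping torus hyperbolic. Since $\varphi^k\circ\psi$ is again a nonempty product of elements of $\mathcal{T}$, Theorem~\ref{t.generators} applies. You are also right that $f^s,f^u$ must be the invariant foliations of the pseudo-Anosov map $\varphi$, not of $\psi$; the statement's ``of $\psi$'' is evidently a slip.
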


As we noted in Remark \ref{remark:t.findex}, for $\varphi_g = \text{Id}_{S_g}$, $M_{\varphi_g}$ does not carry any Anosov flow. To produce a monodromy in the Torelli group, we can take $\varphi_g = \widehat\tau_d$, then by Theorem \ref{t.generators}, $M_{\phi_g}$ admits an Anosov flow and $\phi_g \in \operatorname{Tor}(S_g)$. Such an Anosov flow is obtained by performing Dehn-Fried surgeries on the geodesic flow on $S_g$ along the lifts of $d_i$ (cf. Section \ref{s.cover}). However, $ \widehat\tau_d$ is not pseudo-Anosov. This raises the following interesting question.

\begin{question}
    Does there exist a pseudo-Anosov mapping class $\varphi \in \operatorname{Tor}(S_g) $ so that $M_{\varphi}$ admits a transitive Anosov flow?
\end{question}

Let us now explain the general meaning and some implications of Theorem \ref{t.findex}. This theorem implies that, for any fixed genus $g \geq 2$, among all fibered hyperbolic 3-manifolds with genus $g$ fibers, those that carry transitive Anosov flows are ``abundant", when abundance is measured in terms of the symplectic representation of their monodromy. Indeed, by a result of Rivin (\cite{Rivin08}), for a generic coset in \( \operatorname{Mod}(S_g) / \operatorname{Tor}(S_g) \), every element \( \varphi \) of this coset is of pseudo-Anosov type (hence \( M_\varphi \) is hyperbolic). Here \emph{generic} means almost every product of the elements of a fixed finite symmetric generating set of \(  \operatorname{Sp}(2g, \mathbb{Z})\) (see \cite[Theorem 8.2]{Rivin08}). In this sense, Theorem \ref{t.findex} implies that, for a fixed genus $g$, the set of fibered hyperbolic 3-manifolds with genus $g$ fibers carrying Anosov flows has positive density within the set of all fibered 3-manifolds with genus $g$ fibers considered ``up to trivial linear monodromy" (that is considered up to a change of monodromy having a trivial action in homology).

Combined with the virtual fibering theorem and the results announced in \cite{HeddenRaouxVanHornMorris}, these facts make the following open problem posed by Potrie even more appealing.

\begin{question}\label{que:virtualAnosov}\cite{Potrie}
    Does every closed hyperbolic 3-manifold have a finite cover carrying an Anosov flow? 
\end{question}

\subsection{Organization of the paper}

Dehn-Fried surgeries are classical and important construction tools of Anosov flows in dimension~3. They will play a central role in our paper. In section \ref{section:Dehn-Fried-fibered}, we carefully recall the definition of these surgeries and study which Dehn-Fried surgeries preserve the property of the manifold being fibered. 

In Section \ref{s.g-2-fiber}, we construct a fibered (non-hyperbolic) 3-manifold with genus 2 fibers and a very simple monodromy that carries an Anosov flow with many horizontal periodic orbits, \emph{i.e.} periodic orbits included in a fiber up to isotopy. This is the main technical part of this paper. The construction consists of considering the geodesic flow on the unit tangent bundle of a genus 2 closed surface, and performing Dehn-Fried surgeries on two specific orbits of this flow. The main difficulty is to prove that the resulting manifold is fibered. This is done by cutting the closed genus 2 surface into two halves --- each of them being a once-puncture torus ---, carefully defining an explicit family of partial sections of the unit tangent bundle over each of the two halves, and proving that, after the Dehn-Fried surgeries, each section on the left-half can be glued to the corresponding section on the right half, yielding a family of closed genus 2 surfaces in the surgeried manifold that will be the fibers of a fibration over the circle. 

In section~\ref{s.cover}, we generalize the result of Section~\ref{s.g-2-fiber} to an arbitrary genus $g\geq 2$, by using a finite covering argument. 

The proof of Theorem~\ref{t.generators} is completed in Section~\ref{s.proof}. This is done by considering the specific Anosov flows on the fibered manifold constructed in Section~\ref{s.g-2-fiber} (for genus 2 fibers) and Section~\ref{s.cover} (for arbitrary genus fibers), taking a finite cover, and performing Dehn-Fried surgeries on horizontal orbits. Such Dehn-Fried surgeries preserve the property of the underlying manifolds being fibered, but will change the monodromy of the fibrations, allowing to obtain fibered manifolds with complicated monodromies carrying Anosov flows. 

In the last section~\ref{s.F-index-subgp} of the paper, we explain why Theorem~\ref{t.findex} follows from Theorem~\ref{t.generators}. In other words, we prove that, for every $g$, the sub-semi-group of $\mathrm{Sp}(H_1(S_g,\ZZ))\simeq \mathrm{Sp}(2g,\ZZ)$ generated by the classes in $\mathrm{Sp}(2g,\ZZ)$ of the Dehn twists allowed by Theorem~\ref{t.generators} is actually a subgroup of finite index in $\mathrm{Sp}(2g,\ZZ)$. This is done by proving that our sub-semi-group contains the time $2^{g-1}$ of the one-parameter subgroups generated by the root vectors associated to the classical root system of $\mathrm{Sp}(2g)$, allowing to apply results of Bass, Milnor, Serre and Tits which imply that our sub-semi-group contains the level $2^{2g-2}$ principal congruence of $\mathrm{Sp}(2g,\ZZ)$.

\section{Dehn-Fried surgeries in fibered manifolds}
\label{section:Dehn-Fried-fibered}
Dehn-Fried surgeries, introduced by David Fried in~\cite{Fried},
play a central role in 3-dimensional (pseudo-)Anosov flow theory, and more specifically in our proof of Theorem~\ref{t.generators}. The purpose of the present section is to find  sufficient conditions for a Dehn-Fried surgery to preserve the existence of a fibration over the circle. In other words, we will address the following problem: 

\begin{problem}
\label{problem:Dehn-Fried-fibered}
Let $M$ be a closed $3$-manifold which admits a fibration $p:M\to\SS^1$ and carries a transitive Anosov vector field $X$ so that one periodic orbit $\gamma$ of $X$ is contained in a fiber of the fibration $p$. Let $M'$ and $X'$ be the manifold and the Anosov flow obtained by performing an index $k$ Dehn-Fried surgery on the orbit~$\gamma$. Find sufficient conditions ensuring that the manifold $M'$ also fibers over the circle. If it be so, describe the monodromy of the fibration of $M'$. 
\end{problem}

As it can be expected, we will see that $M'$ still fibers over the circle in the case where the local stable manifold of $\gamma$ is ``parallel" to the fibers. More surprisingly, we will prove that the same is true in some specific cases where the local stable manifold of $\gamma$ is ``twisted" with respect to the fibers. 

Our first task, in Subsection~\ref{subsection:blow-up}, will be to recall the definition of the blow-up of a manifold along a periodic orbit is and, more importantly, to fix some orientation conventions. In Subsection~\ref{subsection:Dehn-Fried}, we will recall the precise definition of an index $k$ Dehn-Fried surgery. 
In Subsection~\ref{subsection:twist}, we will introduce an integer measuring the ``torsion" of the local stable manifold of a hyperbolic periodic orbit of a flow with respect to the fibers of the fibration. 
In Subsection~\ref{subsection:Dehn-Fried-versus-Dehn} and~\ref{subsection:Dehn-fibered}, we will explain the effect of a Dehn-Fried surgery on the topology of the underlying manifold, first in terms of (classical topological) Dehn surgery, then in terms of a Dehn twist in a surface. We will finally be ready to answer problem~\ref{problem:Dehn-Fried-fibered} in Subsections~\ref{subsection:Dehn-Fried-fibered-I},~\ref{subsection:Dehn-Fried-fibered-II} and~\ref{subsection:multi-Dehn-Fried}. 

\subsection*{Some notations.} All along the paper, $\SS^1$ stands for $\RR/2\pi\ZZ$, and $\DD$ stands for the open unit disc in $\RR^2$ with respect to the standard Euclidean metric. Given a real vector space $V$, we denote by $\PP^+(V)$ the positive projectivization of $V$, \emph{i.e.} the quotient of $V-\{0\}$ by positive homotheties, \emph{i.e.} the set of oriented directions in $V$. More generally, if $E$ is a vector bundle, we denote by $\PP^+(E)$ the bundle whose fibers are the (positive) projectivization of the fibers of $V$.

\subsection{Blowing-up a manifold along a periodic orbit}
\label{subsection:blow-up}

Let $M$ be an oriented closed $3$-manifold, $X$ be a vector field on $M$ and $\gamma$ be a periodic orbit of $X$. Blowing-up the periodic orbit $\gamma$ roughly consists of taking polar coordinates in the direction transverse to $\gamma$. More precisely, consider a tubular neighbourhood $V$ of $\gamma$ and a diffeomorphism  $\zeta=(\theta,x,y) : V\to \SS^1\times\DD$ such that $\zeta(\gamma)=\SS^1\times \{0_{\RR^2}\}$. The \emph{blow-up of $M$ along $\gamma$} is the $3$-manifold with boundary (well-defined up to diffeomorphism)
$$\widehat M=\widehat M_\gamma := \left((\SS^1\times [0,1)\times\SS^1)\sqcup(M-\gamma)\right)_{/(\theta,\rho,\varphi)\sim \zeta^{-1}(\theta,\rho\cos(\varphi),\rho\sin(\varphi))\;\mathrm{for}\;\rho>0}.$$
By construction, there is a canonical identification of $M-\gamma$ with $\mathrm{int}(\widehat M)$, and $\partial\widehat M$ is a two-dimensional torus. The construction also provides some coordinates 
$$(\theta,\rho,\varphi):\widehat V\to \SS^1\times [0,1)\times \SS^1$$
defined on a collar neighbourhood $\widehat V$ of $\partial M$ in $M$. Actually, $(\rho,\varphi)$ are nothing but the polar coordinates defined by $(x,y)=(\rho\cos\varphi,\rho\sin\varphi)$. The torus $\partial\widehat M$ corresponds to the set $(\rho=0)$. There is natural projection $\widehat M\to M$, which is ``the identity" on $\mathrm{int}(\widehat M)$ and maps the circle $\{\theta_0\}\times\{0\}\times\SS^1$ in $\partial\widehat M$ to the point $\zeta^{-1}(\theta_0,0,0)$ in $\gamma$. The restriction of this projection defines a $\SS^1$-bundle $\partial\widehat M\to\gamma$.

The torus $\partial\widehat M$ naturally identifies with the projectivized normal bundle of the orbit $\gamma$
$$N_{\gamma}:=\partial\widehat M\simeq \PP^+\left(TM/\RR.X\right)_{|\gamma},$$
providing a more intrinsic definition of $\partial\widehat M$. 
The identification is obtained by mapping the point $(\theta,\varphi)\in\partial M$ to the point $m$ of coordinates $(\theta,0,0)\in\gamma$ and the class of the vector $v=\cos(\varphi)\partial_x+\sin(\varphi)\partial_y$ in $\PP^+(T_mM/\RR.X(m))$. Note that the identification is compatible with the $\SS^1$-bundle structures of $\partial\widehat M$ and $\PP^+\left(TM/\RR.X\right)_{|\gamma}$. See Figure \ref{f.blowup-gamma}.

Now we set some conventions of orientations. First observe that, since $M-\gamma$ canonically identifies with $\mathrm{int}(\widehat M)$, the orientation of $M$ induces a canonical orientation on $\widehat M$. 

\begin{convention}
\label{convention:orientation-torus}
We endow the torus $\partial\widehat M\simeq N_{\gamma}$ with the orientation as a boundary of the oriented $3$-manifold $\widehat M$.
\end{convention}

We observe that the orientation of $M$ and the orientation of $\gamma$ induce a canonical orientation on the fibers of $TM/\RR.X$ (here we use that $M$ is odd-dimensional), which, in turn, induces an orientation on the fibers of $\PP^+(TM/\RR.X)$ (just in the same way as the standard orientation of the plane induces the trigonometric orientation on the circle). So the fibers of $N_{\gamma}$ receive a canonical orientation.

\begin{definition}
\label{definition:canonical-meridian}
We define the \emph{canonical (oriented) meridian} of the torus $N_{\gamma}=\PP^+(TM/\RR.X)\simeq \partial\widehat M$, denoted by $\mu$, as the class in $H_1(N_{\gamma},\ZZ)$ of the fibers of $N_{\gamma}\to\gamma$ equipped with their orientation canonically induced by those of $M$.
\end{definition}

The coordinates $(\theta,\varphi)$ provide more tractable characterizations of these orientations. Indeed, one easily checks that: 

\begin{fact}
\label{fact:orientations}
If the local coordinates $(\theta,x,y):M\to\SS^1\times M$ near $\gamma$ are such that $(\partial_\theta,\partial_x,\partial_y)$ is a direct frame of $M$, and we define $(\rho,\varphi)$ by setting $(x,y)=(\rho\cos(\varphi),\rho\sin(\varphi))$, then:
\begin{itemize}
    \item the orientation of the torus $N_{\gamma}=\partial\widehat M$ is chosen such that $(\partial_\theta,\partial_\varphi)$ is a direct frame;
    \item the canonical meridian of $N_{\gamma}=\partial\widehat M$ is oriented by the vector field $\partial_\theta$.
\end{itemize}
\end{fact}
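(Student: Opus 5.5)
The plan is to reduce both items to a short linear-algebra check in the model neighbourhood $\SS^1\times\DD$, using the coordinates $(\theta,\rho,\varphi)$ on the collar $\widehat V$. Throughout I assume, as is implicit in the statement, that $\gamma=\{x=y=0\}$ is oriented by $X$ and that $X$ is positively collinear to $\partial_\theta$ along $\gamma$. If it were negatively collinear, one would first replace $\theta$ by $-\theta$ and $y$ by $-y$, which keeps the frame $(\partial_\theta,\partial_x,\partial_y)$ direct.

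\emph{First item.} Away from $\rho=0$, the coordinates $(\theta,\rho,\varphi)$ are related to $(\theta,x,y)$ by polar coordinates in the $(x,y)$-plane. There $\partial_\rho\wedge\partial_\varphi=\rho^{-1}\,\partial_x\wedge\partial_y$ with $\rho^{-1}>0$. Hence $(\partial_\theta,\partial_\rho,\partial_\varphi)$ is a direct frame of $\mathrm{int}(\widehat M)=M-\gamma$, and by continuity it is a direct frame of $\widehat M$ on the whole collar. The boundary $\partial\widehat M=\{\rho=0\}$ lies at the minimum of $\rho\in[0,1)$, so the outward normal is $-\partial_\rho$. By the outward-normal-first convention, a frame $(e_1,e_2)$ of $\partial\widehat M$ is direct iff $(-\partial_\rho,e_1,e_2)$ is direct in $\widehat M$. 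Now $(-\partial_\rho,\partial_\theta,\partial_\varphi)$ is obtained from the direct frame $(\partial_\theta,\partial_\rho,\partial_\varphi)$ by one transposition and one sign change, so it is direct. Therefore $(\partial_\theta,\partial_\varphi)$ is a direct frame of $\partial\widehat M$.

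\emph{Second item.} Here I would unwind Definition~\ref{definition:canonical-meridian}. The canonical orientation on the fibres of $TM/\RR.X$ is the one for which $(u,v)$ is direct iff $(X,u,v)$ is direct in $TM$. Along $\gamma$, $X$ is a positive multiple of $\partial_\theta$, so this orientation is the one determined by $\partial_\theta$: the classes of $(\partial_x,\partial_y)$ form a direct basis. The fibre of $N_\gamma\to\gamma$ over $(\theta_0,0,0)$ is the circle $\varphi\mapsto[\cos\varphi\,\partial_x+\sin\varphi\,\partial_y]$. With the trigonometric convention, the induced orientation on this fibre is that of increasing $\varphi$. Thus the canonical meridian $\mu$ is the fibre $\{\theta=\theta_0\}$, and its orientation is the one prescribed by the direction $\partial_\theta$ of $\gamma$ through the rule "$(\partial_\theta,\cdot,\cdot)$ direct". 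Concretely, this is the parametrization by increasing $\varphi$, which is exactly the orientation making $(\partial_\theta,\partial_\varphi)$ direct in the first item.

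The main point needing care is the bookkeeping of orientation conventions: the boundary orientation convention, the sign of $X$ relative to $\partial_\theta$, and the trigonometric convention on $\PP^+$. The check itself is routine. I would also state explicitly how the wording "oriented by $\partial_\theta$" is read. The meridian circle is tangent to $\partial_\varphi$, so the claim only makes sense as saying that its orientation is the one induced by the direction $\partial_\theta$ of $\gamma$, and that reading is what I would verify.
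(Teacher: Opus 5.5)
Your verification is correct and is exactly the routine coordinate check the paper leaves to the reader (``one easily checks''): the outward normal $-\partial_\rho$ makes $(\partial_\theta,\partial_\varphi)$ direct, and the trigonometric orientation on $\PP^+(TM/\RR.X)$ orients the meridian by increasing $\varphi$. The $\partial_\theta$ in the statement is evidently a typo for $\partial_\varphi$, as the later convention that $\mu^\pm$ is oriented by $\pm\partial_{\psi^\pm}$ confirms, and two small corrections apply: $\partial_\rho\wedge\partial_\varphi=\rho\,\partial_x\wedge\partial_y$ (not $\rho^{-1}$), which is harmless since only the sign matters; and positive collinearity of $X$ with $\partial_\theta$ is a genuine hypothesis for the second item rather than a normalization you can reach by changing coordinates, since in the original coordinates with $X$ negatively collinear the meridian is oriented by $-\partial_\varphi$.
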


Finally, we observe that the orientation of the curve $\gamma$ induces a canonical orientation on the sections of $\partial\widehat M\simeq N_{\gamma}$:

\begin{definition}
\label{definition:dynamical-orientation}
We define the \emph{dynamical orientation} of (the image of) a section of the $\SS^1$-bundle $\partial\widehat M\simeq N_{\gamma}\to \gamma$ as the orientation obtained by pushing forward the dynamical orientation of the orbit $\gamma$.
\end{definition}

\begin{observation}
\label{remark:intersection-number}
Note that, with our conventions, the intersection number of any dynamically oriented section $\sigma$ with the canonical meridian $\mu$ is $\mathrm{Int}(\sigma,\mu)=+1$.
\end{observation}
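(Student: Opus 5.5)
The claim is that $\mathrm{Int}(\sigma,\mu)=+1$ for every dynamically oriented section $\sigma$. I will prove it in the local coordinates of Fact~\ref{fact:orientations}, where everything is explicit.

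\textbf{Setting up coordinates.} Choose the tubular coordinates $\zeta=(\theta,x,y)$ so that $\gamma$ is parametrized by $\theta$ in its dynamical direction, i.e.\ $X$ is a positive multiple of $\partial_\theta$ along $\gamma$. If the direction is wrong, compose with $\theta\mapsto-\theta$. Then, if $(\partial_\theta,\partial_x,\partial_y)$ is not direct, replace $y$ by $-y$; this keeps the dynamical parametrization and makes the frame direct. So we may assume we are in the situation of Fact~\ref{fact:orientations}. In these coordinates the torus $\partial\widehat M=N_\gamma$ is oriented by the frame $(\partial_\theta,\partial_\varphi)$.

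\textbf{The two classes.} The fibers of $N_\gamma\to\gamma$ are the circles $\{\theta_0\}\times\SS^1$. Their canonical orientation comes from the orientation of $T M/\RR.X$ induced by the orientation of $M$ and that of $\gamma$. With $X\sim\partial_\theta$ and $(\partial_\theta,\partial_x,\partial_y)$ direct, the induced orientation on $TM/\RR.X$ is the one given by $(\partial_x,\partial_y)$. So the fiber is oriented by increasing $\varphi$, i.e.\ by $\partial_\varphi$. (The second bullet of Fact~\ref{fact:orientations} literally says $\partial_\theta$; I read this as a typo for $\partial_\varphi$, since only $\partial_\varphi$ is tangent to the fibers. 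The check from the definition above is what I rely on.) A section $\sigma$ is the graph $\theta\mapsto(\theta,\varphi(\theta))$. Its dynamical orientation is the push-forward of the orientation of $\gamma$, namely increasing $\theta$, so its tangent vector is $\partial_\theta+\varphi'(\theta)\partial_\varphi$.

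\textbf{Computing the intersection.} $\sigma$ and a fiber $\{\theta_0\}\times\SS^1$ meet transversally in exactly one point. The local sign there is the orientation of the pair $(\partial_\theta+\varphi'\partial_\varphi,\;\partial_\varphi)$ relative to $(\partial_\theta,\partial_\varphi)$. The change-of-basis matrix is triangular with determinant $1$, so the sign is $+1$. Since the intersection number is a homological invariant, $\mathrm{Int}(\sigma,\mu)=+1$ for every section.

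\textbf{Main obstacle.} The only delicate point is the bookkeeping of orientation conventions: the boundary orientation of $\partial\widehat M$, the orientation induced on $TM/\RR.X$, and the coordinate flips in the first step. Each flip must keep Fact~\ref{fact:orientations} applicable. I would check this by noting that $\theta\mapsto-\theta$ together with $y\mapsto-y$ preserves the orientation of $M$, so the boundary orientation is unchanged. Both intrinsic orientations, of the meridian and of the section, are defined independently of coordinates, so the computation above is legitimate.
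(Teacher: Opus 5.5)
Your argument is correct and is the local computation the paper intends. The paper gives no proof; the Observation is an immediate consequence of Fact~\ref{fact:orientations}, with the torus oriented by $(\partial_\theta,\partial_\varphi)$, the meridian by $\partial_\varphi$, and a dynamically oriented section by $\partial_\theta+\varphi'(\theta)\partial_\varphi$. You are right that the second bullet of that Fact is a typo for $\partial_\varphi$, and your normalization that $\theta$ increases along the flow makes explicit an assumption the Fact leaves implicit.
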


\begin{figure}
  \centering
    \includegraphics[scale=0.32]{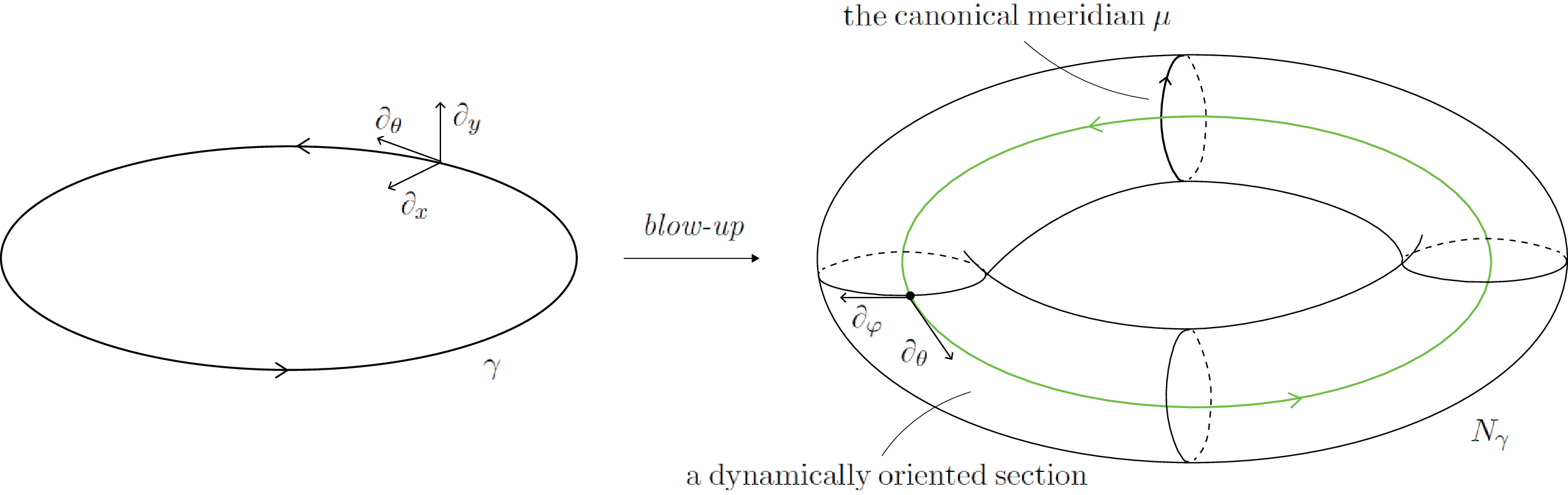}
\caption{The torus $N_{\gamma}\simeq\partial\widehat M$, the fibers of its projection onto $\gamma$, the canonical meridian $\mu$, a dynamically oriented section and the frame $(\partial_\theta,\partial_\varphi)$.}
\label{f.blowup-gamma}
\end{figure}

\begin{remark}
\label{remark:orbit-vs-closed-curve}
We restricted ourselves to the case where $\gamma$ is a periodic orbit of a vector field $X$ on $M$ since this will be the natural setting for Dehn-Fried surgeries. Nevertheless, all the content of Subsection~\ref{subsection:blow-up} generalizes to the more general setting where $\gamma$ is any oriented simple closed curve in $M$. One just needs to replace the direction $\RR.X$ by $\RR.\dot{\gamma}$ in the definition of the projectivized normal bundle $N_\gamma$. 
\end{remark}

\subsection{Index $k$ Dehn-Fried surgeries}
\label{subsection:Dehn-Fried}

We shall now recall what is an index $k$ Dehn-Fried surgery  on a periodic orbit of a transitive Anosov flow. Details and proofs of the assertion made below can be found \emph{e.g.} in Shannon's PhD thesis~\cite{shannon2020dehn}.  

Let $M$ be a closed oriented $3$-manifold, $X$ be a transitive Anosov vector field on $M$, and $\gamma$ be a periodic orbit of $X^t$ with positive multipliers. Let $\widehat M$ be the blow-up of $M$ along~$\gamma$. Recall that the boundary torus of $\partial \widehat M$ identifies with the projectivized normal bundle $N_{\gamma}=\PP^+(TM/\RR.X)_{|\gamma}$. Also recall that the fibers of the projection $N_{\gamma}\to\gamma$ inherit of a canonical orientation, and that the class in $H_1(N_{\gamma},\ZZ)$ of the oriented fibers is called the \emph{canonical meridian} of $N_{\gamma}$ and denoted by $\mu$ (see Definition~\ref{definition:canonical-meridian}). 

The flow $X^t$ being $C^1$, it lifts to a continuous flow $\widehat X^t$ tangent to a $C^0$ vector field: the restriction of $\widehat X^t$ to $N_{\gamma}$ is canonically defined as the action of the derivative of $X^t$ on the projectivized normal bundle of $\gamma$. The hyperbolicity of the orbit $\gamma$ yields a complete description of this action, and therefore of the dynamics of $\widehat X^t$ on $N_{\gamma}$. The weak stable (resp. unstable) plane of along $\gamma$ defines two opposite $\widehat X^t$-invariant sections\footnote{Recall that the multipliers of $\gamma$ were assumed to be positive. We use this hypothesis here to ensure that we indeed get genuine sections rather than "two-points-valued sections".} of $\gamma$. This yields four parallel periodic orbits of $\widehat X^t$. Of course, the flow orientations of these periodic orbits coincides with their dynamical orientation as sections of $N_{\gamma}$. These four periodic orbits divide the torus $N_{\gamma}$ into four annuli, each of which having a boundary component corresponding to the stable direction and a boundary component corresponding to the unstable direction. The other orbits of $\widehat X^t$ are contained in one of the above mentioned annuli, spiraling from the boundary component corresponding to the stable direction to the boundary component corresponding to the unstable direction. See Figure~\ref{f.Xt-widehat}.

\begin{definition}
\label{definition:canonical-longitude}
The class in $H_1(N_{\gamma},\ZZ)$ of the oriented periodic orbits of $\widehat X^t$ (\emph{i.e.} of the dynamically oriented sections of $N_{\gamma}$ corresponding to the stable/unstable directions) will be called the \emph{canonical longitude} of $N_{\gamma}$. We will denote it by $\lambda^{s}$. 
\end{definition}

\begin{figure}
\centering
    \includegraphics[scale=0.32]{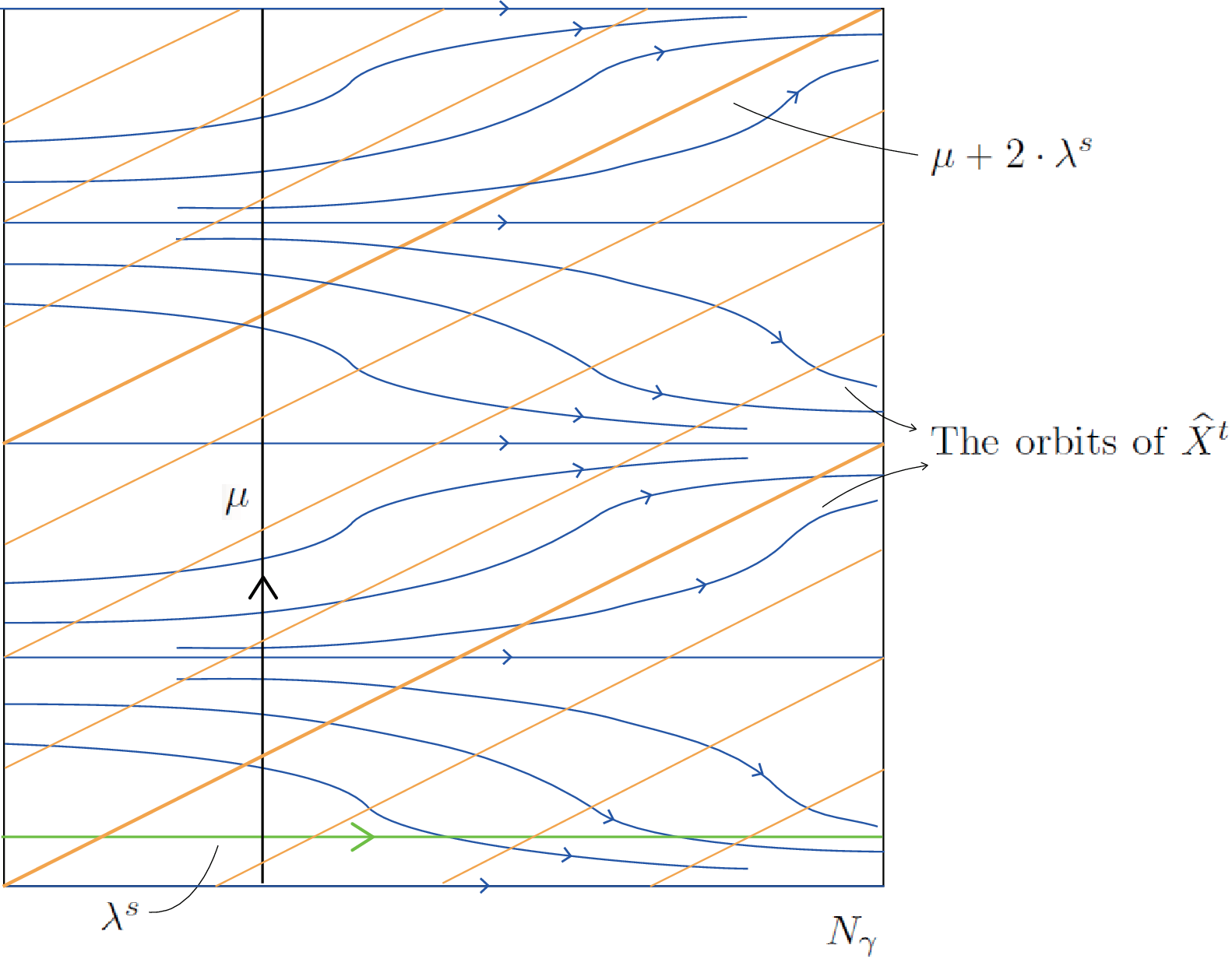}
\caption{The orbits of $\widehat X^t$ on the torus $N_{\gamma}$ (in blue). The canonical meridian $\mu$ (in black) and the canonical longitude $\lambda^{s}$ of $N_{\gamma}$ (in green), and a fibration whose fibers are in the homology class $\mu+2\cdot\lambda^{s}$ and transverse to the orbits of~$\widehat X$ (in tangerine).} 
\label{f.Xt-widehat}
\end{figure}

In view of the dynamics of $\widehat X_\gamma^t$ on $N$, for every $k\in\ZZ$, one can find a new smooth fibration on $p':N_{\gamma}\to\gamma$, so that the homology class of the fibers is $\pm(\mu+k\cdot\lambda^s)$ (see Figure~\ref{f.Xt-widehat} for the case $k=2$) and so that $\widehat X_\gamma^t$ is transverse to the fibers of $p'$. We consider the closed $3$-manifold $M':=\widehat M/p'$ (here the quotient means that we crash each fiber of $p'$ to a point). The oriented one-dimensional foliation of $\widehat M$ by the orbit of $\widehat X^t$ projects to a continuous oriented one-dimensional foliation of $M'$ tangent to a continuous half-line field (the continuity follows from the transversality of the fibers of $p'$ to the foliation on $\widehat M$). One may choose a continuous non-singular vector field $X'$ tangent the leaves of this foliation. By construction, the projection $\gamma'$ of the torus $N_{\gamma}$ in $M'$ is a periodic orbit of the vector field~$X'$, and there is a canonical orbit equivalence 
$$i:\left(M'-\gamma',X'_{|M'-\gamma'}\right)\to \left(M-\gamma,X_{|M-\gamma}\right),$$ 
(which can be thought as ``the identity" since it is obtained by composing the natural identifications of $M-\gamma$ and $M'-\gamma'$ with $\mathrm{int}(\widehat M)$). The vector field $X'$ enjoys many properties of an Anosov vector field, but fails to be $C^1$. Nevertheless, Shannon has proved that $X'$ is orbit equivalent to a genuine transitive Anosov vector field, and that $(M',X')$ only depend on $M$, $X$, $\gamma$ and $k$ up to topological equivalence, see \cite{shannon2020dehn}. 

We will apply the construction process above in the slightly more general situation where the orbits of $\widehat X_\gamma^t$ are not genuinely transverse, but only topologically transverse to the fibers of the fibration $p':N_{\gamma}\to\gamma$. In this situation, the foliation by the orbits of $\widehat X^t$ still projects to a continuous oriented foliation on $M'$. The leaves of this foliation are not necessarily tangent to a continuous half-line field, but still are the orbits of a topological Anosov flow, and Shannon's theorem still applies. 

\begin{definition}
\label{definition:Dehn-Fried}
The surgery transforming $(M,X^t)$ to $(M',X')$ is the index $k$ Dehn-Fried surgery on the orbit $\gamma$.  
\end{definition}

\subsection{Twist number of the local stable manifold of a periodic orbit with respect to a surface}
\label{subsection:twist}

Let $M$ be an oriented $3$-manifold and $\gamma$ be a periodic orbit with positive multipliers of a flow $X^t$ on $M$. We denote by $N_\gamma$ the projectivized normal bundle of~$\gamma$. 

Recall that the orientation of the manifold $M$ induces an orientation of the torus $N_\gamma$ (see Convention~\ref{convention:orientation-torus}). Also recall that the image of every section of the bundle $N_\gamma\to\gamma$ is equipped with a canonical orientation, called the \emph{dynamical orientation}, obtained by pushing forward the dynamical orientation of the orbit $\gamma$ (see Subsection~\ref{subsection:blow-up}). As a consequence, the homological intersection of (the images of) any two sections of $N_\gamma$ is well-defined. 

Every orientable surface $S$ containing the orbit $\gamma$ induces a pair of opposite sections $\pm\sigma_S$ of the circle bundle $N_\gamma\to\gamma$. Observe that, if $S$ and $S'$ are two orientable surfaces containing $\gamma$, then the intersection number $\mathrm{Int}(\sigma_{S'},\sigma_{S})$ is unchanged if one replaces $\sigma_{S'}$ and/or $\sigma_{S}$ by its opposite (since any section $\sigma$ is homologous to $-\sigma$). This gives a well-defined meaning to the following definition:

\begin{definition}
\label{definition:twist-1}
Given two orientable surfaces $S$ and $S'$ containing the orbit $\gamma$, the \emph{twist number of $S'$ with respect to $S$} is defined as
$$\mathrm{Twist}_\gamma(S',S):=\mathrm{Int}(\sigma_{S'},\sigma_S)$$
where $\sigma_S'$ (resp. $\sigma_S$) is any of the two opposite sections of the projectivized normal bundle $N_\gamma=\PP^+(TM/\RR.X)_{|\gamma}$ defined by the tangent planes of $S'$ (resp. $S$). 

In particular, if $S$ is an orientable surface containing a hyperbolic periodic orbit  $\gamma$ with positive multipliers, the \emph{twist number of the local stable manifold of $\gamma$ with respect to~$S$} is $$\mathrm{Twist}_\gamma\left(W^s_{\mathrm{loc}}(\gamma),S\right):=\mathrm{Int}\left(\lambda^s,\sigma_S\right)$$
where $\lambda^s$ is the canonical longitude of $N$ (Definition~\ref{definition:canonical-longitude}). If $S$ is a fiber of a fibration $p:M\to\SS^1$, then $\mathrm{Twist}_\gamma\left(W^s_{\mathrm{loc}}(\gamma),S\right)$ is called the twist number of $W^s_{\mathrm{loc}}(\gamma)$ with respect to $p$, and $W^s_{\mathrm{loc}}(\gamma)$ is said to be \emph{horizontal} (with respect to $p$) if $\mathrm{Twist}_\gamma\left(W^s_{\mathrm{loc}}(\gamma),S\right)=0$.
\end{definition}

\begin{figure}[htb]
    \centering
    \includegraphics[scale=0.35]{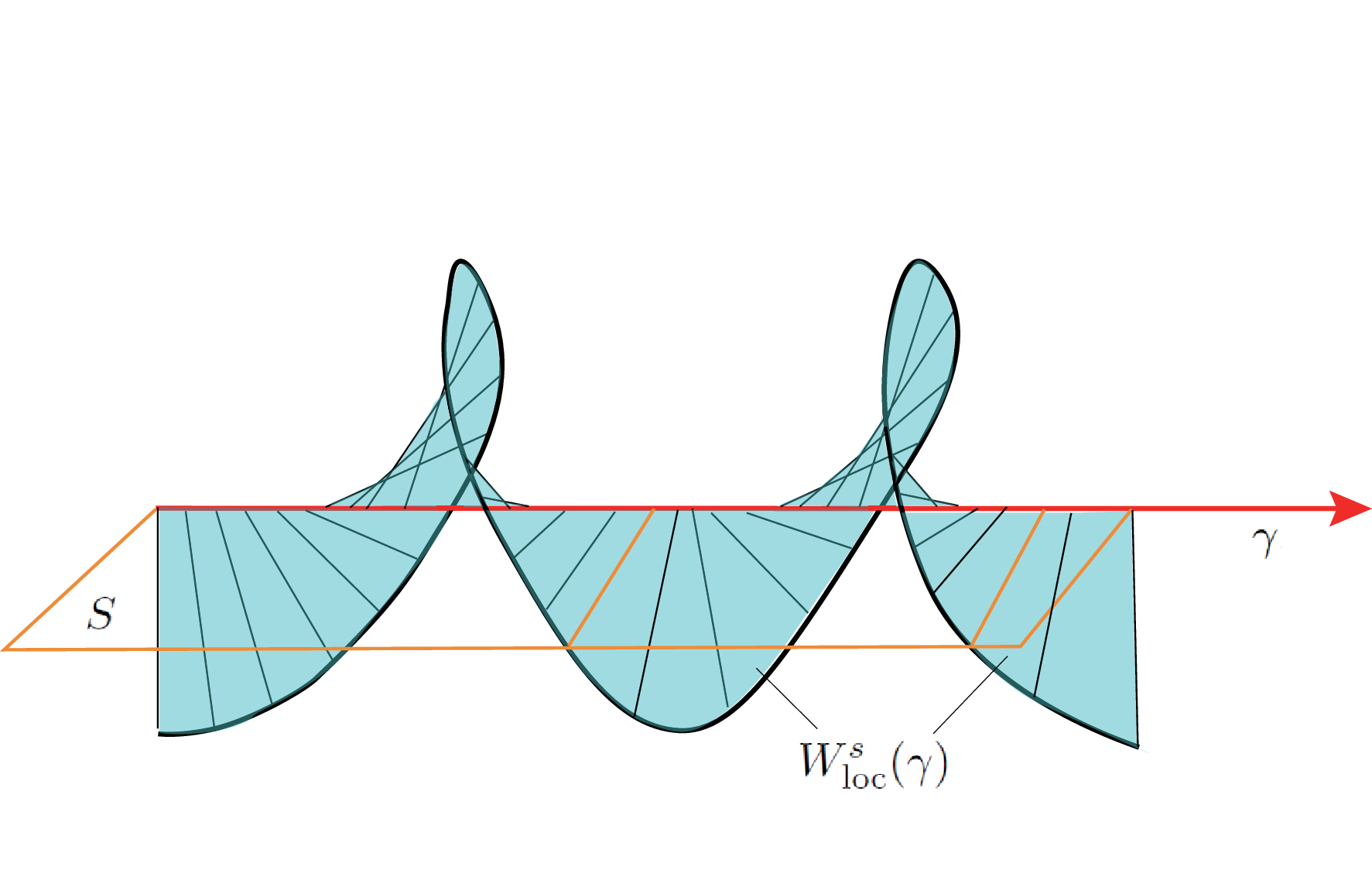}
    \caption{The case \(\operatorname{Twist}(W_{\text{loc}}^s(\gamma),S) = -2\) (assuming that the orientation corresponds to the standard orientation of $\RR^3$, and that the left of the figure is identified with the right)}
    \label{f.twistnumber2}
\end{figure}

We will use the following fact:

\begin{fact}
\label{f.twist-sections}
Let $S$ and $S'$ be two orientable surfaces containing a hyperbolic orbit  $\gamma$ with positive multipliers. Denote by $\lambda_S$ and $\lambda_{S'}$ the classes in $H_1(N_\gamma,\ZZ)$ of the dynamically oriented sections of $N_\gamma$ associated to $S$ and $S'$ respectively. Denote by $\mu$ the canonical meridian of $N_\gamma$. Then 
$$\lambda_{S'}=\lambda_S-\mathrm{Twist}_\gamma(S',S)\cdot \mu.$$
\end{fact}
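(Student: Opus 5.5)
The fact is pure homology on the torus $N_\gamma$, so I will work in $H_1(N_\gamma,\ZZ)\simeq\ZZ^2$ and use only the intersection pairing. First I fix a basis. Take $\lambda_S$ together with the canonical meridian $\mu$. Since $\lambda_S$ is the class of a dynamically oriented section, Observation~\ref{remark:intersection-number} gives $\mathrm{Int}(\lambda_S,\mu)=+1$. A pair of classes with intersection number $\pm1$ is a basis of $H_1$ of the torus, so $(\lambda_S,\mu)$ is a basis.

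Next I write $\lambda_{S'}=\alpha\lambda_S+\beta\mu$ and determine the coefficients.

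\emph{Computing $\alpha$.} Pair both sides with $\mu$. By Observation~\ref{remark:intersection-number}, $\mathrm{Int}(\lambda_{S'},\mu)=1$. Since $\mathrm{Int}(\mu,\mu)=0$, the right-hand side pairs to $\alpha$. Hence $\alpha=1$. Geometrically, every dynamically oriented section projects with degree $+1$ onto $\gamma$.

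\emph{Computing $\beta$.} Pair both sides with $\lambda_S$. By Definition~\ref{definition:twist-1},
$$\mathrm{Twist}_\gamma(S',S)=\mathrm{Int}(\lambda_{S'},\lambda_S)=\mathrm{Int}(\lambda_S,\lambda_S)+\beta\,\mathrm{Int}(\mu,\lambda_S).$$
Here $\mathrm{Int}(\lambda_S,\lambda_S)=0$, and by antisymmetry $\mathrm{Int}(\mu,\lambda_S)=-1$. Therefore $\beta=-\mathrm{Twist}_\gamma(S',S)$, which is exactly the claimed formula.

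The only point needing care is that the twist number is independent of which of the two opposite sections $\pm\sigma_S$, $\pm\sigma_{S'}$ is chosen. This holds because the two opposite sections are isotopic through sections (rotate by the $\SS^1$-action on the fibers), so they are homologous, as already noted before Definition~\ref{definition:twist-1}. Consequently $\lambda_S$ and $\lambda_{S'}$ are well defined.

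Orientation signs are the real obstacle: $\mathrm{Int}(\sigma,\mu)=+1$ versus $-1$, and the antisymmetry step. I would check them against Fact~\ref{fact:orientations} in coordinates. With $(\partial_\theta,\partial_\varphi)$ direct, a section is isotopic to $\varphi=n\theta$, whose class is $\lambda_0+n\mu$. Then $\mathrm{Int}(\lambda_0+n\mu,\lambda_0+m\mu)=m-n$, which reproduces the formula with the right sign.
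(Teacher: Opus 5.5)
Your proof is correct and follows essentially the same route as the paper: write $\lambda_{S'}$ in terms of $\lambda_S$ and $\mu$, then pair with $\lambda_S$ and use $\mathrm{Int}(\mu,\lambda_S)=-1$ from Observation~\ref{remark:intersection-number}. The only difference is that you derive the coefficient $1$ on $\lambda_S$ by pairing with $\mu$, whereas the paper takes it directly from the fact that both classes are represented by sections.
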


\begin{proof}
Since $\lambda_S$ and $\lambda_{S'}$ are the homological classes of two sections, there must exist an integer $j\in\ZZ$ so that $$\lambda_{S'}=\lambda_S+j\cdot\mu.$$ 
Taking the intersection with $\lambda_{S}$, we get 
$$\mathrm{Twist}_\gamma(S',S)=\mathrm{Int}(\sigma_{S'},\sigma_S)=\mathrm{Int}(\lambda_S+j\cdot\mu,\lambda_S)=j\cdot\mathrm{Int}(\mu,\lambda_S)=-j,$$
where the last equality follows from Observation~\ref{remark:intersection-number}. Hence, we get
$$\lambda_{S'}=\lambda_S-\mathrm{Twist}_\gamma(S',S)\cdot\mu.$$ 
as desired.
\end{proof}

\subsection{Dehn-Fried surgeries as Dehn surgeries}
\label{subsection:Dehn-Fried-versus-Dehn}

A very classical tool in $3$-manifold topology is the Dehn surgery along a closed simple curve.  

Let $M$ be a closed $3$-manifold and $\gamma\colon \SS^1\to M$ be simple closed curve, identified with its image, and $V\simeq \SS^1\times\DD$ be a tubular neighborhood of $\gamma$ in $M$. Let $M_V$ be the closed manifold with boundary obtained by removing the interior of $V$. The boundary of $M_\gamma$ is the torus $\partial M_V=\partial V$.  A \emph{(non-oriented) meridian} is any simple closed curve on $\partial M_V$, non zero homotopic on the torus $\partial M_V=\partial V$ and bounding a disk in the solid torus $V$. 

A Dehn-filling of $M_V$ is a manifold $M_{\gamma,\varphi}$ obtained from $M_V$ by gluing a solid torus $\SS^1\times\DD$ on the torus $\partial M_V$ through a diffeomorphism $\varphi\colon \SS^1\times \partial \DD\to \partial M_V$. A classical result from the $3$-dimensional topology is that $M_{\gamma,\varphi}$ only depends (up to a diffeomorphism equal to the identity map outside an arbitrarily small neighborhood of $\gamma$) on the free homotopy class (as a non-oriented curve) of $m:=\varphi(\partial \DD)$ in the torus $\partial M_V$.  In other words, $M_{\gamma,\varphi}$ only depends on the homotopy class of the new non-oriented meridian $m$. One says that $M_{\gamma,\varphi}$ is obtained from $M$ by a \emph{Dehn surgery along $\gamma$ with new meridian $m$}. 

Assume now that $M$ is oriented, $X$ is a transitive Anosov vector field on $M$ and $\gamma$ is a periodic orbit of $X$ with positive multipliers. The manifold $\widehat M$ obtained by blowing up $\gamma$ is diffeomorphic to $M_V$ by a diffeomorphism equal to the identity map out of an arbitrarily small neighborhood of $\gamma$ (the solid torus  $V$ can be chosen arbitrarily small). The transverse orientation of $X$ induced by the orientation of $M$ has allowed us to define a canonical oriented meridian $\mu$ (Definition~\ref{definition:canonical-meridian}), and the invariant (stable/unstable) manifolds of $\gamma$ defined canonical longitudes $\lambda^s$ (Definition~\ref{definition:canonical-longitude}) on $N_\gamma\simeq \partial\widehat M$. When we identify $\widehat M$ with $M_V$, the canonical oriented meridian $\mu$ is identify to the (homotopy class) of the curves on $\partial M_V=\partial V$ bounding discs in $V$, equipped with the orientation as boundaries of those discs, transversally oriented by $\gamma$.

The blow down of $\hat M$ along the new fibration $p'$ considered in Subsection~\ref{subsection:Dehn-Fried} is equivalent to performing a Dehn filling whose new meridians are the fibers of $p'$. More precisely,

\begin{fact}
\label{f.Dehn-Fried-vs-Dehn}
The $3$-manifold $M'$ (carrying a transitive Anosov flow) obtained from $M$ by performing an index $k$ Dehn-Fried surgery along $\gamma$ is homeomorphic to the manifold $M_{\gamma,\varphi}$ obtained by the Dehn surgery along $\gamma$ with new meridian $\pm(\mu+k\lambda^s)$. Furthermore, the homeomorphism can be chosen to coincide out of an arbitrarily small neighborhood of $\gamma$ with the canonical identification with $M$. 
\end{fact}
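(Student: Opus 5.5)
The plan is to compare the two constructions directly, since both begin with the manifold with boundary $\widehat M\simeq M_V$. The first step is to fix the diffeomorphism $h:\widehat M\to M_V$. It should be the identity outside a small neighbourhood of $\gamma$. Near $\gamma$ it is given in the coordinates $(\theta,\rho,\varphi)$ by $(\theta,\rho,\varphi)\mapsto \zeta^{-1}(\theta,(\epsilon+\rho')\cos\varphi,(\epsilon+\rho')\sin\varphi)$, where $\rho'$ is a reparametrisation of $\rho$ that is the identity for $\rho$ away from $0$. Under this map the boundary torus $N_\gamma=\partial\widehat M$ goes to $\partial V$, where $V=\{\rho\le\epsilon\}$. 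The fibres $\{\theta_0\}\times\SS^1$ of $N_\gamma\to\gamma$ go to curves bounding the discs $\{\theta_0\}\times\DD_\epsilon$ in $V$. By Fact~\ref{fact:orientations}, the canonical meridian $\mu$ is oriented by $\partial_\varphi$ with $(\partial_\theta,\partial_\varphi)$ direct, so $h_*\mu$ is the boundary of such a disc, oriented as a boundary when the disc is co-oriented by $\gamma$. This is the identification already described in the text.

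The second step is to recognise the blow-down $M'=\widehat M/p'$ as a Dehn filling. The fibres of $p':N_\gamma\to\gamma$ are circles in the class $\pm(\mu+k\lambda^s)$. The fibration $p'$ can be trivialised, giving a diffeomorphism $\psi:\SS^1\times\partial\DD\to N_\gamma$ that sends each $\{t\}\times\partial\DD$ to a fibre of $p'$. Using $\psi$, glue the solid torus $\SS^1\times\overline{\DD}$ to $\widehat M$. Collapsing each fibre of $p'$ to a point is then the same as collapsing each disc $\{t\}\times\overline{\DD}$ radially to its centre, which is a homeomorphism. More precisely, a collar $N_\gamma\times[0,\delta)$ in $\widehat M$ together with the quotient map $N_\gamma\to\gamma'$ gives a homeomorphism from the mapping cylinder neighbourhood of $\gamma'$ in $M'$ to $\SS^1\times\overline{\DD}$ glued along the collar. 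This yields a homeomorphism $M'\cong \widehat M\cup_\psi(\SS^1\times\overline\DD)$ that is the canonical identification away from the collar.

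The third step composes the two. We get $M'\cong M_V\cup_{h\circ\psi}(\SS^1\times\overline\DD)=M_{\gamma,h\circ\psi}$. By the classical fact quoted above, this manifold depends only on the unoriented class of the new meridian, which is $h_*(\pm(\mu+k\lambda^s))$. Since both homeomorphisms are the identity outside arbitrarily small neighbourhoods of $\gamma$ (shrink $\epsilon$ and $\delta$), the "furthermore" part follows.

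The only delicate point is the second step: checking that the quotient by $p'$ really is a manifold homeomorphic to the filled one. This needs a product structure on a collar in which $p'$ extends over the collar levels, and this is where I would be most careful. Orientation bookkeeping is not an issue here, because the meridian is only needed up to sign.
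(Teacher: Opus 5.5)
Your proposal is correct and follows the same route as the paper, which supports this Fact only with the remarks just before it: identify $\widehat M$ with $M_V$ by a diffeomorphism that is the identity away from $\gamma$ and sends $\mu$ to the disc-bounding meridian, then observe that collapsing the fibres of $p'$ is the Dehn filling whose meridians are those fibres. Your collar/mapping-cylinder argument just fills in that last step, and the point you flag as delicate is harmless, because $p'\times\mathrm{id}$ on a collar $N_\gamma\times[0,\delta)$ already gives the needed product structure; the one slip is in your first step, where the radial map should be a diffeomorphism $f\colon[0,1)\to[\epsilon,1)$ equal to the identity for $\rho\geq 2\epsilon$ (as written, $\epsilon+\rho'$ with $\rho'=\rho$ away from $0$ is not the identity there).
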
 

We will say that the index $k$ Dehn-Fried surgery along $\gamma$ and the Dehn surgery along $\gamma$ with new meridian $\pm(\mu+k\lambda^s)$ are \emph{topologically equivalent}.

\subsection{Dehn twist in fibers as Dehn surgeries}
\label{subsection:Dehn-fibered}

Assume now that $M$ is a closed oriented $3$-manifold fibering over the circle. More precisely, assume 
$$M:=(S\times [0,2\pi])_{/(\psi(x),0)\sim (x,2\pi)}, $$
where $S$ is an oriented closed surface and $\psi$ is an orientation preserving diffeomorphism whose isotopy class is called the monodromy of the fibration.  For $\nu\in \SS^1$, we denote by $S_\nu=S\times \{\nu\}$ the fiber over $\nu$.  

Let $\gamma\colon \SS^1\to S_\nu$ be a simple closed curve.  
Let $\tau_\gamma\colon S_\nu\to S_\nu$ be a diffeomorphism of $S_\nu$ which is the identity map out of a small neighborhood of $\gamma$ and  is the left Dehn-twist along $\gamma$ in a annular neighborhood $A_\gamma$ of $\gamma$ in $S_\mu$. In what follows, we will consider $\tau_\gamma$ as a diffeomorphism of $S$ (as well as of $S_\nu=S\times \{\nu\}$). 

For $k\in\ZZ$,  we denote by $M_{\tau_\gamma, k}$ the closed  $3$-manifold obtained by the following procedure, called \emph{Dehn-twist of order $k$ in $S_\mu$ along $\gamma$}: 
\begin{itemize}
    \item for some small $\varepsilon>0$, one cuts $M$ along $S_\nu$ so that a neighborhood  $S\times [\nu-\varepsilon,\nu+\varepsilon]$ of $S_\nu$ is disconnected and becomes  $$S\times [\nu-\varepsilon, \nu^-:=\nu] \coprod S\times [\nu^+:=\nu,\nu+\varepsilon];$$
    \item one glues back $S\times \{\nu^-\}$ with $S\times \{\nu^+\}$ by 
    $(\tau_\gamma^k(x),\nu^+)\simeq (x,\nu^-).$
\end{itemize}

Then $M_{\tau_\gamma,k}$ is a closed $3$-manifold, fibering over the circle, with fiber $S$ and monodromy $\psi\circ \tau_\gamma^k$. 

The manifold $M_{\tau_\gamma,k}$ can be seen as the result of a Dehn surgery on $M$ along $\gamma$ as follows. When one proceeds to the gluing of $S\times \{\nu^-\}$ with $S\times \{\nu^+\}$, one starts by gluing by $(S-A_\gamma)\times \{\nu^-\}$ with $(S-A_\gamma)\times \{\nu^+\}$ by the identity map.  One gets a topological manifold with boundary $M_{A_\gamma}$, whose boundary is a torus $T_\gamma$ which consists of two copies of the annulus $A_\gamma$ glued along their boundary by the identity map. This manifold $M_{A_\gamma}$ is homeomorphic to $M\setminus\mathrm{int}(V)$ where $V$ is a closed tubular neighborhood of $\gamma$. To obtain $M_{\tau_\gamma,k}$, it remains to glue $A_\gamma\times\{\nu^-\}$ with $A_\gamma\times \{\nu^+\}$ by the diffeomorphism $\tau_\gamma$. This is equivalent to a Dehn filling, where the new meridian is the union of a essential arc $\alpha\subset A_\gamma\times \{\nu^-\}$ and its image $\tau_\gamma(\alpha)\subset A_\gamma\times\{\nu^+\}$. For any orientation of $\gamma$, this new meridian is freely homotopic to $\pm(\mu+k\lambda_{S})$ where:
\begin{itemize}
\item $\mu$ is the old meridian of $T_\gamma$ oriented by $\gamma$ (which is the canonical oriented meridian if we identify $M_{A_\gamma}$ with the blow-up of $M$ along $\gamma$, see Remark~\ref{remark:orbit-vs-closed-curve});
\item $\lambda_{S}$ is the longitude of $T_\gamma$ which is the boundary components of the annuli $A_\gamma\subset S$ oriented as $\gamma$.
\end{itemize}
Note that changing the orientation of $\gamma$ changes both the orientation of the meridian $\mu$ and the orientation of the longiutude $\lambda_S$, hence does not change $\pm(\mu+k\lambda_{S})$. 

Summarizing, one gets that 

\begin{fact}
\label{f.Dehn-twist-vs-Dehn-surgery}
The manifold obtained by a Dehn surgery along the curve $\gamma$  whose new meridian is $\pm(\mu+k\lambda_{S})$ homeomorphic to the manifold obtained by a Dehn-twist of order $k$ in $S_\nu$ along $\gamma$ (and the homeomorphism can be chosen to coincide with the canonical identification of these manifolds with $M$ out of an arbitrarily small tubular neighborhood of $\gamma$). In particular, the manifold obtained by this Dehn surgery is still a fibered manifold, with new monodromy $\psi\circ \tau_\gamma^k$. 
\end{fact}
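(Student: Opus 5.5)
The plan is to compare the two manifolds directly inside an explicit solid-torus neighbourhood of $\gamma$. We show that both are obtained from the same manifold with torus boundary by gluing in a solid torus, and that the meridians of the glued solid tori agree up to sign and homotopy on the boundary torus. The classical fact recalled in Subsection~\ref{subsection:Dehn-Fried-versus-Dehn} then gives the homeomorphism: a Dehn filling depends, up to a homeomorphism supported near $\gamma$, only on the free homotopy class of the new non-oriented meridian.

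\emph{Step 1: the common complement.} Take the solid torus $V:=A_\gamma\times[\nu-\varepsilon,\nu+\varepsilon]\subset M$, which is a closed tubular neighbourhood of $\gamma$. The complement $M\setminus \mathrm{int}(V)$ is untouched by the cut-and-reglue procedure, because the regluing map $\tau_\gamma^k$ is the identity on $S-A_\gamma$. So it is canonically a submanifold of $M_{\tau_\gamma,k}$ with the same boundary torus $T=\partial V$; this is the manifold $M_{A_\gamma}$ of the discussion above. In $M_{\tau_\gamma,k}$ the complementary piece is
$$V':=A_\gamma\times[\nu-\varepsilon,\nu^-]\ \cup_{\tau_\gamma^k}\ A_\gamma\times[\nu^+,\nu+\varepsilon].$$
This is an annulus times an interval glued to another annulus times an interval along an annulus by a homeomorphism, hence a solid torus. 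Therefore $M_{\tau_\gamma,k}$ is a Dehn filling of $M\setminus\mathrm{int}(V)$.

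\emph{Step 2: identify the meridian of $V'$.} Let $\alpha\subset A_\gamma$ be an essential arc. In $V$ the meridian disc is $\alpha\times[\nu-\varepsilon,\nu+\varepsilon]$, and its boundary represents $\pm\mu$. In $V'$, the gluing identifies $(x,\nu^-)$ with $(\tau_\gamma^k(x),\nu^+)$. Hence the band $\alpha\times[\nu-\varepsilon,\nu^-]\cup\tau_\gamma^k(\alpha)\times[\nu^+,\nu+\varepsilon]$ is a properly embedded disc in $V'$. Its boundary on $T$ is made of:
\begin{itemize}
\item the arc $\alpha\times\{\nu-\varepsilon\}$;
\item the arc $\tau_\gamma^k(\alpha)\times\{\nu+\varepsilon\}$;
\item the two vertical segments over $\partial\alpha$, which are unchanged because $\tau_\gamma$ is the identity near $\partial A_\gamma$.
\end{itemize}
In $H_1(T,\ZZ)$ this curve therefore differs from $\mu$ exactly by the class of the closed curve $\tau_\gamma^k(\alpha)\cup\alpha^{-1}$ in $A_\gamma\times\{\nu+\varepsilon\}$. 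Since $\tau_\gamma$ is the left Dehn twist along $\gamma$, that class is $\pm k$ times the core of the annulus, i.e. $\pm k\lambda_S$. Thus the new meridian is $\pm(\mu\pm k\lambda_S)$.

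\emph{Step 3: fix the sign and conclude.} The only delicate point, and the step I expect to be the main obstacle, is to check that the relative sign is $+$, so that the new meridian is $\pm(\mu+k\lambda_S)$ and not $\pm(\mu-k\lambda_S)$. I would do this in the local model $A_\gamma=\SS^1\times[0,1]$ with coordinates $(\theta,s,t)$, where $\partial_\theta$ orients $\gamma$ and $(\partial_\theta,\partial_s,\partial_t)$ is direct. I would write the left twist as $(\theta,s)\mapsto(\theta+2\pi f(s),s)$ and orient $T$ as in Convention~\ref{convention:orientation-torus}. Then I would compute the intersection of the new meridian with $\lambda_S$ and compare with $\mathrm{Int}(\mu,\lambda_S)$, using Observation~\ref{remark:intersection-number}. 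This is a one-line but convention-sensitive computation. As noted in the text, reversing the orientation of $\gamma$ reverses both $\mu$ and $\lambda_S$, so the answer is independent of that choice.

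Once the meridians agree, uniqueness of Dehn fillings gives a homeomorphism between the Dehn-surgered manifold and $M_{\tau_\gamma,k}$ which is the identity on $M\setminus\mathrm{int}(V)$. Since $\varepsilon$ and $A_\gamma$ can be taken arbitrarily small, this homeomorphism agrees with the canonical identification with $M$ outside an arbitrarily small neighbourhood of $\gamma$. Finally, $M_{\tau_\gamma,k}$ fibers over the circle with monodromy $\psi\circ\tau_\gamma^k$ by construction, since we only modified the gluing of $S\times\{\nu^-\}$ to $S\times\{\nu^+\}$ by $\tau_\gamma^k$.
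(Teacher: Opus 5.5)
Your argument is the paper's. There too, the regluing leaves the complement $M_{A_\gamma}$ of a neighbourhood of $\gamma$ untouched and fills it with a solid torus whose meridian is $\alpha\cup\tau_\gamma^k(\alpha)$. Uniqueness of Dehn fillings then gives the homeomorphism, and the monodromy $\psi\circ\tau_\gamma^k$ is read off from the regluing. Your thick $V=A_\gamma\times[\nu-\varepsilon,\nu+\varepsilon]$ is only a cosmetic variant of the paper's torus made of two copies of $A_\gamma$. The paper simply asserts that this meridian is $\pm(\mu+k\lambda_S)$, so the sign you leave open is not worked out there either.

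To close Step 3 you need one input your local model omits: $M$ must carry the product orientation of $S\times[0,2\pi]$, meaning the orientation of $S$ followed by $\partial_t$, the direction in which the regluing applies $\tau_\gamma^k$. Fixing $(\partial_\theta,\partial_s,\partial_t)$ direct in $M$ says nothing about which way the left twist turns in the $(\theta,s)$-coordinates, and the sign genuinely depends on this. Reversing the orientation of $M$ (keeping that of $S$) replaces $\mu$ by $-\mu$ but leaves $\lambda_S$ and $\tau_\gamma$ unchanged, which turns the answer into $\pm(\mu-k\lambda_S)$.

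With the product orientation, $(\partial_\theta,\partial_s)$ is direct on $S$. So the left twist is $(\theta,s)\mapsto(\theta+2\pi f(s),s)$ with $f$ decreasing from $f(0)=1$ to $f(1)=0$. Running along $\partial V$ in the direction of $\mu$ means going counterclockwise in the $(s,t)$-plane. Hence the face $A_\gamma\times\{\nu+\varepsilon\}$ is traversed with $s$ decreasing. On that face the curve is $\theta=\theta_0+2\pi k f(s)$, so $\theta$ changes by $+2\pi k$, while $\theta$ is constant on the other three faces. The new meridian is therefore $\mu+k\lambda_S$, as claimed.
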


We will say that the Dehn surgery along $\gamma$ with new meridian $\pm(\mu+k\lambda_{S})$ and the Dehn-twist of order $k$ in $S_\nu$ along $\gamma$ are \emph{topologically equivalent}.

\begin{remarks}\label{r.classical} 
The equivalence between Dehn surgeries and Dehn twists is the classical argument used in the proof of Lickoricz-Wallace theorem, see \emph{e.g}~\cite[Chapter 9]{Rolfsen}.
\end{remarks}

We can  perform simultaneous Dehn-twists on a finite family of simple closed curves:

\begin{proposition}\label{p.multitwist-monodromy}
Let  $M=(S\times [0,2\pi])_{/(\psi(x),0)\sim (x,2\pi)}$ be a closed oriented fibered manifold over the circle $S^1$, with fiber $S$ monodromy $\psi$. Consider a finite sequence $0\leq \nu_1\leq \nu_2\leq \cdots \leq\nu_j<1$, a sequence of simple closed curves $\gamma_i,\dots,\gamma_j$  and a sequence of integers $k_1,\dots,k_j\in\ZZ$, so that $\gamma_i\subset S\times \{\nu_i\}$ for every $i$ and so that the $\gamma_i$'s lying in the same fiber are pairwise disjoint.

Consider the manifold $M_{\{\gamma_i,k_i\}}$ obtained by performing a Dehn-twist of order $k_i$ on $\gamma_i$ for every $i\in\{1,\dots,j\}$.  Then $M_{\{\gamma_i,k_i\}}$ is a fibered manifold with fiber $S$ and whose  monodromy is  
$$\psi\circ \tau_{\gamma_j}^{k_j}\circ\tau_{\gamma_{j-1}}^{k_{j-1}}\circ\cdots\circ \tau_{\gamma_1}^{k_1}.$$
\end{proposition}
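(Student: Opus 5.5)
The plan is to reduce Proposition~\ref{p.multitwist-monodromy} to the single-curve case, Fact~\ref{f.Dehn-twist-vs-Dehn-surgery}, either by induction on $j$ or by doing all the cutting and regluing at once. The surgeries are supported in pairwise disjoint regions (small tubular neighborhoods of the $\gamma_i$), so the order in which they are performed does not matter for the resulting manifold. The only real content is bookkeeping: how the regluing maps compose along the circle direction.

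First, curves lying in the same fiber. If $\gamma_i,\dots,\gamma_{i'}$ all lie in $S_\nu$ and are pairwise disjoint, we choose pairwise disjoint annular neighborhoods $A_{\gamma_i}$. We cut $M$ once along $S_\nu$ and reglue $S\times\{\nu^-\}$ to $S\times\{\nu^+\}$ by the single map $\tau_{\gamma_{i'}}^{k_{i'}}\circ\cdots\circ\tau_{\gamma_i}^{k_i}$. Since the twists have disjoint supports they commute, so this map equals the product in any order, in particular in the order required by the formula. Locally near each $\gamma_l$ this regluing coincides with the order-$k_l$ Dehn twist on $\gamma_l$ alone. Hence this one cut realizes all those surgeries simultaneously, by the same argument as in Fact~\ref{f.Dehn-twist-vs-Dehn-surgery}. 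It therefore suffices to treat distinct fiber levels $\nu_1<\dots<\nu_m$, each carrying one combined twist $T_l$.

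Second, composing the regluings around the circle. Write the new manifold as $\bigsqcup_l S\times[\nu_l,\nu_{l+1}]$, with $\nu_0=0$ and $\nu_{m+1}=2\pi$. Consecutive pieces are glued by $(T_l(x),\nu_l^+)\sim(x,\nu_l^-)$, and the ends by $(\psi(x),0)\sim(x,2\pi)$. Straightening each piece by the identity of $S$ yields a single product $S\times[0,2\pi]$. To do this we reparametrize: on $S\times[\nu_l,\nu_{l+1}]$ use the coordinate $(T_l\circ\cdots\circ T_1)^{-1}(x)$, or equivalently push all gluings to level $2\pi$. A point $(x,2\pi^-)$ is then identified, through the successive gluings, with the point of $S\times\{0\}$ obtained by applying $T_m$, then $T_{m-1}$, down to $T_1$, and finally $\psi$. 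The composite is therefore $\psi\circ T_m\circ\cdots\circ T_1$, which expands to $\psi\circ\tau_{\gamma_j}^{k_j}\circ\cdots\circ\tau_{\gamma_1}^{k_1}$.

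The main obstacle is getting the composition order and direction right, since that is exactly where a sign or order error would creep in. This needs care with the convention $(\tau^k(x),\nu^+)\sim(x,\nu^-)$ and with $(\psi(x),0)\sim(x,2\pi)$. The check will be done with $j=1$ first, where the single-curve case recovers $\psi\circ\tau_\gamma^k$ as in Fact~\ref{f.Dehn-twist-vs-Dehn-surgery}. The case $j=2$ with two levels then fixes the order of composition. The boundary case $\nu_1=0$ also needs handling: we either shift all $\nu_l$ slightly, which is an isotopy in $M$ not affecting the result, or absorb that twist next to $\psi$ consistently with the formula.
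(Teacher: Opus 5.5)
Your proposal is correct and is essentially the paper's argument: the paper also just tracks how the regluing maps compose around the circle, organizing it as an induction from $\gamma_j$ down to $\gamma_1$ (each surgery leaves the trivialization over $[0,\nu_{i+1}]$ untouched, so the next twist composes on the right of the current monodromy), whereas you straighten all slabs at once; your explicit grouping of same-level curves and your treatment of $\nu_1=0$ are somewhat more careful than the paper's. One small slip: your phrase ``applying $T_m$, then $T_{m-1}$, down to $T_1$, and finally $\psi$'' describes the order of application backwards, since in your straightened coordinates a point leaving level $0$ meets $T_1$ first, but the formula $\psi\circ T_m\circ\cdots\circ T_1$ that you actually write is the correct one.
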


\begin{remark}
The curves $\gamma_1,\dots,\gamma_j$ being pairwise disjoint the manifold $M_{\{\gamma_i,k_i\}}$ does not depend on the order in which the Dehn-twists surgeries are performed. 
\end{remark}

\begin{proof} 
The proof is an induction where we perform successively the Dehn-twists in the dicreasing order (first on $\gamma_j$, then on $\gamma_{j-1}$ and so on...). The surgery on $\gamma_j$ preserves the fibration and preserves the trivialisation of the fibration over the interval $[0,\nu_j)$, but the new monodromy is $\psi\circ \tau_{\gamma_j}^{k_j}.$ 

Assume that we prove that the Dehn twist on $\gamma_j,\dots,\gamma_{i+1}$ induces a fibration with monodromy $\psi\circ \tau_{\gamma_j}^{k_j}\circ\tau_{\gamma_{j-1}}^{k_{j-1}}\circ\cdots\circ \tau_{\gamma_{i+1}}^{k_{i+1}}$. The key point is that this sequence of Dehn twist did not affect the trivialisation of the fibration over $[0,\nu_{i+1}]$.  As $\nu_i\in[0,\nu_{i+1}]$ one gets that the monodromy associated to the Dehn-twist of order $k_i$ along $\gamma_i$ is the composition of the monodromy, before the Dehn twist, with the Dehn-twist of order $k_i$ along $\gamma_i$ that is 
$\left(\psi\circ \tau_{\gamma_j}^{k_j}\circ\tau_{\gamma_{j-1}}^{k_{j-1}}\circ\cdots\circ \tau_{\gamma_{i+1}}^{k_1}\right)\circ  \tau_{\gamma_i}^{k_i}$. This concludes the proof by induction. 
\end{proof}

\begin{remark}
As already noticed, the order of the surgeries does not affect the topology of the resulting manifold. We used a specific order in the preceding proof, only because it makes the computation of the monodromy much easier. 
\end{remark}

\subsection{Dehn-Fried surgeries preserving surface fibrations I: the horizontal non twisted case}
\label{subsection:Dehn-Fried-fibered-I}

As a straightforward consequence of Subsections~\ref{subsection:Dehn-Fried-versus-Dehn} and ~\ref{subsection:Dehn-fibered}   one gets: 

\begin{proposition}
\label{proposition:preserves-fibration-I}
Let  $M:=(S\times [0,2\pi])_{/(\psi(x),0)\sim (x,2\pi)}$
be the mapping torus of an orientation preserving diffeomorphism $\psi$ of a closed oriented surface~$S$. Suppose that $M$ carries a transitive Anosov vector field $X$, so that a  horizontal curve $\gamma\subset S_\nu= S\times\{\nu\}$, with $\nu\in [0,2\pi)$, is a periodic orbit of $X$ with positive multipliers. Suppose that the twist number of the local stable manifold of $\gamma$ with respect to $S_\nu$ is equal to zero. 

For any $k\in\ZZ$,  the index $k$ Dehn-fried surgery along the periodic orbit $\gamma$ is topologically equivalent to the Dehn-twist of order $k$ along $\gamma$ in the fiber $S_\nu$. As a consequence,  the manifold obtained from $M$ by performing a index $k$ Dehn-Fried surgery  along $\gamma$ (hence carrying a transitive Anosov flow) is a fibered manifold  with fiber $S$ and  monodromy $\psi\circ \tau_\gamma^k$, where $\tau_\gamma$ is the Dehn twist in $S$ along $\gamma$. 
\end{proposition}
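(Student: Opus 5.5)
The plan is to chain together Fact~\ref{f.Dehn-Fried-vs-Dehn}, Fact~\ref{f.twist-sections} and Fact~\ref{f.Dehn-twist-vs-Dehn-surgery}. The only genuine point is to identify, in $H_1(N_\gamma,\ZZ)$, the canonical longitude $\lambda^s$ given by the dynamics with the longitude $\lambda_{S}$ given by the fiber $S_\nu$.

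\emph{Step 1: from Dehn-Fried to Dehn surgery.} By Fact~\ref{f.Dehn-Fried-vs-Dehn}, the index $k$ Dehn-Fried surgery along $\gamma$ is topologically equivalent to the Dehn surgery along $\gamma$ with new meridian $\pm(\mu+k\lambda^s)$. Here $\mu$ is the canonical meridian of $N_\gamma\simeq\partial\widehat M$, and the homeomorphism agrees with the canonical identification with $M$ outside a small neighbourhood of $\gamma$.

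\emph{Step 2: comparison of longitudes.} Let $\lambda_{S_\nu}$ denote the class of a dynamically oriented section of $N_\gamma$ given by the tangent planes of $S_\nu$ along $\gamma$. The canonical longitude $\lambda^s$ is the class of the dynamically oriented section given by the weak stable plane, i.e.\ by the tangent planes of $W^s_{\mathrm{loc}}(\gamma)$. Fact~\ref{f.twist-sections}, applied to $S'=W^s_{\mathrm{loc}}(\gamma)$ and $S=S_\nu$, gives
$$\lambda^s=\lambda_{S_\nu}-\mathrm{Twist}_\gamma\bigl(W^s_{\mathrm{loc}}(\gamma),S_\nu\bigr)\cdot\mu=\lambda_{S_\nu},$$
using the hypothesis that the twist number vanishes. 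Hence the new meridian is $\pm(\mu+k\lambda_{S_\nu})$.

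\emph{Step 3: matching with Subsection~\ref{subsection:Dehn-fibered}.} We must check that $\mu$ and $\lambda_{S_\nu}$ coincide, under the identification of $\widehat M$ with $M_{A_\gamma}$, with the meridian and longitude used there. The meridian is the boundary of a meridian disc oriented by $\gamma$; this is exactly the canonical oriented meridian, as already noted in Subsection~\ref{subsection:Dehn-Fried-versus-Dehn} and Remark~\ref{remark:orbit-vs-closed-curve}. The longitude is a boundary component of the annulus $A_\gamma\subset S_\nu$ oriented like $\gamma$. Pushing it off $\gamma$ inside $S_\nu$ and letting it shrink to $\gamma$, this curve becomes, in the blow-up, one of the two sections $\pm\sigma_{S_\nu}$ with its dynamical orientation. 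Since $\sigma_{S_\nu}$ and $-\sigma_{S_\nu}$ are homologous, it represents $\lambda_{S_\nu}$.

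This compatibility of orientation and sign conventions is the only step requiring care, and I expect it to be the main, if mild, obstacle. One must make sure that the diffeomorphism $\widehat M\simeq M_{A_\gamma}$ carries sections of $N_\gamma$ to parallel copies of $\gamma$ on $T_\gamma$ with matching orientations. The sign ambiguity is harmless, because a Dehn filling depends only on the unoriented meridian class $\pm(\mu+k\lambda)$, and reversing $\gamma$ flips both $\mu$ and $\lambda$.

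\emph{Conclusion.} Fact~\ref{f.Dehn-twist-vs-Dehn-surgery} then identifies this Dehn surgery with the Dehn twist of order $k$ along $\gamma$ in $S_\nu$. Composing the two topological equivalences (both equal to the canonical identification away from $\gamma$), the surgered manifold is fibered with fiber $S$ and monodromy $\psi\circ\tau_\gamma^k$. By Definition~\ref{definition:Dehn-Fried} and Shannon's theorem, it carries a transitive Anosov flow.
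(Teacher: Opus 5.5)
Your proposal is correct and follows essentially the paper's own proof. Both reduce the Dehn--Fried surgery and the Dehn twist to Dehn surgeries via Facts~\ref{f.Dehn-Fried-vs-Dehn} and~\ref{f.Dehn-twist-vs-Dehn-surgery}, and both observe that a vanishing twist number forces $\lambda^s=\lambda_{S_\nu}$, so the new meridians $\pm(\mu+k\lambda^s)$ and $\pm(\mu+k\lambda_{S})$ coincide; the only differences are that you derive this longitude equality through Fact~\ref{f.twist-sections} where the paper says it holds by definition, and that you spell out the orientation bookkeeping the paper leaves implicit.
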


\begin{proof}
In Subsections~\ref{subsection:Dehn-Fried-versus-Dehn} and ~\ref{subsection:Dehn-fibered}, we have seen that the index $k$ Dehn-Fried surgery along $\gamma$ and the Dehn-twist of order $k$ along $\gamma$ in $S_\nu$ both are topologically equivalent to some Dehn surgeries. We are left to check that the new meridians are the same for the two surgeries. In the case of the index $k$ Dehn-Fried surgery, the new meridian is $\pm(\mu+k\lambda^s)$ where $\mu$ and $\lambda^s$ are respectively the canonical oriented meridian and the canonical longitude (given by the stable manifold of $\gamma$, see Definition~\ref{definition:canonical-longitude}).  In the case of the Dehn-twist of order $k$ in $S_\nu$, the new meridian is $\pm(\mu+k\lambda_S)$ where $\lambda_S$ is the longitude defined by the surface $S_\nu$ oriented as $\gamma$. Finally, saying that the twist number $\mathrm{Twist}(W^s_{loc}(\gamma),S_\nu)$ is equal to zero is, by definition, the same as saying that the (free homotopy classes of the) longitude $\lambda^s$ and $\lambda_S$ coincide. Hence, the index $k$ Dehn-Fried surgery along $\gamma$ and the Dehn-twist of order $k$ along $\gamma$ in $S_\nu$ are topologically equivalent to the same Dehn surgery.
\end{proof}

\subsection{Dehn-Fried surgeries preserving surface fibrations II: the horizontal twisted case. }
\label{subsection:Dehn-Fried-fibered-II}

Now, we consider the case where the twist number of the local stable manifold does not vanish.

\begin{proposition}
\label{proposition:preserves-fibration-II}
Let  $M:=(S\times [0,2\pi])_{/(\psi(x),0)\sim (x,2\pi)}$
be the mapping torus of an orientation preserving diffeomorphism $\psi$ of a closed oriented surface~$S$. Suppose that $M$ carries a transitive Anosov vector field $X$, so that a  horizontal curve $\gamma\subset S_\nu= S\times\{\nu\})$, with $\nu\in S^1$, is a periodic orbit of $X$ with positive multipliers. Suppose that the twist number $\mathrm{Twist}_\gamma(W^s_{\mathrm{loc}}(\gamma,X), S_\nu)$ of the local stable manifold of $\gamma$ with respect to $S_\nu$  is not zero. 

Then the index $k$ Dehn-Fried surgery along $\gamma$ is topologically equivalent to Dehn twist of order $\ell$ along $\gamma$ in $S_\nu$ in the following cases  
\begin{itemize}
    \item $\mathrm{Twist}_\gamma(W^s_{\mathrm{loc}}(\gamma,X), S_\nu)=1$ and  $k=2=-\ell$
    \item $\mathrm{Twist}_\gamma(W^s_{\mathrm{loc}}(\gamma,X), S_\nu)=-1$ and  $k=-2=-\ell$
    \item $\mathrm{Twist}_\gamma(W^s_{\mathrm{loc}}(\gamma,X), S_\nu)=2$ and  $k=1=-\ell$
    \item $\mathrm{Twist}_\gamma(W^s_{\mathrm{loc}}(\gamma,X), S_\nu)=-2$ and $k=-1=-\ell$ 
\end{itemize}
In all these cases, the monodromy of the new fibration is $\psi\circ \tau_{\gamma}^{-k}$. 
\end{proposition}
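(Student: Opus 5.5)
The plan is to reduce everything to a comparison of new meridians on the torus $N_\gamma\simeq\partial\widehat M$, exactly as in the proof of Proposition~\ref{proposition:preserves-fibration-I}. By Fact~\ref{f.Dehn-Fried-vs-Dehn}, the index $k$ Dehn-Fried surgery along $\gamma$ is topologically equivalent to the Dehn surgery along $\gamma$ with new meridian $\pm(\mu+k\lambda^s)$. By Fact~\ref{f.Dehn-twist-vs-Dehn-surgery}, the Dehn twist of order $\ell$ along $\gamma$ in $S_\nu$ is topologically equivalent to the Dehn surgery with new meridian $\pm(\mu+\ell\lambda_S)$. Here $\lambda_S$ is the longitude cut out by $S_\nu$, oriented like $\gamma$, which is the dynamically oriented section associated to $S_\nu$. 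A Dehn surgery only depends on the unoriented free homotopy class of the new meridian, i.e. on its homology class up to sign. So it suffices to show that $\mu+k\lambda^s=\pm(\mu+\ell\lambda_S)$ in $H_1(N_\gamma,\ZZ)$ with $\ell=-k$.

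The key step is to express $\lambda^s$ in terms of $\lambda_S$ and $\mu$. Put $t:=\mathrm{Twist}_\gamma(W^s_{\mathrm{loc}}(\gamma,X),S_\nu)=\mathrm{Int}(\lambda^s,\sigma_{S_\nu})$. The proof of Fact~\ref{f.twist-sections} only uses the facts that $\lambda_{S'}$ and $\lambda_S$ are classes of dynamically oriented sections, together with Observation~\ref{remark:intersection-number}. Since $\lambda^s$ is itself such a section (the one given by the tangent planes of the local stable manifold, an annulus containing $\gamma$ because the multipliers are positive), the same argument applies with $S'=W^s_{\mathrm{loc}}(\gamma)$ and gives
$$\lambda^s=\lambda_S-t\,\mu.$$
The only point to check carefully here is this legitimacy: the local stable manifold is an orientable surface containing $\gamma$, so Definition~\ref{definition:twist-1} and Fact~\ref{f.twist-sections} apply to it.

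Substituting, we get $\mu+k\lambda^s=(1-kt)\mu+k\lambda_S$. In each of the four listed cases ($t=1,k=2$; $t=-1,k=-2$; $t=2,k=1$; $t=-2,k=-1$) we have $kt=2$. Hence
$$\mu+k\lambda^s=-\mu+k\lambda_S=-(\mu-k\lambda_S)=-(\mu+\ell\lambda_S)\quad\text{with } \ell=-k.$$
So the two surgeries have the same unoriented new meridian and are topologically equivalent. Fact~\ref{f.Dehn-twist-vs-Dehn-surgery} then says the resulting manifold fibers with fiber $S$ and monodromy $\psi\circ\tau_\gamma^{\ell}=\psi\circ\tau_\gamma^{-k}$.

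I expect the main obstacle to be purely a matter of signs and orientation conventions. One must check that $\lambda_S$ in Subsection~\ref{subsection:Dehn-fibered} (the boundary of the annulus $A_\gamma$ oriented by $\gamma$) is the same class as the dynamically oriented section $\sigma_{S_\nu}$. One must also check that the meridian $\mu$ there coincides with the canonical meridian, as noted in that subsection. Once these identifications are confirmed, the computation above is immediate.
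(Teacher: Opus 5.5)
Your proposal is correct and follows the paper's argument: you compare the unoriented new meridians $\pm(\mu+k\lambda^s)$ and $\pm(\mu+\ell\lambda_S)$, use Fact~\ref{f.twist-sections} to write $\lambda_S=\lambda^s+t\mu$, and everything reduces to $kt=2$. The only difference is cosmetic: the paper argues by equivalence (it rules out the $+$ sign because $\mathrm{Int}(\lambda^s,\lambda_S)\neq 0$, then reads off $\ell=-k$ from intersections with $\mu$), whereas you substitute directly and check sufficiency, which is all the statement requires.
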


\begin{proof} 
According to Sections~\ref{subsection:Dehn-Fried-versus-Dehn} and ~\ref{subsection:Dehn-fibered} one just needs to check that the new meridians, for the Dehn-Fried surgery and for the Dehn-twist are the same. Remember that these meridians are not oriented.  These meridians are equal
 if and only if
$$\mu+k\lambda^s=\pm(\mu+\ell\lambda_{S})\in H_1(\partial V, \ZZ),$$
where $V$ is a tubular neighborhood of $\gamma$. 
Since the twist number $\mathrm{Twist}_\gamma(W^s_{\mathrm{loc}}(\gamma,X), S_\nu))$ is not zero, the longitudes $\lambda^s$ and $\lambda_{S}$ have a non-zero intersection number.  One deduces that the sign in the equality above cannot be $+$.  Thus the meridian coincide if and only if 
$$2\mu= -k\lambda^s-\ell\lambda_{S} \in H_1(\partial V, \ZZ).$$
Both $\lambda^s$ and $\lambda_{S}$ have intersection number equal to $1$ with $\mu$.  Thus the equation is equivalent to $\ell=-k$ and 
$$2\mu= -k(\lambda^s-\lambda_{S}).$$
Now $\lambda_{S}=\lambda^s+\left(\mathrm{Twist}_\gamma(W^s_{\mathrm{loc}}(\gamma,X), S_\nu)\right)\cdot \mu$ (see Fact~\ref{f.twist-sections}). 
Thus the equation is equivalent to $k=-\ell$ and 
$$2\mu=k \left(\mathrm{Twist}_\gamma(W^s_{\mathrm{loc}}(\gamma,X), S_\nu)\right)\cdot \mu.$$
One deduces the four possible announced cases. The fact that $k=-\ell$ implies that the new monodromy is  $\psi\circ \tau_{\gamma}^{-k}$, ending the proof.
\end{proof}

\subsection{ Sequence of Dehn-Fried surgeries preserving surface fibrations}
\label{subsection:multi-Dehn-Fried}

Summarizing the results in Subsection~\ref{subsection:Dehn-fibered}, ~\ref{subsection:Dehn-Fried-fibered-I} and~\ref{subsection:Dehn-Fried-fibered-II}, one gets the following 

\begin{proposition}\label{p.multi-Dehn-Fried}  
Let $M:=(S\times [0,2\pi])_{/(\psi(x),0)\sim (x,2\pi)}$ be a closed oriented $3$-manifold fibered over $\SS^1$ and with fiber $S$ and monodromy $\psi$. Assume that $M$ carries a transitive Anosov vector-field $X$ so that there are $0\leq \nu_1\leq\nu_2\dots\leq \nu_j<1$ and periodic orbits $\gamma_1,\dots,\gamma_j$ with positive multipliers so that $\gamma_i$ is contained in the fiber $S_{\nu_i}=S\times \{\nu_i\}$ and the $\gamma_i$'s contained in the same fiber are pairwise disjoint. Assume moreover that   the twist number of the local stable manifold of $\gamma_i$ with respect to $S\times \{\nu_i\}$  either vanishes or belongs to $\{-1,1, 2,-2 \}$. 
Let $k_1\dots k_j$ be a sequence of integers, with the condition that $k_i$ is any integer if $\mathrm{Twist}_{\gamma_i}(W^s_{\mathrm{loc}}(\gamma_i,X), S_{\nu_i})=0$ and  
$$
k_i=2,-2,1,-1 \mbox{\; if\;  } \mathrm{Twist}_{\gamma_i}(W^s_{\mathrm{loc}}(\gamma_i,X), S_{\nu_i})=1,-1,2,-2,  \mbox{  respectively}.
$$
Let $\ell_i=k_i$ if $\mathrm{Twist}_{\gamma_i}(W^s_{\mathrm{loc}}(\gamma_i,X), S_{\nu_i})$ vanishes and $\ell_i=-k_i$ otherwise. 

Then, performing the index $k_i$ Dehn-Fried surgeries along $\gamma_i$ for every $i$ is topologically equivalent to performing the Dehn twist of order $\ell_i$ along the curves $\gamma_i$ in the surfaces $S\times \{\nu_i\}$ for $i=1,\dots,j$. The resulting manifold carries a transitive Anosov flow and fibers of $\SS^1$ with fiber $S$ and monodromy $\psi\circ \tau_{\gamma_j}^{\ell_j}\circ\cdots\circ \tau_{\gamma_1}^{\ell_1} $. 
\end{proposition}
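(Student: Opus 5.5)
The plan is to reduce Proposition~\ref{p.multi-Dehn-Fried} to the single-orbit statements, Propositions~\ref{proposition:preserves-fibration-I} and~\ref{proposition:preserves-fibration-II}, together with the multi-twist monodromy computation of Proposition~\ref{p.multitwist-monodromy}. The key point is that all the identifications are local, i.e.\ supported in small tubular neighbourhoods of the orbits.

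\emph{Step 1: choose disjoint neighbourhoods.} I will first choose pairwise disjoint small tubular neighbourhoods $V_1,\dots,V_j$ of the orbits $\gamma_1,\dots,\gamma_j$. This is possible because the $\gamma_i$ are distinct periodic orbits of $X$, hence pairwise disjoint, including those in different fibers. For each $i$, I also choose an annulus $A_{\gamma_i}\subset S_{\nu_i}$ around $\gamma_i$ contained in $V_i$.

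\emph{Step 2: identify the meridians.} For each $i$, the index $k_i$ Dehn-Fried surgery along $\gamma_i$ is, by Fact~\ref{f.Dehn-Fried-vs-Dehn}, a Dehn surgery along $\gamma_i$ with new meridian $\pm(\mu_i+k_i\lambda^s_i)$, realized by a homeomorphism equal to the canonical identification outside $V_i$. By Fact~\ref{f.Dehn-twist-vs-Dehn-surgery}, the Dehn twist of order $\ell_i$ in $S_{\nu_i}$ along $\gamma_i$ is a Dehn surgery with new meridian $\pm(\mu_i+\ell_i\lambda_{S,i})$, again supported in $V_i$. The meridian computation in the proofs of Propositions~\ref{proposition:preserves-fibration-I} and~\ref{proposition:preserves-fibration-II} only involves the torus $\partial V_i$ and the two longitudes there. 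It therefore shows that these two meridians coincide under the stated hypotheses:
\begin{itemize}
\item if the twist number is $0$, take $\ell_i=k_i$ with $k_i$ arbitrary;
\item if the twist number is $1,-1,2,-2$, take $k_i=2,-2,1,-1$ respectively, with $\ell_i=-k_i$.
\end{itemize}

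\emph{Step 3: assemble.} A Dehn surgery depends only on the unoriented meridian in $\partial V_i$, and the supports $V_i$ are disjoint. Hence the surgeries commute, and the simultaneous surgery on all $\gamma_i$ can be identified piece by piece. This gives a homeomorphism from the Dehn-Fried surgered manifold to the manifold $M_{\{\gamma_i,\ell_i\}}$ obtained by the multi-twists. The fibration and monodromy of $M_{\{\gamma_i,\ell_i\}}$ are given by Proposition~\ref{p.multitwist-monodromy}, yielding the monodromy $\psi\circ\tau_{\gamma_j}^{\ell_j}\circ\cdots\circ\tau_{\gamma_1}^{\ell_1}$.

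\emph{Step 4: the Anosov flow.} The Anosov property comes from iterating Shannon's theorem. Each Dehn-Fried surgery on a transitive Anosov flow produces a flow orbit equivalent to a transitive Anosov flow. The remaining orbits $\gamma_{i'}$ persist as periodic orbits with positive multipliers, since the surgery is the identity away from $\gamma_i$. Their local stable manifolds, and hence their canonical longitudes $\lambda^s_{i'}$ on $\partial V_{i'}$, are unchanged. So the next surgery is again defined with the same data.

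\emph{Main obstacle.} The delicate point is justifying that performing the surgeries successively does not alter the twist numbers or the horizontality of the remaining orbits. In Proposition~\ref{p.multitwist-monodromy} the fibration is modified, and fibers $S_{\nu_{i'}}$ could a priori be affected near $\gamma_i$. I would handle this with the ordering used in the proof of Proposition~\ref{p.multitwist-monodromy}: perform the twists in decreasing order of $\nu_i$, which keeps the trivialisation over $[0,\nu_{i})$ intact. Alternatively, I would simply note that all relevant data (the fiber germ near $\gamma_{i'}$ and $W^s_{\mathrm{loc}}(\gamma_{i'})$) live in $V_{i'}$, which is disjoint from the other surgery regions.
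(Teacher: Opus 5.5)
Your proposal is correct and is essentially the paper's argument. The paper gives no separate proof: it states this proposition as a direct summary of Propositions~\ref{proposition:preserves-fibration-I}, \ref{proposition:preserves-fibration-II} and~\ref{p.multitwist-monodromy}, using (see Remark~\ref{r.small-neighborhoods}) that the identifications of Facts~\ref{f.Dehn-Fried-vs-Dehn} and~\ref{f.Dehn-twist-vs-Dehn-surgery} are supported in arbitrarily small, hence pairwise disjoint, neighbourhoods of the $\gamma_i$. Of your two ways around the ``main obstacle'', the second is the right one: all twist and horizontality data are computed once in the original $M$ on the disjoint tori $\partial V_i$, so no bookkeeping after intermediate surgeries is needed.
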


\begin{remark}\label{r.small-neighborhoods} 
As already said in Facts~\ref{f.Dehn-Fried-vs-Dehn} and~\ref{f.Dehn-twist-vs-Dehn-surgery}, both Dehn-Fried surgeries and Dehn twists can be performed in arbitrarily small neighborhoods of the orbits $\gamma_i$ so that the homeomorphism between the  resulting manifolds coincides with the canonical identification out of these neighborhoods. 
\end{remark}

\begin{remark} \label{r.pieces-of-monodromy}
In this paper, we will apply Proposition~\ref{p.multi-Dehn-Fried} to a slightly different presentation of the fibered manifold $M$, which carries a transitive Anosov flow $X$. Instead of considering  $M$ as $(S\times [0,2\pi])_{/(\psi(x),0)\sim (x,2\pi)}$ we consider 
$$
M:=\left(\bigsqcup_{s=1}^{l} S\times [2(s-1)\pi,2s\pi]\right)_{{\Huge{/}}\begin{array}[t]{l}
(x,2s\pi^-)\sim (\psi_s(x),2s\pi^+),\; s=1\dots,l-1\\
(x,2l\pi)\sim (\psi_l(x),0) 
\end{array}}
$$
This means that we consider $M$ as a fibration over the circle $\RR/2l\pi$, with fiber $S$ and monodromy  $\psi=\psi_l\circ\psi_{l-1}\circ\cdots\circ\psi_1$. 

We assume that there are 
$$
0\leq \nu_1^1\leq\dots\leq \nu_{j_1}^1<2\pi\leq  \nu_1^2\leq\dots\leq \nu_{j_2}^2<4\pi\leq\dots 
\dots < 2(l-1)\pi\leq \nu_1^l\leq \dots\leq \nu_{j_l}^l< 2s\pi
$$
and that $X$ has periodic orbits $\gamma_1^1,\dots,\gamma_{j_1}^1,\gamma_1^2,\dots,\gamma_{j_2}^2,\dots,\gamma_1^l,\dots,\gamma_{j_l}^l$ with positive multipliers so that $\gamma_i^s$ is contained in the fiber $S_{\nu_i^s}$, and so that the orbits contained in the same fiber are pairwise disjoint. We assume that the twists number $\mathrm{Twist}_{\gamma_i^s}(W^s_{\mathrm{loc}}(\gamma_i^s,X), S_{\nu_i^s})$ either vanishes or belongs to $\{-2,-1,1,2\}$. We consider some integers $k_i^s$ , with the condition that $k_i^s=2,-2,1,-1$ if $\mathrm{Twist}_{\gamma_i^s}(W^s_{\mathrm{loc}}(\gamma_i^s,X), S_{\nu_i^s})=1,-1,2,-2$, respectively.
One denotes $\ell_i^s=k_i^s$ if $\mathrm{Twist}_{\gamma_i^s}(W^s_{\mathrm{loc}}(\gamma_i^s,X), S_{\nu_i^s})$ vanishes and $\ell_i^s=-k_i^s$ otherwise. 

Thus Proposition~\ref{p.multi-Dehn-Fried} applies in the same way with this presentation. Performing index $k_i^s$ Dehn-Fried surgeries along the $\gamma_i^s$ for every $i,s$ leads to a fibered manifold carrying an Anosov flow whose monodromy, after surgeries, is 
$$\psi_l\circ \left(\tau_{\gamma_{j_l}^l}^{\ell_{j_l}^l}\circ \cdots\circ \tau_{\gamma_{1}^l}^{\ell_{1}^l}\right)\circ\psi_{l-1}\circ\cdots\circ \psi_{2}\circ \left(\tau_{\gamma_{j_2}^2}^{\ell_{j_2}^2}\circ \cdots\circ \tau_{\gamma_{1}^2}^{\ell_{1}^2}\right)\circ\psi_1\circ \left(\tau_{\gamma_{j_1}^1}^{\ell_{j_1}^1}\circ \cdots\circ \tau_{\gamma_{1}^1}^{\ell_{1}^1}\right).$$
\end{remark}

\section{Construction of an Anosov flow with many horizontal periodic orbits: the genus two case}
\label{s.g-2-fiber}
The main step of the proof of Theorem~\ref{t.generators} consists of constructing, for every $g \geq 2$, a toroidal fibered closed 3-manifold with genus $g$ fibers carrying an Anosov flow with ``many" horizontal periodic orbits. In this section, we perform the construction in the genus $g=2$ case. To this end, we adapt the notation of the introduction to this setting. 

Let $S$ be the closed, orientable, connected genus~\(2\) surface, and consider a system of essential oriented simple closed curves \(a_\ell, b_\ell, c, a_r, b_r, d\) diffeomorphic to those drawn in Figure~\ref{f.S}. The orientation of $S$ is taken so that $\mathrm{int}(a_\ell,b_\ell)=-1$.

\begin{figure}[htb]
    \centering
    \includegraphics[scale=0.3]{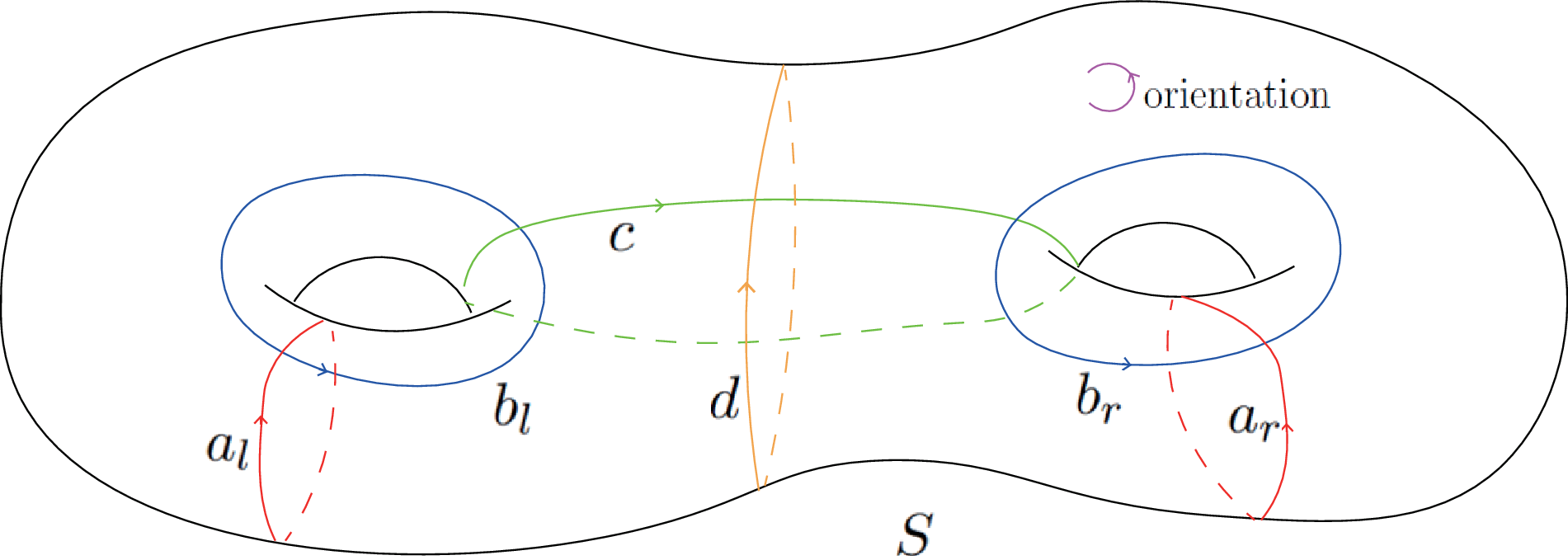}
    \caption{The genus two surface \(S\) and the oriented curves $a_\ell,b_\ell,a_r,b_r,c,d$ (the subscripts $\ell$ and $r$ stand for ``left" and ``right" with respect to $d$).}
    \label{f.S}
\end{figure}

Recall that the twist number of the local stable manifold of a periodic orbit contained in a fiber of a fibration was defined in the previous section (Definition~\ref{definition:twist-1}). The main result of this section is the following proposition. 

\begin{theorem}\label{p.perorb-lk}
Let \(\tau_d^{-2}\) be the square of the left Dehn twist along the curve \(d\). Consider the mapping torus
\[
M = S \times [0,2\pi]_{/ (x,2\pi) \sim (\tau_d^{-2}(x),0)}
\]
and denote by $\mathscr{p}:M\to \SS^1$ the fibration of $M$ induced by the projection of $S \times [0,2\pi]$ to the second coordinate. There exists a transitive Anosov flow \(Y^t\) on $M$ with orientable stable and unstable foliations such that:
\begin{enumerate}
    \item the (projection in $M$ of the) curves \(a_\ell \times \{0\}\), \(a_r \times \{0\}\), \(b_\ell \times \{\frac{3\pi}{2}\}\), \(b_r \times \{\frac{3\pi}{2}\}\) and \(c \times \{0\}\) are periodic orbits with positive multipliers of \(Y^t\), 
    \item the local stable manifolds of the orbits \(a_\ell \times \{0\}\), \(a_r \times \{0\}\), \(b_\ell \times \{\frac{3\pi}{2}\}\), and \(b_r \times \{\frac{3\pi}{2}\}\) are horizontal, \emph{i.e.}, the twist number of the local stable manifolds of these orbits with respect to the fibration $\mathscr{p}$ is equal to zero,
    \item the twist number of the local stable manifold of the orbit \(c \times \{0\}\) with respect to the surface $\mathscr{p}^{-1}(\{0\})$ is equal to \(1\).
\end{enumerate}
\end{theorem}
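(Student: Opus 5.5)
The plan follows the strategy announced in the introduction. Start from a hyperbolic metric on $S$ chosen so that $d$ is a closed geodesic separating $S$ into two once-punctured tori $S_\ell$ and $S_r$, and so that $a_\ell,b_\ell,a_r,b_r,c$ are realized by closed geodesics. Let $X^t$ be the geodesic flow on $T^1S$. It is a transitive Anosov flow with orientable weak foliations. The lifts $\vec d$ and $\overleftarrow d$ of $d$, with its two orientations, are periodic orbits with positive multipliers.

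The first step is to perform Dehn--Fried surgeries on $\vec d$ and $\overleftarrow d$ with suitable indices (presumably $\pm1$, chosen so that the combined effect on the Dehn filling realizes a $\tau_d^{-2}$ gluing). By Shannon's theorem this yields a transitive Anosov flow $Y^t$ on the surgered manifold $M'$. The orientability of the stable and unstable foliations is preserved, since we surger along orbits with positive multipliers and with even total twisting. The periodic orbits that are lifts of $a_\ell,b_\ell,a_r,b_r,c$ are untouched, so they survive as periodic orbits with positive multipliers.

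The main step, and the expected obstacle, is to show that $M'\cong M$ through a fibration whose fibers contain these orbits. I would build an explicit one-parameter family of sections $\sigma^\ell_t\colon S_\ell\to T^1S_\ell$ and $\sigma^r_t\colon S_r\to T^1S_r$, for $t\in\SS^1$. Each should be a nonvanishing vector field on the once-punctured torus, for instance obtained by rotating a fixed vector field tangent to the $a$-geodesics by angle $t$. On $\partial S_\ell = d$, the boundary values of $\sigma^\ell_t$ and $\sigma^r_t$ give curves on the boundary tori of $T^1S$ cut along $T^1d$. The index of the vector field along $d$ is $\pm1$ on each side, since $\chi(S_\ell)=-1$, so the left and right sections do not match in $T^1S$; this is consistent with $T^1S$ not fibering this way. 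The work is to check that after blowing up $\vec d$ and $\overleftarrow d$ and collapsing along the new meridians, the two boundary curves become identified. The family then glues into genus-2 surfaces $\Sigma_t$ fibering $M'$. Following one loop of $t$ and comparing $\Sigma_{2\pi}$ with $\Sigma_0$ near $d$, the monodromy should be $\tau_d^{-2}$, the two surgeries each contributing one twist. This homological bookkeeping on the tori $N_{\vec d}$ and $N_{\overleftarrow d}$ must use the conventions of Section~\ref{section:Dehn-Fried-fibered}, with Fact~\ref{f.twist-sections} relating $\lambda^s$ to the longitudes given by the sections. I expect this to be the hardest and most sign-sensitive part, and I would first verify it in a local model of a collar of $d$.

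Finally, the choice of sections must make the tangent lifts of $a_\ell,a_r,c$ lie in $\Sigma_0$ and those of $b_\ell,b_r$ lie in $\Sigma_{3\pi/2}$. This corresponds to the sections at parameter $0$ being tangent to the $a$- and $c$-geodesics, and the sections at $3\pi/2$ being tangent to the $b$-geodesics. Item (1) then follows. For the twist numbers in (2) and (3), compare the section of $N_\gamma$ given by $\Sigma$ with the weak stable plane, which in the geodesic flow is spanned by $X$ and the horocyclic direction. This amounts to counting how often the tangent plane of $\Sigma$ rotates relative to the stable direction along $\gamma$, i.e.\ the rotation of the section's derivative normal to $\gamma$. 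For $a_\ell,a_r,b_\ell,b_r$ the family is locally a rotation of a geodesic-parallel field, so the count should be $0$. For $c$, which crosses $d$, the passage through the surgered region contributes exactly one turn, giving twist $1$.
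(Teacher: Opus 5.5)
\paragraph{Verdict.} Your outline matches the paper's: the geodesic flow on $T^1S$, surgeries on the two lifts $\tilde d^\pm$ of $d$, and a circle of partial sections over $S_\ell\sqcup S_r$. There is a genuine gap, though. The step you postpone, making the left and right sections close up, is the heart of the proof, and the mechanism you suggest for it cannot work.

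\paragraph{Why continuous sections fail.} Suppose $\sigma^\ell_t$ and $\sigma^r_t$ extend continuously up to $d$, for instance as pointwise rotations of fixed fields. Their boundary values are then two sections $\beta_\ell(t),\beta_r(t)$ of the \emph{same} torus bundle $T=\pi^{-1}(d)\to d$. Their difference has degree $\pm2$, the Euler number of $T^1S$. So they cross and bound two-dimensional regions of $T$ between them, and $\overline{S_{\ell,t}}\cup\overline{S_{r,t}}$ is not a surface. A Dehn--Fried surgery on $\tilde d^\pm$ only alters an arbitrarily small neighbourhood of these orbits, so it cannot ``identify'' the two curves.

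\paragraph{The missing idea.} You need a family that is \emph{discontinuous} along $d$ and degenerates in a precise way. In the paper, near $d$ one has $V_\nu\propto|\rho|\cos(\theta-\nu)\partial_\rho-\frac{\rho}{|\rho|}\sin(\theta-\nu)\partial_\theta$. This is the blow-up of a constant direction at a puncture. Its two special points $p_\nu,p_{\nu+\pi}$ slide along $d$ as $\nu$ varies, so it is not a pointwise rotation. The left family is $-\varsigma_*$ of the right one, where $\varsigma$ is an orientation-reversing isometric involution fixing $d$. This forces both closures to meet $T$ in the \emph{same} two fibre arcs over $p_\nu,p_{\nu+\pi}$, plus \emph{complementary} arcs of $\tilde d^+$ and $\tilde d^-$. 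Hence $\overline{S_\nu}$ is a compact surface whose boundary is exactly $\tilde d^+\cup\tilde d^-$. Only then does the blow-up argument run:
\begin{enumerate}
\item the boundary curves on $N_{\tilde d^\pm}$ have class $\mu^\pm+\lambda^\pm$ (canonical meridian plus canonical longitude), which forces index $+1$ on both orbits rather than ``presumably $\pm1$'';
\item you must still check that these curves are topologically transverse to the blown-up flow. This holds with no margin: the blown-up orbits have slope of absolute value at most $1$, with equality attained, while the curves have slope $1$.
\end{enumerate}

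\paragraph{Item (3) and the monodromy.} Your arguments here are only heuristics.

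For (3), $\tilde c^+$ never meets the surgered orbits and $c$ crosses $d$ twice, so ``one turn through the surgered region'' is not an argument. The paper proceeds as follows:
\begin{enumerate}
\item Since the weak stable planes are transverse to the fibres of $T^1S$, the twist reduces to $\mathrm{Int}(\sigma_{\mathrm{Fiber}},\sigma_{\overline{S_0}})$.
\item These two sections can meet only over $c\cap d=\{p,q\}$.
\item The explicit shape of $\overline{S_{r,0}}$ near $T$ shows that, with a coherent orientation of $\overline{S_0}$, the sections are opposite at $\tilde p$ and equal at $\tilde q$, with local sign $+1$.
\end{enumerate}
Without explicit sections you cannot tell $0$, $\pm1$ and $\pm2$ apart.

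Likewise, the monodromy $\tau_d^{-2}$ is not ``one twist per surgery''. It is the return map of a vector field transverse to the fibres, which is a right twist on each of the two collars, one in each half, of the separating curve of $\Sigma_0$ coming from $d$.

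Finally, to land on the prescribed curves $a_\ell\times\{0\}$, etc., you need a separate configuration argument identifying $\Sigma_0$ with its curves to $(S,a_\ell,\dots,d)$. The natural map $\Sigma_0\setminus(\text{that curve})\to S-d$ preserves orientation on one half and reverses it on the other, so it does not do this directly.
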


\begin{remark}
\label{r.incoherent-orientations}
The orientations of the curves \(a_\ell \times \{0\}\), \(a_r \times \{0\}\), \(b_\ell \times \{\frac{3\pi}{2}\}\), \(b_r \times \{\frac{3\pi}{2}\}\) and \(c \times \{0\}\) given by Figure~\ref{f.S} do not all match to their orientations as orbits of the flow \(Y^t\). This is not a problem: we have chosen some orientations of the curves $a_\ell,b_\ell,a_r,b_r,c,d$ because it will be convenient during the proof of Theorem~\ref{p.perorb-lk}, but these orientations will be meaningless once the proof is over since the left Dehn twists along a curve does not depend of orientation of this curve. 
\end{remark}

This long section is devoted to the proof of Theorem~\ref{p.perorb-lk}. We now outline the {\it strategy} of the proof. Let $S_\ell$ and $S_r$ be the two connected components of $S \setminus d$, where \(S_\ell\) contains the curves \(a_\ell\) and \(b_\ell\) and $S_r$ contains the curves \(a_r\) and \(b_r\). Clearly, \(S_\ell\) and \(S_r\) are two once-punctured tori. We will endow \( S \) with a carefully chosen hyperbolic metric \( g \), and consider the associated geodesic flow \( X^t \)  on the unit tangent bundle \( U:=T^1S \). The circle bundle \( \pi : U \to S \) is not globally trivializable. However, the restrictions of $\pi$ to $U_\ell:=T^1 S_\ell$ and $U_r:=T^1S_r$ are trivial. In other words, if we cut \( U \) open along the torus \( T = \pi^{-1}(d) \), we obtain two trivial \( \SS^1 \)-bundles $U_\ell\to S_\ell$ and $U_r\to S_r$. We will carefully choose some trivializations of these bundles, defining trivial fibrations $U_\ell \cong S_\ell \times \SS^1 \to S_{\ell} $ and $U_r \cong S_r \times \SS^1 \to S_{r}$. Since \( U \) is not globally trivializable, the fibration structures on \( U_\ell \) and \( U_r \) cannot be directly glued along \( T = \partial U_\ell = \partial U_r \). Nevertheless, we will prove that these fibrations can be pinched together by performing Dehn surgeries along the orbits \( \widetilde d ^+ \) and \( \widetilde d^- \) of the geodesic flow $X^t$ that projects on the oriented geodesics \(d\) and \(d^{-1}\). In other words, starting with the geodesic flow \( X^t \) on \( U=T^1S \) and performing Dehn-Fried surgeries along the periodic orbits \( \widetilde{d}^+ \) and \( \widetilde{d}^-\), we will obtain an Anosov flow \( Y^t \) on a manifold \(M \) that fibers over the circle. We will carefully determine the monodromy of the fibration on $M$, and get that \( M \) is diffeomorphic to  the mapping torus of the square of the left Dehn twist along the curve \( d \), \emph{i.e.},
$$M = S \times [0,1]_{ / \left( (x,1) \sim \left( \tau_d^{-2}(x), 0 \right) \right)}.$$
The hyperbolic metric \( g \) will be chosen so that \( a_\ell, b_\ell, c, a_r, b_r \) are geodesics. Moreover, the fibration structures on $U_\ell$ and $U_r$ will be chosen such that the periodic orbits of the geodesic flow projecting on \( a_\ell, b_\ell, c, a_r, b_r \) are contained in some specific fibers of the fibrations. The Dehn-Fried surgeries along \( \widetilde{d}^+ \) and \( \widetilde d^- \) will preserve these properties (recall that a Dehn-Fried surgery leaves the flow unchanged everywhere except on a single orbit, see subection~\ref{subsection:blow-up}). As a consequence, we will get that the oriented curves \(a_\ell \times \{0\}\), \(a_r \times \{0\}\), \(b_\ell \times \{\frac{3\pi}{2}\}\), \(b_r \times \{\frac{3\pi}{2}\}\) and \(c \times \{0\}\) on $M$ are periodic orbits of the flow $Y^t$. Due to the fact that stable and unstable foliations of the geodesic \(X^t\) are transverse to the fibers of the unit tangent bundle, the local stable manifolds for $Y^t$ of all the orbits above except for the last one will be horizontal. By a careful analysis, we will prove that the twist number of the local stable manifold of the orbit \(c \times \{0\}\)  with respect to the fiber containing it is equal to \(1\). Formally, Theorem \ref{p.perorb-lk} will be obtained by combining Proposition \ref{p.horizontalorbits}, Proposition \ref{p.intnumber1} and Corollary~ \ref{coro:mono}.

\subsection{The hyperbolic metric \(g\) and the symmetry \(\varsigma\)}
\label{ss.g-sigma} 

First, we need a hyperbolic metric $g$ on $S$ for which the curves $a_\ell, b_\ell, c, d, a_r, b_r$ are geodesics with some specific properties. The lemma below follows from standard hyperbolic geometry, see \emph{e.g.} \cite[Section 1.7]{Buser92}. 

\begin{lemma}\label{l.g-sigma}
There exists a hyperbolic metric \( g \) on \( S \) and an orientation reversing isometric involution \( \varsigma \) of \( (S, g) \) such that:
\begin{enumerate}
    \item The curves \( a_\ell, b_\ell, c, d, a_r, b_r \) are geodesics  for \( g \).
    \item At each intersection point of two of the curves \( a_\ell, b_\ell, c, d, a_r, b_r \), the intersecting curves meet orthogonally.
    \item The geodesic $d$ has a length $2\pi$, and the two intersection points $p$ and $q$ of \( c \) and \( d \) are at a distance of \( \pi \) from each other along the geodesic $d$.
    \item The symmetry \( \varsigma \) preserves \( d \) pointwise,  preserves \(c\) globally, maps \( a_\ell \) to \( a_r \), and maps \( b_\ell \) to \( b_r \).
 \end{enumerate}   
\end{lemma}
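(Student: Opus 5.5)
We will build $(S,g)$ by gluing two isometric hyperbolic one-holed tori along their geodesic boundaries. The reflection exchanging the two halves will be $\varsigma$.

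\emph{Step 1: the half.} Take a right-angled hyperbolic hexagon $H$ whose three alternating sides have lengths $\ell_1,\ell_2,\ell_3$, with $\ell_3=\pi/2$. Doubling $H$ along its other three sides gives a pair of pants $P$ with geodesic boundary lengths $2\ell_1,2\ell_2,\pi$. Gluing the boundary components of lengths $2\ell_1=2\ell_2$ to each other produces a one-holed torus $T_\ell$ whose boundary $d_\ell$ has length $\pi$. That is too short: we need $d$ of length $2\pi$. We therefore take $\ell_3=\pi$ instead, so $P$ has a boundary of length $2\pi$; the hexagon exists for any positive $\ell_i$ \cite[Section 1.7]{Buser92}.

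\emph{Step 2: the curves $a_\ell$, $b_\ell$.} More symmetrically, we realise $T_\ell$ as the quotient of a right-angled hyperbolic pentagon/quadrilateral configuration. Concretely, start from a one-holed torus admitting the symmetries given by the reflections in $a_\ell$ and in $b_\ell$; for instance, choose Fenchel–Nielsen twist $0$ when gluing $P$. Then the closed geodesic $a_\ell$ (the glued seam) and the geodesic $b_\ell$ (the union of the two common perpendiculars) meet orthogonally. The twist-zero gluing makes $b_\ell$ closed, and orthogonality at the intersection point comes from the reflection symmetries of $H$. These reflections also fix the orthogonal geodesic arc $c_\ell$ from $d_\ell$ to itself, namely the perpendicular from the side of length $\ell_3$ that crosses the seam region. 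The arc $c_\ell$ meets $d_\ell$ orthogonally at two points, and by the left–right reflection symmetry of $H$ these points are antipodal on $d_\ell$, at distance $\pi$ apart. The other curves are disjoint from $c_\ell$ or meet it orthogonally by symmetry.

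\emph{Step 3: doubling.} Let $T_r$ be a mirror copy of $T_\ell$ and glue $T_\ell$ to $T_r$ along $d$ by the identity of the boundary, with no twist. The arcs $c_\ell$ and $c_r$ then match at the points $p,q$, both making right angles with $d$, so $c=c_\ell\cup c_r$ is a smooth closed geodesic. The reflection $\varsigma$ swapping the two copies is an orientation reversing isometric involution. It fixes $d$ pointwise, maps $c$ to itself, and sends $a_\ell,b_\ell$ to $a_r,b_r$. Together with the orthogonality already established, this proves items (1)–(4).

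\emph{Main obstacle.} The only real point is Step 2: we must check that the configuration of curves obtained this way is diffeomorphic to the one in Figure~\ref{f.S}, and that the perpendicular arc meets the seam curve as drawn. We handle this by tracking the curves in the hexagon decomposition.
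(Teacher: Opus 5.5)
Your construction is correct. It is a legitimate filling-in of what the paper leaves to ``standard hyperbolic geometry'': take a right-angled hexagon with $\ell_1=\ell_2$ and $\ell_3=\pi$, double it to a pair of pants, glue with zero twist, and double the result along $d$; the reflection of the hexagon exchanging its $\ell_1$- and $\ell_2$-sides is what makes $p,q$ antipodal on $d$. Only cosmetic fixes are needed: $b_\ell$ is the single common perpendicular (seam) between the two glued boundary components, closed up by the zero-twist gluing, not a ``union of two perpendiculars''; and the discarded $\ell_3=\pi/2$ attempt and the vague ``pentagon/quadrilateral'' sentence should be deleted.
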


We fix a hyperbolic metric $g$ on the surface $S$ as given by Lemma \ref{l.g-sigma}. We denote by
$$
U:=T^1 S = \{ (x, v) \mid x \in S,  v \in T_x S,  g(v,v) = 1 \}
$$
the unit tangent bundle of the surface $S$, \( \pi: U \to S \) the natural projection, and \( X^t \) the geodesic flow on \( U \) associated to $g$. For any oriented geodesic \( e\) in $S$, we will denote by $\tilde e^+$ the oriented orbits of \( X^t \) which projects on $e$ (with the right orientation), and $\tilde e^-$ the oriented orbits of \( X^t \) which projects on $e^{-1}$.

The last item of the above lemma means that the metric $g$ is symmetric with respect to the ``middle" curve $d$. We denote by $S_\ell$ and $S_r$ the two connected components of $S\setminus d$ where  \(S_\ell\) contains the curves \(a_\ell\) and \(b_\ell\) and $S_r$ contains the curves \(a_r\) and \(b_r\). So we have a decomposition 
$$S=S_\ell\sqcup d\sqcup S_r.$$
Note that $S_\ell$ and $S_r$ are two once-punctured tori, and that $\varsigma$ maps $S_\ell$ to $S_r$ and vice versa. The decomposition of $S$ gives rise to a decomposition of the unit tangent bundle
$$U=U_\ell\sqcup T\sqcup U_r$$
where 
$$U_\ell:=T^1 S_\ell = \pi^{-1}(S_\ell),\;\; T := \pi^{-1}(d)\mbox{ and } U_r:=T^1 S_r = \pi^{-1}(S_r).$$
Note that $T$ is a torus, whereas $U_\ell$ and $U_r$ are trivial $\SS^1$-bundles over once-punctured tori.

\subsection{A coordinate system $(\rho, \theta, \varphi)$ on a neighbourhood of the torus $T$}
\label{ss.coordinate-system}

We now introduce a ``nice" coordinate system on a neighbourhood of the  geodesic $d$ in $S$. Since $d$ is a closed geodesic of length $2\pi$ in the hyperbolic surface $S$, there is a neighbourhood of $d$ in $S$ which is a standard hyperbolic cylinder. Hence (see, for instance, \cite{Buser92}):

\begin{fact}
\label{f.coordinates}
There exists a constant $\epsilon\in \left(0,\frac{1}{4}\right)$, a closed annular neighbourhood $A$ of $d$ in~$S$, and a coordinate system $(\rho, \theta): A \to [-\epsilon, \epsilon] \times \SS^1$ such that:
    \begin{enumerate}
        \item The curve $(\rho = 0)$ is precisely the geodesic $d$.
        \item The restriction $g_{|A}$ of $g$ to $A$ can be written as \[g = d\rho^2 + \cosh^2(\rho)  d\theta^2.\]
        \item The annulus $A$ is disjoint from the curves $a_\ell$, $a_r$, $b_\ell$, and $b_r$.
    \end{enumerate}
\end{fact}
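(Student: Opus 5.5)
We would obtain the coordinates $(\rho,\theta)$ as \emph{Fermi coordinates} along the closed geodesic $d$, working directly in the hyperbolic surface $(S,g)$ given by Lemma~\ref{l.g-sigma}.

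\textbf{Step 1: the normal exponential map.} Parametrize $d$ by arclength, $\theta\mapsto d(\theta)$. Since $d$ has length $2\pi$ by Lemma~\ref{l.g-sigma}, the parameter $\theta$ lives exactly in $\SS^1=\RR/2\pi\ZZ$. Let $n(\theta)$ be the unit normal to $d$ at $d(\theta)$, chosen continuously. This is possible because $S$ is orientable, so the normal bundle of $d$ is trivial. Define
$$E(\rho,\theta)=\exp_{d(\theta)}\bigl(\rho\, n(\theta)\bigr).$$

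\textbf{Step 2: $E$ is an embedding for small $\rho$.} The curve $d$ is a compact embedded submanifold, so its normal injectivity radius is positive. Hence $E$ restricted to $(-\epsilon_0,\epsilon_0)\times\SS^1$ is a diffeomorphism onto an open annulus, for some $\epsilon_0>0$. Concretely, $dE$ is invertible along $\rho=0$, and injectivity near $d$ follows by a standard compactness argument. Alternatively, one can invoke the collar lemma, which gives an explicit width.

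\textbf{Step 3: the metric takes the form $d\rho^2+\cosh^2(\rho)\,d\theta^2$.}
\begin{itemize}
\item The curves $\theta=\mathrm{const}$ are unit-speed geodesics, so $g(\partial_\rho,\partial_\rho)=1$.
\item By the Gauss lemma for geodesics emanating orthogonally from a geodesic, $g(\partial_\rho,\partial_\theta)=0$.
\item The field $J(\rho)=\partial_\theta$ along a normal geodesic is a Jacobi field. Curvature $-1$ gives $J''=J$ for its normal component $j(\rho)$, with initial conditions $j(0)=1$ (unit speed of $d$) and $j'(0)=0$ (since $d$ is a geodesic, its geodesic curvature vanishes). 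Hence $j=\cosh\rho$, and $g(\partial_\theta,\partial_\theta)=\cosh^2\rho$.
\end{itemize}
This proves items (1) and (2).

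\textbf{Step 4: disjointness (item (3)).} The curves $a_\ell,a_r,b_\ell,b_r$ are compact and disjoint from $d$, as can be seen in Figure~\ref{f.S}. Therefore their distance $\delta$ to $d$ is positive. Choose $\epsilon<\min(\epsilon_0,\delta,\tfrac14)$ and set $A=E([-\epsilon,\epsilon]\times\SS^1)$. Every point of $A$ is at distance at most $\epsilon<\delta$ from $d$, so $A$ misses these four curves.

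\textbf{Main obstacle.} The only step needing real care is the injectivity in Step~2. It is standard, and we would either cite the collar lemma from \cite{Buser92} or argue by compactness. Note that $c$ meets $d$, so it is correctly excluded from item (3).
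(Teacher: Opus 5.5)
Your argument is correct and follows essentially the same route as the paper, which simply asserts that a neighbourhood of the closed geodesic $d$ is a standard hyperbolic cylinder and cites \cite{Buser92} for the Fermi-coordinate form $d\rho^2+\cosh^2(\rho)\,d\theta^2$. You additionally supply the derivation the paper outsources: the normal exponential map, the Gauss lemma, and the Jacobi equation $j''=j$ with $j(0)=1$, $j'(0)=0$. You also give the positive-distance argument for item (3) and the choice $\epsilon<\tfrac14$, which the paper leaves implicit.
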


The orientations will play an important role in the computations of some intersection and twist numbers, so we need to decide in which directions the coordinates are increasing. Up to replacing $\rho$ by $-\rho$ and/or $\theta$ by $-\theta$, we will assume that:
    \begin{enumerate}
        \item[(1)] The vector field $\partial_\rho$ points towards the subsurface $S_r$ along the geodesic $d$.
        \item[(2)] The vector field $\partial_\theta$ points according to the orientation of the geodesic $d$.
    \end{enumerate}
This convention immediately implies:

\begin{fact}
The left part $S_\ell\cap A$ and the right part $S_r\cap A$ of the annulus $A$ correspond respectively to the regions $\{\rho<0\}$ and $\{\rho>0\}$ in the $(\rho,\theta)$-coordinate system. 
\end{fact}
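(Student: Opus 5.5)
The argument is purely topological: it uses only that $d$ is a separating simple closed curve and the sign convention (1) on $\partial_\rho$. I would proceed in three steps.

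First, by item (1) of Fact~\ref{f.coordinates}, the curve $d$ is exactly $\{\rho=0\}$ in the coordinates $(\rho,\theta)$. Hence
$$A\setminus d=\{-\epsilon\le\rho<0\}\sqcup\{0<\rho\le\epsilon\},$$
and each of these two pieces is homeomorphic to $[\,\cdot\,)\times\SS^1$, so it is connected. Since $A\setminus d\subset S\setminus d=S_\ell\sqcup S_r$, connectedness forces each piece to lie entirely in $S_\ell$ or entirely in $S_r$.

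Second, I determine which piece goes where, using the orientation convention. Fix $\theta_0\in\SS^1$ and consider the $\rho$-line $t\mapsto(\rho,\theta)=(t,\theta_0)$ for $t\in[-\epsilon,\epsilon]$. It crosses $d$ at $t=0$ with velocity $\partial_\rho$. By convention (1), $\partial_\rho$ points towards $S_r$ along $d$. So for small $t>0$ the point $(t,\theta_0)$ lies in $S_r$. The piece $\{\rho>0\}$ therefore meets $S_r$, and by the first step it is contained in $S_r$.

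Third, I show the other piece lies in $S_\ell$. For small $t<0$ the point $(t,\theta_0)$ lies on the side of $d$ that $\partial_\rho$ points away from. I need this side to be $S_\ell$ rather than $S_r$. This holds because $d$ is two-sided and separating, and both components $S_\ell,S_r$ of $S\setminus d$ accumulate on $d$. If both sides of a collar of $d$ lay in $S_r$, then $S_r\cup d$ would be open in $S$, and also closed since $S_r\cup d=S\setminus S_\ell$. By connectedness of $S$ this would give $S_\ell=\emptyset$, a contradiction. Hence $\{\rho<0\}\subset S_\ell$.

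The two pieces then exhaust $A\setminus d$ and lie in disjoint sets, so
$$S_\ell\cap A=\{\rho<0\},\qquad S_r\cap A=\{\rho>0\}.$$

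There is no real obstacle. The only point needing a line of justification is the third step, namely that the two sides of $d$ in the collar belong to different components.
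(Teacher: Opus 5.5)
Your proof is correct and takes the paper's route: the paper simply says the fact follows immediately from convention (1) that $\partial_\rho$ points towards $S_r$ along $d$. Your three steps spell out the point-set details the paper leaves implicit, namely that each half of the collar is connected and that the two halves lie in different components of $S\setminus d$.
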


The expression of \( g \) in the \( (\rho, \theta) \)-coordinates implies that the arcs \( (\theta = \text{constant}) \) are the geodesic arcs of $g_{|A}$ orthogonal to \( d \). Now recall that, by Lemma~\ref{l.g-sigma}, \( c \) is a geodesic orthogonal to \( d \) and that the distance along \( d \) between the two points of \( c \cap d \) is equal to~\( \pi \). As a consequence, adding a constant to the coordinate $\theta$ if necessary, we get:

\begin{fact}
         The intersection of the geodesic \( c \) with the annulus \( A \) corresponds to the segments $\{\theta = 0\}$ and $\{\theta = \pi\}$ in the $(\rho,\theta)$-coordinate system.
\end{fact}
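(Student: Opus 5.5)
The statement to prove is the last Fact: the intersection of $c$ with $A$ is exactly the two segments $\{\theta=0\}$ and $\{\theta=\pi\}$, after adding a constant to $\theta$ if needed. The plan is to identify the geodesics of $g_{|A}$ that cross $d$ orthogonally, and then use the spacing of the two points of $c\cap d$ to pin down their $\theta$-values.

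First, I would show that each arc $\{\theta=\theta_0\}$ is a unit-speed geodesic orthogonal to $d$. With $g=d\rho^2+\cosh^2(\rho)\,d\theta^2$, the curve $\rho\mapsto(\rho,\theta_0)$ has unit speed. It also minimizes length between any two of its points inside $A$, because for any curve in $A$ one has $\int\sqrt{\dot\rho^2+\cosh^2\rho\,\dot\theta^2}\geq\int|\dot\rho|$. Equivalently, the Christoffel symbols give $\Gamma^\theta_{\rho\rho}=\Gamma^\rho_{\rho\rho}=0$. Orthogonality to $d=\{\rho=0\}$ follows from the vanishing of the cross term $d\rho\,d\theta$. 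By uniqueness of geodesics with given initial point and direction, the geodesic through $(0,\theta_0)$ orthogonal to $d$ coincides with $\{\theta=\theta_0\}$ as long as it stays in $A$, that is, for $|\rho|\leq\epsilon$.

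Next, apply this to $c$. By Lemma~\ref{l.g-sigma}, $c$ is a geodesic meeting $d$ orthogonally at the two points $p$ and $q$. So near $p$, $c$ is the segment $\{\theta=\theta_p\}$, and near $q$ it is $\{\theta=\theta_q\}$. Along $d$ we have $g=\cosh^2(0)\,d\theta^2=d\theta^2$, so $\theta$ is arclength on $d$, which has length $2\pi$; this is consistent with $\theta\in\SS^1=\RR/2\pi\ZZ$. Lemma~\ref{l.g-sigma}(3) says the distance along $d$ from $p$ to $q$ is $\pi$, so $\theta_q-\theta_p=\pi$ in $\RR/2\pi\ZZ$. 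Translating $\theta$ by $-\theta_p$, which preserves the form of the metric and the chosen orientation of $\partial_\theta$, gives $\theta_p=0$ and $\theta_q=\pi$.

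The main obstacle is showing that $c\cap A$ contains nothing beyond these two full segments, i.e. that $c$ does not re-enter $A$ elsewhere. For this I would use that $c$ is a simple closed geodesic meeting $d$ only at $p$ and $q$ (Figure~\ref{f.S}). Any other component of $c\cap A$ would be a geodesic arc in the hyperbolic cylinder $A$ that does not cross $d$. Such an arc must enter and exit through the same boundary circle, and in the cylinder coordinates $\rho$ is convex along geodesics, so I would need to rule it out. I would do this by shrinking $\epsilon$: a geodesic arc in a collar that stays on one side of $d$ while reaching depth close to $d$ would force a small angle with $d$. Since $c$ is compact and makes only finitely many transverse crossings with $d$, for $\epsilon$ small enough $c\cap A$ lies in a small neighbourhood of $\{p,q\}$. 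Lowering $\epsilon$ if necessary, which is allowed in Fact~\ref{f.coordinates}, then yields $c\cap A=\{\theta=0\}\cup\{\theta=\pi\}$.
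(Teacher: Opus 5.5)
Your proof is correct and follows the paper's argument: the arcs $\{\theta=\mathrm{const}\}$ are the geodesics of $g_{|A}$ orthogonal to $d$, and $c$ meets $d$ orthogonally at $p$ and $q$, which lie at distance $\pi$ along $d$ where $\theta$ is arclength, so translating $\theta$ puts them at $\theta=0$ and $\theta=\pi$. Your extra compactness step, shrinking $\epsilon$ so that $c\cap A$ stays inside the two segments of $c$ through $p$ and $q$, settles a point the paper leaves implicit (that $c$ does not re-enter $A$ elsewhere); the remarks about convexity of $\rho$ and small angles are not needed for it.
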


Hence, in particular, the two points \( p, q \) of intersections of \( c \) and \( d \) are the points of coordinates \( (0, 0) \) and \( (0, \pi) \). Up to exchanging the names of $p$ and $q$, we will assume that \( p \) is the point with coordinates \( (0, 0) \) and \( q \) is the point with coordinates \( (0, \pi) \). 

According to the expression of the metric \( g_{|A} \), the modulus of the $\rho$-coordinate of a point in $A$ is the distance to the geodesic \( d \). As a consequence:

\begin{fact}
\label{f.expression-symmetry}
 In the \( (\rho, \theta) \)-coordinate system, the restriction to \( A \) of the symmetry \( \varsigma \) writes
    \[
    \varsigma(\rho, \theta) = (-\rho, \theta).
    \]
\end{fact}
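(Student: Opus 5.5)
The plan is to use only two facts: $\varsigma$ is an isometry fixing $d$ pointwise, and the coordinates $(\rho,\theta)$ are Fermi coordinates along $d$. Concretely, the metric expression $g=d\rho^2+\cosh^2(\rho)\,d\theta^2$ shows that for each fixed $\theta_0$ the curve $\rho\mapsto(\rho,\theta_0)$ is the unit-speed geodesic starting at the point $(0,\theta_0)$ of $d$ with initial velocity $\partial_\rho$, the unit normal to $d$ pointing towards $S_r$. So the point with coordinates $(\rho,\theta_0)$ is $\exp_{(0,\theta_0)}(\rho\,\partial_\rho)$.

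First, I will show that $\varsigma(A)=A$. Since $|\rho|$ is the distance to $d$, $A$ is the closed $\epsilon$-neighbourhood of $d$. Since $\varsigma$ is an isometry preserving $d$, it preserves distance to $d$, hence preserves $A$.

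Second, I will compute the differential of $\varsigma$ at a point $m=(0,\theta_0)$ of $d$. Because $\varsigma$ fixes $d$ pointwise, $d\varsigma_m$ fixes the tangent vector $\partial_\theta$. Because $d\varsigma_m$ is a linear isometry of $T_mS$, it must therefore be either the identity or the reflection $\partial_\rho\mapsto-\partial_\rho$. It cannot be the identity: an isometry of a connected Riemannian manifold is determined by its value and differential at one point, so $\varsigma$ would be the identity, contradicting that it reverses orientation. Hence $d\varsigma_m(\partial_\rho)=-\partial_\rho$. This agrees with $\varsigma$ exchanging $S_\ell$ and $S_r$.

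Third, I will conclude using that isometries commute with the exponential map:
$$\varsigma(\rho,\theta_0)=\varsigma\bigl(\exp_m(\rho\,\partial_\rho)\bigr)=\exp_{m}\bigl(\rho\, d\varsigma_m(\partial_\rho)\bigr)=\exp_m(-\rho\,\partial_\rho)=(-\rho,\theta_0),$$
which is valid for $|\rho|\le\epsilon$.

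The only step requiring care is the second: ruling out $d\varsigma_m=\mathrm{Id}$. The orientation-reversing hypothesis on $\varsigma$, together with the rigidity of isometries, handles it. Everything else is the standard behaviour of Fermi coordinates under isometries.
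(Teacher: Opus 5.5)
Your proof is correct and is essentially the paper's argument: the paper only notes that $|\rho|$ is the distance to $d$, with the $\theta$-lines being the geodesics orthogonal to $d$, so an isometry fixing $d$ pointwise and exchanging $S_\ell$ and $S_r$ must act by $(\rho,\theta)\mapsto(-\rho,\theta)$, and your exponential-map computation makes exactly this rigorous. Your first step is not needed, since $\varsigma\circ\exp_m=\exp_m\circ d\varsigma_m$ already lands in $A$; and $d\varsigma_m\neq\mathrm{Id}$ follows at once from $\det d\varsigma_m<0$, without appealing to the rigidity of isometries.
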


The coordinates \( (\rho, \theta) \) allow us to define an orientation on the surface $S$: we endow \( S \) with the orientation for which (see Figure~\ref{f.S-orientation}) $\left( \partial_\rho, \partial_\theta \right) \mbox{ is a positively oriented frame.}$

 \begin{figure}[htb]
    \centering
    \includegraphics[scale=0.35]{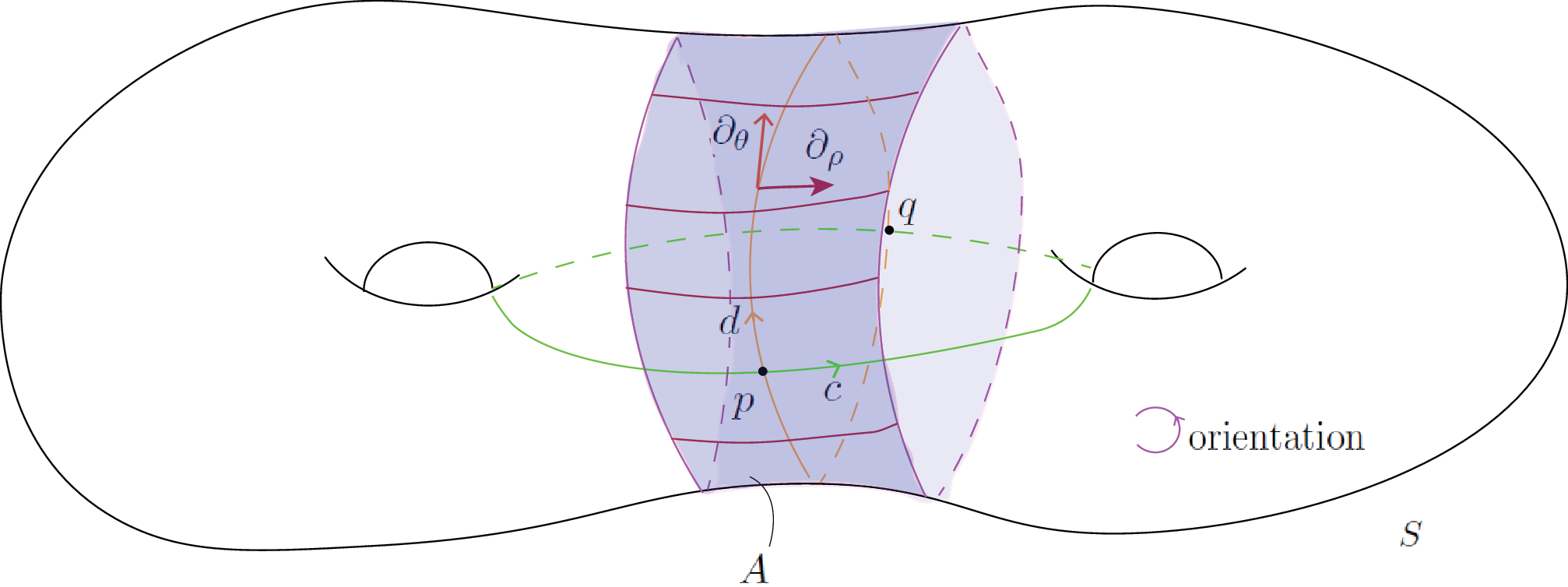}
    \caption{The orientation on \(S\)}
    \label{f.S-orientation}
\end{figure}

We now define a new coordinate \( \varphi \) so that \( (\rho, \theta, \varphi) \) will be a coordinate system on a neighbourhood of the torus \( T\) in the unit tangent bundle $U=T^1 S$. The metric \( g \) and the orientation on \( S \) allow us to measure the \textit{oriented angle} $\text{Angle}_m(v_1,v_2)$ between two tangent vectors $v_1$ and $v_2$ at the same point $m$ of \( S \). Restricting to \(A\), we define a coordinate \( \varphi \) on 
$$U_{A}:=T^1 A = \pi^{-1}(A)$$ 
as follows: 
    for $m \in A$ and $v \in T^1_m S$, we set
    \[
    \varphi(m, v) := \text{Angle}_m\left(\partial_\theta, v \right).
    \]
Adding $\varphi$ to the $(\rho,\theta)$-coordinate system, we get a coordinate system 
\[
(\rho, \theta, \varphi) : U_A \longrightarrow [-\epsilon, \epsilon] \times \SS^1 \times \SS^1.
\]
The fact below is an immediate consequence of the construction of the coordinates.

\begin{fact}
The torus \( T = \pi^{-1}(d) \) corresponds to the set \( \{\rho = 0\} \) in  \( (\rho, \theta, \varphi) \) coordinates. The left half $U_\ell\cap U_A$ and the right half $U_r\cap U_A$ of $U_A$ correspond to the regions $\{\rho<0\}$ and $\{\rho>0\}$ respectively. 
\end{fact}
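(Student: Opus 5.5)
The plan is to unwind the definitions, since every object in the statement is defined by pulling back through the projection $\pi:U_A\to A$. The first step is to note what the coordinate $\rho$ on $U_A$ means. In the system $(\rho,\theta,\varphi)$ on $U_A=\pi^{-1}(A)$, the first two coordinates are, by construction, the coordinates of the base point. That is, $\rho(m,v)=\rho(m)$ and $\theta(m,v)=\theta(m)$, where on the right $(\rho,\theta)$ is the coordinate system on $A$ given by Fact~\ref{f.coordinates}. The third coordinate $\varphi(m,v)=\mathrm{Angle}_m(\partial_\theta,v)$ only parametrizes the fibre $T^1_mS$.

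Before using this, I will briefly check that $(\rho,\theta,\varphi)$ really is a coordinate system on $U_A$, as claimed just before the statement.
\begin{itemize}
\item The field $\partial_\theta$ is nowhere vanishing on $A$, because $g(\partial_\theta,\partial_\theta)=\cosh^2\rho\geq 1$. So the oriented angle is well defined and depends smoothly on $(m,v)$.
\item For fixed $m$, the map $v\mapsto \mathrm{Angle}_m(\partial_\theta,v)$ is a diffeomorphism $T^1_mS\to\SS^1$.
\item Together with the fact that $(\rho,\theta)$ is a diffeomorphism $A\to[-\epsilon,\epsilon]\times\SS^1$, this shows that $(\rho,\theta,\varphi)$ is a bijective local diffeomorphism onto $[-\epsilon,\epsilon]\times\SS^1\times\SS^1$. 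Hence it is a diffeomorphism.
\end{itemize}

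With this in hand, the two assertions follow by taking preimages under $\pi$.
\begin{itemize}
\item For the torus: by item~(1) of Fact~\ref{f.coordinates}, the geodesic $d$ is exactly $\{\rho=0\}\subset A$. Since $d\subset A$, we get $T=\pi^{-1}(d)=\pi^{-1}(\{\rho=0\})$. Because $\rho$ on $U_A$ is $\rho\circ\pi$, this is exactly the set $\{\rho=0\}$ in the $(\rho,\theta,\varphi)$ coordinates.
\item For the halves: the earlier Fact (a consequence of the convention that $\partial_\rho$ points towards $S_r$) gives $S_\ell\cap A=\{\rho<0\}$ and $S_r\cap A=\{\rho>0\}$. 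Since $U_\ell=\pi^{-1}(S_\ell)$ and $U_A=\pi^{-1}(A)$, we get $U_\ell\cap U_A=\pi^{-1}(S_\ell\cap A)=\{\rho<0\}$. The same argument gives $U_r\cap U_A=\{\rho>0\}$.
\end{itemize}

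There is no genuine obstacle here. The only point that needs some care is the verification that $\varphi$ is a smooth fibre coordinate, which is the third bullet of the first list; everything else is bookkeeping with preimages under $\pi$.
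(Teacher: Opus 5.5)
Your proposal is correct and follows the same route as the paper, which simply declares the Fact an immediate consequence of the construction. On $U_A$, $\rho=\rho\circ\pi$, so $T=\pi^{-1}(d)=\{\rho=0\}$ by item~(1) of Fact~\ref{f.coordinates}, and $U_{\ell}\cap U_A=\pi^{-1}(S_\ell\cap A)=\{\rho<0\}$ (similarly for $U_r$) by the sign convention on $\rho$; your extra verification that $(\rho,\theta,\varphi)$ is a genuine coordinate system is sound but goes beyond what the statement requires.
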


Recall that, in the surface \( S \), the curve \( \{\rho = 0\} \) is the geodesic \( d \). Moreover, the vector field \( \partial_\theta \) is pointing according to the orientation of \( d \). As a consequence, in \( T^1 S \):

\begin{fact}
 The canonical lift \( \widetilde{d} \) of \( d \) in \( U=T^1 S \) is the curve \( \{\rho = 0, \varphi = 0\} \), and the canonical lift \( \tilde d^{-1} \) of \( d^{-1} \) is the curve \( \{\rho = 0, \varphi = \pi\} \).
\end{fact}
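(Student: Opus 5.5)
The plan is to unwind the definitions of the coordinates $(\rho,\theta,\varphi)$ and compare them pointwise with the canonical lifts. Recall that the canonical lift $\widetilde d$ of the oriented closed geodesic $d$ is the set of unit vectors $(m,\dot d(m))$, $m\in d$, where $\dot d(m)$ is the unit tangent vector to $d$ at $m$ pointing in the direction of the orientation. It is oriented by the geodesic flow $X^t$, which moves such a vector forward along $d$. The lift $\widetilde d^{-1}$ is defined in the same way, using $-\dot d(m)$.

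\emph{Identifying the tangent vector.} By Fact~\ref{f.coordinates}, the geodesic $d$ is the curve $\{\rho=0\}$, and $g_{|A}=d\rho^2+\cosh^2(\rho)\,d\theta^2$. Along $\rho=0$ this gives $g(\partial_\theta,\partial_\theta)=\cosh^2(0)=1$, so $\partial_\theta$ is a unit vector tangent to $d$. By our choice of direction for the coordinates, $\partial_\theta$ points according to the orientation of $d$. Hence $\dot d(m)=\partial_\theta(m)$ and $-\dot d(m)=-\partial_\theta(m)$ for every $m\in d$.

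\emph{Reading off the angle.} By definition, $\varphi(m,v)=\mathrm{Angle}_m(\partial_\theta,v)$. Therefore $\varphi(m,\dot d(m))=\mathrm{Angle}_m(\partial_\theta,\partial_\theta)=0$ and $\varphi(m,-\dot d(m))=\pi$. Since $m\in d$ means $\rho=0$, we get $\widetilde d\subset\{\rho=0,\varphi=0\}$ and $\widetilde d^{-1}\subset\{\rho=0,\varphi=\pi\}$.

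\emph{Reverse inclusions.} Conversely, a point with $\rho=0$ and $\varphi=0$ lies over some $m\in d$, and is the unique unit vector at $m$ making angle $0$ with $\partial_\theta(m)$, namely $\dot d(m)$. The same argument gives the case $\varphi=\pi$. Both sets are therefore closed curves $\theta\mapsto(0,\theta,0)$ and $\theta\mapsto(0,\theta,\pi)$, and they coincide with the two lifts as sets. As for orientations, the flow on $\widetilde d$ moves the base point in the direction of $\partial_\theta$, that is, towards increasing $\theta$; on $\widetilde d^{-1}$ it moves towards decreasing $\theta$.

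There is no real obstacle. The only point requiring care is that $\partial_\theta$ is genuinely unit length on $d$, which holds because $\cosh(0)=1$, together with the orientation convention fixed just before.
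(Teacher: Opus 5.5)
Your proposal is correct and follows the same reasoning the paper gives: $d=\{\rho=0\}$, $\partial_\theta$ is positively tangent to $d$, and $\varphi$ is the oriented angle measured from $\partial_\theta$. The unit-length check $\cosh(0)=1$ is harmless but not actually needed, since the oriented angle depends only on the directions of the vectors.
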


The orientation of \( A \) induces an orientation on \( U_A=T^1 A \) and, therefore, on $U=T^1S$. 

\begin{convention}
We endow \( U_A \) with the orientation for which 
$\left( \partial_\rho, \partial_\theta, \partial_\varphi \right)$ is a positively oriented basis.
\end{convention}

\subsection{Explicit sections of the unit tangent bundles \( U_\ell=T^1S_\ell \) and \( U_r=T^1S_r \)}
\label{ss.explicit-sections}  

 Recall that \( S_\ell \) and \( S_r \) are the connected components of the complement of the geodesic \( d \) in \( S \). Clearly, \( S_\ell \) and \( S_r \) are once-punctured tori. Hence the unit tangent bundles \( U_\ell\to S_\ell \) and \( U_r\to S_r \) are trivializable. We will now define  an explicit family of sections of the unit tangent bundles $U_\ell$ and $U_r$, \emph{i.e.}, families of unit vector fields on \( S_\ell\sqcup S_r \). We denote by $c_\ell$ and $c_r$ the left and right halves of the geodesic $c$, that $c_\ell := c \cap S_\ell$ and $c_{r} := c \cap S_r.$

\begin{proposition}
\label{p.SlSr-section}
There exists a continuous family \((V_\nu)_{\nu \in \SS^1}\) of unit vector fields on the non-compact surface \( S - d \) with the following properties
\begin{enumerate}
    \item The vector field \( V_0 \) is positively tangent to the closed oriented geodesics $a_\ell$ and $a_r$, and to the oriented geodesic arcs $c_\ell$ and $c_r$.
    \item The vector field \( V_{3\pi/2} \) is positively tangent to the closed oriented geodesics $b_\ell$ and $b_r$.
    \item For every \(\nu  \in \SS^1 \), the restriction to \( A - d \) of the vector field \( V_\nu \) has the following expression in the  \( (\rho, \theta) \) coordinate system:
    \begin{equation}
    \label{equation:explicit-section}
    V_\nu(\rho, \theta) = \lambda(\rho, \theta) \left( |\rho| \cos(\theta - \nu) \partial_\rho - \frac{\rho}{|\rho|}\sin(\theta - \nu) \partial_\theta \right),
    \end{equation}
    where \( \lambda(\rho, \theta) \) is a normalizing positive factor ensuring that $g(V_\nu,V_\nu) = 1$.
    \item For every \( m \in S - d \), the map 
    $$\begin{array}[t]{rcl}
    \SS^1 & \to & T_m^1 S \\
    \nu & \mapsto &  V_\nu(m)
    \end{array}$$
    is a homeomorphism.  With respect to the orientation of the fibers defined in subsection \ref{ss.coordinate-system} and the canonical orientation of $\SS^1=\RR/2\pi\ZZ$, this homeomorphism is orientation-preserving for $m\in S_\ell$ and orientation-reversing for $m\in S_r$. 
    \item For every \( \nu \in \SS^1 \), the vector field \( V_\nu \) anti-commutes with the symmetry \( \varsigma \):
    \[
    \varsigma_* V_\nu = -V_\nu.
    \]
\end{enumerate}
\end{proposition}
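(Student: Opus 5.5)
The plan is to construct the family on $S_\ell$ and then define it on $S_r$ by symmetry. Item (5) forces this: for $m\in S_r$ we set
$$V_\nu(m):=-\,d\varsigma\bigl(V_\nu(\varsigma(m))\bigr),$$
so it suffices to build $(V_\nu)$ on $S_\ell$ with properties (1)--(4). The other items then transfer to $S_r$ as follows.
\begin{itemize}
\item Since $\varsigma$ reverses orientation, $\nu\mapsto V_\nu(m)$ is orientation-reversing on $S_r$ exactly when it is orientation-preserving on $S_\ell$, which gives the second half of (4).
\item Using Fact~\ref{f.expression-symmetry}, the differential of $\varsigma$ on $A$ is $(\partial_\rho,\partial_\theta)\mapsto(-\partial_\rho,\partial_\theta)$. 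A direct substitution then shows that formula (\ref{equation:explicit-section}) for $\rho<0$ is carried to the same formula for $\rho>0$; the factor $\rho/|\rho|$ is there precisely for this.
\item Since $\varsigma$ maps $a_\ell,b_\ell,c_\ell$ to $a_r,b_r,c_r$, item (1) and (2) on $S_r$ reduce to checking that the orientations in Figure~\ref{f.S} satisfy $-\varsigma_*\dot a_\ell=\dot a_r$, and similarly for $b$. For $c$ this is automatic, because the oriented geodesic $c$ crosses the fixed curve $d$ orthogonally.
\end{itemize}

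On $S_\ell$ I would first build a single nonvanishing field and then rotate it. Let $R_\nu$ denote rotation by the oriented angle $\nu$ in $T_mS$ (for $g$ and the chosen orientation). Choose a unit vector field $W$ on $S_\ell$ with the following properties:
\begin{itemize}
\item $W$ is positively tangent to $a_\ell$ and $c_\ell$;
\item along $b_\ell$, $R_{3\pi/2}W$ is positively tangent to $b_\ell$; this is consistent at $a_\ell\cap b_\ell$ because the curves are orthogonal there, and the sign is fixed by $\mathrm{int}(a_\ell,b_\ell)=-1$;
\item on the collar $A\cap\{\rho<0\}$, $W$ coincides with the normalized field $V_0$ of formula (\ref{equation:explicit-section}).
\end{itemize}
We then set $V_\nu:=R_\nu W$ away from $A$. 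On $A\cap\{\rho<0\}$ the explicit family is not exactly a rotation family, but for each fixed $m$ it is a monotone parametrization of the unit circle $T^1_mS$. I will check that this parametrization has degree $+1$: the vector $\bigl(|\rho|\cos(\theta-\nu),\sin(\theta-\nu)\bigr)$ in the frame $(\partial_\rho,\partial_\theta)$ runs around an ellipse, and one computes its sense of rotation. The two families agree at $\nu=0$ and are both orientation-preserving homeomorphisms $\SS^1\to T^1_mS$. On a thin sub-collar $-\epsilon\le\rho\le-\epsilon/2$, I interpolate between them by taking convex combinations of lifts of the angle functions $\nu\mapsto \mathrm{Angle}(V_0(m),V_\nu(m))$, which are increasing maps $\RR\to\RR$ with the same endpoints. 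Convex combinations of such maps are again increasing homeomorphisms, so (4) survives the interpolation, and item (3) holds exactly inside $\{|\rho|<\epsilon/2\}$. After shrinking $A$ to this smaller collar, (3) holds on all of $A-d$.

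The main obstacle is the existence of $W$, which is an extension problem with an obstruction. I would prescribe $W$ on the closed set $K=a_\ell\cup b_\ell\cup c_\ell\cup(\text{collar})$, using along $a_\ell$ and $b_\ell$ small transverse neighbourhoods (the prescriptions agree at their orthogonal intersection point). Near $c_\ell\cap A$, at $\theta=0$ and $\theta=\pi$, the explicit $V_0$ is a positive multiple of $\pm\partial_\rho$, so it is tangent to $c$; I need to verify that the sign matches the orientation of $c_\ell$ (entering $S_\ell$ at one end and leaving at the other). The complement $S_\ell-K$ is a union of open disks, and $W$ extends over each disk iff the winding number of the prescribed field along its boundary, measured in a trivialization over the disk, is zero.

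I would check this using Poincar\'e--Hopf, counting turning relative to the tangent of each boundary polygon. The right angles at the corners, together with the tangency or normality data along the geodesic sides, give an explicit count. The boundary contribution comes from the collar field $\varepsilon\cos\theta\,\partial_\rho+\sin\theta\,\partial_\theta$, which turns twice relative to $\partial_\theta$ as $\theta$ goes once around. The Euler characteristic $\chi(S_\ell)=-1$ is exactly what makes the total obstruction vanish. If some individual disk carries a nonzero obstruction, I would modify $W$ by adding full turns along the non-boundary sides of that disk (along $b_\ell$ away from the prescribed constraint, or along auxiliary arcs), redistributing winding between adjacent disks while keeping the total fixed. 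This bookkeeping of turning numbers around the pieces of $S_\ell$ cut by $a_\ell,b_\ell,c_\ell$ is where I expect the real work, and the sign conventions, to lie.
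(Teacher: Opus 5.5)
The proposal has a genuine gap. The step that fails is the degree check you postpone.

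For $\rho<0$ one has $-\rho/|\rho|=+1$, so \eqref{equation:explicit-section} gives $V_\nu\propto|\rho|\cos(\theta-\nu)\,\partial_\rho+\sin(\theta-\nu)\,\partial_\theta$. In the direct frame $(\partial_\rho,\partial_\theta)$ its coordinates are $\bigl(|\rho|\cos(\nu-\theta),-\sin(\nu-\theta)\bigr)$, and this ellipse is run \emph{clockwise} as $\nu$ increases. So on $A\cap S_\ell$ the map $\nu\mapsto V_\nu(m)$ has degree $-1$, not $+1$; for $\rho>0$ it has degree $+1$. Equivalently, $\varphi$ decreases with $\nu$ on the left, as the explicit formulas in the proof of Proposition~\ref{p.top-bund} confirm.

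The degree of a continuous family of circle homeomorphisms is locally constant in $m$. Hence your family $R_\nu W$ on $S_\ell$ cannot be joined to the collar family. Your convex-combination interpolation also breaks down, since one angle lift increases and the other decreases.

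In fact items (3) and (4), as printed, are incompatible. The consistent statement has $S_\ell$ and $S_r$ exchanged in (4), and this is what the paper's proof actually delivers. Lemma~\ref{g2:increasing} and the orientation-preserving $h$ of Lemma~\ref{l.diffh} give an orientation-preserving family on $S_r$, and the orientation-reversing $\varsigma$ turns it into an orientation-reversing one on $S_\ell$. This corrected version is also what is used later, e.g.\ in Lemma~\ref{l.Z-tr-Sig}, where $W$ is taken negatively tangent to the fibres over $S_\ell$.

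Your scheme is repaired by setting $V_\nu:=R_{-\nu}W$ on $S_\ell$. Both families are then decreasing and the interpolation works. However, the condition on $b_\ell$ becomes ``$R_{\pi/2}W$ positively tangent to $b_\ell$'', so the sign check against the orientations of $a_\ell,b_\ell$ must be redone. Given (1), (2), (5) and $\mathrm{Int}(a_r,b_r)=-1$ as in Lemma~\ref{l.diffh}, it now requires $(\dot a_\ell,\dot b_\ell)$ to be direct for the orientation in which $(\partial_\rho,\partial_\theta)$ is direct.

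Separately, the existence of $W$ is only asserted, and the fallback you describe is not available. Removing the collar and $a_\ell\cup b_\ell\cup c_\ell$ from $S_\ell$ leaves two discs. Every side of each disc lies where the direction of $W$ is completely prescribed: tangent to $a_\ell$ and $c_\ell$, normal to $b_\ell$, explicit on the collar. So there is nowhere to ``add full turns''; each winding number is forced and must vanish on its own. Note also that $\varepsilon\cos\theta\,\partial_\rho+\sin\theta\,\partial_\theta$ turns once, not twice, relative to $\partial_\theta$.

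These obstructions do in fact vanish, but the paper avoids the bookkeeping entirely. An orientation-preserving diffeomorphism $h$ takes $S_r$ to the punctured flat torus. It sends $a_r$ and $b_r$ to the circles $y=\frac12$ and $x=\frac12$, $c_r$ to a horizontal segment through the puncture, and the collar to a punctured polar-coordinate disc. Then $V_\nu$ is the pull-back of a field that equals the translation-invariant $(R_\nu)_*\partial_x$ away from the puncture and has the polar form \eqref{equation:explicit-section} near it. Items (1), (2) and the monotonicity in $\nu$ come for free, and $S_\ell$ is handled by $-\varsigma_*$, as in your first step.
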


\begin{remark}
\begin{enumerate}
    \item The vector field $V_\nu$ is defined on the $S-d$ and cannot be extended to a vector field on $S$. This can seen on the Formula~\eqref{equation:explicit-section}, which is discontinuous at $\rho=0$. Actually, $S$ is a closed surface with non-zero Euler characteristic, hence the tangent bundle of $S$ is not trivializable, hence there does not exists any continuous unit vector field defined on the whole surface $S$.
    \item The expression of $V_\nu$ in the $(\rho,\theta)$ coordinate system (Formula~\eqref{equation:explicit-section}) admits a geometric explanation. Indeed, for $\nu=0$, the formula 
    $$
      \RR^+ . V_0(\rho, \theta) =\RR^+ . \left( |\rho| \cos (\theta) \partial_\rho - \frac{\rho}{|\rho|} \sin (\theta) \partial_\theta \right)
    $$
    corresponds to the expression in polar coordinates of the blow-up at the origin of a horizontal half-line field on $\mathbb{R}^2$. The half-line field $\RR^+.V_\nu$ is the image of the half-line field $\RR^+.V_0$ by a rotation of angle $\nu$ in these coordinates. 
    \item Figure \ref{f.V_0} represents the vector field $V_0$ on the surface $S$ which has been cut along the geodesics $a_\ell,b_\ell,d,a_r,b_r$. Notice that, in order to obtain the original surface $S$, one should glue together the lower side and the upper side of each of the two squares, glue together the left side and the right side of each of the two squares, and glue square to the right square along $d$ using a symmetry with respect to the vertical axis. For example, that the points $p$, $q$ on the left part should be glued to the points $p$, $q$ on the right part.  Figure~\ref{f.V_0-bis} is an attempt to represent $V_0$ without cutting $S$ along $d$, so that one can see that $V_0$ is discontinuous along $d$.
\end{enumerate}
\end{remark}

 \begin{figure}[htb]
    \centering
    \includegraphics[scale=0.32]{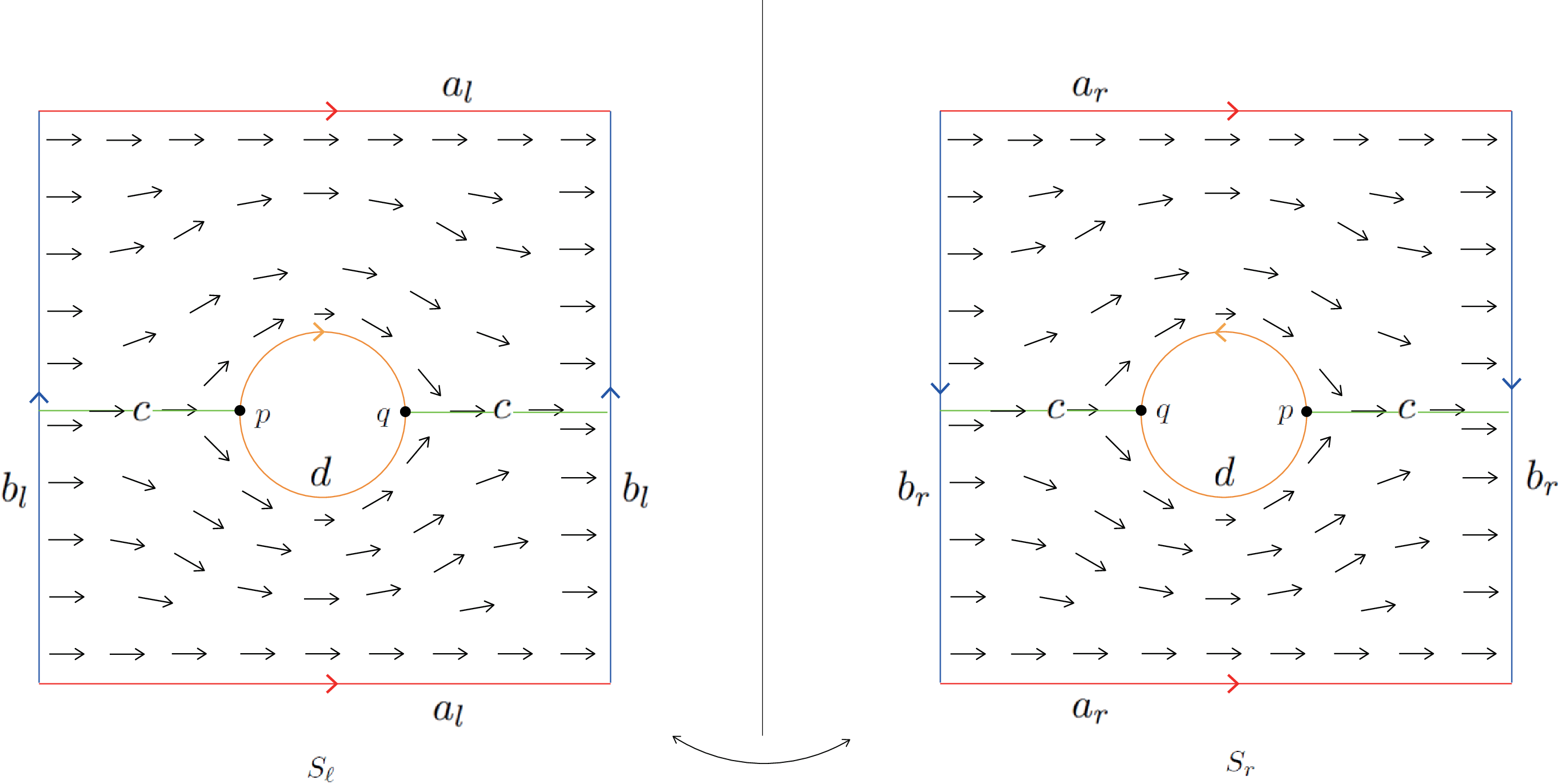}
    \caption{The vector field $V_0$ represented on the surface $S$ cut along the geodesics $a_\ell,b_\ell,d,a_r,b_r$.}
    \label{f.V_0}
\end{figure}

 \begin{figure}[htb]
    \centering
    \includegraphics[scale=0.35]{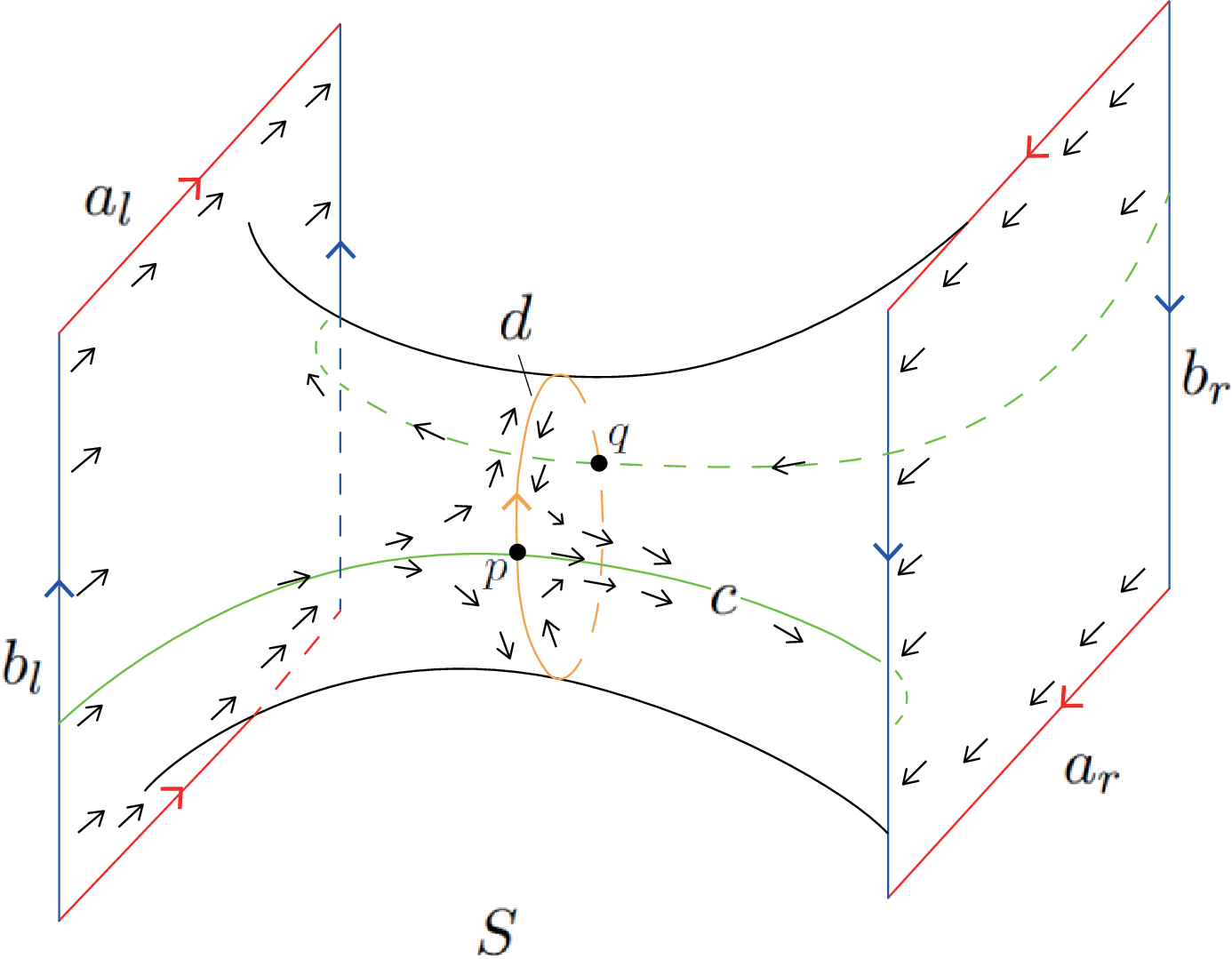}
    \caption{The vector field $V_0$ represented on the surface $S$ cut along the geodesics $a_\ell,b_\ell,a_r,b_r$. Note the discontinuity of $V_0$ along the geodesic $d$.}
    \label{f.V_0-bis}
\end{figure}

In the sequel, $\mathrm{pr}$ denotes the natural projection of $\mathbb{R}^2$ onto the torus $\mathbb{R}^2 /\mathbb{Z}^2$.  In order to prove Proposition \ref{p.SlSr-section}, we will construct a diffeomorphism $h$ mapping the surface $S_r$ to the standard punctured torus $\RR^2/\ZZ^2-\{\mathrm{pr}(0_{\RR^2})\}$, define an explicit family of vector fields $(\mathcal{V}_\nu)_{\nu\in\SS^1}$ on $\RR^2/\ZZ^2-\{\mathrm{pr}(0_{\RR^2})\}$, and pull back the $\mathcal{V}_\nu$'s on $S_r$ and $S_\ell$ thanks to the diffeomorphisms $h$ and $h \circ \varsigma$ in order to get the family of vector fields $(\mathcal{V}_\nu)_{\nu\in\SS^1}$ on $S\setminus d$.

\begin{lemma}\label{l.diffh}
There is a diffeomorphism \( h: S_{r} \to \RR^2/\ZZ^2-\{\mathrm{pr}\{0_{\RR^2}\} \) such that:
\begin{enumerate}
    \renewcommand{\labelenumi}{(\arabic{enumi})}
    \item \( h \) maps the orientation of \( S_{r} \) to the standard orientation of \( \mathbb{R}^2/\mathbb{Z}^2 \).
    \item \( h(a_{r}) = \mathrm{pr}\left(\mathbb{R} \times \left\{ \frac{1}{2}\right\}\right)  \quad \text{and} \quad h(b_{r}) = \mathrm{pr}\left(\left\{\frac{1}{2}\right\} \times \mathbb{R}\right)\).
    \item \( h(c_{r}) = \mathrm{pr}\left((\left[-\frac{1}{2},\frac{1}{2}\right]-\{0\})\times\{0\}\right) \), where $c_r = c \cap S_r$. 
    \item \( h (A \cap S_{r}) = \mathrm{pr}\left(\{(x,y) \mid 0 < x^2 + y^2 \leq \epsilon\}\right) \), and $h$ maps the coordinates \( (\rho, \theta) \) on \( A \cap S_{r} \) to (the projection in $\RR^2 /\ZZ^2$ of) the standard polar coordinates $(\bar\rho,\bar\theta)$ on \( \mathbb{R}^2 \).
\end{enumerate}
\end{lemma}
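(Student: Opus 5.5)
We will build $h$ by first understanding the topology of $S_r$ relative to the curves $a_r,b_r,c_r$, then correcting a rough diffeomorphism near the puncture so that it matches polar coordinates exactly on $A\cap S_r$.

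\emph{Step 1: the model near the puncture.} By Fact~\ref{f.coordinates}, $A\cap S_r=\{0<\rho\le\epsilon\}$ is a half-collar with coordinates $(\rho,\theta)\in(0,\epsilon]\times\SS^1$. Define $h_0$ on $A\cap S_r$ by
$$h_0(\rho,\theta)=\mathrm{pr}\bigl(\sqrt{\rho}\,\cos\theta,\ \sqrt{\rho}\,\sin\theta\bigr).$$
This sends $A\cap S_r$ onto the punctured disc $\{0<x^2+y^2\le\epsilon\}$ in the quotient; the disc embeds in $\RR^2/\ZZ^2$ because $\epsilon<1/4$. Here we read item (4) as requiring $\bar\rho=x^2+y^2$. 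If the intended radial coordinate is instead $\sqrt{x^2+y^2}$, we use $\rho\mapsto\rho$ and the disc of radius $\epsilon$; nothing else changes. The map $h_0$ is orientation preserving: $(\partial_\rho,\partial_\theta)$ is direct on $S$, and it is sent to a positive multiple of $(\partial_{\bar\rho},\partial_{\bar\theta})$, which is direct in $\RR^2$. By the Fact on $c\cap A$, the arcs $c_r\cap A=\{\theta=0\}\cup\{\theta=\pi\}$ go to the two horizontal radii, as item (3) requires.

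\emph{Step 2: global topology.} Let $S_r^c=S_r\setminus\mathrm{int}(A)$, a compact torus with one hole, with boundary $\{\rho=\epsilon\}$. It contains $a_r$ and $b_r$ (these avoid $A$), which meet once orthogonally. It also contains the arc $c_r'=c_r\setminus A$, which runs from $\{\theta=0\}$ to $\{\theta=\pi\}$ on the boundary and is disjoint from $a_r\cup b_r$, as Figure~\ref{f.S} shows. The model $T^c=\RR^2/\ZZ^2\setminus\mathrm{pr}(\text{open disc})$ has the same configuration:
\begin{itemize}
\item the curves $\mathrm{pr}(\RR\times\{1/2\})$ and $\mathrm{pr}(\{1/2\}\times\RR)$ meet once;
\item the arc $\mathrm{pr}([\sqrt\epsilon,1/2]\times\{0\})\cup\mathrm{pr}([-1/2,-\sqrt\epsilon]\times\{0\})$, which is connected through $\mathrm{pr}(\pm 1/2,0)$, is disjoint from both curves.
\end{itemize}
Cutting $S_r^c$ along $a_r\cup b_r$ gives an annulus: one boundary component is the square formed by $a_r,b_r$, and the other is $\partial A$. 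In this annulus $c_r'$ is a spanning arc from the inner boundary to itself that does not touch the outer square. Since the square cuts the torus into a disc, this arc is determined up to isotopy rel boundary by the side of the square it passes through.

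\emph{Step 3: assembling $h$.} Cutting both surfaces along the curves and the arc reduces everything to discs, so the classification of surfaces gives a diffeomorphism $S_r^c\to T^c$ with the required properties:
\begin{itemize}
\item it extends $h_0|_{\partial A}$ on the boundary;
\item it maps $a_r,b_r,c_r'$ to the model curves and arc;
\item it preserves orientation, after fixing the labels and orientations of $a_r,b_r$ by composing with a symmetry of the square lattice fixing the origin.
\end{itemize}
Near $\partial A$ we smooth the gluing with $h_0$ using a collar, arranging that the map is the product map in collar coordinates on both sides.

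\emph{Main obstacle.} The delicate point is compatibility: the endpoints of $c_r'$ must match, i.e.\ the $x$-axis arc has to exit through the correct side of the square $a_r\cup b_r$. This is exactly what the figure determines, and it is also why we may need to precompose with a reflection.
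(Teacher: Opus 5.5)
Your overall route is the paper's: a polar-coordinate model on $A\cap S_r$, classification of the one-holed torus $S_r^c$ relative to the curves, and gluing along $\partial A$. Step 1 is fine. The paper takes $\bar\rho=\rho$ (a disc of radius $\epsilon$), which is your second reading and is also what the later formulas for $\widetilde{\mathcal V}_\nu$ require.

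The gap is in Step 2, which misdescribes exactly the configuration that item (3) is about.
\begin{itemize}
\item The arc $c_r'$ is \emph{not} disjoint from $b_r$. The curve $c$ meets each of $b_\ell,b_r$ transversally in exactly one point. This is why $\tau_c$ fails to commute with $\tau_{b_\ell},\tau_{b_r}$, and why the paper's proof first slides $b_r\cap c$ to $\mathrm{pr}(\frac12,0)$ before isotoping $c_r$ onto the axis.
\item Symmetrically, your model arc is not disjoint from $\mathrm{pr}(\{\frac12\}\times\RR)$: the point $\mathrm{pr}(\pm\frac12,0)$ through which you join its two halves lies on that curve.
\item The configuration you describe is in fact impossible. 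Let $Q:=S_r^c\setminus(a_r\cup b_r)$. An arc in the annulus $Q$ with both endpoints on $\partial A$ that avoids the outer square cobounds a disc with an arc of $\partial A$. Extending through the collar, this would give a bigon between the closed geodesics $c$ and $d$.
\item It also breaks Step 3: cutting an annulus along such an arc leaves a disc and an annulus, not discs.
\item Your ``main obstacle'' paragraph, where the arc exits through a side of the square, contradicts Step 2.
\end{itemize}

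The correct picture is that $c_r'$ misses $a_r$ and crosses $b_r$ once at a point $x$. In $Q$ it therefore becomes two arcs, one from $\theta=0$ and one from $\theta=\pi$, ending at the two copies of $x$ on the two copies of $b_r$. These cut $Q$ into two discs, each containing one copy of $a_r$. This is exactly how the horizontal axis cuts the model square-minus-disc, with $x\mapsto\mathrm{pr}(\frac12,0)$.

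With this correction your cut-and-extend argument goes through:
\begin{itemize}
\item Fix the boundary map on $\partial A$ to be $h_0$.
\item The induced boundary orientations on $\partial A$ agree, since both surfaces lie on the side $\rho>\epsilon$ with $(\partial_\rho,\partial_\theta)$, respectively $(\partial_{\bar\rho},\partial_{\bar\theta})$, direct. Hence the cyclic orders of the boundary pieces of the two discs match.
\item The side gluings reverse the boundary orientation in both models. So choosing the map once on $a_r$ and once on each half of $b_r$ gives compatible, orientation-preserving boundary maps, which extend over the discs.
\end{itemize}

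In particular, no lattice symmetry or reflection is needed, and none is permitted:
\begin{itemize}
\item Every nontrivial symmetry of the lattice fixing the origin changes $\bar\theta$, so composing with it after gluing $h_0$ destroys (4); the reflections among them also destroy (1).
\item Precomposing with a reflection of $S_r$ destroys (1).
\end{itemize}
Since (2) concerns $a_r,b_r$ only as sets, their orientations play no role.
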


\begin{proof}
Let $\epsilon$ be the constant given by Fact~\ref{f.coordinates}. Denote by $\DD_\epsilon$ be the euclidean disc of radius $\epsilon$ centered at the origin in $\RR^2$. First, we define a diffeomorphism 
$$h_A : A \cap S_r \to \mathrm{pr}\left(\DD-\{0_{\RR^2}\}\right)$$
by setting \( h_A(\rho,\theta) = (\rho \cos \theta, \rho \sin \theta)  \), where $(\rho,\theta)$ are the coordinates defined in Fact~\ref{f.coordinates}. This ensures that $h_A$ maps $(\rho,\theta)$ to the standard polar coordinates $(\bar\rho,\bar\theta)$ on \( \mathbb{R}^2 \).

Then we observe that \( S_r- \text{int}(A) \)  is a torus minus a closed disc, and that \( a_r, b_r \) are two oriented essential simple closed curves in \( S_r - \text{int}(A) \) so that the algebraic intersection number \( \operatorname{Int}(a_r, b_r) = -1 \).
It follows that there exists a diffeomorphism 
\[
h_{A^c}: S_r- \text{int}(A) \to (\mathbb{R}^2 / \mathbb{Z}^2) - \mathrm{pr}\left( \DD_\epsilon \right)
\]
mapping \( a_r \) to $\mathrm{pr}\left(\mathbb{R} \times \left\{\frac{1}{2}\right\}\right))$ and 
\( b_r \) to \( \mathrm{pr}\left(\left\{\frac{1}{2}\right\} \times \mathbb{R}\right) \),
and preserving the orientations.

The diffeomorphisms $h_A$ and $h_{A^c}$ need not coïncide on the circle \( \partial (A\cap S_r)  \). But since any two orientation-preserving diffeomorphisms of the circle are isotopic, we may modify \( h_{A^c} \) on a small neighbourhood of $\partial (A\cap S_r)$ to get the coincidence. Then, we can glue $h_A$ and $h_{A^c}$ together, and this yields a diffeomorphism $h: S_{r} \to (\mathbb{R}^2/\mathbb{Z}^2) -\{\mathrm{pr}(0_{\RR^2})\}$ satisfying items~(1), (2) and~(4). 

We further modify \( h \) on a neighborhood of \( b_r \) so that $h(b_r \cap c) =\left\{\mathrm{pr}\left(\frac{1}{2}, 0\right)\right\}$.
Now $h$ satisfies item (2). After this modification, \( h(c \cap S_r) \) and $\mathrm{pr}\left(\{\RR-\{0\}\} \times \{0\}\right) $ are two arcs properly embedded in the punctured torus $\RR^2/\ZZ^2-\{\mathrm{pr}(0_{\RR^2})\}$. These two arcs coïncide in $h(A\cap S_r)$, and they have the same unique intersection point with the closed curve $\mathrm{pr}\left(\left\{\frac{1}{2}\right\}\times\RR\right)=h(b_r)$. So we can modify \( h \) on \( S_r - (A \cup a_r \cup b_r) \) so that, after modification, it maps \( c_r = c \cap S_r \) to $\mathrm{pr}\left(([-\frac{1}{2},\frac{1}{2}]-\{0\})\times\{0\}\right)$, so that  item (3) is satisfied.
\end{proof}

Now we will define a family \( \{ \mathcal{V}_\nu \}_{\nu\in \SS^1} \) of vector fields on \( \mathbb{R}^2 / \mathbb{Z}^2 - \{\mathrm{pr}(0_{\RR^2})\} \) and pull them back via the diffeomorphism \( h \) constructed in Lemma \ref{l.diffh}  to obtain of the family of vector fields \( \{ V_\nu \}_{\nu\in \SS^1}  \) on $S_r$. 
To construct the family \( \{ \mathcal{V}_\nu \}_{\nu\in \SS^1} \), we pick a smooth function \( \eta : (0, +\infty) \to [0, 1] \) so that
\[
\eta = 0 \quad \text{on} \quad (0, \epsilon] \quad \text{and} \quad \eta = 1 \quad \text{on} \quad \left[\tfrac{1}{4}, +\infty\right).
\]
Denoting by \( (\bar\rho, \bar\theta) \) the usual polar coordinates on \( \mathbb{R}^2 \), we define a vector field \(\widetilde{\mathcal{V}}_0\) on \( \mathbb{R}^2 - \{(0_{\RR^2})\}\) by the formula
\[
\widetilde{\mathcal{V}}_0(\bar\rho, \bar\theta) = \left[ (1 - \eta(\bar\rho)) \cdot \bar\rho  + \eta(\bar\rho)  \right]\cos (\bar\theta) \partial_{\bar\rho} - \sin (\bar\theta) \partial_{\bar\theta}.
\]
Then, for every \( \nu \in \SS^1 \), we denote by \( R_\nu \) the linear rotation of angle \( \nu \) on $\mathbb{R}^2$, and we define a vector field \(\widetilde{\mathcal{V}}_\nu\) on \( \mathbb{R}^2 - \{(0, 0)\}\) by setting
\[
\widetilde{\mathcal{V}}_\nu := (R_\nu)_* \widetilde{\mathcal{V}}_0.
\]
Observe that \(\widetilde{\mathcal{V}}_\nu\) admits the following expression in polar coordinates:
\[
\widetilde{\mathcal{V}}_\nu(\bar\rho, \bar\theta) = [(1 - \eta(\bar\rho)) \bar\rho + \eta(\bar\rho) ]  \cos(\bar\theta - \nu) \partial_{\bar\rho} - \sin(\bar\theta - \nu) \partial_{\bar\theta}.
\]

\begin{lemma}\label{l.proj} 
For every \( \nu \in \SS^1 \), the restriction of  \(\widetilde{\mathcal{V}}_\nu\) to the punctured square \hbox{\(\left[-\tfrac{1}{2}, \tfrac{1}{2}\right]^2 - \{0_{\RR^2}\}\)} projects to a vector field  \(\mathcal{V}_\nu\) on the punctured torus \hbox{\( \mathbb{R}^2/\mathbb{Z}^2 - \{\mathrm{pr}(0_{\RR^2})\} \)}.
\end{lemma}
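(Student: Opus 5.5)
The plan is to reduce the statement to a compatibility check on the boundary of the square $Q:=\left[-\tfrac12,\tfrac12\right]^2$. The punctured torus $\RR^2/\ZZ^2-\{\mathrm{pr}(0_{\RR^2})\}$ is obtained from $Q-\{0_{\RR^2}\}$ by identifying the opposite sides through the translations $(x,y)\mapsto(x+1,y)$ and $(x,y)\mapsto(x,y+1)$. So it suffices to prove two things: first, that $\widetilde{\mathcal V}_\nu$ is smooth on $Q-\{0_{\RR^2}\}$, which is clear from the formula since $\eta$ is smooth; second, that at any two points of $\partial Q$ identified by one of these translations, the vectors $\widetilde{\mathcal V}_\nu$ coincide, and the same holds for all derivatives, so that the projected field is smooth across the glued sides. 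The natural way to get both at once is to show that $\widetilde{\mathcal V}_\nu$ is \emph{constant} on a neighbourhood of $\partial Q$ in $\RR^2$. A constant vector field is invariant under translations, hence descends.

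The key step is therefore to show that every point of $\partial Q$ satisfies $\bar\rho\geq\tfrac12>\tfrac14$. Hence, on a neighbourhood of $\partial Q$ (for instance the set $\{\bar\rho>\tfrac14\}$), we have $\eta(\bar\rho)=1$, and the formula reduces to
\[
\widetilde{\mathcal V}_\nu=\cos(\bar\theta-\nu)\,\partial_{\bar\rho}-\sin(\bar\theta-\nu)\,\partial_{\bar\theta}.
\]
I then rewrite this in Cartesian coordinates. Since $\widetilde{\mathcal V}_\nu=(R_\nu)_*\widetilde{\mathcal V}_0$, it is enough to treat $\nu=0$. Using $\partial_{\bar\rho}=(\cos\bar\theta,\sin\bar\theta)$ and the unit angular direction $(-\sin\bar\theta,\cos\bar\theta)$, the expression $\cos\bar\theta\,\partial_{\bar\rho}-\sin\bar\theta\,\partial_{\bar\theta}$ becomes $(\cos^2\bar\theta+\sin^2\bar\theta,\ \cos\bar\theta\sin\bar\theta-\sin\bar\theta\cos\bar\theta)=(1,0)$, the constant horizontal field. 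Consequently $\widetilde{\mathcal V}_\nu=R_\nu(1,0)$ is constant on $\{\bar\rho\geq\tfrac14\}$, and this concludes the argument. Geometrically, this matches the remark after Proposition~\ref{p.SlSr-section}: the $\mathcal V_\nu$ are the rotated horizontal field, modified only inside the disc of radius $\tfrac14$.

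The step I expect to be the main obstacle is exactly this Cartesian computation, because it depends on the normalisation of the angular vector $\partial_{\bar\theta}$. If $\partial_{\bar\theta}$ is taken literally as the coordinate field $(-\bar\rho\sin\bar\theta,\bar\rho\cos\bar\theta)$, the $\partial_{\bar\theta}$-term acquires a factor $\bar\rho$. Then the field is no longer constant, and one checks that its values at $(\tfrac12,y)$ and $(-\tfrac12,y)$ differ, since the $y$-component changes sign. So I would first fix the convention under which the expression is the blown-up horizontal field, namely the unit angular frame for $\bar\rho\geq\tfrac14$, which is consistent with the factor $|\rho|$ in front of $\partial_\rho$ in \eqref{equation:explicit-section}. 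Under that convention the constancy, and hence the lemma, follow immediately. Smoothness of the projected field on the interior of the punctured square is automatic.
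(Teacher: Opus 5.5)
Your argument is correct and is the paper's own proof: on a neighbourhood of $\partial\bigl[-\tfrac12,\tfrac12\bigr]^2$ one has $\bar\rho\geq\tfrac14$, hence $\eta=1$, so the field equals the constant $(R_\nu)_*\partial_x$, which is invariant under the integer translations and therefore descends. Your caveat about normalisation is also well-founded. The paper's identity $\cos\bar\theta\,\partial_{\bar\rho}-\sin\bar\theta\,\partial_{\bar\theta}=\partial_x$ tacitly uses the unit angular vector, whereas recovering Formula~\eqref{equation:explicit-section} on $h(A\cap S_r)$ requires the coordinate field $\partial_{\bar\theta}$. The consistent reading of the definition is therefore $\widetilde{\mathcal V}_\nu=[(1-\eta(\bar\rho))\bar\rho+\eta(\bar\rho)]\,(R_\nu)_*\partial_x$, which is smooth on $\RR^2-\{0_{\RR^2}\}$ and makes your proof go through verbatim.
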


\begin{proof}
    Recall that \( \eta(\bar\rho) = 1\) for $\bar\rho\geq  \frac{1}{4}$. As a consequence, close to the boundary of the square $\left[-\tfrac{1}{2}, \tfrac{1}{2}\right]^2$, the expression of the vector field \(\widetilde{\mathcal{V}}_\nu\) reads
    \[
    \widetilde{\mathcal{V}}_\nu(\bar\rho, \bar\theta) = \cos(\theta - \nu) \partial_{\bar\rho} - \sin(\bar\theta - \nu) \partial_{\bar\theta} = (R_\nu)_* \left( \cos (\bar\theta) \partial_{\bar\rho} - \sin (\bar\theta) \partial_{\bar\theta} \right) = (R_\nu)_* \partial x,
    \]
    where \(\partial x\) is the horizontal vector field on \(\mathbb{R}^2\). In particular, the restriction of \(\widetilde{\mathcal{V}}_\nu\) to a neighbourhood of the boundary of the square $\left[-\tfrac{1}{2}, \tfrac{1}{2}\right]^2$ is invariant under the translations by $(x,y)\mapsto (X+1,y)$ and $(x,y)\mapsto (x,y+1)$. The result follows.
\end{proof}

\begin{lemma}\label{g2:increasing}
    For every $m \in \mathbb{R}^2 -\{(0_{\RR^2})\}$, the map 
    \[\begin{array}[t]{rcl}
          \SS^1 & \to & \mathbb{P}_+(T_m\mathbb{R}^2)\\ 
          \nu & \mapsto & \RR_{>0}.\widetilde{\mathcal{V}}_\nu(m)
    \end{array}\]
    is an orientation-preserving homeomorphism when the circle $\mathbb{P}_+(T_m\mathbb{R}^2)$ is equipped with the orientation induced by the standard orientation of $\RR^2$.
\end{lemma}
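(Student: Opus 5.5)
The plan is to compute the direction of $\widetilde{\mathcal{V}}_\nu(m)$ explicitly at a fixed point and show that its angle is a strictly increasing function of $\nu$ of degree one. Fix $m\in\RR^2-\{0_{\RR^2}\}$ with polar coordinates $(\bar\rho,\bar\theta)$, $\bar\rho>0$. Rather than $(\partial_{\bar\rho},\partial_{\bar\theta})$, use the orthonormal frame $(e_{\bar\rho},e_{\bar\theta}):=(\partial_{\bar\rho},\bar\rho^{-1}\partial_{\bar\theta})$ of $T_m\RR^2$. This frame is positively oriented for the standard orientation of $\RR^2$, since $(\partial_{\bar\rho},\partial_{\bar\theta})$ is. Set $f:=(1-\eta(\bar\rho))\bar\rho+\eta(\bar\rho)$. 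Because $\eta$ takes values in $[0,1]$ and $\bar\rho>0$, $f$ is a convex combination of the positive numbers $\bar\rho$ and $1$, so $f>0$.

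From the polar expression of $\widetilde{\mathcal{V}}_\nu$, and writing $\alpha:=\bar\theta-\nu$, I would record
\[
\widetilde{\mathcal{V}}_\nu(m)= f\cos(\alpha)\, e_{\bar\rho} - \bar\rho\sin(\alpha)\, e_{\bar\theta}.
\]
As $\alpha$ runs over $\SS^1$, this vector runs over an ellipse centred at the origin with semi-axes $f>0$ and $\bar\rho>0$ in the frame $(e_{\bar\rho},e_{\bar\theta})$. In particular it never vanishes, so the map $\nu\mapsto \RR_{>0}.\widetilde{\mathcal{V}}_\nu(m)$ is well defined and continuous.

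Next, let $\psi(\nu)$ be the oriented angle from $e_{\bar\rho}$ to $\widetilde{\mathcal{V}}_\nu(m)$. For a planar curve $(u,w)$ one has $d\psi = (u\,dw - w\,du)/(u^2+w^2)$. Applying this with $u=f\cos\alpha$ and $w=-\bar\rho\sin\alpha$ gives
\[
\frac{d\psi}{d\alpha}=\frac{-f\bar\rho}{f^2\cos^2\alpha+\bar\rho^2\sin^2\alpha}<0,
\]
and hence $d\psi/d\nu>0$, because $d\alpha/d\nu=-1$. So $\psi$ is strictly increasing in $\nu$.

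Finally, as $\nu$ goes once around $\SS^1$ positively, $\alpha$ goes once around negatively, and the ellipse is traversed exactly once. Therefore $\psi$ increases by exactly $2\pi$, and the map has degree $+1$. A continuous, strictly monotone, degree-one self-map of the circle is an orientation-preserving homeomorphism. Since $(e_{\bar\rho},e_{\bar\theta})$ is a direct frame, increasing $\psi$ is exactly the orientation of $\mathbb{P}_+(T_m\RR^2)$ induced by the standard orientation of $\RR^2$.

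The only delicate point is the bookkeeping of signs: the minus sign in front of $\sin(\bar\theta-\nu)\partial_{\bar\theta}$ and the reversal $\alpha=\bar\theta-\nu$ must compensate each other. The rest of the argument uses only the positivity of $f$ and $\bar\rho$.
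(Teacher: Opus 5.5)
Your proof is correct and is essentially the paper's argument: the paper observes that $\nu\mapsto\widetilde{\mathcal{V}}_\nu(m)$ traces an ellipse that is positively parametrized with respect to the direct basis $(\partial_{\bar\rho},\partial_{\bar\theta})$. You make the same point explicit by computing $d\psi/d\nu>0$ and checking that $f>0$, which the paper leaves implicit. The passage to the orthonormal frame is harmless but unnecessary, since any direct frame induces the same orientation on $\mathbb{P}_+(T_m\RR^2)$.
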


\begin{proof}
This easily follows from the formula defining the vector field $\widetilde{\mathcal{V}}_\nu$. Indeed, if $m\in\RR^2-\{0_{\RR^2}\}$ is the point of polar coordinates $(\bar \rho_0,\bar \theta_0)$ (hence $\rho_0 > 0$), the curve 
  \[\nu \mapsto 
\widetilde{\mathcal{V}}_\nu(\bar\rho_0, \bar\theta_0) = [(1 - \eta(\bar\rho_0))\cdot\bar\rho_0 + \eta(\bar\rho_0) ]  \cos(\bar\theta_0 - \nu) \partial_{\bar\rho} - \sin(\bar\theta_0 - \nu) \partial_{\bar\theta}
\] 
 is an ellipse in the $(\bar\rho,\bar\theta)-$plane, parametrized according to the  orientation given by the basis $\left( \partial_{\bar\rho}, \partial_{\bar\theta} \right)$ which is a direct basis of $T_m\mathbb{R}^2$ for the standard orientation.
\end{proof}

\begin{proof}[Proof of Proposition \ref{p.SlSr-section}]
Looking at the formula defining the vector fields \(\left(\widetilde{\mathcal{V}}_\nu\right)_{\nu\in\SS^1}\) in polar coordinates, we observe that:
\begin{enumerate}
    \item \(h(a_r) = \mathrm{pr}\left(\mathbb{R}\times\{\frac{1}{2} \} \right)\) and \( h(c \cap S_r) = \mathrm{pr}\left( ([-\frac{1}{2},\frac{1}{2}] - \{0\}) \times \{0\})\right)\) are the orbits of the vector field \(\mathcal{V}_0\).
    \item \( h(b_r) = \mathrm{pr}(\{\frac{1}{2}\times \mathbb{R} \})\) is an orbit of the vector field \(\mathcal{V}_{\frac{3\pi}{2}}\).
    \item On $h(A\cap S_r)$, we have 
    $$\mathcal{V}_\nu(\bar\rho, \bar\theta) = \bar\rho  \cos(\bar\theta - \nu) \partial_{\bar\rho} - \sin(\bar\theta - \nu) \partial_{\bar\theta}$$
    (here we slightly abuse notations, by seeing the standard polar coordinates $(\bar\rho,\bar\theta)$ of $\RR^2$ as coordinates on a neighbourhood of the projection of the point $(0,0)$ in $\RR^2/\ZZ^2$).
    \item  For every \(m \in \mathbb{R}^2/\mathbb{Z}^2 - \{0_{\RR^2}\}\), the map 
    $$\begin{array}[t]{rcl}
          \SS^1 & \to & T_m(\mathbb{R}^2/\mathbb{Z}^2)\\ 
          \nu & \mapsto & \mathcal{V}_\nu(m)
          \end{array}$$
          is an orientation preserving homeomorphism (cf. Lemma \ref{g2:increasing}).
\end{enumerate}

For $\nu\in\SS^1$, we define the unit vector field \(V_\nu\) on \(S_r\) by pulling back \(\mathcal{V}_\nu\) via \(h^{-1}\) and normalizing with respect to the norm $\| \cdot \|_g$:
\[
V_\nu := \frac{(h^{-1})_* \mathcal{V}_\nu}{\| (h^{-1})_* \mathcal{V}_\nu \|_g}.
\]
We define the unit vector field \(V_\nu\) on \(S_\ell\) by pulling back \(\mathcal{V}_\nu\) via \((h \circ \varsigma)^{-1}\), taking the opposite, and normalizing:
\[
V_\nu := -\frac{\left((h \circ \varsigma)^{-1}\right)_* \mathcal{V}_\nu}{\left\| \left((h \circ \varsigma)^{-1}\right)_* \mathcal{V}_\nu \right\|_g}.
\]
Then the family of vector fields $(V_\nu)_{\nu\in\SS^1}$ obviously satisfies item (5) of Proposition~\ref{p.SlSr-section}. Items (1)...(4) above clearly imply that the vector fields $(V_\nu)_{\nu\in\SS^1}$ satisfy the items (1)...(4) of Proposition~\ref{p.SlSr-section} in restriction to $S_r$. Finally using the properties of the symmetry $\varsigma$ (Lemma~\ref{l.g-sigma} and Fact~\ref{f.expression-symmetry}), we deduce that the vector fields $(V_\nu)_{\nu\in\SS^1}$ also satisfy the items (1)...(4) of Proposition~\ref{p.SlSr-section} in restriction to~$S_\ell$. 
\end{proof}

For \( \nu \in \SS^1 \), the vector field $V_\nu$ given by Proposition \ref{p.SlSr-section} allows to define a section of the unit tangent bundle of the open surface $S- d$. Namely, for \( \nu \in \SS^1 \),  we set 
\[
S_{\ell,\nu} = \left\{ \big(m, V_\nu(m)\big) \mid m \in S_{l} \right\} \subset T^1S_{\ell}=U_\ell, 
\]
\[
S_{r,\nu} = \left\{ \big(m, V_\nu(m)\big) \mid m \in S_{r}\right\} \subset T^1S_{r}=U_r. 
\]
and 
$$S_\nu = S_{\ell,\nu} \sqcup  S_{r,\nu}.$$ 
By construction, $S_{r,\nu}$ is a section of the unit tangent bundle  \( U_r\). Moreover, according to item (4) of Proposition \ref{p.SlSr-section}, for every $m\in S_r$, the map $\nu \mapsto V_\nu(m)$ defines a homeomorphism from $\SS^1$ to $T^1_m S_r$. Hence, we can define a map
$$\begin{array}{ccl}
U_r=T^1S_r & \longrightarrow& \SS^1\\ 
(m,v) & \longmapsto & \text{the unique } \nu \text{ such that } v = V_\nu(m),
\end{array}$$
and this map is a fibration of $U_r$ over the circle, whose fibers are the surfaces $\left\{ S_{r,\nu} \right\}_{\nu \in \SS^1}$. Similarly, there is a fibration of $U_\ell$ over the circle, whose fibers are the surfaces $\left\{ S_{\ell,\nu} \right\}_{\nu \in \SS^1}$.  
Recall that, for every oriented geodesic (or geodesic arc) $e$ in $S$, we denote by $\widetilde{e}^+$ the canonical lift of $e$ to $U=T^1S$, and by $\widetilde{e}^-$ the canonical lift of the oppositely oriented geodesic $e^{-1}$. The vector fields $(V_\nu)_{\nu\in\SS^1}$ were chosen in such a way that the orbit of the geodesic flow $X^t$ lifting one of the oriented geodesics $a_\ell,b_\ell,a_r,b_r,c_\ell,c_r,d$ is contained in some section $S_\nu$ for some $\nu\in\SS^1$. More precisely, as an immediate consequence of items~(1) and~(2) of Proposition~\ref{p.SlSr-section}, we obtain:

\begin{corollary}
\label{c.geodesics-in-section}
\noindent
\begin{itemize}
    \item The closed lifted geodesics $\widetilde{a}^{+}_\ell$ and $\widetilde{a}^{+}_r$ are included in the surface $S_0$.
    \item The closed lifted geodesics $\widetilde{b}^{+}_\ell$ and $\widetilde{b}^{+}_r$ are included in the surface $S_{\frac{3\pi}{2}}$.
    \item The geodesic arcs $\widetilde{c}_\ell^+ $ and $\tilde c_r^+ $ are included in the $S_0$ (hence, the closed lifted geodesic $\widetilde{c}^{+}$ is included in the topological closure $\overline{S}_0$ of $S_0$ in $U$). 
\end{itemize}
\end{corollary}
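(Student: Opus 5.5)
This corollary is a direct consequence of items (1) and (2) of Proposition~\ref{p.SlSr-section}, together with the definitions of the sections $S_\nu$. The plan is to unwind these definitions geodesic by geodesic.

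\emph{Closed geodesics $a_\ell,a_r$.} By Lemma~\ref{l.g-sigma} and Fact~\ref{f.coordinates}, these curves are disjoint from $A$, hence from $d$. So $a_\ell\subset S_\ell$ and $a_r\subset S_r$. A point of $\widetilde a_\ell^{+}$ has the form $(m,\dot a_\ell(m))$, where $m\in a_\ell$ and $\dot a_\ell(m)$ is the unit tangent vector pointing along the orientation of $a_\ell$. Item~(1) of Proposition~\ref{p.SlSr-section} says that $V_0$ is positively tangent to $a_\ell$, and $V_0$ has unit norm, so $V_0(m)=\dot a_\ell(m)$. Therefore $(m,\dot a_\ell(m))=(m,V_0(m))\in S_{\ell,0}\subset S_0$. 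The same argument in $S_r$ gives $\widetilde a_r^{+}\subset S_{r,0}\subset S_0$.

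\emph{Closed geodesics $b_\ell,b_r$.} The argument is identical, using item~(2) of Proposition~\ref{p.SlSr-section}. This gives $\widetilde b_\ell^{+}\subset S_{\ell,3\pi/2}$ and $\widetilde b_r^{+}\subset S_{r,3\pi/2}$, hence both lie in $S_{3\pi/2}$.

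\emph{The arcs $c_\ell,c_r$ and the closed geodesic $c$.} The arcs $c_\ell=c\cap S_\ell$ and $c_r=c\cap S_r$ are open geodesic arcs in $S-d$. Item~(1) gives $V_0$ positively tangent to them along their orientation, and the same reasoning shows that their lifts $\widetilde c_\ell^{+}$ and $\widetilde c_r^{+}$ lie in $S_0$. The closed orbit $\widetilde c^{+}$ is the union of these two lifted arcs and the two points of $U$ lying over $p$ and $q$. Each of these two points is a limit of points of the lifted arcs, by continuity of the lift of $c$. Hence $\widetilde c^{+}\subset\overline{S}_0$.

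The only point that needs care is that the orientation conventions of Figure~\ref{f.S} are the ones encoded in the word ``positively'' in Proposition~\ref{p.SlSr-section}. That proposition is built on exactly these oriented curves: through $h$ on $S_r$, and through $h\circ\varsigma$ together with the sign change in the definition of $V_\nu$ on $S_\ell$. So no further obstacle is expected.
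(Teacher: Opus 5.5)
Your proposal is correct and takes the same approach as the paper, which derives the corollary directly from items~(1) and~(2) of Proposition~\ref{p.SlSr-section}. You simply make the implicit steps explicit. Because $V_\nu$ has unit norm, being positively tangent forces $V_\nu(m)$ to equal the unit tangent vector of the curve at $m$. Continuity of the lift of $c$ then puts the two points of $\widetilde{c}^{+}$ lying over $p$ and $q$ into $\overline{S}_0$.
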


\subsection{Closure of the sections \(S_{\ell,\nu}\) and $S_{r,\nu}$}
\label{ss.Clo-Sv}

The decomposition of the surface $S$ as $S_\ell\sqcup d\sqcup S_r$ leads to a decomposition of the unit tangent bundle $U$ as $U_\ell\sqcup T\sqcup U_r$. In the last subsection, we have defined families of sections \(\{S_{\ell,\nu}\}_{\nu \in \SS^1}\) and \(\{S_{r,\nu}\}_{\nu \in \SS^1}\) of the unit tangent bundles $U_\ell$ and $U_r$. We already know that these sections cannot be glued together along the torus $T$ to yield global sections of the unit tangent bundle $U$, since this bundle is not trivializable. Yet, our next task is to understand quite precisely the behavior of the sections \(\{S_{\ell,\nu}\}_{\nu \in \SS^1}\) and \(\{S_{r,\nu}\}_{\nu \in \SS^1}\) as they approach the torus $T$. This will be divided into the following two steps. First, we will consider the sections from a purely topological viewpoint and describe the intersection of their closures with the torus $T$ (Proposition \ref{p.top-bund}). Second, we will study the sections from a differential viewpoint and prove that their tangent planes behave well as one approaches the torus $T$ (Proposition \ref{p.par-scc}). 

 For \(\theta_0 \in \SS^1\), we will denote by \(p_{\theta_0}\) the point of coordinates \((0, \theta_0)\) in the $(\rho,\theta)$ coordinate system on the annulus $A$. Note that $p_{\theta_0}$ is a point of the geodesic $d$; hence, the fiber of $U=T^1 S$ over $p_{\theta_0}$ is included in the torus $T=\pi^{-1}(d)$. We denote by \(\widetilde{d}^+\) (resp. $\widetilde d^-$) the orbit of geodesic flow on $U$ which projects on the oriented geodesic $d$ (resp. $d^{-1}$). We will prove the following:

\begin{proposition}
\label{p.top-bund} 
For every \(\nu \in \SS^1\), the closure $\overline{S}_\nu$ of the surface $S_{\nu}=S_{\ell,\nu} \sqcup S_{r,\nu}$ in $U=T^1 S$ is a compact topological surface with boundary, with two boundary components that are precisely the lifted geodesics \(\widetilde{d}^+\) and \(\widetilde{d}^-\).
\end{proposition}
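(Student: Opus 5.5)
The closure is a local question near $T$. Away from $U_A$ each $S_{\ell,\nu}$ and $S_{r,\nu}$ is the graph of a continuous unit vector field on a compact part of $S-d$, so it is already a closed embedded surface there. I will therefore work entirely in the coordinates $(\rho,\theta,\varphi)$ on $U_A$ and compute the angle function $\varphi$ along $S_\nu$ explicitly from formula~\eqref{equation:explicit-section}.

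On $\{\rho>0\}$ the vector $V_\nu$ is positively proportional to $\rho\cos(\theta-\nu)\partial_\rho-\sin(\theta-\nu)\partial_\theta$. On $\{\rho<0\}$ it is positively proportional to $|\rho|\cos(\theta-\nu)\partial_\rho+\sin(\theta-\nu)\partial_\theta$. Using the metric $d\rho^2+\cosh^2\rho\,d\theta^2$ and the fact that $(\partial_\rho,\partial_\theta)$ is direct, I will write the angle $\varphi$ from $\partial_\theta$ to $V_\nu$ as an explicit argument. For $\rho>0$ it is $\varphi=\arg\big(-\sin(\theta-\nu)\cosh\rho,\ -\rho\cos(\theta-\nu)\big)$, with the first entry the $\partial_\theta$-component and the second the normal component taken with the correct sign. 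The formula for $\rho<0$ is analogous with signs flipped.

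Letting $\rho\to 0^\pm$ with $\theta$ fixed, the direction tends to $\mp\sin(\theta-\nu)\partial_\theta$. This means $\varphi\to 0$ or $\pi$ according to the sign of $\sin(\theta-\nu)$, and the limit lies on $\widetilde d^+=\{\varphi=0\}$ or $\widetilde d^-=\{\varphi=\pi\}$. The degenerate points are $\theta=\nu,\nu+\pi$, where $\sin(\theta-\nu)=0$. There I will check that, along the arc $\theta=\nu$ as $\rho\to0$, the vector is $\pm\partial_\rho$, giving $\varphi=\pm\pi/2$. Near these points the limit set fills out the whole fiber arc joining $\varphi=0$ to $\varphi=\pi$ in the fiber over $p_\nu$ (resp.\ $p_{\nu+\pi}$).

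The main obstacle is this degenerate behaviour at $\theta\in\{\nu,\nu+\pi\}$. To handle it I will blow up: introduce $s=\rho/\sin(\theta-\nu)$ or a similar rescaled variable, or parametrize $S_{r,\nu}$ near $\rho=0$ by $(\rho,\theta)$ and consider the map $(\rho,\theta)\mapsto(\rho,\theta,\varphi(\rho,\theta))$ on $[0,\epsilon]\times\SS^1$ minus the two bad boundary points. The goal is to show that the closure of $S_{r,\nu}\cap U_A$ is homeomorphic to a closed annulus. One boundary circle sits in $T$ and is the concatenation of the arcs of $\widetilde d^+$ and $\widetilde d^-$ over $\theta\in(\nu,\nu+\pi)$ and $(\nu+\pi,\nu+2\pi)$ (or the reverse), joined by the two half-fibers over $p_\nu$ and $p_{\nu+\pi}$. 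Concretely, $\varphi$ depends on $(\rho,\theta)$ only through the ratio $\rho\cos(\theta-\nu)/\sin(\theta-\nu)$, so near a bad point the closure is a graph over the real blow-up of that point, which is a closed half-disc. For $S_{\ell,\nu}$ I will use the symmetry: $\varsigma$ lifts to an involution of $U$ fixing $T$ setwise, and by Proposition~\ref{p.SlSr-section}(5) it maps $S_{r,\nu}$ to $S_{\ell,\nu}$ up to the antipodal map composed with the derivative. Alternatively I can compute directly. Either way, $\overline{S_{\ell,\nu}}\cap T$ consists of the \emph{complementary} arcs of $\widetilde d^\pm$ together with the \emph{opposite} half-fibers over $p_\nu$ and $p_{\nu+\pi}$. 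The sign bookkeeping here is the step I expect to require the most care.

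Gluing the two halves: on the pieces of $\widetilde d^{\pm}$ off the fibers over $p_\nu,p_{\nu+\pi}$, the closures from both sides meet $T$ only along these curves. Since the left half approaches from $\rho<0$ and the right from $\rho>0$, the two sheets glue along these open arcs into a topological surface locally, with the arcs in its interior. The union of the left and right half-fibers over $p_\nu$ is the full fiber circle. I therefore need to reexamine what is interior and what is boundary: the half-fibers adjacent to the two sheets become interior seams, while the arcs of $\widetilde d^\pm$ that are approached from only one side become boundary. After this bookkeeping, the complement in $\overline S_\nu$ of the glued seams is exactly $\widetilde d^+\sqcup\widetilde d^-$, each approached from one side only. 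This will yield a compact surface whose boundary consists of the two circles $\widetilde d^+$ and $\widetilde d^-$. I will finally confirm compactness (closed subset of compact $U$) and check the local models near $p_\nu$ and $p_{\nu+\pi}$ (half-disc from the blow-up on each side, glued along the half-fiber seams) to conclude.
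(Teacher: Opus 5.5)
Your overall route is the paper's: work in $(\rho,\theta,\varphi)$ and compute the limit of the angle as $\rho\to0^\pm$. Your real blow-up at the degenerate points $(0,\nu)$ and $(0,\nu+\pi)$ justifies the local structure more carefully than the paper's figure-based argument does. The right half is correct: $\overline{S_{r,\nu}}\cap U_A$ is a closed annulus whose inner boundary is made of
\begin{itemize}
\item $\widetilde d^-$ over $\theta\in[\nu,\nu+\pi]$,
\item the fiber arc $\{\theta=\nu+\pi,\ \varphi\in[0,\pi]\}$,
\item $\widetilde d^+$ over $\theta\in[\nu+\pi,\nu+2\pi]$,
\item the fiber arc $\{\theta=\nu,\ \varphi\in[-\pi,0]\}$.
\end{itemize}
Your ``ratio'' should carry the harmless factor $\cosh\rho$.

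The genuine gap is in the left half and the gluing, which is where the proposition is actually decided. You make three claims:
\begin{itemize}
\item $\overline{S_{\ell,\nu}}\cap T$ contains the \emph{opposite} half-fibers;
\item the left and right half-fibers over $p_\nu$ together form the whole fiber;
\item the two sheets glue along open arcs of $\widetilde d^{\pm}$.
\end{itemize}
All three are false. The involution you invoke, $(m,v)\mapsto(\varsigma(m),-D\varsigma(v))$, maps $S_{r,\nu}$ onto $S_{\ell,\nu}$ by item (5). In coordinates it reads $(\rho,\theta,\varphi)\mapsto(-\rho,\theta,\pi-\varphi)$. It swaps $\widetilde d^+$ and $\widetilde d^-$, but it maps each of the arcs $\{\theta=\nu,\ \varphi\in[-\pi,0]\}$ and $\{\theta=\nu+\pi,\ \varphi\in[0,\pi]\}$ to itself. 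You can also see this directly: along $\theta=\nu$, $V_\nu$ is a positive multiple of $\partial_\rho$ on both sides of $d$, so $\varphi=-\pi/2$ from both sides.

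Hence the two closures share the \emph{same} two fiber arcs and contain \emph{complementary} arcs of $\widetilde d^{\pm}$. No open arc of $\widetilde d^{\pm}$ is approached from both sides. This is exactly what the proof needs:
\begin{itemize}
\item the common fiber arcs become interior seams;
\item near each of their four endpoints, a corner of the left blown-up rectangle and a corner of the right one glue along the common arc into a half-disc whose diameter lies on $\widetilde d^+$ or $\widetilde d^-$;
\item therefore $\widetilde d^+\sqcup\widetilde d^-$ is the boundary.
\end{itemize}

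With the description you actually wrote, the conclusion would be false. $\overline S_\nu$ would contain the full fibers over $p_\nu$ and $p_{\nu+\pi}$, each half approached from one side only. It would also be pinched at the four points where these fibers meet $\widetilde d^{\pm}$: two corners touching at a single point, which is not a surface with boundary there. Your closing ``bookkeeping'' sentence states the right answer, but it contradicts the sentences just before it and is not supported by anything you computed. You need to carry out the sign computation above and rebuild the gluing step from it.
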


However, the intersection of the aforementioned compact topological surface $\overline{S}_\nu$ with the torus $T$ is not reduced to the lifted geodesics \(\widetilde{d}^+\) and \(\widetilde{d}^-\). During the proof of Proposition~\ref{p.top-bund}, we will obtain a complete description of this intersection.

\begin{addendum}
\label{a.topological-closure}
The intersection of the surface $\overline{S}_\nu$ with the torus $T$ is the union of:
\begin{itemize}
    \item the lifted geodesics \(\widetilde{d}^+\) and \(\widetilde{d}^-\), 
    \item the arc $d_\nu^-$ of the fiber \(T^1_{p_\nu} S\) running (according to the orientation of the fibers) from the point of intersection with the lifted geodesic \(\widetilde{d}^-\) to the point of intersection with the lifted geodesic \(\widetilde{d}^+\),
    \item the arc $d_{\nu}^+$ of the fiber \(T^1_{p_{\nu+\pi}} S\) running (according to the orientation of the fibers) from the point of intersection with the lifted geodesic \(\widetilde{d}^+\) to the point of intersection with the lifted geodesic \(\widetilde{d}^-\).
\end{itemize}
See Figure~\ref{f.topological_boundary}.
\end{addendum}

\begin{figure}
\centering
\includegraphics[scale=0.35]{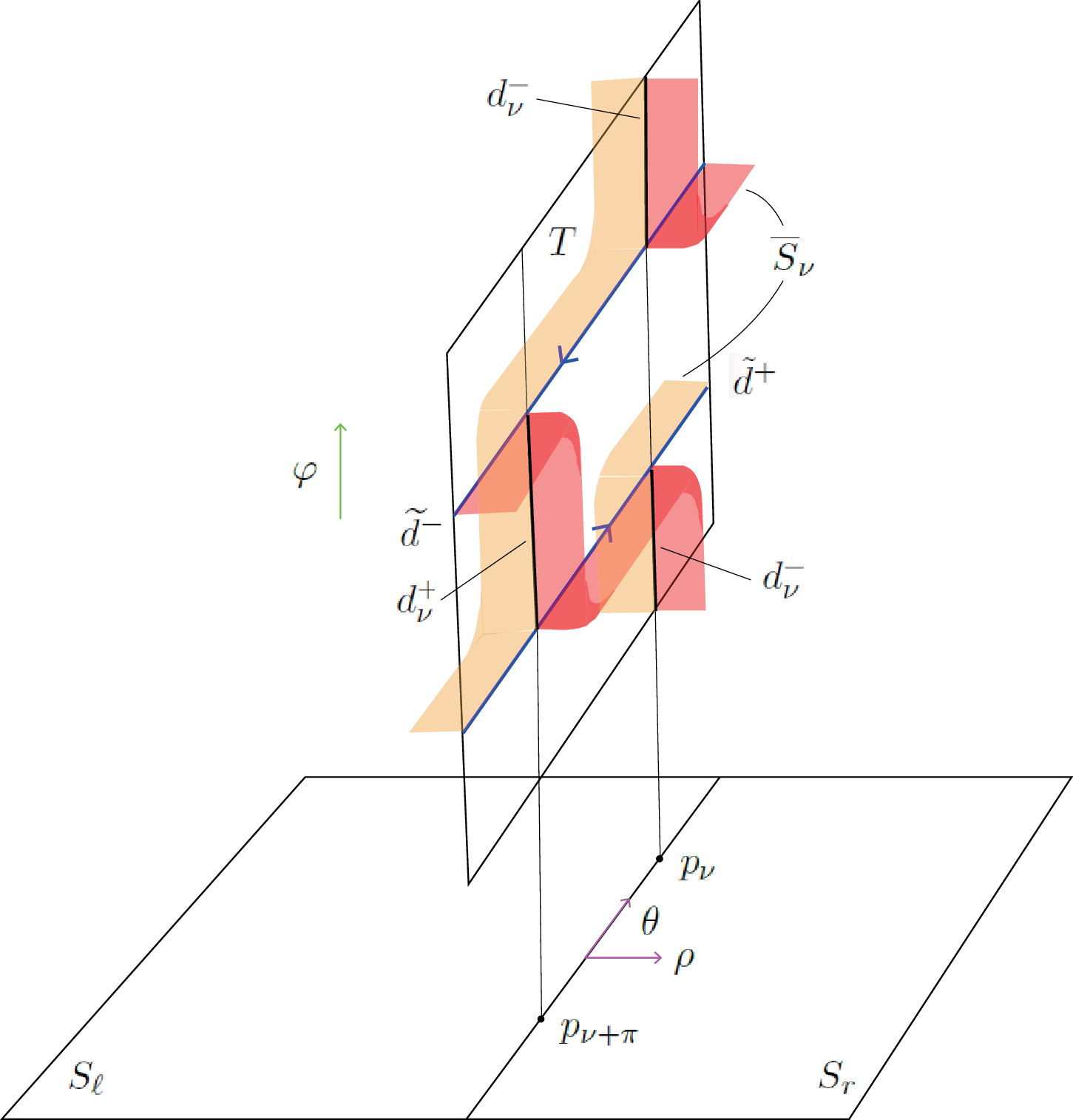}
\label{f.topological_boundary}
\caption{A local picture of the compact surface $\overline{S_\nu}$ in a neighbourhood of the torus $T$. The torus $T$ is represented as a square. The surface $\overline{S_\nu}$ is the union of the red and the tangerine part.  The boundary of $\overline{S_\nu}$ is made of the two horizontal lifted geodesics $\tilde d^-$ and $\tilde d^+$. }
\end{figure}

\begin{proof}[Proof of Proposition \ref{p.top-bund}]
We are interested in the behaviour of the surfaces $S_{\ell,\nu}$ and $S_{r,\nu}$ close to the torus $T$, and $U_A=T^1_{|A}U$ is a neighbourhood of $T$ in $U$, so we can work in the coordinate system 
$$(\rho, \theta, \varphi) : U_A \longrightarrow [-\epsilon, \epsilon] \times \SS^1 \times \SS^1$$ 
constructed in Section \ref{ss.coordinate-system}. We recall that $(\rho, \theta)$ is a coordinate system on the annulus $A$, and $\varphi$ is a coordinate along the fibers of the unit tangent bundle $U_A\to A$. More precisely, $\varphi$ is defined as follows:  for $m \in A$ and $v \in T^{1}_m A$, then
    \[
    \varphi(m,v) = \text{Angle}\left(\partial_\theta, v\right),
    \]
where the angle is measured with respect to the metric $g$, which reads $$g_{|A}=d\rho^2 + \cosh^2(\rho) d\theta^2.$$
Since $\varphi$ is the angle with a given direction, it is increasing when one runs along the fibers according to the orientation of the fibers induced by the orientation of the surface $S$. Also recall that the torus $T = T^1_{d}S$ corresponds to the set $\{\rho = 0\}$ in the $(\rho, \theta, \varphi)$ coordinates.
   
In view of Formula~(\ref{equation:explicit-section}) in Proposition \ref{p.SlSr-section}, the intersection of the surfaces \(S_{r,\nu}\) and $S_{\ell,\nu}$ with $U_A$ correspond to the following graphs over $A\cap S_r$ and $A\cap S_\ell$ respectively
$$S_{r,\nu} \cap U_A = \left\{(\rho,\theta,\varphi):\rho \in (0,\epsilon],\;
     \theta \in \SS^1, \varphi=\text{Angle}\left(
       \partial_\theta, \rho \cos(\theta - \nu) \partial_\rho - \sin(\theta - \nu) \partial_\theta\right)\right\},$$
    $$S_{\ell,\nu} \cap U_A = \left\{(\rho,\theta,\varphi): \rho \in [-\epsilon,0),
     \theta \in \SS^1, \varphi=\text{Angle}\left(
       \partial_\theta, -\rho\cos(\theta - \nu) \partial_\rho + \sin(\theta - \nu) \partial_\theta\right)\right\}.$$
Now observe that the angle
\[
\text{Angle}\left( \partial_\theta,\rho \cos(\theta - \nu) \partial_\rho - \sin(\theta - \nu) \partial_\theta \right) \]
behaves as follows when $\rho\to 0^+$. When \( \rho \to 0^+ \) and \(\theta\; \cancel{\to}\; \nu, \nu+\pi\), it tends to:
\[
\begin{cases} 
\pi & \text{when } \theta \in (\nu, \nu + \pi) \pmod{2\pi} \\
0 & \text{when } \theta \in (\nu - \pi, \nu) \pmod{2\pi}
\end{cases}
\]
When \(\rho \to 0^+\) and \(\theta \to \nu\), it accumulates on the whole segment \([-\pi, 0] \pmod{2\pi}\). When \(\rho \to 0^+, \theta \to \nu+\pi\), it accumulates the whole segment $[0,\pi] \pmod{2 \pi}$. This yields the following description of the closure $\overline{S}_{r,\nu}$ of  $S_{r,\nu}$ in the $(\rho,\theta,\varphi)$ coordinate system:
\begin{eqnarray*}
\overline{S}_{r,\nu} -S_{r,\nu} & = &  \left( \{ 0 \} \times \{ \nu \} \times [-\pi, 0] \right) \cup  \left( \{ 0 \} \times [\nu, \nu + \pi] \times \{ \pi \} \right)  \\
& & \cup\left( \{ 0 \} \times \{ \nu + \pi \} \times [0, \pi] \right) \cup \left( \{ 0 \} \times [\nu + \pi, \nu+2\pi ] \times \{ 0 \} \right).
\end{eqnarray*}
Now, recall that $p_{\theta_0} \in d$ is the point of the surface $S$ with coordinates \((\rho, \theta) = (0, \theta_0)\), hence the fiber over $p_{\theta_0}$ in $U$ corresponds to the circle $\{0\}\times\{\theta_0\}\times \SS^1$. Also recall the geodesics \(\widetilde{d}^+\) and \(\widetilde{d}^{-}\) correspond to the closed curves $\{\rho = 0,\; \varphi = 0 \}$ and  $\{\rho = 0,\; \varphi = \pi\}$ respectively. As a consequence, 
\begin{enumerate}
    \item the set \( \{ 0 \} \times \{ \nu \} \times [-\pi, 0] \) is the arc of the fiber over the point \(p_\nu\) running from the point $(0,\nu,-\pi)$ on the geodesic $\tilde d^-$ to the point $(0,\nu,0)$ on the geodesic $\tilde d^+$ as $\varphi$ increases; we denote this arc by $d_\nu^-$;
    \item the set \( \{ 0 \} \times [\nu, \nu + \pi] \times \{ \pi \} \) is the arc of the geodesic \(\widetilde{d}^{-}\) running from the point $(0,\nu,\pi)$ on the fiber of $p_\nu$ to the point $(0,\nu+\pi,\pi)$ on the fiber of $p_{\nu+\pi}$ as $\theta$ increases\footnote{Caution: the orientation for which $\theta$ is increasing is opposite to the dynamical orientation of $\tilde d^-$ as an orbit of the geodesic flow.};
    \item  the set \( \{ 0 \} \times \{ \nu+\pi \} \times [0,\pi] \) is the arc of the fiber over the point \(p_{\nu+\pi}\) running from the point $(0,\nu+\pi,0)$ on the geodesic $\tilde d^+$ to the point $(0,\nu+\pi,\pi)$ on the geodesic $\tilde d^-$  as $\varphi$ increases; we denote this arc by $d_{\nu}^+$; 
    \item \(\left( \{ 0 \} \times [\nu + \pi, \nu ] \times \{ 0 \} \right)\) is the arc of the geodesic \(\widetilde{d}^{+}\) running from the point $(0,\nu+\pi,0)$ on the fiber of $p_{\nu+\pi}$ to the point $(0,\nu,0)$ on the fiber of $p_\nu$ as $\theta$ increases.
\end{enumerate}
Hence, the closure \(\overline{S}_{r,\nu}\) of \(S_{r,\nu}\) is a topological surface with boundary, and its boundary looks (from a topological viewpoint) as Figure \ref{f.bdSt+} shows. 

 \begin{figure}[htb]
    \centering
    \includegraphics[scale=0.35]{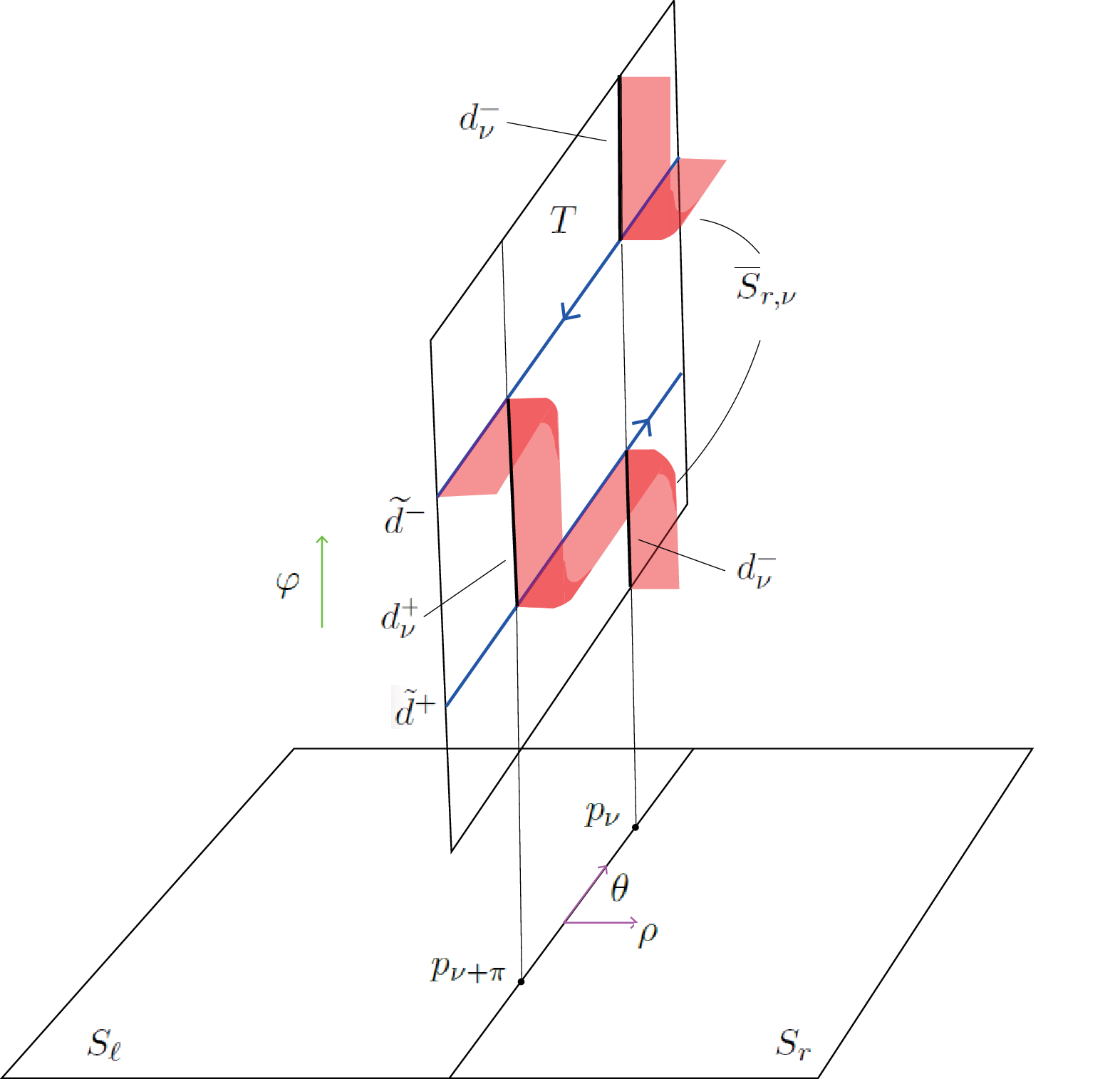}
    \caption{The surface with boundary  \(\overline{S}_{r,\nu}\) in the neighbourhood of the torus $T$}
    \label{f.bdSt+}
\end{figure}

Similarly, the angle 
\[
\text{Angle} \left(\partial_\theta,  -\rho \cos(\theta - \nu) \partial_\rho + \sin(\theta - \nu) \partial_\theta \right)
\]
behaves as follows. When \( \rho \to 0^- \) and $\rho\;\cancel{\to}\;\nu,\nu+\pi$, it tends to
\[
\begin{cases}
0 & \text{when } \theta \in (\nu, \nu + \pi) \mod 2\pi \\
\pi & \text{when } \theta \in (\nu + \pi, \nu + 2\pi) \mod 2\pi
\end{cases}
\]
When \( \rho \to 0^- \) and \( \theta \to \nu \), it 
accumulates on the whole segment \( [-\pi, 0] \) mod \( 2\pi \). And when \(\rho \to 0^-\) and \(\theta \to \nu+\pi\), it accumulates the whole segment $[0,\pi] \pmod{2 \pi}$.

As a consequence, the closure $\overline{S}_{\ell,\nu}$ of the surface $S_{\ell,\nu}$ admits the following description in the $(\rho,\theta,\varphi)$ coordinate system:
\begin{eqnarray*}
\overline{S}_{\ell,\nu} - S_{\ell,\nu} & = &  \left( \{ 0 \} \times \{ \nu \} \times [-\pi, 0] \right) \cup  \left( \{ 0 \} \times [\nu, \nu + \pi] \times \{ 0 \} \right)  \\
& & \cup\left( \{ 0 \} \times \{ \nu + \pi \} \times [0, \pi] \right) \cup \left( \{ 0 \} \times [\nu + \pi, \nu+2\pi ] \times \{ \pi \} \right).
\end{eqnarray*}
This means that $\overline{S}_{\ell,\nu}-S_{\ell,\nu}$ is the union of
\begin{enumerate}
    \item the arc $d_\nu^-$ of the fiber over the point \(p_\nu\) running from the point $(0,\nu,-\pi)$ on the geodesic $\tilde d^-$ to the point $(0,\nu,0)$ on the geodesic $\tilde d^+$ as $\varphi$ increases; 
    \item the arc of the geodesic \(\widetilde{d}^{+}\) running from the point $(0,\nu,0)$ on the fiber of $p_\nu$ to the point $(0,\nu+\pi,0)$ on the fiber of $p_{\nu+\pi}$ as $\theta$ increases;
    \item  the arc $d_{\nu}^+$ of the fiber over the point \(p_{\nu+\pi}\) running from the point $(0,\nu+\pi,0)$ on the geodesic $\tilde d^+$ to the point $(0,\nu+\pi,\pi)$ on the geodesic $\tilde d^-$  as $\varphi$ increases; 
    \item the arc of the geodesic \(\widetilde{d}^{-}\) running from the point $(0,\nu+\pi,\pi)$ on the fiber of $p_{\nu+\pi}$ to the point $(0,\nu,\pi)$ on the fiber of $p_\nu$ as $\theta$ increases.
\end{enumerate}
Hence the closure $\overline{S}_{\ell,\nu}$ of the surface $S_{\ell,\nu}$ is a topological surface with boundary, and its boundary looks (from a topological viewpoint) as Figure \ref{f.bdSt-} shows. 
 
 \begin{figure}[htb]
    \centering
    \includegraphics[scale=0.35]{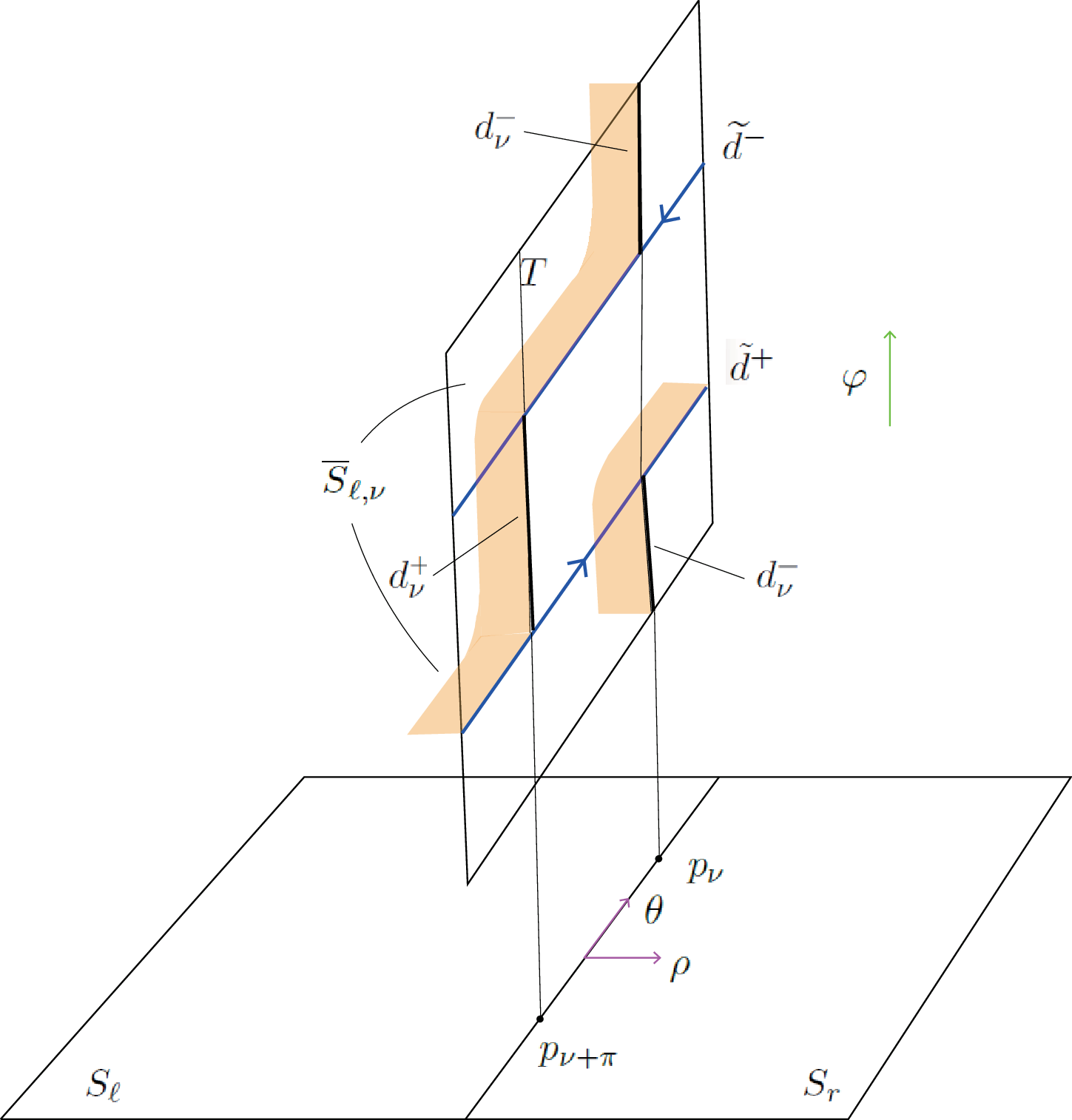}
    \caption{The surface with boundary  \(\overline{S}_{\ell,\nu}\) in the neighbourhood of the torus $T$}
    \label{f.bdSt-}
\end{figure}

Observe that the two arcs of fibers $d_\nu^-$ and $d_{\nu}^+$ contained the boundary of $\overline{S}_{\ell,\nu}$ are exactly the same as those contained the boundary of $\overline{S}_{r,\nu}$. On the other hand, the arcs of the geodesics $\tilde d^-$ and $\tilde d^+$ contained in the boundary of $\overline{S}_{\ell,\nu}$ are different from those contained in the boundary of~$\overline{S}_{r,\nu}$. From this discussion, we get that $\overline{S}_\nu=\overline{S}_{\ell,\nu}\cup\overline{S}_{r,\nu}$ is a topological surface whose boundary components are precisely the two lifted geodesics $\widetilde{d}^{+}$ and $\widetilde{d}^{-}$, and moreover that the intersection of $\overline{S}_\nu$ with the torus $T$ is exactly as described in Addendum~\ref{a.topological-closure}. See Figure~\ref{f.topological_boundary}. 
\end{proof}

As a consequence of Proposition~\ref{p.top-bund}, if we could crash down each of the two geodesics \(\widetilde{d}^{+}\) and \(\widetilde{d}^{-}\) to one point, then each \(\overline{S}_\nu\) would become a closed surface. This indicates that there is some hope that Dehn-Fried surgeries on \(\widetilde{d}^{+}\) and \(\widetilde{d}^{-}\) could turn \(\overline{S}_\nu\) into a closed surface. To understand this better, we need to blow up \(\widetilde{d}^{+}\) and \(\widetilde{d}^{-}\), and study the lift of the surfaces $S_{\ell,\nu}$ and $S_{r,\nu}$ in the blown up manifold.   

We denote by $\widehat U$ the 3-manifold with boundary obtained by blowing-up $U$ along the two lifted geodesics $\tilde d^+$ and $\tilde d^-$ (see Subsection~\ref{subsection:blow-up}). We will denote by $N_{\tilde d^-}$ (resp. $N_{\tilde d^+}$) the boundary component of $\widehat U$ corresponding to  $\tilde d^-$ (resp. $\tilde d^+$). Recall that $N_{\tilde d^\pm}$ naturally identifies with the projectivized normal bundle $\PP^+(TM/\RR X)_{|\gamma}$ of $\tilde d^\pm$. 

The non-compact surface $S_{\nu}=S_{\ell,\nu}\sqcup S_{r,\nu}$ is included in $U\setminus (\widetilde{d}^{+}\cup \widetilde{d}^{-})$; hence, it lifts homeomorphically in $\widehat U$. But the compact topological surface $\overline S_{\nu}$ needs not lift to $\widehat U$. We shall denote by $\widehat S_{\nu}$ the closure of the lift of $S_\nu$ in $\widehat U$. Our goal is to prove that $\widehat S_\nu$ is a compact surface with boundary and to understand its boundary (which, of course, is included in $N_{\tilde d^-}\cup N_{\tilde d^+}$). For that purpose, we need some explicit coordinates on $\widehat U$ in the neighborhood of the boundary tori $N_{\tilde d^\pm}$. 

Recall that the lifted geodesic $\tilde d^+$ corresponds to the curve $(\rho=0,\theta \in \SS^1,\varphi=0)$ in the \( (\rho, \theta, \varphi) \) coordinate system. Hence, a tubular neighbourhood of $\tilde d^+$ can be parametrized by $(\theta,\varphi,\rho)\in \SS^1\times\mathbb{D}_\eta$ for small positive $\eta$, where \[\mathbb{D}_{\eta} = \{(\varphi,\rho) \in \mathbb{R}^2: \rho^2 + \varphi^2 \leq \eta^2\}.\] 
Notice that $(\partial_\theta,\partial_\varphi,\partial_\rho)$ is a direct frame for the orientation of $U$ (this is important). By the definition of the blow-up manifold $\widehat U$, a coordinate system on a neighborhood of $N_{\tilde d^\pm}$ in $\widehat U$ is obtained by considering the polar coordinates in the $(\varphi,\rho)$-plane. Namely, let $(\tau^+,\psi^+)\in [0,\eta)\times \SS^1$ be defined by 
$$(\varphi,\rho)=(\tau^+\cos\psi^+,\tau^+\sin\psi^+)$$
then $(\theta,\tau^+,\psi^+)\in \SS^1 \times [0,\eta)\times\SS^1$ is a coordinate system on  neighbourhood of $N_{\tilde d^+}$ in $\widehat U$. In particular, $(\theta,\psi^+) \in \SS^1\times\SS^1$ is a coordinate system on $N_{\tilde d^+}$ and $(\partial_\theta,\partial_{\psi^+})$ is a direct frame in $N_{\tilde d^+}$.

Similarly, the lifted geodesic $\tilde d^-$ corresponds to the curve $(\rho=0,\theta\in \SS^1,\varphi=\pi)$ in the \( (\rho, \theta, \varphi) \) coordinate system, and therefore we get a coordinate system $(\theta,\tau^-,\psi^-)\in \SS^1\times [0,\eta)\times \SS^1$ which is a coordinate system on neighbourhood of $N_{\tilde d^-}$ in $\widehat U$ by setting 
$$(\varphi,\rho)=(\pi+\tau^-\cos\psi^-,\tau^-\sin\psi^-).$$
In particular, $(\theta,\psi^-) \in (\SS^1)^2$ is a coordinate system on $N_{\tilde d^-}$ and $(\partial_\theta,\partial_{\psi^+})$ is a direct frame in $N_{\tilde d^+}$.

The coordinates \( (\theta, \psi^{\pm}) \) allow us to identify some particular homology classes of \( N_{\widetilde{d}^\pm} \):

\begin{notation}
\label{notations.meridian}
 We denote by:
    \begin{itemize}
        \item \(\mu^+\) the class in \( H_1(N_{\tilde d^+}, \mathbb{Z}) \) of the curve \( (\theta = \text{constant}) \) oriented by ~$\partial_{\psi^{+}}$,
        \item \( \lambda^+ \) the class in \( H_1(N_{\tilde d^+}, \mathbb{Z}) \) of the curve \( (\psi^{+} = \text{constant}) \) oriented by $\partial_\theta$, 
         \item \(\mu^-\) the class in \( H_1(N_{\tilde d^-}, \mathbb{Z}) \) of the curve \( (\theta = \text{constant}) \) oriented by ~$-\partial_{\psi^{-}}$,
        \item \( \lambda^- \) the class in \( H_1(N_{\tilde d^-}, \mathbb{Z}) \) of the curve \( (\psi^{-} = \text{constant}) \) oriented by $-\partial_\theta$, 
    \end{itemize}
\end{notation}

The classes \( \mu^{\pm} \) and \( \lambda^{\pm} \) have some natural interpretations. Recall that we have defined a canonical oriented meridian and a canonical oriented longitude on the tori $N_{\tilde d^\pm}$ (see Defintions~\ref{definition:canonical-meridian} and~\ref{definition:canonical-longitude}).

\begin{lemma}
\( \mu^{\pm} \) and \( \lambda^{\pm} \) are, respectively, the canonical oriented meridian and the canonical oriented longitude of the torus \( N_{\tilde{d}^\pm} \).
\end{lemma}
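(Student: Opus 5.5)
The plan is to check the two statements separately for each of the tori $N_{\tilde d^+}$ and $N_{\tilde d^-}$. For the meridians I will use Fact~\ref{fact:orientations}. For the longitudes I will use the fact that a dynamically oriented section of the circle bundle $N_{\tilde d^\pm}\to\tilde d^\pm$ is determined in homology once it is shown to be a curve $\psi^\pm=\mathrm{const}$.

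\emph{Meridians.} Along $\tilde d^+=\{\rho=0,\varphi=0\}$ the geodesic flow points in the direction of $\partial_\theta$, because $\tilde d^+$ lifts $d$, which is oriented by $\partial_\theta$. We know that $(\partial_\theta,\partial_\varphi,\partial_\rho)$ is a direct frame of $U$. So Fact~\ref{fact:orientations} applies with $(x,y)=(\varphi,\rho)$, whose polar angle is exactly $\psi^+$. Hence the canonical meridian is the fiber oriented by $\partial_{\psi^+}$, namely $\mu^+$.

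Along $\tilde d^-=\{\rho=0,\varphi=\pi\}$ the flow points along $-\partial_\theta$. The frame $(-\partial_\theta,-\partial_\varphi,\partial_\rho)$ is direct, since it differs from the direct frame above by two sign changes. So I take $x=-(\varphi-\pi)=-\tau^-\cos\psi^-$ and $y=\rho=\tau^-\sin\psi^-$. The polar angle of $(x,y)$ is then $\pi-\psi^-$, and it increases when $\psi^-$ decreases. Fact~\ref{fact:orientations} then gives the meridian oriented by $-\partial_{\psi^-}$, which is $\mu^-$.

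\emph{Longitudes.} The canonical longitude is the class of the dynamically oriented section given by the weak stable direction. The dynamical orientation of $\tilde d^\pm$ is $\pm\partial_\theta$, which matches the orientations in Notation~\ref{notations.meridian}. It therefore suffices to show that the stable direction along $\tilde d^\pm$ has constant $\psi^\pm$-coordinate.

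For this I use the rotations $R_c:(\rho,\theta)\mapsto(\rho,\theta+c)$ of the annulus $A$. They are isometries of $g_{|A}=d\rho^2+\cosh^2(\rho)\,d\theta^2$, so their derivatives act on $U_A$ by $(\rho,\theta,\varphi)\mapsto(\rho,\theta+c,\varphi)$. These maps commute with the geodesic flow as long as orbits stay in $U_A$, and they preserve $\tilde d^\pm$. The stable direction at a point of $\tilde d^\pm$ is determined by the linearized flow along $\tilde d^\pm$ alone, that is, by the Jacobi equation $J''=J$ along $d$ in curvature $-1$. Since this linearized flow is $R_c$-equivariant, the stable direction is mapped by $dR_c$ to the stable direction. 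In the coordinates $(\varphi,\rho)$ this direction is therefore a constant vector, independent of $\theta$. Concretely, a stable Jacobi field $e^{-t}$ gives a direction of the form $\partial_\rho\mp\partial_\varphi$, up to sign conventions. Its polar angle $\psi^\pm$ is constant along $\tilde d^\pm$, so the stable section is a curve $\psi^\pm=\mathrm{const}$ and its class is $\lambda^\pm$.

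The only delicate point I expect is the orientation bookkeeping for $\tilde d^-$. There the flow direction is reversed relative to $\theta$, and the blow-up coordinates are centred at $\varphi=\pi$, so two sign changes must be tracked. Both of them are absorbed by the minus signs in Notation~\ref{notations.meridian}.
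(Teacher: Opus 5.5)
Your proposal is correct and follows the same route as the paper. The meridians come from the orientation conventions once one notes that the flow points along $\pm\partial_\theta$ on $\tilde d^\pm$. The longitudes come from the invariance of the metric, of $d$ and of the coordinate $\varphi$ under translations in $\theta$, which forces the stable section to be a curve $\psi^\pm=\mathrm{const}$. Your version is in fact slightly more careful than the paper's: you check the frame orientation for $\tilde d^-$ explicitly, and you observe that the stable direction depends only on the linearized flow along $\tilde d^\pm$, so the merely local $\theta$-rotation symmetry of the annulus $A$ suffices.
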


\begin{proof}
The statement concerning $\mu^\pm$ immediately follows from Defintion~\ref{definition:canonical-meridian}, from the definition of $\mu^\pm$ above, and from the fact that the dynamical orientation of the lifted geodesic $\tilde d^\pm$ is pointing according to $\pm\partial_\theta$. 

In order to prove that $\lambda^{\pm}$ is the canonical longitude, let us observe that the metric $g$ is invariant, the geodesic $d$, and the definition of the coordinate $\varphi$ are invariant under translation of the coordinate $\theta$. Hence the geodesic flow and the local stable manifold of the lifted geodesics $\tilde d^\pm$ must also be invariant under translation of $\theta$. It follows that the sections of the projectivized normal bundle $N_{\tilde d^\pm}$ corresponding to the local stable of $\tilde d^\pm$ must be circles of the form $(\theta=\mathrm{constant})$ in the $(\theta,\psi^\pm)$ coordinate system\footnote{By writing down the explicit formulas for the derivative of the geodesic flow in the normal Fermi coordinate system $(\rho,\theta,\varphi)$ (see the proof of Lemma~\ref{l.blow-up-flow-transverse}), one can see that the sections corresponding to the local (resp. unstable) stable manifolds are the circle $(\theta=-\frac{\pi}{4}\;\mathrm{mod}\;\pi)$ (resp. $(\theta=\frac{\pi}{4}\;\mathrm{mod}\;\pi)$).}. In other words, the canonical longitude of  $N_{\tilde d^\pm}$ is the homology of the horizontal circles $\theta=\mathrm{constant}$, up to orientation. In order to check the coincidence of the orientations, one just needs to recall that the dynamical orientation of the orbit $\tilde d^\pm$ is given by the vector field $\pm\partial_\theta$.
\end{proof}

The following statement ensures that the topological compact surface $\overline S_\nu$ is actually differentiable close to its boundary $\tilde d^-\cup\tilde d^+$, and describes the behavior of the tangent planes along the boundary:

\begin{proposition}
\label{p.par-scc}
  For every $\nu\in \SS^1$, the closure $\widehat {S}_\nu$ of the lift of the surface $S_\nu$ in $\widehat U$ is a compact (topological) surface with boundary. The boundary of $\widehat{S}_\nu$ is made of the simple closed curve
  \[
    \partial^+\widehat S_\nu:=\left\{\psi^+ =  \theta - \nu + \pi\right\} \quad \text{on the torus} \quad N_{\widetilde{d}^+}
  \]
  and the simple closed curve
  \[
    \partial^-\widehat S_\nu:=\left\{\psi^- = \theta - \nu\right\} \quad \text{on the torus} \quad N_{\widetilde{d}^-}.
  \]
  See Figure~\ref{f.boundary-blow-up}.
\end{proposition}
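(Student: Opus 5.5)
The plan is to work entirely in the coordinates $(\rho,\theta,\varphi)$ on $U_A$ and in the blow-up coordinates $(\theta,\tau^\pm,\psi^\pm)$. The goal is to write the lift of $S_\nu$ near $N_{\tilde d^\pm}$ as the zero set of an explicit function that extends smoothly to $\tau^\pm=0$, and then to apply the implicit function theorem. Away from $U_A$ nothing needs to be done, since $S_\nu$ is an honest section there and stays away from $\tilde d^\pm$. Proposition~\ref{p.top-bund} and Addendum~\ref{a.topological-closure} already show that $\overline S_\nu$ accumulates on $T$ only along $\tilde d^\pm$ and along the fiber arcs $d_\nu^\pm$. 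Near $T$ but away from $\tilde d^\pm$, the surface is therefore already described by Proposition~\ref{p.top-bund}. What remains is a neighbourhood of $\tilde d^+$ (where $\varphi\approx 0$) and of $\tilde d^-$ (where $\varphi\approx\pi$). Note that these neighbourhoods include the endpoints of the arcs $d_\nu^\pm$, where $\theta\to\nu,\nu+\pi$.

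\textbf{Step 1: an implicit equation.} Use the orthonormal frame $(\partial_\rho,\cosh(\rho)^{-1}\partial_\theta)$. Since $(\partial_\rho,\partial_\theta)$ is direct, a vector $\alpha\partial_\rho+\beta\partial_\theta$ makes an angle $\varphi$ with $\partial_\theta$ satisfying $\cos\varphi\propto \beta\cosh\rho$ and $\sin\varphi\propto-\alpha$, with the same positive constant of proportionality. Plug in the formula \eqref{equation:explicit-section} for $V_\nu$, on both sides of $d$. Both the vector and its opposite then satisfy the single equation
\[
G(\rho,\theta,\varphi):=\sin\varphi\,\sin(\theta-\nu)\cosh\rho-\rho\cos(\theta-\nu)\cos\varphi=0 .
\]
(On the left side the overall sign change in \eqref{equation:explicit-section} does not affect $G=0$.) Since $V_{\nu+\pi}=-V_\nu$ by \eqref{equation:explicit-section}, the zero set of $G$ in $U_A\setminus T$ is exactly $S_\nu\cup S_{\nu+\pi}$. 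The choice of branch is fixed by the sign conditions $\operatorname{sign}(\cos\varphi)=-\operatorname{sign}(\rho)\operatorname{sign}\sin(\theta-\nu)$ and $\operatorname{sign}(\sin\varphi)=-\operatorname{sign}(\rho)\operatorname{sign}\cos(\theta-\nu)$.

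\textbf{Step 2: desingularize in the blow-up.} Near $\tilde d^+$, substitute $(\varphi,\rho)=(\tau^+\cos\psi^+,\tau^+\sin\psi^+)$ and set $F^+:=G/\tau^+$. This extends smoothly to $\tau^+=0$, where
\[
F^+(\theta,0,\psi^+)=\cos\psi^+\sin(\theta-\nu)-\sin\psi^+\cos(\theta-\nu)=\sin(\theta-\nu-\psi^+).
\]
Since $\partial_{\psi^+}F^+\neq0$ on the zero set at $\tau^+=0$, the implicit function theorem shows the following. Near $N_{\tilde d^+}$, the zero set of $F^+$ in $\{\tau^+\ge0\}$ is a union of two smooth half-open annuli, $\psi^+=\Psi_1(\theta,\tau^+)$ and $\psi^+=\Psi_2(\theta,\tau^+)$, with boundaries the two disjoint curves $\psi^+=\theta-\nu$ and $\psi^+=\theta-\nu+\pi$. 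This holds uniformly in $\theta$, including $\theta=\nu,\nu+\pi$, which is precisely where the arcs $d_\nu^\pm$ attach.

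\textbf{Step 3: select the branch, then treat $\tilde d^-$.} Evaluate the sign conditions of Step~1 at a convenient point. Take $\rho>0$ small and $\theta\in(\nu-\pi,\nu)$, so that $\sin(\theta-\nu)<0$ and $\varphi\to0$. First order gives $\rho/\varphi\approx\tan(\theta-\nu)$ with $\rho>0$. This forces $\sin\psi^+>0$, hence $\psi^+=\theta-\nu+\pi$ for $S_\nu$, while the other branch belongs to $S_{\nu+\pi}$. By continuity of the annulus this identification holds for all $\theta$; a check on the left side ($\rho<0$, $\theta\in(\nu,\nu+\pi)$) gives the same curve, as it must. Near $\tilde d^-$, repeat with $(\varphi,\rho)=(\pi+\tau^-\cos\psi^-,\tau^-\sin\psi^-)$. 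Now $\sin\varphi\approx-\tau^-\cos\psi^-$ and $\cos\varphi\approx-1$, so the limit equation becomes $-\sin(\theta-\nu-\psi^-)=0$ and the sign analysis yields $\psi^-=\theta-\nu$.

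Gluing these collar pieces to the compact part of $\overline S_\nu$ away from $\tilde d^\pm$ gives that $\widehat S_\nu$ is a compact surface with exactly the two stated boundary curves. I expect the main obstacle to be the bookkeeping of signs and orientations in the branch selection: separating $S_\nu$ from $S_{\nu+\pi}$, on both sides of $d$ and at both lifted geodesics. The desingularization itself is routine once $G$ is written down.
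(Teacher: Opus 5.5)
Your proposal is correct and follows essentially the paper's route. The paper also linearizes the defining equations of $S_{r,\nu}$ and $S_{\ell,\nu}$ near $\tilde d^\pm$, rewrites them in $(\theta,\tau^\pm,\psi^\pm)$ as $\sin(\psi^\pm-(\theta-\nu))=0$, and uses the sign of $\rho=\tau^\pm\sin\psi^\pm$ to select $\psi^+=\theta-\nu+\pi$ and $\psi^-=\theta-\nu$. Your division $F^\pm=G/\tau^\pm$ followed by the implicit function theorem is a useful sharpening. It establishes the collar structure uniformly in $\theta$, including at $\theta=\nu,\nu+\pi$, where the paper's relation $\varphi=\rho\cot(\theta-\nu)$ degenerates and its argument is only given on open intervals.

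There is one harmless slip. The $\partial_\rho$-component of $V_\nu$ is $|\rho|\cos(\theta-\nu)$, so the correct condition is $\operatorname{sign}(\sin\varphi)=-\operatorname{sign}\cos(\theta-\nu)$, with no factor $\operatorname{sign}(\rho)$. As you wrote it, the condition is wrong for $\rho<0$: it would make the left half accumulate on the wrong half of the fiber over $p_\nu$. However, your branch selection only uses the $\cos\varphi$ condition and the sign of $\rho$, so the argument is unaffected.
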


\begin{figure}
\centering
 \includegraphics[scale=0.34]{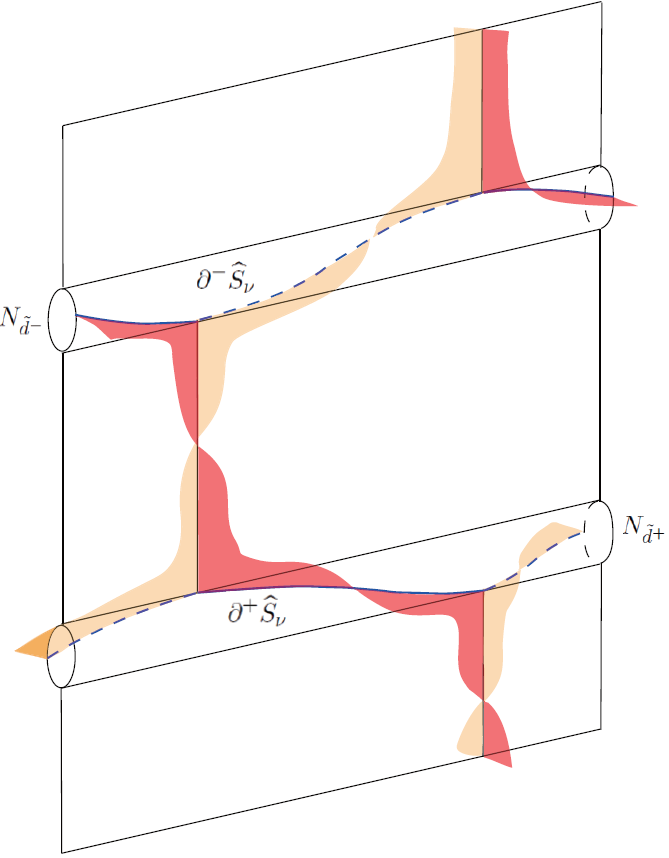}
\label{f.boundary-blow-up}
\caption{The surface $\widehat S_\nu$ in the neighbourhood of the lift in $\widehat U$ of the torus $T$.}
\end{figure}

\begin{corollary}
\label{coro:homology-class-boundary}
If we endow the curve $\partial^+\widehat S_\nu$ with the orientation for which $\theta$ is increasing, and the curve $\partial^-\widehat S_\nu$ with the orientation for which $\theta$ is decreasing, then
$$\left[\partial^{\pm}\widehat S_\nu\right]=\mu^\pm+\lambda^\pm \mbox{ in }H_1(N_{\widetilde{d}^+},\ZZ).$$
\end{corollary}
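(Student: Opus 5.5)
This corollary is a direct homology computation on each torus $N_{\tilde d^\pm}$, using the explicit equations of the boundary curves from Proposition~\ref{p.par-scc} and the definitions of $\mu^\pm,\lambda^\pm$ in Notation~\ref{notations.meridian}. Recall that $(\theta,\psi^\pm)$ are coordinates on $N_{\tilde d^\pm}\simeq \SS^1\times\SS^1$. For any curve on such a torus, its class in $H_1(N_{\tilde d^\pm},\ZZ)\simeq\ZZ\mu^\pm\oplus\ZZ\lambda^\pm$ is read off from the total variations of $\theta$ and of $\psi^\pm$ along the curve. These variations must be measured relative to the orientations chosen for $\mu^\pm$ and $\lambda^\pm$.

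On $N_{\tilde d^+}$, the curve $\partial^+\widehat S_\nu=\{\psi^+=\theta-\nu+\pi\}$ is the graph of a degree-one map $\theta\mapsto\psi^+$. Traversing it once with $\theta$ increasing, $\theta$ increases by $2\pi$ and $\psi^+$ also increases by $2\pi$. The curve is therefore homologous to the sum of one horizontal circle $\{\psi^+=\mathrm{const}\}$ oriented by $\partial_\theta$, which is $\lambda^+$, and one vertical circle $\{\theta=\mathrm{const}\}$ oriented by $\partial_{\psi^+}$, which is $\mu^+$. To justify this, I would homotope the graph (a straight line of slope $1$ in the universal cover $\RR^2$) to the concatenation of a horizontal segment and a vertical segment. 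This gives $[\partial^+\widehat S_\nu]=\mu^++\lambda^+$.

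On $N_{\tilde d^-}$, the curve $\partial^-\widehat S_\nu=\{\psi^-=\theta-\nu\}$ is oriented with $\theta$ decreasing. Over one turn, $\theta$ changes by $-2\pi$ and $\psi^-$ also changes by $-2\pi$. By Notation~\ref{notations.meridian}, $\lambda^-$ is the horizontal circle oriented by $-\partial_\theta$ and $\mu^-$ is the vertical circle oriented by $-\partial_{\psi^-}$. The displacement vector $(-2\pi,-2\pi)$ in the $(\theta,\psi^-)$-plane is therefore exactly one unit of $\lambda^-$ plus one unit of $\mu^-$. The same straight-line homotopy argument gives $[\partial^-\widehat S_\nu]=\mu^-+\lambda^-$.

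The only real care needed is in the sign conventions: $\mu^-$ and $\lambda^-$ carry the reversed orientations $-\partial_{\psi^-}$ and $-\partial_\theta$, and the chosen orientation of $\partial^-\widehat S_\nu$ is the decreasing-$\theta$ one. These two reversals cancel, producing the same formula as on $N_{\tilde d^+}$. There is no genuine obstacle beyond this bookkeeping. Note also that the statement writes $H_1(N_{\widetilde{d}^+},\ZZ)$ for both signs; this is a typo, and the $-$ case lives in $H_1(N_{\tilde d^-},\ZZ)$.
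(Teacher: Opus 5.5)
Your proposal is correct and is essentially the paper's argument: the paper also just reads off the class from the equations $\psi^+=\theta-\nu+\pi$ and $\psi^-=\theta-\nu$ of Proposition~\ref{p.par-scc}, combined with the orientation conventions of Notation~\ref{notations.meridian}. Your explicit bookkeeping of the displacements $(2\pi,2\pi)$ and $(-2\pi,-2\pi)$ against the reversed orientations of $\mu^-,\lambda^-$ fills in what the paper calls ``immediate'', and you are right that the $-$ case should read $H_1(N_{\widetilde d^-},\ZZ)$.
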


\begin{proof}
This follows immediately from the definition of the curves $\mu^\pm, \lambda^\pm$ (see notation~\ref{notations.meridian}), and the equations 
$(\psi^+=\theta - \nu + \pi, \psi^-= \theta - \nu) $ of the curves $\partial^{\pm}\widehat S_\nu$ given by Proposition~\ref{p.par-scc}.
\end{proof}

\begin{proof}[Proof of Proposition~\ref{p.par-scc}]
Recall that in the \((\rho, \theta, \varphi)\) coordinate system:
\begin{itemize}
    \item On the annulus $A$, the surface \(S_{r,\nu}\) is defined by the equation 
    \begin{equation}
    \label{e.boundary-r}
    \varphi = \text{Angle}\left( \partial_\theta, \rho \cos (\theta - \nu) \partial_\rho - \sin (\theta - \nu) \partial_\theta \right)
    \end{equation}
    where the oriented angle is measured with respect to the metric \(g = d\rho^2 + \cosh^2(\rho)  d\theta^2\).
    \item The lifted geodesic \(\widetilde{d}^+\) is the curve \(\{(\rho, \theta, \varphi) \mid \rho = 0, \varphi = 0\}\), and the intersection $\overline S_{r,\nu}\cap \widetilde{d}^+$ is the arc
    \[
    \{(\rho, \theta, \varphi) \mid \rho = 0, \, \nu + \pi \leq \theta \leq \nu + 2\pi, \, \varphi = 0\}.
    \]
\end{itemize}

Equation~\eqref{e.boundary-r} can be rewritten as:
\[
\begin{cases} 
\cos \varphi = -\dfrac{\cosh \rho \cdot \sin (\theta - \nu)}{\sqrt{\rho^2 \cos^2 (\theta - \nu) + \cosh^2 \rho \sin^2 (\theta - \nu)}} \\ 
\sin \varphi = -\dfrac{\rho \cos (\theta - \nu)}{\sqrt{\rho^2 \cos^2 (\theta - \nu) + \cosh^2 \rho \sin^2 (\theta - \nu)}} 
\end{cases}
\]

We first treat the case where $\theta - \nu \in (\pi,2\pi)$. Linearizing the two equalities above at \(\rho \approx 0\), \(\varphi \approx 0\) for $\theta - \nu \in (\pi,2\pi)$, we obtain:
\[
\begin{cases} 
1 = -\frac{\sin (\theta - \nu)}{|\sin (\theta - \nu)|} = 1 \\ 
\varphi = -\rho \dfrac{\cos (\theta - \nu)}{|\sin (\theta - \nu)|}=\rho \dfrac{\cos (\theta - \nu)}{\sin (\theta - \nu)} 
\end{cases}
\]
or equivalently
\[
\rho\cos (\theta - \nu) - \varphi \sin (\theta - \nu)  = 0.
\]
In $(\theta,\tau^+,\psi^+)$ coordinates, the above equation reads
\[
\tau^+\sin(\psi^+)\cos (\theta - \nu) - \tau^+\cos(\psi^+)\sin (\theta - \nu) = 0,
\]
which is equivalent to
$$\sin(\psi^+ - (\theta - \nu)) = 0,$$
that is $\psi^+ = \theta - \nu ~\operatorname{mod} \pi.$ But $\theta-\nu \in (\pi,2\pi)$ and $\psi^+ \in [0, \pi]$ (since $\rho \geq 0$ and $\rho=\tau^+\sin(\psi^+)$), so $\psi^+ = \theta - \nu -\pi$. Hence the closure of the lift of \(S_{r,\nu}\) in $\widehat U$ draws the curve:
\[
\psi^+ =  \theta - \nu + \pi, \quad \nu + \pi \leq \theta \leq \nu + 2\pi
\]
on the torus \(N_{\widetilde{d}^+}\).

We perform similar computations for the left part \(S_{\ell,\nu}\) of $S_\nu$, which is given by the equation: 
\begin{equation}
\label{e.boundary-l}
\varphi = \text{Angle}\left( \partial_\theta, -\rho \cos(\theta -\nu) \partial_\rho + \sin(\theta - \nu ) \partial_\theta \right) 
\end{equation}
and whose boundary intersects \(\widetilde{d}^+\) along the arc \(\{\rho = 0, \nu \leq \theta \leq \nu + \pi, \varphi = 0\}\).
Equationı\eqref{e.boundary-l} can be rewritten:
\begin{align*}
\cos \varphi &= \frac{\cosh \rho \cdot \sin(\theta - \nu)}{\sqrt{\rho^2 \cos^2(\theta - \nu) + \cosh^2 \rho \sin^2(\theta - \nu)}}, \\
\sin \varphi &= \frac{\rho \cos(\theta - \nu)}{\sqrt{\rho^2 \cos^2(\theta - \nu) + \cosh^2 \rho \sin^2(\theta - \nu)}}.
\end{align*}
Linearizing the two equalities above at \(\rho  \approx 0\), \(\varphi  \approx 0\), for $\theta - \nu \in (0,\pi)$, we get:
\[
\begin{cases} 
1 = \frac{\sin(\theta-\nu}{|\sin{\theta-\nu)|}} = 1 \\ 
\varphi = \rho \dfrac{\cos (\theta - \nu)}{|\sin (\theta - \nu)|}=\rho \dfrac{\cos (\theta - \nu)}{\sin (\theta - \nu)} 
\end{cases}
\]
As for the surface \( S_{r,\nu} \), it follows that the closure of the lift of \(S_{\ell,\nu}\) in $\widehat U$ draws the curve
\[
\psi^+ = \theta - \nu +\pi, \quad \nu \leq \theta \leq \nu + \pi
\] 
on the torus \( N_{\widetilde{d}^+} \).

As a conclusion, the closure $\widehat S_\nu$ of the lift in $\widehat U$ of the section \( S_\nu = S_{\ell,\nu} \cup S_{d,\nu} \) draws the closed curve 
\[
\psi^+ = \theta - \nu +\pi
\] 
on the torus \( N_{\widetilde{d}^+} \).

Similarly, if we linearize the (lift of the) surface \( S_{\nu} \) close to a point \( (0, \theta, \pi) \) of \( \widetilde{d}^- \), we get
\[
-\varphi + \pi = -\rho \frac{\cos(\theta - \nu)}{\sin(\theta - \nu)} ,
\]
\emph{i.e.},
\[
\rho \cos(\theta - \nu) - (\varphi-\pi)\sin(\theta - \nu)  = 0.
\]
In $(\theta,\tau^-,\psi^-)$ coordinates, the above equation reads
\[
-\tau^-\cos(\psi^-)\sin(\theta - \nu) +\tau^-\sin(\psi^-)\cos (\theta - \nu)= 0,
\]
which is equivalent to
$$\sin(\psi^- - \theta + \nu) = 0,$$
similar analysis as the previous cases show that $\psi^-=\theta-\nu$. This means that the closure $\widehat S_\nu$ of the lift of the surface \( S_\nu \) in $\widehat M$ draws the curve 
\[
\psi^- =  \theta - \nu
\] 
on the torus \( N_{\widetilde{d}^-} \).
\end{proof}

 \subsection{Turning the geodesic flow into an Anosov flow on a fibered manifold}
 \label{ss.YtM}

Recall that $\widehat U$ is the 3-manifold with boundary obtained from the unit tangent bundle $U$ by blowing-up the lifted geodesics $\tilde d^-$ and $\tilde d^+$. The boundary components of $\widehat U$ are the tori $N_{\widetilde{d} ^-}$ and $N_{\widetilde{d} ^+}$. According to Proposition~\ref{p.par-scc} and Corollary~\ref{coro:homology-class-boundary}, the curves $(\partial^\pm\widehat S_\nu)_{\nu\in\SS^1}$ are the fibers of a circle fibration of the torus $N_{\tilde d^\pm}$. These fibers (equipped with the appropriate orientation) lie in the homology class $\mu^\pm+\lambda^\pm\in H_1(N_{\tilde d^\pm},\ZZ)$ where $\mu^\pm$ and $\lambda^\pm$ are the canonical meridian and the canonical longitude of the torus $N_{\tilde d^\pm}$. 

Also recall that $X^t$ denotes the geodesic flow on the unit tangent bundle $U$. We denote by $\widehat X$ the lift of the vector field $X$ to the blown-up manifold $\widehat U$. Recall that the restriction of $\widehat X$ to the two boundary components $N_{\tilde d^\pm}$ of $\widehat M$ is induced by the derivative of $X$ (through the identification of $N_{\tilde d^\pm}$ to the projectivized normal bundle along $\tilde d^\pm$).

\begin{lemma}
\label{l.blow-up-flow-transverse}
The orbits of vector field $\widehat X$ are topologically transverse to the family of curves $(\partial^\pm \widehat S_\nu)_{\nu\in\SS^1}$ on the torus $N_{\tilde d^\pm}$.
\end{lemma}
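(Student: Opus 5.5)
The plan is to compute the flow $\widehat X$ on the tori $N_{\tilde d^\pm}$ explicitly in the coordinates $(\theta,\psi^\pm)$. Then we compare it with the family of curves $\partial^+\widehat S_\nu=\{\psi^+=\theta-\nu+\pi\}$ and $\partial^-\widehat S_\nu=\{\psi^-=\theta-\nu\}$ given by Proposition~\ref{p.par-scc}. The key observation we aim for is that the function $F^\pm:=\psi^\pm-\theta$, whose level sets are exactly these curves, is strictly monotone along every orbit of $\widehat X_{|N_{\tilde d^\pm}}$. Such monotonicity is precisely topological transversality.

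\textbf{Step 1: linearize the geodesic flow along $\tilde d^+$.} A unit-speed geodesic in $A$ with angle $\varphi$ to $\partial_\theta$ satisfies $\dot\rho=\sin\varphi$ and $\dot\theta=\cos\varphi/\cosh\rho$. Computing the rotation of the direction relative to the field $\partial_\theta$, whose integral curves have geodesic curvature $\tanh\rho$, gives $\dot\varphi=\pm\tanh\rho\cos\varphi$. To first order near $(\rho,\varphi)=(0,0)$ this yields the linear system $\dot\theta=1$, $\dot\rho=\varphi$, $\dot\varphi=\epsilon\rho$ with $\epsilon\in\{\pm1\}$. We must have $\epsilon=+1$, since this is the Jacobi equation $J''=J$ in curvature $-1$; the sign of the curvature term only needs to be checked once, against the fact that $\tilde d^\pm$ is a hyperbolic orbit. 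This linear system is $\theta$-invariant, consistent with the proof of the previous lemma.

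\textbf{Step 2: projectivize.} Write $(\varphi,\rho)=(\tau^+\cos\psi^+,\tau^+\sin\psi^+)$. The induced flow on $N_{\tilde d^+}$ is
\[
\dot\theta=1,\qquad \dot\psi^+=\frac{\varphi\dot\rho-\rho\dot\varphi}{\tau^2}=\cos^2\psi^+-\sin^2\psi^+=\cos 2\psi^+ .
\]
Its invariant circles are $\psi^+\in\{\pm\pi/4,\pm3\pi/4\}$, the stable and unstable directions, as expected. Along any orbit,
\[
\frac{d}{dt}\bigl(\psi^+-\theta\bigr)=\cos2\psi^+-1\le 0 ,
\]
with equality only on the circles $\psi^+\in\{0,\pi\}$. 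These circles are not invariant, since there $\dot\psi^+=1\neq0$. Hence every orbit meets them only at isolated times, and $\psi^+-\theta$ is strictly decreasing along every orbit. Therefore each orbit crosses each curve $\partial^+\widehat S_\nu$. It does so genuinely transversally away from $\psi^+\in\{0,\pi\}$, and at the tangency points (where $\partial^+\widehat S_\nu$ is tangent to $\widehat X$) it still crosses from one side to the other. This is topological transversality.

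\textbf{Step 3: the torus $N_{\tilde d^-}$.} We use the coordinates $(\varphi,\rho)=(\pi+\tau^-\cos\psi^-,\tau^-\sin\psi^-)$, together with the fact that $\tilde d^-$ is traversed in the direction $-\partial_\theta$. Redoing the linearization gives $\dot\theta=-1$ and $\dot\psi^-=-\cos2\psi^-$, up to the same sign check. Then $\frac{d}{dt}(\psi^--\theta)=1-\cos2\psi^-\ge0$, again vanishing only on non-invariant circles, so the same argument applies with reversed monotonicity.

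The main obstacle I expect is the bookkeeping of signs: the sign of $\dot\varphi$ in Fermi coordinates, and the orientations on $N_{\tilde d^-}$, where both $\theta$ and the polar angle are reversed. If a sign came out wrong, $\psi^\pm-\theta$ would have derivative $\cos2\psi\pm1$ of the wrong type and would fail to be monotone. Fortunately the argument is robust. The curves have slope $\pm1$ in $(\theta,\psi)$, and $|\dot\psi|=|\cos2\psi|\le|\dot\theta|$ everywhere, with equality only on non-invariant circles. So the conclusion holds provided the slope of the curves matches the sign realized by $\cos2\psi$ at its extrema. I would verify this matching directly from the explicit equations in Proposition~\ref{p.par-scc}.
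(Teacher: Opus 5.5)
Your proof is correct and is essentially the paper's argument. The paper also linearizes the geodesic flow in the Fermi coordinates along $\tilde d^\pm$ and projectivizes in $(\theta,\psi^\pm)$, finding that orbits are graphs of slope $\cos 2\psi^\pm\in[-1,1]$ (obtained by differentiating the explicit solution rather than by writing the projectivized vector field), and then compares these with the slope-$1$ circles $\partial^\pm\widehat S_\nu$. Your strict monotonicity of $\psi^\pm-\theta$, with tangencies only on non-invariant circles, is a cleaner way to say the same thing, and, as you suspected, the sign bookkeeping is harmless: any hyperbolic choice of signs gives $\dot\psi^\pm=\pm\cos 2\psi^\pm$, whose range is $[-1,1]$ in both cases.
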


\begin{proof}
Let us treat the case of the torus $N_{\tilde d^+}$, the case of the torus $N_{\tilde d^-}$ being similar. Let us first recall that the lifted geodesic $\tilde d^+$ corresponds to the curve $\rho=\varphi=0$ in the $(\rho,\theta,\varphi)$ coordinate system. Since $\rho$, $\theta$ and $\varphi$ are nothing but the normal Fermi coordinates along $\tilde d^+$, the action of the geodesic flow and its derivative along this geodesics reads (see \textit{e.g.}~\cite[Proposition 2.3.7]{Wilkinson}:
$$X^t(\theta,0,0)=(\theta+t,0,0)\quad\mbox{ and }\quad  DX^t_{(\theta_0,0,0)}\cdot \left ( \begin{array}{c} \partial_\theta \\ \partial_\rho \\ \partial_\varphi \end{array} \right ) = \left (\begin{array}{ccc} 1 & 0 & 0 \\ 0 & \cosh t & \sinh t \\  0 & \sinh t & \cosh t \end{array} \right ) \left ( \begin{array}{c} \partial_\theta \\ \partial_\rho \\ \partial_\varphi \end{array} \right ). $$
Now, recall that the torus $N_{\tilde d^+}$ is parametrized by the coordinate system $(\theta,\psi^+)$ where $\psi^+$ is defined by $(\varphi,\rho)=(\sqrt{\varphi^2+\tau^2}\cos\psi^+,  \sqrt{\varphi^2+\tau^2}\sin\psi^+)$.
Hence the action of the flow $\widehat X$ on the torus $N_{\tilde d^+}$ reads 
$$\widehat X^t(\theta,\psi^+) = \left(\theta+t , \arctan\frac{\cosh t \sin\psi^+ + \sinh t \cos \psi^+}{\sinh t \sin\psi^+ + \cosh t \cos \psi^+}\;\mathrm{mod}\;\pi\right).$$
It follows that, if we denote by $\tilde\theta$ the lift of the coordinate $\theta\in\SS^1=\RR/2\pi\ZZ$ to $\RR$, then the orbits of $\widehat X^t$ are graphs of the form 
$$\psi^+\left(\tilde\theta\right) = \arctan\left(\frac{\cosh \left(\tilde\theta\right) \sin\left(\psi^+_0\right) + \sinh \left(\tilde\theta\right) \cos \left(\psi^+_0\right)}{\sinh \left(\tilde\theta\right) \sin\left(\psi^+_0\right) + \cosh \left(\tilde\theta\right) \cos \left(\psi^+_0\right)}\right)\;\mathrm{mod}\;\pi.$$
The derivative of the function above writes
$$\frac{d\psi^+}{d\tilde\theta}\left(\tilde\theta\right) = \frac{\cos\left(2\psi^+_0\right)}{\cosh\left(2\tilde\theta\right)+ \sinh\left(2\tilde\theta\right)\sin\left(2\psi^+_0\right)}.$$
One can easily check that the right-hand side of the equality above is bounded by $1$ (either it is positive and its maximum is equal to $1$, or it is negative and its minimum is equal to $-1$)). This means that, in $(\theta,\psi^+)$ coordinates, the orbits of the vector field $\widehat X$ are graphs of slope at most $1$. Now, recall that, according to Proposition~\ref{p.par-scc} the curves $(\partial^+\widehat S_\nu)_{\nu\in\SS^1}$ are circles of constant slope equal to $1$ in $(\theta,\psi^+)$ coordinates. The lemma follows. Figure~\ref{f.transversality} shows the orbits of the flow $\widehat X^t$ and the curves $(\partial^+\widehat S_\nu)_{\nu\in\SS^1}$ on the torus $T_{\tilde d^+}$. 
\end{proof}

\begin{figure}
\centering
\includegraphics[scale=0.34]{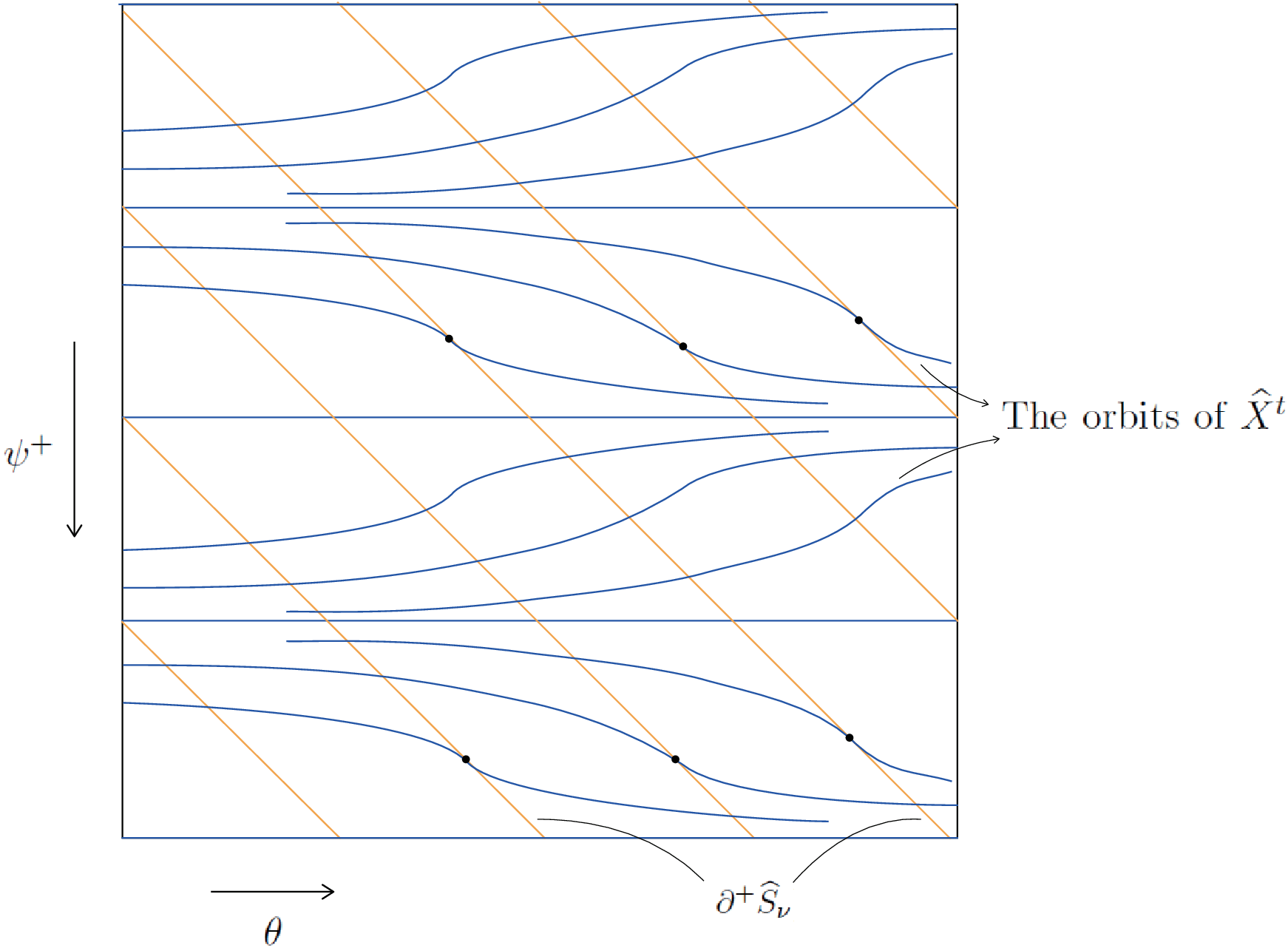}
\label{f.transversality}
\caption{The orbits of the flow $\widehat X^t$ (in blue) and the curves $(\partial^+\widehat S_\nu)_{\nu\in\SS^1}$ (in yellow) on the torus $T_{\tilde d^+}$ }
\end{figure}

\begin{remark}
The orbits of $\widehat X^t$ are topologically transverse to the $\partial^+\widehat S_\nu$'s, but have a tangencies with those curves, since the maximum slope of the orbits is precisely equal to~$1$. See Figure~\ref{f.transversality}. So we obtain exactly the property we need, with no safety margin. 
\end{remark}
 
\begin{notation}
\label{notation-fibered-manifold}
We denote by $\mathscr{U}$ the closed oriented 3-manifold obtained from $\widehat U$ by collapsing each curve $\partial^\pm\widehat S_\nu$, $\nu\in\SS^1$, to a point. 
\end{notation}

Lemma~\ref{l.blow-up-flow-transverse} ensures that the oriented one-dimensional foliation of $\widehat U$ by the orbits of the flow $\widehat X^t$ projects down in $\mathscr{U}$ to a continuous oriented one-dimensional foliation in $\mathscr{U}$. 

\begin{notation}
We denote by $\mathscr{X}$ a continuous non-singular vector field on $\mathscr{U}$, positively tangent to the oriented one-dimensional foliation obtained by projecting the orbits of $\widehat X^t$. 
\end{notation}

\begin{proposition}
\label{p.we-get-Dehn-Fried}
The vector field $\mathscr{X}$ is obtained from the geodesic vector field $X$ by performing an index $+1$ Dehn-Fried surgery on the lifted geodesic $\tilde d^+$ and an index $+1$ Dehn-Fried surgery on the lifted geodesic $\tilde d^+$. In particular $\mathscr{X}$ is orbit equivalent to a transitive Anosov vector field. 
\end{proposition}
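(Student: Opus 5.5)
The plan is to compare the construction of $\mathscr{U}$ and $\mathscr{X}$ with the definition of an index $k$ Dehn-Fried surgery given in Subsection~\ref{subsection:Dehn-Fried} (Definition~\ref{definition:Dehn-Fried}), applied simultaneously along the two periodic orbits $\tilde d^+$ and $\tilde d^-$. (The second occurrence of $\tilde d^+$ in the statement is presumably meant to be $\tilde d^-$.) That definition has three ingredients. First, one blows up the orbit. Second, on the boundary torus $N_\gamma$ one picks a circle fibration $p'$ whose fibers lie in the homology class $\pm(\mu+k\lambda^s)$ and are (topologically) transverse to the lifted flow $\widehat X^t$. Third, one collapses each fiber of $p'$ to a point. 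Since $\tilde d^+$ and $\tilde d^-$ are disjoint and the blow-ups and collapses are local near each orbit, the two surgeries can be performed at once. So it suffices to check these ingredients for the boundary tori $N_{\tilde d^\pm}$ of $\widehat U$ with the fibrations $\nu\mapsto\partial^\pm\widehat S_\nu$.

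First I check that the multipliers of $\tilde d^\pm$ are positive, which the definition requires. The derivative formula used in the proof of Lemma~\ref{l.blow-up-flow-transverse} gives eigenvalues $e^{\pm t}$, so this holds. Next, Proposition~\ref{p.par-scc} shows that the curves $\partial^+\widehat S_\nu=\{\psi^+=\theta-\nu+\pi\}$, for $\nu\in\SS^1$, are pairwise disjoint graphs over the $\theta$-circle. They fill $N_{\tilde d^+}$, so they are the fibers of a genuine circle fibration of the torus, and the same holds on $N_{\tilde d^-}$. By Corollary~\ref{coro:homology-class-boundary} their homology class is $\mu^\pm+\lambda^\pm$. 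The lemma preceding Proposition~\ref{p.par-scc} identifies $\mu^\pm$ and $\lambda^\pm$ with the canonical meridian $\mu$ and the canonical longitude $\lambda^s$ of Definitions~\ref{definition:canonical-meridian} and~\ref{definition:canonical-longitude}. Hence the fibers lie in the class $\mu+1\cdot\lambda^s$, which corresponds to index $k=+1$ on each torus.

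Lemma~\ref{l.blow-up-flow-transverse} gives the topological transversality of $\widehat X^t$ to these fibers. This is exactly the weakened transversality hypothesis allowed just before Definition~\ref{definition:Dehn-Fried}. By Notation~\ref{notation-fibered-manifold}, the quotient $\mathscr{U}$ is then precisely the manifold $M'$ of that definition, and $\mathscr{X}$ is a vector field tangent to the projected one-dimensional foliation. It follows that $(\mathscr{U},\mathscr{X})$ is the result of index $+1$ Dehn-Fried surgeries on $\tilde d^+$ and on $\tilde d^-$. The geodesic flow of a closed hyperbolic surface is a transitive Anosov flow. Applying Shannon's theorem once for each orbit, or once for the pair, shows that $\mathscr{X}$ is orbit equivalent to a transitive Anosov flow.

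The main obstacle is the orientation and sign bookkeeping needed to get index $+1$ rather than $-1$. The convention for $\mu^-$ and $\lambda^-$ is reversed relative to the $(\theta,\psi^-)$ coordinates, and the quantity to check is the unoriented class $\pm(\mu+k\lambda^s)$. I would therefore verify directly that, on $N_{\tilde d^-}$, the curve $\psi^-=\theta-\nu$ oriented with $\theta$ decreasing gives $(-1)\cdot(-\mu^-)$-type signs that combine to $\mu^-+\lambda^-$, as claimed in Corollary~\ref{coro:homology-class-boundary}. To do this I will use that $(\partial_\theta,\partial_{\psi^-})$ is direct, together with Observation~\ref{remark:intersection-number}.
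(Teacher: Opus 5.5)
Your proposal is correct and follows essentially the same route as the paper: identify the construction of $(\mathscr{U},\mathscr{X})$ with Definition~\ref{definition:Dehn-Fried}, get the (topological) transversality from Lemma~\ref{l.blow-up-flow-transverse}, read off index $+1$ from Corollary~\ref{coro:homology-class-boundary} together with the identification of $\mu^\pm,\lambda^\pm$ as the canonical meridian and longitude, and conclude with Shannon's theorem. You are also right that the second $\tilde d^+$ in the statement should read $\tilde d^-$, and the sign check you flag comes out as claimed: on $N_{\tilde d^-}$ the curve $\psi^-=\theta-\nu$, traversed with $\theta$ decreasing, has $\Delta\theta=\Delta\psi^-=-2\pi$, which is exactly $\lambda^-+\mu^-$ because these are oriented by $-\partial_\theta$ and $-\partial_{\psi^-}$ respectively.
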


\begin{proof}
The definition of $\mathscr{X}$ is precisely those of a vector field obtained from $X$ by some Dehn-Fried surgeries on the lifted geodesics $\tilde d^\pm$ (see Subsection~\ref{subsection:Dehn-Fried}), where we use Lemma~\ref{l.blow-up-flow-transverse} to ensure that the curves we collapsed are transverse to the orbits of the blown-up flow, as required by the construction. The indices of the Dehn-Fried surgeries are given by Corollary~\ref{coro:homology-class-boundary}. Shannon has proved that the result of such Dehn-Fried surgeries is orbit equivalent to a transitive Anosov vector field (\cite{shannon2020dehn}). 
\end{proof}

\begin{notations}
\noindent 
\begin{itemize}
\item We denote by $r_\nu^\pm$ the point of $\mathscr{U}$ which is the projection of the curve $\partial^\pm \widehat S_\nu$. 

\item We denote by $\mathscr{d}^\pm$ the curve in $\mathscr U$ which is the projection of the torus $N_{\tilde d^\pm}$. Our construction entails that $\mathscr{d}^-$ and $\mathscr{d}^+$ are two periodic orbits of $\mathscr{X}$.

\item We denote by $\mathscr{i}$ the natural identification of $\left(\mathscr{U}-\mathscr{d}^\pm,\mathscr{X}_{|\mathscr{U}-\mathscr{d}^\pm}\right)$ with $\left(U-\tilde d^\pm,X_{|U-\tilde d^\pm}\right)$.

\item We denote by $\mathscr{S}_\nu$ the projection in $\mathscr{U}$ of the surface $\widehat S_\nu$.

\item We denote by $\mathscr{d}_\nu^\pm$ the projection in $\mathscr{U}$ of the arc $d_\nu^\pm$, and we set $\mathscr{d}_\nu:=\mathscr{d}_\nu^-\cup\mathscr{d}_\nu^+$.
\end{itemize}
\end{notations}

\begin{proposition} 
\label{c.Anosov-tor-fib}
For each $\nu\in\SS^1$, $\mathscr{S}_\nu$ is a closed (\emph{i.e} compact boundaryless) genus two surface, and $\mathscr{d}_\nu$ is a simple closed curve, passing through the points $r_\nu^-$ and $r_\nu^+$, dividing the surface $\mathscr{S}_\nu$ into two once-punctured tori $\mathscr{S}_{\ell,\nu}$ and $\mathscr{S}_{r,\nu}$. 
\end{proposition}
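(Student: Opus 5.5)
The plan is to analyse $\widehat S_\nu$ in two pieces, its interior and its boundary, and then see how the boundary collapses under the quotient $\widehat U\to\mathscr U$.

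\textbf{Step 1: the surface $\widehat S_\nu$.} By Proposition~\ref{p.par-scc}, $\widehat S_\nu$ is a compact topological surface in $\widehat U$. Its boundary consists of the two circles $\partial^+\widehat S_\nu\subset N_{\tilde d^+}$ and $\partial^-\widehat S_\nu\subset N_{\tilde d^-}$. Its interior is the union of:
\begin{itemize}
\item the lift of $S_{\ell,\nu}\sqcup S_{r,\nu}$, which is homeomorphic to $S-d$ via $\pi$;
\item the lifts of the two fiber arcs $d_\nu^-$ and $d_\nu^+$. These are open arcs disjoint from $\tilde d^\pm$ except at their endpoints, so they lift to arcs joining $N_{\tilde d^-}$ to $N_{\tilde d^+}$.
\end{itemize}
The two pieces $\widehat S_{\ell,\nu}$ and $\widehat S_{r,\nu}$ (closures of the lifts) are glued to each other exactly along these lifted arcs $\hat d_\nu^\pm$. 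This gluing is what the proof of Proposition~\ref{p.top-bund} shows: the fiber arcs are common to $\overline S_{\ell,\nu}$ and $\overline S_{r,\nu}$, while the geodesic arcs get blown up into pieces of $\partial^\pm\widehat S_\nu$.

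\textbf{Step 2: the boundary of each half.} Each half $\widehat S_{\ell,\nu}$ (resp. $\widehat S_{r,\nu}$) is a once-punctured torus compactified by a single boundary circle. That circle is the concatenation of four arcs:
\begin{itemize}
\item $\hat d_\nu^-$;
\item an arc of $\partial^+\widehat S_\nu$ (the half $\nu\le\theta\le\nu+\pi$ for the left piece, the complementary half for the right piece);
\item $\hat d_\nu^+$;
\item an arc of $\partial^-\widehat S_\nu$.
\end{itemize}
This follows from the parametrised computations in the proof of Proposition~\ref{p.par-scc}. To check that each half is genuinely a compact surface with one boundary circle, and not something pinched, I would use the graph description in $(\theta,\tau^\pm,\psi^\pm)$ coordinates near $N_{\tilde d^\pm}$. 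I would also use the explicit description near interior points of the fiber arcs, where $S_{r,\nu}$ and $S_{\ell,\nu}$ approach from $\rho>0$ and $\rho<0$ respectively. It follows that $\widehat S_\nu$ is a genus two surface with two boundary circles: gluing two one-holed tori along two disjoint arcs of their boundaries gives genus $2$ with two boundary components.

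\textbf{Step 3: the quotient.} In $\mathscr U$ each circle $\partial^\pm\widehat S_\nu$ collapses to the single point $r_\nu^\pm$. Since these are boundary circles of $\widehat S_\nu$, collapsing each to a point caps it off with a disc, up to homeomorphism. Hence $\mathscr S_\nu$ is a closed genus two surface.

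To show $\mathscr S_\nu$ is embedded in $\mathscr U$, I use three facts:
\begin{itemize}
\item the map $\widehat U\to\mathscr U$ is injective off $N_{\tilde d^\pm}$;
\item on $N_{\tilde d^\pm}$ its fibers are exactly the curves $\partial^\pm\widehat S_{\nu'}$;
\item $\widehat S_\nu$ meets $N_{\tilde d^\pm}$ precisely in the full fiber $\partial^\pm\widehat S_\nu$.
\end{itemize}
The image $\mathscr d_\nu$ of $\hat d_\nu^-\cup\hat d_\nu^+$ is a simple closed curve through $r_\nu^-$ and $r_\nu^+$, because the endpoints of the two arcs on each torus lie on the same collapsed circle. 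After collapsing, the boundary circle of each half from Step 2 becomes $\mathscr d_\nu$. So $\mathscr d_\nu$ bounds $\mathscr S_{\ell,\nu}$ on one side and $\mathscr S_{r,\nu}$ on the other, each a torus minus an open disc, i.e. the once-punctured tori of the statement.

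\textbf{Main obstacle.} The delicate point is the local structure of $\widehat S_\nu$ near the endpoints of the lifted fiber arcs, i.e. at $\theta=\nu$ or $\nu+\pi$ on $N_{\tilde d^\pm}$. There the linearisations used in Proposition~\ref{p.par-scc} degenerate, since $\sin(\theta-\nu)=0$. I would handle this in two ways:
\begin{itemize}
\item first, note that the curves $\partial^\pm\widehat S_\nu$ are given continuously for all $\theta$, so $\widehat S_\nu$ near these corners is a topological quadrant glued to two others, which is locally a half-plane;
\item second, if needed, perturb $\nu$-independently inside a small neighbourhood using the rotation-invariance $R_\nu$ in the model.
\end{itemize}
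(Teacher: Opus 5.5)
Your proposal is correct and is essentially the paper's own argument. In both, $\widehat S_\nu$ is the union of the two once-holed tori $\widehat S_{\ell,\nu}$ and $\widehat S_{r,\nu}$, glued along the lifted fiber arcs $\hat d_\nu^\pm$ that join $\partial^-\widehat S_\nu$ to $\partial^+\widehat S_\nu$. Collapsing those two boundary circles gives a closed genus two surface, and $\mathscr{d}_\nu$ is the image of $\hat d_\nu^-\cup\hat d_\nu^+$, separating the images of the two halves. Your extra corner analysis really belongs to Proposition~\ref{p.par-scc}, which both you and the paper take as given; note also that near a corner one sees two quadrants, one from each half, glued along $\hat d_\nu^\pm$, not three.
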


Figure~\ref{f.fibers} and Figure~\ref{f.local-Sigma} below both are attempts to display, for a given $\nu\in\SS^1$, the surface $\widehat S_\nu$ and its projection $\mathscr{S}_\nu$. Figure~\ref{f.fibers} is a global but abstract representation of these surfaces. Figure~\ref{f.local-Sigma} represents the local situation in $\widehat U$ and $\mathscr{U}$, in a neighbourhood of the torus $N_{\tilde d^\pm}$ and their projections. 

\begin{figure}[htb]
\centering
 \includegraphics[scale=0.34]{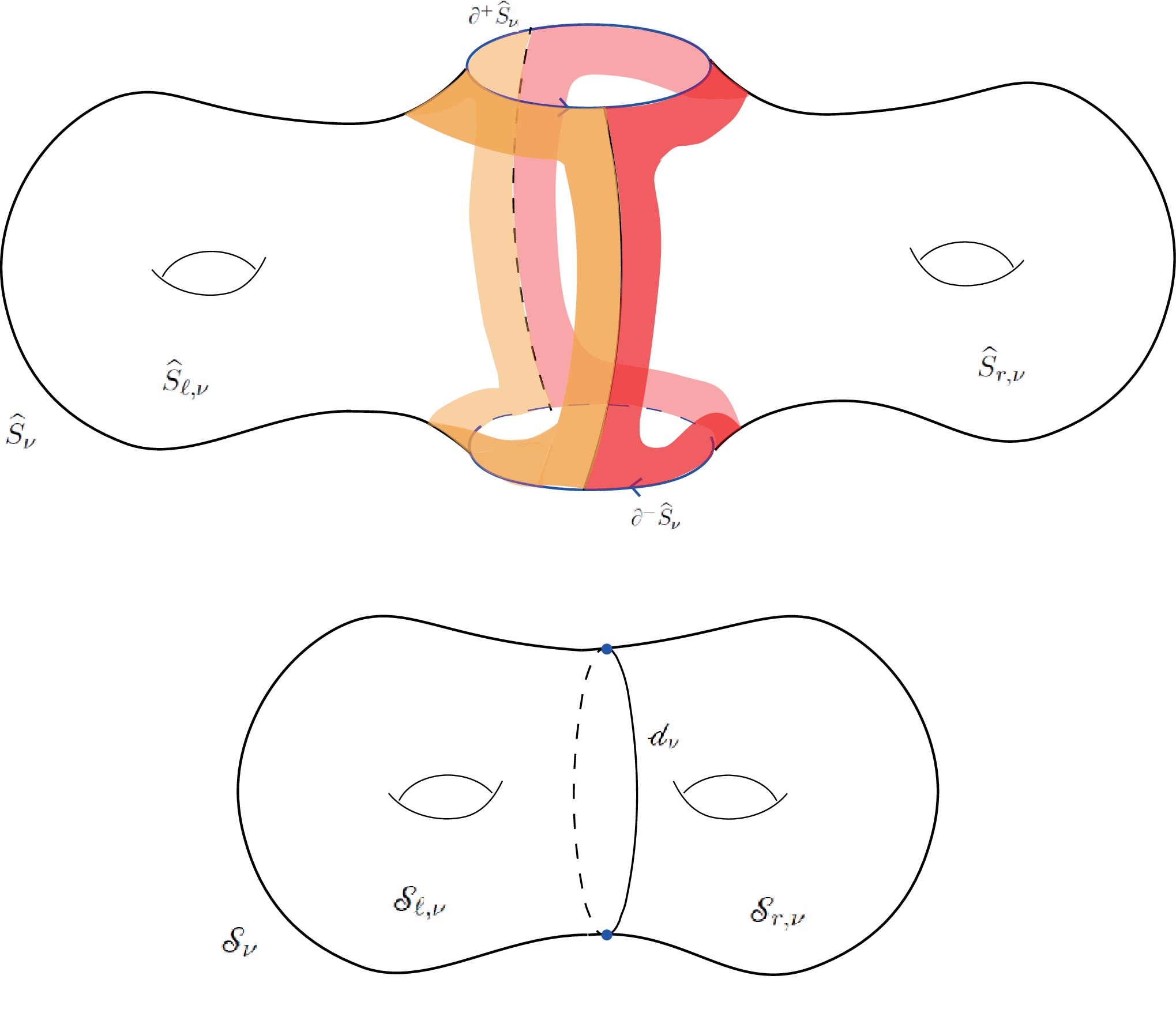}
    \caption{The surface $\widehat{S}_\nu$ and its projection $\mathscr{S}_\nu$}
    \label{f.fibers}
\end{figure}

\begin{figure}[htb]
\centering
 \includegraphics[scale=0.32]{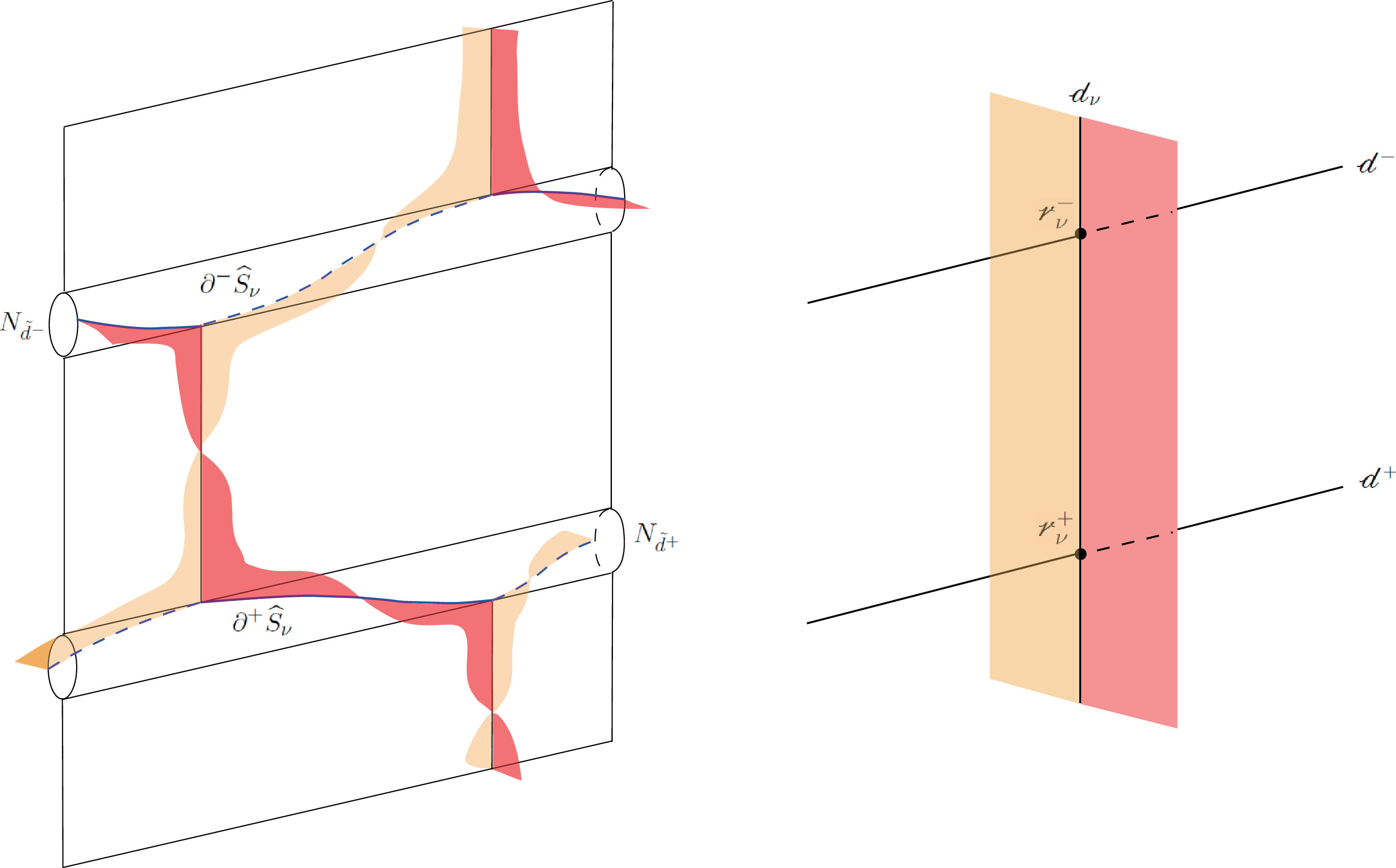}
    \caption{The surface $\widehat S_\nu$, the tori $N_{\tilde d^-},N_{\tilde d^+}$, the curves $\partial^-\widehat S_\nu, \partial^+\widehat S_\nu$ in $\widehat U$. The projections of some of these objects in $\mathscr{U}$: the surface $\mathscr{S}_\nu$, the orbits $\mathscr{d}^-,\mathscr{d}^+$, the curve $\mathscr{d}_\nu$; the points $\mathscr{r}_\nu^-, \mathscr{r}_\nu^+$.}
    \label{f.local-Sigma}
\end{figure}

\begin{proof}
Fix $\nu_0\in\SS^1$. The boundary components of the surface $\widehat S_{\nu_0}$ are the curves $\partial^{-}\widehat S_{\nu_0}$ and $\partial^{+}\widehat S_{\nu_0}$. The surface $\widehat S_{\nu_0}$ is disjoint from the curves $\partial^\pm\widehat S_{\nu}$ for $\nu\neq\nu_0$. By definition, the manifold $\mathscr{U}$ is obtained from $\widehat U$ by collapsing each curve $\partial^\pm\widehat S_{\nu}$, for $\nu\in\SS^1$, to a point. Hence the surface $\mathscr{S}_{\nu_0}$ is obtained by collapsing each of the two boundary components $\partial^{-}\widehat S_{\nu_0}, \partial^{+}\widehat S_{\nu_0}$ of the surface $\widehat S_{\nu_0}$ to a point.

Now, the surface $\widehat S_{\nu_0}$ is obtained by gluing the two once punctured tori $S_{\ell,\nu_0}$ and $S_{r,\nu_0}$ on the neighbourhood of $\widehat S_{\nu_0}\cap T$ described in the previous subsection and represented in Figure~\ref{f.boundary-blow-up}). It follows that $\widehat S_{\nu_0}$ is a connected orientable genus two surface with two boundary components, \emph{i.e.}, is diffeomorphic to a closed orientable genus two surface minus two open discs. As a further consequence, $\mathscr{S}_{\nu_0}$ is homeomorphic to a closed (\emph{i.e.}, compact and boundaryless) genus two surface. See Figure~\ref{f.fibers}.

By definition, $d_{\nu_0}^-$ and $d_{\nu_0}^+$ are two disjoint arcs contained in the surface $\overline{S}_{\nu_0}$, disjoint from each other (see  Addendum~\ref{a.topological-closure}). The ends of these two arcs lie on the geodesics $\tilde d^\pm$, but their interiors are disjoint from theses geodesics. It follows that $d_{\nu_0}^-$ and $d_{\nu_0}^+$ lift homeomorphically to two disjoint arcs $\hat d_{\nu_0}^-$ and $\hat d_{\nu_0}^+$ in $\widehat U$, whose ends lie on the tori $N_{\tilde d^\pm}$ and whose interiors are disjoint from these tori. From the equations of the curves $\partial^-\widehat S_{\nu_0}$ and $\partial^+\widehat S_{\nu_0}$ in Proposition~\ref{p.par-scc}, we get that both the arcs $\hat d_{\nu_0}^-$ and $\hat d_{\nu_0}^+$ join the curve $\partial^-\widehat S_{\nu_0}$ to the curve $\partial^+\widehat S_{\nu_0}$. It follows that the projections $\mathscr{d}_{\nu_0}^-$ and $\mathscr{d}_{\nu_0}^+$ of $\hat d_{\nu_0}^-$ and $\hat d_{\nu_0}^+$ are two arcs with disjoint interiors, joining the point $\mathscr{r}_{\nu_0}^-$ to the point $\mathscr{r}_{\nu_0}^+$ in the surface $\mathscr{S}_{\nu_0}$. Hence, the union $\mathscr{d}_{\nu_0}=\mathscr{d}_{\nu_0}^-\cup\mathscr{d}_{\nu_0}^+$ is a simple closed curve in $\mathscr{S}_{\nu_0}$ passing through the points $\mathscr{r}_{\nu_0}^-$ to the point $\mathscr{r}_{\nu_0}^+$. This simple closed curve  divides $\mathscr{S}_{\nu_0}$ into the two once-punctured tori, since its lift $\mathscr{d}_{\nu_0}^-\cup\mathscr{d}_{\nu_0}^+\cup\partial^{-}\widehat S_{\nu_0}\cup\partial^{+}\widehat S_{\nu_0}$ in $\widehat U$ divides the lift $\widehat S_{\nu_0}$ of the surface $\mathscr{S}_{\nu_0}$ into the two once-punctured tori $S_{\ell,\nu_0}$ and $S_{r,\nu_0}$, and since the later project homeomorphically in $\mathscr{U}$. See again Figure~\ref{f.fibers}.
\end{proof}

\begin{proposition}
\label{proposition:fibration-3}
The surfaces $(\mathscr{S}_\nu)_{\nu \in \SS^1}$ are the fibers of a fibration of $\mathscr{p}:\mathscr{U}\to\SS^1$.
\end{proposition}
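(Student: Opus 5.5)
We will define the map $\mathscr{p}:\mathscr{U}\to\SS^1$ explicitly and show it is a locally trivial fibration whose fibers are the $\mathscr{S}_\nu$. We do this piece by piece, following the decomposition $\mathscr{U}=\mathscr{i}^{-1}(U_\ell)\sqcup \mathscr{i}^{-1}(U_r)\sqcup \mathscr{i}^{-1}(T-\tilde d^\pm)\sqcup \mathscr{d}^-\sqcup\mathscr{d}^+$.

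\textbf{Step 1: the open pieces.} On $\mathscr{i}^{-1}(U_\ell\sqcup U_r)$ we set $\mathscr{p}(m,v)=\nu$, where $\nu$ is the unique parameter with $v=V_\nu(m)$. By item (4) of Proposition~\ref{p.SlSr-section}, this is a smooth submersion, namely the trivial fibrations $U_\ell\cong S_\ell\times\SS^1$ and $U_r\cong S_r\times\SS^1$ already noted after that proposition.

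\textbf{Step 2: the torus $T$.} On $T-(\tilde d^+\cup\tilde d^-)$ we use Addendum~\ref{a.topological-closure}. A point $(0,\theta,\varphi)$ with $\varphi\neq 0,\pi$ lies on the arc $d_\nu^-$ with $\nu=\theta$ if $\varphi\in(-\pi,0)$, and on the arc $d_\nu^+$ with $\nu=\theta-\pi$ if $\varphi\in(0,\pi)$. These arcs are pairwise disjoint and fill $T-\tilde d^\pm$, so $\mathscr{p}$ extends there. On $\mathscr{d}^\pm$, the point $r_\nu^\pm$ is sent to $\nu$. This is well defined because, by Proposition~\ref{p.par-scc}, the curves $\partial^\pm\widehat S_\nu$ foliate $N_{\tilde d^\pm}$ and $\nu\mapsto r^\pm_\nu$ parametrizes $\mathscr{d}^\pm$ bijectively.

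\textbf{Step 3: continuity.} By construction the fibers are exactly the $\mathscr{S}_\nu$, which are disjoint for distinct $\nu$, so $\mathscr{p}$ is a well-defined surjection. Continuity near $T$ follows from the limit analysis in the proof of Proposition~\ref{p.top-bund}, as $(m,v)\to T$ with $v=V_\nu(m)$. Near $N_{\tilde d^\pm}$ in $\widehat U$, we would first check that the function $\hat\nu$ defined on $\widehat U$ is continuous, using the linearized equations $\psi^+=\theta-\nu+\pi$ and $\psi^-=\theta-\nu$, which express $\nu$ continuously up to the boundary. Since $\hat\nu$ is constant on the collapsed curves, it then descends to $\mathscr{U}$.

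\textbf{Step 4: local triviality.} This is the main obstacle. Since $\mathscr{U}$ is only a topological manifold near $\mathscr{d}^\pm$, we cannot invoke Ehresmann. Instead we construct local product charts directly.

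Away from $\mathscr{d}^\pm$, the surfaces $\overline S_\nu$ near $T$ form a continuous family of graphs over the neighbourhood of $T$ given in coordinates, so explicit charts $(\rho,\theta,\nu)$ can be written down on both sides of $T$.

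Near $\mathscr{d}^+$ we work in $\widehat U$ with coordinates $(\theta,\tau^+,\psi^+)$. The function $\hat\nu$ restricts to $\theta-\psi^++\pi$ on the boundary. We would show that $(\theta,\tau^+,\hat\nu)$ is a homeomorphic chart on a collar. Collapsing $\{\tau^+=0,\hat\nu=\nu\}$ to points then turns the collar into a product $\DD\times(\nu-\delta,\nu+\delta)$. Concretely, each slice $\hat\nu=\nu$ is a half-annulus $[0,\eta)\times\SS^1$ whose boundary circle is collapsed, giving a disc.

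The delicate part is checking that the collar slices vary continuously, which requires control of the error in the linearization of Proposition~\ref{p.par-scc}. We would try to get this from the explicit trigonometric formula for $\varphi$ as a function of $(\rho,\theta,\nu)$, expanded in $\tau^+$.

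Once the fibration is locally trivial, compactness of $\mathscr{U}$ gives a fibration over $\SS^1$ with fibers $\mathscr{S}_\nu$, which are closed genus two surfaces by Proposition~\ref{c.Anosov-tor-fib}.
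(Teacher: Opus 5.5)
Your route is genuinely different from the paper's. The paper's proof is two lines. It regards the $\mathscr{S}_\nu$ as the leaves of a codimension-one foliation of $\mathscr{U}$, notes that they are closed and orientable in the orientable manifold $\mathscr{U}$, and concludes that the foliation is a fibration (tacitly via transverse orientability, trivial holonomy of compact leaves, and Reeb stability). It never writes $\mathscr{p}$ down, and it treats the local product structure along $T$ and at $\mathscr{d}^\pm$ as evident from the construction. You define $\mathscr{p}$ explicitly and try to prove exactly that local structure, which is the real content. The plan is sound, but one step fails as written, the decisive step is left open, and your last step is the paper's argument in disguise.

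\textbf{Charts along $T-\tilde d^\pm$.} You claim that near $T$ the $\overline S_\nu$ are graphs over $(\rho,\theta)$, so that $(\rho,\theta,\nu)$ is a chart. This is false on $T$ itself. Every point of $T-(\tilde d^+\cup\tilde d^-)$ is an interior point of a vertical arc $d^\pm_\nu$, which lies in a fiber $\{\theta=\mathrm{const}\}$. So $(\rho,\theta,\nu)$ forgets $\varphi$ there and is not injective; charts on each side of $T$ separately do not give charts at points of $T$. The fix is to use $(\rho,\varphi)$ as transverse coordinates. Up to the choice of branch, both \eqref{e.boundary-r} and \eqref{e.boundary-l} reduce to $\tan(\theta-\nu)=\rho\cot\varphi/\cosh\rho$. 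Hence near the interior of $d^-_\nu$ (resp. $d^+_\nu$), on both sides of $\rho=0$ at once, $\overline S_\nu$ is the graph $\theta=\nu+\arctan(\rho\cot\varphi/\cosh\rho)$ (resp. $\theta=\nu+\pi+\arctan(\rho\cot\varphi/\cosh\rho)$). So $(\rho,\varphi,\nu)$ is a chart. Continuity of $\mathscr{p}$ there follows; the fixed-$\nu$ limit analysis of Proposition~\ref{p.top-bund} alone does not give joint continuity.

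\textbf{The collar at $\mathscr{d}^\pm$.} Your chart $(\theta,\tau^+,\hat\nu)$ is correct, and the error control you are missing is explicit.

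Continuity: with $\rho=\tau\sin\psi$ and $\varphi=\tau\cos\psi$, the same relation reads $\cot(\theta-\nu)=h(\tau,\psi)\cot\psi$. Here $h=\cosh(\tau\sin\psi)\cdot\tan(\tau\cos\psi)/(\tau\cos\psi)$, with $\tan x/x$ set to $1$ at $x=0$, is continuous and positive, and $h(0,\cdot)=1$. Take the branch $\theta-\nu\in(\pi,2\pi)$ for $\rho>0$ and $\theta-\nu\in(0,\pi)$ for $\rho<0$. Then $\hat\nu$ is continuous up to $\tau=0$, where it equals $\theta-\psi+\pi$.

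Bijectivity in $\psi$ for fixed $(\theta,\tau)$: in the half-disc $\rho>0$, the trace of $\overline S_\nu$ is the curve $\varphi=\arctan(k\rho/\cosh\rho)$ with $k=\cot(\theta-\nu)$. Along it, $\rho^2+\varphi^2$ is strictly increasing, because $\varphi\varphi'\geq 0$ for $\rho<1$. So each trace meets each small circle $\{\tau=\mathrm{const}\}$ exactly once, and the half-disc $\rho<0$ is symmetric.

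A continuous bijection from the compact collar onto $\SS^1\times[0,\eta]\times\SS^1$ is a homeomorphism. Your collapsing step then gives $\DD\times\SS^1$, with $\mathscr{p}$ the second projection. The case of $\tilde d^-$ is identical, with $\varphi-\pi$ in place of $\varphi$.

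\textbf{The last step.} You rightly say Ehresmann is unavailable, but you then conclude ``by compactness''. Passing from local product charts to local triviality over intervals of $\SS^1$ still needs a topological argument. The level sets of $\mathscr{p}$ form a $C^0$ foliation with compact leaves. Its holonomy is trivial because $\mathscr{p}$ is injective on small transversals, so Reeb stability gives product neighbourhoods of each $\mathscr{S}_\nu$. Alternatively, flow along the transverse field $\mathscr{W}$ of Lemma~\ref{l.Z-tr-Sig}. This is precisely the paper's argument, so the two proofs end the same way. What yours adds, once the two charts above are in place, is an actual proof that the $\mathscr{S}_\nu$ form a foliation near $T$ and $\mathscr{d}^\pm$.
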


\begin{proof}
The surfaces $(\mathscr{S}_\nu)_{\nu \in \SS^1}$ are the leaves of a codimension one foliation on $\mathscr{U}$. All the leaves of this foliation are closed and orientable, and $\mathscr{U}$ is also orientable, hence this foliation is a fibration. 
\end{proof}

 \subsection{Position of certain orbits and their stable manifolds with respect to the fibration} \label{ss.loc-smfd}

 Our next task is to prove that the Anosov flow $\mathscr{X}^t$ admits many horizontal periodic orbits, \emph{i.e.}, periodic orbits included in some fibers of the fibration $\mathscr{p}:\mathscr{U}\to\SS^1$, and to compute the twist number of the local stable manifolds of these periodic orbits with respect to the fibration (see Section~\ref{subsection:twist}). 

\begin{proposition}
\label{p.horizontalorbits} 
The following periodic orbits of the Anosov flow $\mathscr{X}^t$ are horizontal:
\begin{itemize}
\item \(\mathscr{a}_\ell := \mathscr{h}(\tilde{a}_\ell^{+})\) and \(\mathscr{a}_r := \mathscr{h}(\tilde{a}_r^{+})\) are included in the fiber \(\mathscr{S}_0\).
\item \(\mathscr{b}_\ell := \mathscr{h}(\tilde{b}_\ell^{+})\) and \(\mathscr{b}_r := \mathscr{h}(\tilde{b}_r^{+})\) are included in the fiber \(\mathscr{S}_{\frac{3\pi}{2}}\).
\end{itemize}
Moreover, the local stable manifolds of all these orbits are horizontal with respect to the fibration $\mathscr{p}$ (see Definition \ref{definition:twist-1}).
\end{proposition}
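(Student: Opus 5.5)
The statement has two parts: the four orbits lie in the named fibers, and their local stable manifolds have twist number zero with respect to those fibers.

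\emph{Horizontality.} The orbits $\tilde a_\ell^+,\tilde a_r^+,\tilde b_\ell^+,\tilde b_r^+$ are disjoint from the torus $T=\pi^{-1}(d)$, since $a_\ell,a_r,b_\ell,b_r$ are disjoint from $d$. So they lie in $U-(\tilde d^+\cup\tilde d^-)$, where the identification $\mathscr{i}$ is a genuine orbit equivalence between $\mathscr{X}$ and $X$. Under this identification $\mathscr{S}_\nu\cap(\mathscr{U}-\mathscr{d}^\pm)$ corresponds to $S_\nu$. By Corollary~\ref{c.geodesics-in-section}, $\tilde a_\ell^+,\tilde a_r^+\subset S_0$ and $\tilde b_\ell^+,\tilde b_r^+\subset S_{3\pi/2}$. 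Hence their images under $\mathscr{h}$ are periodic orbits of $\mathscr{X}$ contained in $\mathscr{S}_0$ and $\mathscr{S}_{3\pi/2}$ respectively. Their multipliers are positive because they are orbits of a geodesic flow on an orientable surface, whose stable and unstable bundles are orientable.

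\emph{Twist numbers.} The twist number is defined locally, in the projectivized normal bundle along the orbit, so it suffices to work in a small tubular neighbourhood of each orbit inside $U_\ell$ or $U_r$. There the fibration $\nu:U_r\to\SS^1$ (or $U_\ell\to\SS^1$) is a product structure, and by Proposition~\ref{p.SlSr-section} its fibers $S_{r,\nu}$ are sections of the circle bundle $\pi$. I would compare both $W^s_{\mathrm{loc}}$ and the fiber $S_\nu$ against a third surface, the \emph{vertical} annulus $\pi^{-1}(e)$, where $e$ is the closed geodesic in question.

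First, the fiber $S_\nu$ near the orbit $\tilde e^+$ is the graph of a section over a neighbourhood of $e$. Its tangent plane therefore contains $X$ and is transverse to the vertical direction $\partial_\varphi$. Both $\sigma_{S_\nu}$ and the section given by $\pi^{-1}(e)$ are then never equal, so they have intersection number $0$ after a small isotopy. That is,
\[\mathrm{Twist}(\pi^{-1}(e),S_\nu)=0.\]
Second, the weak stable leaf of $\tilde e^+$ is the set of unit vectors pointing toward a common point at infinity. Its tangent plane is likewise everywhere transverse to the fiber direction of $\pi$. This gives
\[\mathrm{Twist}(W^s_{\mathrm{loc}},\pi^{-1}(e))=0.\]

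Combining these two facts yields $\mathrm{Twist}(W^s_{\mathrm{loc}},S_\nu)=0$. For the combination I would use Fact~\ref{f.twist-sections}: the longitude classes satisfy $\lambda_{S'}=\lambda_S-\mathrm{Twist}(S',S)\mu$, so twist numbers add along a chain of three surfaces.

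\emph{Main obstacle.} The care needed is in the first of the two twist computations, and it has two parts. One is that "never equal sections" gives intersection number zero only once we check that the two sections are homotopic through nowhere-equal sections. Equivalently, we need the $\varphi$-coordinate difference between the two sections to have degree zero along the orbit. A transverse plane field gives a section of $\PP^+(TU/\RR X)$ that avoids the vertical point, which forces that degree to be zero, so this should be direct. The other is that the product structure and the relevant surfaces are only continuous, or smooth only on $U_r$. Since $S_\nu$ is a smooth section over $S_r$ by construction, this should cause no difficulty.
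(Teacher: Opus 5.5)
Your proposal is correct and follows essentially the same route as the paper: the orbits are placed in $\mathscr{S}_0$ and $\mathscr{S}_{3\pi/2}$ via Corollary~\ref{c.geodesics-in-section}, and both $\sigma_{W^s_{\mathrm{loc}}}$ and $\sigma_{S_0}$ are compared with the section $\sigma_{\mathrm{Fiber}}$ given by the fibers of $U\to S$ (exactly the section defined by your vertical surface $\pi^{-1}(e)$), using that the stable leaves and the sections $S_\nu$ are both transverse to those fibers. The step you flag as the main obstacle is not really one: two disjoint closed curves on the torus $N_\gamma$ automatically have algebraic intersection zero, so Fact~\ref{f.twist-sections} gives $\lambda^s=\lambda_{\mathrm{Fiber}}=\lambda_{S_\nu}$ and hence zero twist.
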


\begin{proof}
The arguments are similar for all the orbits. We treat the case of  $\mathscr{a}_r$.

According to Corollary~\ref{c.geodesics-in-section}, the lifted geodesic \(\widetilde{a_r}^+\) is included in \(S_0 - \pi^{-1}(d)\). Hence \(\mathscr{h}(\widetilde{a_r}^{+})=\mathscr{a}_r\) is included in  \(\mathscr{i}(S_0 -\tilde d^\pm)=\mathscr{S}_0-\mathscr{d}^\pm\). 

In order to study the position of the local stable manifold of this orbit, we consider three sections of the projectivized normal bundle $N_{\tilde a_r^+}$ in $U$:
\begin{itemize}
\item a section $\sigma_{W_{\text{loc}}^s}$ defined by stable direction, 
\item a section $\sigma_{S_{0}}$ defined by the direction tangent to the surface $S_{0}$,
\item a section $\sigma_{\mathrm{Fiber}}$ defined by the direction of the fibers of the unit tangent bundle $U$.
\end{itemize}
By construction, the surface $S_{0}$ is a section of the unit tangent bundle over $S - d$. In particular, $S_{0}$ is transverse to the fibers of the unit tangent bundle. On the other hand, it is well-known that the local stable manifolds of the orbits of $X^t$ are transverse to the fibers of the unit tangent bundle. Consequently, $W_{\text{loc}}^s(\widetilde{a_r}^{+},X^t)$ is also  transverse to the fibers of the unit tangent bundle. This means that  neither $\sigma_{W_{\text{loc}}^s(\widetilde{a_r}^{+})}$ nor $\sigma_{S_{r,0}}$ intersects $\sigma_{\mathrm{Fiber}}$. Hence, the homological intersection of $\sigma_{W_{\text{loc}}^s(\widetilde{a_r}^{+})}$ and $\sigma_{S_{r,0}}$ must be zero. By definition, this means that 
$$\mathrm{Twist}_{\widetilde{a_r}^{+}}\left(W^s_{\mathrm{loc}}\left(\widetilde{a_r}^{+},X^t\right),\,S_{0}\right)=0.$$
Pushing all the object by $\mathscr{i}$, one gets that 
$$\mathrm{Twist}_{\mathscr{a}_r}\left(W^s_{\text{loc}}\left(\mathscr{a}_r,\mathscr{X}^t\right),\,\mathscr{S}_{0}\right)=0.$$
In other words, the local stable manifold $W_{\text{loc}}^s(\mathscr{a}_r,\mathscr{X}^t)$ is horizontal. 
\end{proof}

The case of the periodic orbit $\mathscr{c} = \mathscr{i}(\widetilde{c}^+)$ is considerably subtler. 

\begin{proposition}
\label{p.intnumber1}
The periodic orbit \(\mathscr{c} = \mathscr{i}(\widetilde{c}^+)\) of the flow  \(\mathscr{X}^t\) is included in the surface~\(\mathscr{S}_0\). Moreover, the twist number of the local stable manifold of \(\mathscr{c}\) with respect to \(\mathscr{S}_0\) is $+1$:
\[
\operatorname{Twist}_{\mathscr{c}}\left( W_{\mathrm{loc}}^s (\mathscr{c}, \mathscr{X}^t), \mathscr{S}_0 \right) = +1.
\]
\end{proposition}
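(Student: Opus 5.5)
The plan is to handle the two assertions separately: first the inclusion $\mathscr{c}\subset\mathscr{S}_0$, then the twist number. For the twist number, I will measure the rotation of the tangent plane of $\mathscr{S}_0$ along $\mathscr{c}$ against a reference line field that never meets the stable direction.

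\emph{Inclusion.} By Corollary~\ref{c.geodesics-in-section}, the arcs $\widetilde c_\ell^+$ and $\widetilde c_r^+$ lie in $S_0$. It remains to treat the two points where $\widetilde c^+$ meets the torus $T$, namely the fibers over $p=p_0$ and $q=p_\pi$.
\begin{itemize}
\item At $p$ the geodesic $c$ is orthogonal to $d$, so its unit tangent vector has $\varphi=\pm\pi/2$. Since $c$ runs from $S_\ell$ to $S_r$ or in the other direction, the relevant value is fixed by the orientation of $c$ in Figure~\ref{f.S}, and I will pin it down from item~(1) of Proposition~\ref{p.SlSr-section}.
\item By Addendum~\ref{a.topological-closure}, for $\nu=0$ the arc $d_0^-$ is $\{\theta=0,\ \varphi\in[-\pi,0]\}$ and the arc $d_0^+$ is $\{\theta=\pi,\ \varphi\in[0,\pi]\}$. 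Hence the two points of $\widetilde c^+\cap T$ are interior points of $d_0^-$ and $d_0^+$. Consistency with the limits of $S_{\ell,0}$ and $S_{r,0}$ forces $\varphi=-\pi/2$ at $p$ and $\varphi=\pi/2$ at $q$.
\item In particular $\widetilde c^+$ avoids $\widetilde d^\pm$, where $\varphi\in\{0,\pi\}$, so it is untouched by the surgery. Therefore $\mathscr{c}=\mathscr{i}(\widetilde c^+)\subset\mathscr{i}(\overline S_0\setminus\widetilde d^\pm)\subset \mathscr{S}_0$.
\item Likewise $W^s_{\mathrm{loc}}(\mathscr{c},\mathscr{X}^t)$ is the image under $\mathscr{i}$ of $W^s_{\mathrm{loc}}(\widetilde c^+,X^t)$, so the twist number can be computed entirely in $U$ with the geodesic flow.
\end{itemize}

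\emph{Twist number.} As in Proposition~\ref{p.horizontalorbits}, I will use the section $\sigma_{\mathrm{Fiber}}$ of $N_{\widetilde c^+}$ given by the vertical direction of $U\to S$ as a reference.
\begin{itemize}
\item The stable section $\lambda^s$ never meets $\pm\sigma_{\mathrm{Fiber}}$. So $\mathrm{Int}(\lambda^s,\sigma_{S_0})$ equals the signed number of times the line field $T\overline S_0/\RR X$ passes through the vertical line along $\widetilde c^+$, with the normalisation that a rotation of the unoriented line by $\pi$ contributes $\pm1$ to the intersection count of oriented sections.
\item Away from $T$, the surface $S_0$ is a section of the unit tangent bundle, so it is transverse to the fibers and contributes nothing.
\item Near $p$ and near $q$, I will use the explicit graph $\varphi=f(\rho,\theta)=\mathrm{Angle}\bigl(\partial_\theta,\ \pm(\rho\cos\theta\,\partial_\rho-\sin\theta\,\partial_\theta)\bigr)$ from Formula~\eqref{equation:explicit-section}. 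Along $c$ one has $\theta=0$ (respectively $\theta=\pi$) and $\rho$ runs from $-\epsilon$ to $\epsilon$.
\item The normal plane to $\widetilde c^+$ is spanned by $\partial_\theta$ and $\partial_\varphi$. The tangent line of $S_0$ in this plane is directed by $\partial_\theta+f_\theta\,\partial_\varphi$.
\item Differentiating the angle gives $f_\theta\approx \mp1/|\rho|$ as $\rho\to0$, and at $\rho=0$ the surface contains the vertical arc $d_0^\mp$. So the line passes monotonically through the vertical exactly once at each of $p$ and $q$, a half-turn each time.
\end{itemize}

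I then expect the two half-turns to have the same sense, because of the symmetry $\varsigma$ combined with the fact that $V_0$ anticommutes with $\varsigma$. The total is then one full turn of the unoriented line, giving twist $\pm1$.

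The main obstacle is the sign. It requires tracking simultaneously the dynamical orientation of $\widetilde c^+$, the orientation of $N_{\widetilde c^+}$ from Convention~\ref{convention:orientation-torus}, and the sign of $f_\theta$ on each side of $\rho=0$ at both $p$ and $q$. For this I will work directly in the coordinates $(\rho,\theta,\varphi)$, in which $(\partial_\rho,\partial_\theta,\partial_\varphi)$ is direct. I will compute a frame of $N_{\widetilde c^+}$ and the signed crossing at $p$, obtain $q$ by the $\theta\mapsto\theta+\pi$ structure of the formula, and then use Fact~\ref{f.twist-sections} with Observation~\ref{remark:intersection-number} to convert the rotation into $\mathrm{Int}(\lambda^s,\sigma_{\mathscr{S}_0})=+1$.
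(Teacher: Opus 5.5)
Your strategy is the paper's: compare the section of $N_{\tilde c^+}$ given by $\overline S_0$ with the vertical section $\sigma_{\mathrm{Fiber}}$, which never meets the stable section; observe that $\overline S_0$ can only be vertical over $d$, i.e.\ at $\tilde p$ and $\tilde q$; and evaluate the local contributions in the coordinates $(\rho,\theta,\varphi)$. Your inclusion argument and your asymptotics $f_\theta\approx-1/\rho$ are fine. But the decisive point is not established as written: you need the contributions at $\tilde p$ and $\tilde q$ to add up rather than cancel. The bookkeeping also contains two errors.

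\emph{Normalisation.} The twist number is the winding of the \emph{oriented} section $\sigma_{\overline S_0}$ relative to $\sigma_{\mathrm{Fiber}}$. Equivalently, it is the signed intersection of $\sigma_{\overline S_0}$ with $+\sigma_{\mathrm{Fiber}}$ alone. So each passage of the unoriented line through the vertical contributes $\pm\tfrac12$, not $\pm1$. Taken literally, your first bullet would give $\pm2$ for two same-sense crossings; your final ``one full turn'' count is the correct one.

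\emph{The symmetry.} $\varsigma$ cannot compare the two crossings. It fixes $p$ and $q$ and reverses $c$. Since $\varsigma_*V_0=-V_0=V_\pi$, its lift sends $(\tilde c^+,\overline S_0)$ to $(\tilde c^-,\overline S_\pi)$, not to itself. What works is the local isometry $(\rho,\theta)\mapsto(-\rho,\theta+\pi)$ of $A$. It preserves $V_0$ and maps $c$ near $p$ onto $c$ near $q$, respecting orientation. Its lift $(\rho,\theta,\varphi)\mapsto(-\rho,\theta+\pi,-\varphi)$ preserves the orientation of $U$, the flow, $\overline S_0$ and the vertical line field, and sends $\tilde p$ to $\tilde q$.

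Alternatively, compute directly. At both points the line is spanned by $\rho\,\partial_\theta-\cosh\rho\,\partial_\varphi$. At $\tilde q$, however, $X=-\partial_\rho$, so $\rho$ \emph{decreases} along the orbit; it does not run from $-\epsilon$ to $\epsilon$ there. This flips both the time direction and the orientation of the normal plane. Hence both crossings are positive, the winding is $+1$, and Fact~\ref{f.twist-sections} with Observation~\ref{remark:intersection-number} gives $+1$.

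The paper avoids comparing two crossings altogether. It orients $\overline S_0$ by lifting the orientation of $S$ to $S_{r,0}$, which makes $(\widetilde{\partial_\rho},-\partial_\varphi)$ direct at both $\tilde p$ and $\tilde q$. Since $X=\pm\widetilde{\partial_\rho}$ there, the oriented section equals $-\sigma_{\mathrm{Fiber}}$ at $\tilde p$ and $+\sigma_{\mathrm{Fiber}}$ at $\tilde q$. So $\tilde q$ is the only intersection point, and only its sign has to be computed.
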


\begin{proof}
The first assertion is straightforward to check. Indeed, recall that the construction of the surfaces\(\{\overline{S}_\nu\}_{\nu \in  \SS^1}\) implies that the lifted geodesic \(\tilde c^+\) is included in \(\overline S_0\) (see Corollary \ref{c.geodesics-in-section}). Moreover, the orbit \(\widetilde{c}^+\) is disjoint from the surgeried orbits \(\widetilde{d}^{+}\) and \(\widetilde{d}^{-}\). Hence, \(\mathscr{c}=\mathscr{i}(\widetilde {c}^+)\) is included in \(\mathscr{i}(\overline{S}_0-\tilde d^\pm) \subset \mathscr{S}_0\).

So we are left to compute the twist number. Using \(\mathscr{i}^{-1}\) we send all the objects in $U = T^1S$: 
\[
\operatorname{Twist}_{\mathscr{c},\mathscr{U}} \left( W_{\mathrm{loc}}^s (\mathscr{c}, \mathscr{X}^t), \mathscr{S}_0 \right) = \operatorname{Twist}_{\tilde c^+,U} \left( W_{\mathrm{loc}}^s (\tilde c^+, X^t), \overline{S_0} \right).
\]

Now we are left to compute the integer $\operatorname{Twist}_{\tilde c^+} \left( W_{\mathrm{loc}}^s (\tilde c^+, X^t), \overline{S_0} \right)$. By definition (see Definition~\ref{definition:twist-1}), this integer is equal to the intersection number $\operatorname{Int}(\sigma_{W_{\mathrm{loc}}^s}, \sigma_{\overline S_0})$ where $\sigma_{W_{\mathrm{loc}}^s}$ and $\sigma_{\overline S_0}$ are some dynamically oriented sections of the projectivized normal bundle $N_{\tilde c^+}=\PP^+(TM/\RR X)_{|\tilde c^+}$. More precisely,  $\sigma_{W_{\mathrm{loc}}^s}$ is one of the two opposite sections defined by the weak stable planes along $\tilde c^+$ and $\sigma_{\overline S_0}$  one of the two opposite sections defined  by the tangent plane to the surface $S_{r,0}$ along $\tilde c^+$. 

To compute $\operatorname{Int}(\sigma_{W_{\mathrm{loc}}^s}, \sigma_{\overline S_0})$, we introduce another section of $N_{\tilde c^+}$: we denote by $\sigma_{\mathrm{Fiber}}$ the section corresponding to the direction of the oriented fibers of the unit tangent bundle $U \to S$. We have already used the classical fact that the weak stable directions of the geodesic flow $X^t$ are everywhere transverse to the fibers of $U$. In other words, the sections $\sigma_{\mathrm{Fiber}}$ and $\sigma_{W_{\mathrm{loc}}^s}$ do not intersect. In particular, their homological intersection \(\operatorname{Int}(\sigma_{W_{\mathrm{loc}}}, \sigma_{\mathrm{Fiber}})\) is equal to $0$, and therefore
\[
\operatorname{Int}(\sigma_{W_{\mathrm{loc}}^s}, \sigma_{\overline S_0}) = \operatorname{Int}(\sigma_{\mathrm{Fiber}}, \sigma_{\overline S_0}).
\]
So we are left to compute \(\operatorname{Int}(\sigma_{\mathrm{Fiber}}, \sigma_{\overline S_0})\). 

For this purpose, let us first recall that, by construction, $\overline{S_0}-T=S_0$ is a section of the unit tangent bundle $U-T=T^1(S-d)\to S-d$. This means that, over $S - d$, the surface \(\overline{S_0}\) is never tangent to the fibers. Equivalently, the only possible intersection of the sections \(\sigma_{\mathrm{Fiber}}\) and \(\sigma_{\overline S_0}\) must be over points of the geodesic $d$. Now, recall that the geodesics \(c\) and \(d\) intersect at two points \(p\) and \(q\). We denote by $\tilde{p}$ and $\tilde{q}$ the lifts of $p$ and $q$ in $U$ that belongs to the lifted geodesic $\tilde{c}$.  Then the sections \(\sigma_{\mathrm{Fiber}}\) and \(\sigma_{\overline S_0}\) can intersect only at $\tilde{p}$ and $\tilde{q}$.  Note that this already implies that
\[
\left| \operatorname{Int}(\sigma_{\mathrm{Fiber}}, \sigma_{\overline S_0}) \right| \leq 2.
\]
In order to be more precise, we need to take care of the orientations.

Recall that, according to the proof of Proposition \ref{p.top-bund}, the boundary $\partial S_{r,0}$ of the surface $\overline S_{r,0}$ can be decomposed into four arcs: an arc of the lifted geodesic $\tilde d^+$, an arc of the lifted geodesic $\tilde d^-$, an arc of the fiber over $p=p_0$ and an arc of the fiber over $q=p_\pi$ (see Figure~\ref{f.bdSt+}). The first two arcs are tangent to the direction $\mathbb{R} \partial_\theta$, while the other two are tangent to the direction $\mathbb{R} \partial_\varphi$. Observe that the vectors $-\partial_\theta$ and $\partial_\varphi$ induce the same orientation of the curve $\partial S_{r,0}$ (see again Figure~\ref{f.bdSt+}). Also note that $\partial_\rho$ is transverse to the torus $T = T_d^1S$ along $\widetilde{d}^{\pm}$, and therefore lifts as a vector field $\widetilde{\partial_\rho}$ along $\partial S_{r,0} \subset U$ , which is tangent to $\overline S_0$ and transverse to $\partial S_{r,0}$. 

The points $\widetilde p$ and $\widetilde{q}$ belong to the two vertical arcs of $\partial S_{r,0}$ (actually, they are precisely the middle of these arcs with respect to the parametrization by $\varphi$).

The surface $S$ is equipped with the orientation for which $\left( \partial_\rho, \partial_\theta \right)$ is a direct basis. Since $S_{r,0}$ is a section of the unit tangent bundle $U_r=T^1 S_r\to S_r$ and $S_r$ is a subsurface of $S$, the orientation of the surface $S$ can be lifted to an orientation of $S_{r,0}$, which induces an orientation of $\overline S_0$ since $S_{r,0}$ is a subsurface of $S_0$. Note that this orientation of $\overline S_0$ is, however, not the orientation induced by that of $S_\ell$ (see Figure~\ref{f.computation-twist}).

Since the vectors $-\partial_\theta$ and $\partial_\varphi$ induce the same orientation of the curve $\partial S_{r,0}$, it follows that the orientation of $\overline S_0$ at $\widetilde p$ and $\widetilde{q} $ is given by the directed basis $(\widetilde{\partial_\rho},-\partial_\varphi)$. In the surface $S$, the geodesic $c$ is tangent to $\RR.\partial_\rho$ both at $p$ and $q$. Moreover, the vector $\partial_\rho$ points according to the orientation of $c$ at $p$, and opposite to the orientation of $c$ at $q$. It follows that $X=\widetilde{\partial_\rho}$ at $\widetilde p$ and $X=-\widetilde{\partial_\rho}$ at $\widetilde{q}$. As a further consequence, $\left(X,-\partial_\varphi\right)$ is a direct basis for the orientation of $\overline S_{0}$ at $\tilde p$, and  $\left(-X,-\partial_\varphi\right)$, or equivalently $\left(X,\partial_\varphi\right)$, is a direct basis for the orientation of $\overline S_{0}$ at $\widetilde{q}$. 

The surface $\overline{S_0}$ induces two opposite sections of $N_{\tilde c^+}$, and  $\sigma_{\overline{S_0}}$ can be chosen as any of these two opposite sections. For the computations we make the following choice: we choose \(\sigma_{\overline S_0}\) such that, for any point $r\in\tilde c^+$, the equivalence class \(\sigma_{\overline S_0}(r)\) in $\PP^+(T_rM/\RR X(r))$ is represented by a vector \(v \in T_r \overline S_0 \setminus \RR X(r)\) where \((X(r), v)\) is a direct basis of \(\overline S_0\). As a consequence, 
$$\sigma_{\overline S_0}(\tilde p) = \left[ -\partial_\varphi(\tilde p) \right] \mbox{ and } \sigma_{\overline S_0}(\widetilde{q}) = \left[ \partial_\varphi(\widetilde{q}) \right].$$
Now recall that the fibers of the unit tangent bundle are tangent to $\partial\varphi$ and oriented by this vector field. Hence, 
$$\sigma_\mathrm{Fiber}(\tilde p)=\left[\partial_\varphi(\tilde p)\right]\mbox{ and }\sigma_\mathrm{Fiber}(\widetilde {q})=\left[\partial_\varphi(\tilde q)\right].$$
Hence, the sections $\sigma_{\overline S_0}$ and $\sigma_\mathrm{Fiber}$ coincide at $\widetilde {q}$ and are opposite at $\widetilde {p}$. In particular, $\widetilde{q}$ is the only point where the sections $\sigma_{\overline S_0}$ and $\sigma_{\text{Fiber}}$ intersect (see Figure~\ref{f.computation-twist}), and thus
\[
\left| \text{Int} \left(\sigma_{\text{Fiber}},\sigma_{\overline S_0} \right) \right| = 1.
\]

\begin{figure}
 \centering
    \includegraphics[scale=0.37]{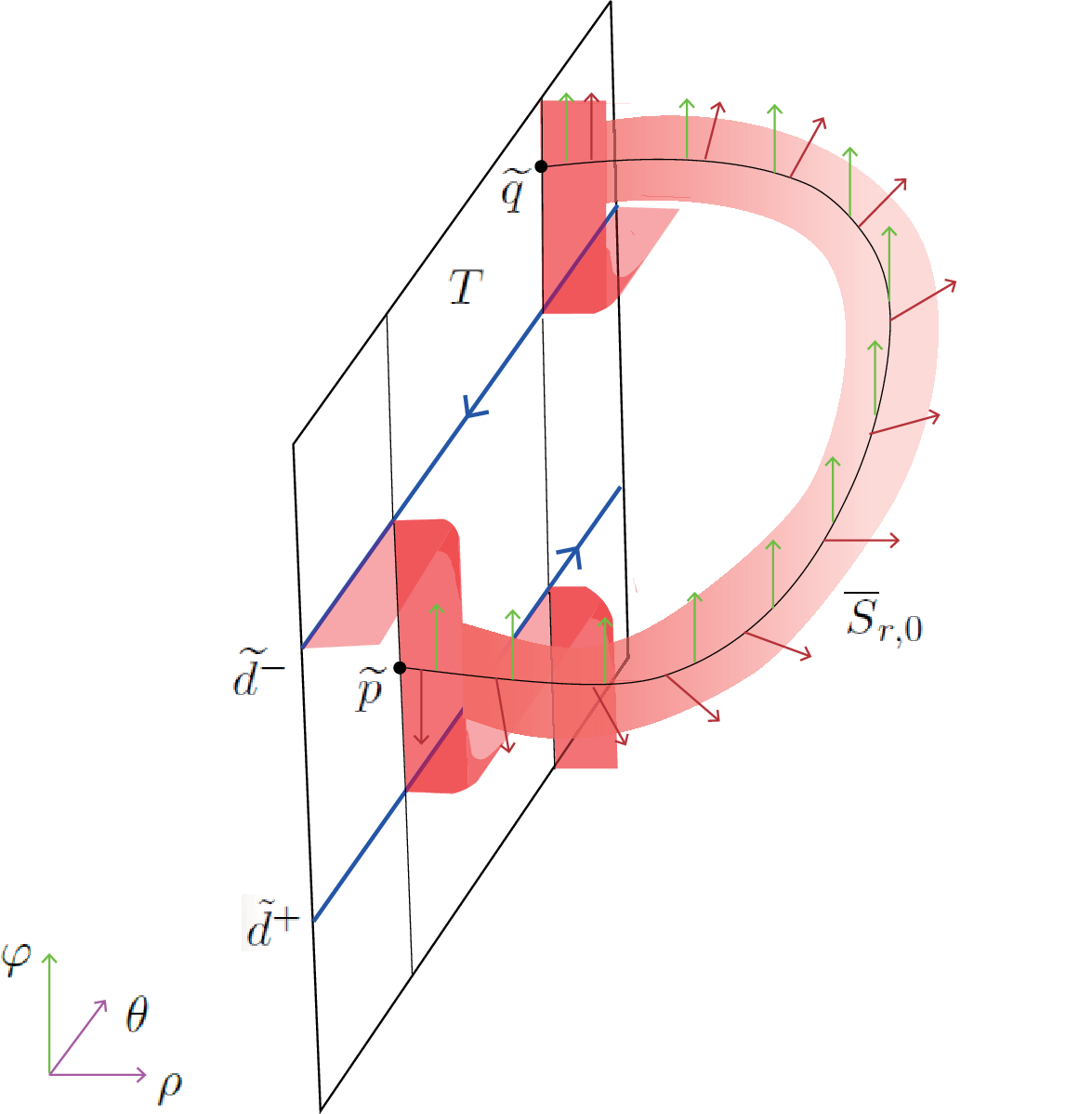}
\caption{The position of the boundary of the surface $\overline S_{r,0}$ with respect to the geodesics $\tilde d^\pm$, the fibers $T^1_{p}S$ and $T^1_{q}S$, and the orbit $\tilde c^+$. The sections $\sigma_{\overline{S}_{r,0}}$ and $\sigma_{\mathrm{Fiber}}$ along the lifted geodesic arc $\tilde c^+\cap S_{0,r}$ correspond to the red vectors and the green vectors respectively.}
\label{f.computation-twist}
\end{figure}

It remains to determine the sign of the intersection at $\widetilde{q}$. For this purpose, we look at the surface $\overline S_{0}$ in the $(\rho, \theta, \varphi)$ coordinate system. Recall that $\widetilde{q}$ has coordinates $(0, \pi, \frac{\pi}{2})$. From the proof of Proposition \ref{p.par-scc}, the surface $\overline S_{r,0}$ is given by the equations:
\[
\begin{cases}
\cos \varphi = - \dfrac{\cosh \rho \cdot \sin \theta}{\sqrt{\rho^2 \cos^2 \theta + \cosh^2 \rho \cdot \sin^2 \theta}} \\
\sin \varphi = - \dfrac{\rho \cos \theta}{\sqrt{\rho^2 \cos^2 \theta + \cosh^2 \rho \cdot \sin^2 \theta}}
\end{cases}
\]
Linearizing the first equation for $(\rho, \theta, \varphi) = (d\rho, \pi+d\theta, \frac{\pi}{2} +d\varphi)$ with variations $(d\rho, d\theta, d\varphi)$:
\[
-d\varphi =  \frac{d\theta}{\sqrt{d\rho^2 + d\theta^2}} \quad \text{or equivalently} \quad d\theta +\sqrt{d\rho^2 + d\theta^2}d\varphi =0.
\]
This means that, in the neighbourhood of $\tilde q$, 
$$\sigma_{\overline S_0}\sim \left[-\sqrt{\rho^2+(\theta-\pi)^2}\partial_\theta+\partial_\varphi\right].$$
On the other hand, recall that 
$$\sigma_{\mathrm{Fiber}}=\left[\partial_\varphi\right].$$
Now the sections are dynamically oriented, \emph{i.e.}, oriented by the flow orientation of $\tilde c$. Near $\widetilde{q}$, this is the orientation for which $\rho$ is decreasing. Moreover, since $\left(\partial_\rho,\partial_\theta,\partial_\varphi\right)$ is a direct basis of $U$, it follows that $\left(\left[\partial_\theta\right],\left[\partial_\varphi\right]\right)$ is an indirect basis of $TU/\RR.X$ (for the orientation of $N_{\tilde c^+}$). For a point on the lifted geodesic $\tilde c^+$ that is just before $\widetilde{q}$ according to the orientation of this geodesic, $\rho$ is very small and positive, and therefore
$$\mathrm{Angle}\left(\sigma_{\mathrm{Fiber}},\sigma_{\overline S_0})\right)=\mathrm{Angle}\left(\left[\partial_\varphi\right],\left[-\sqrt{\rho^2+(\theta-\pi)^2}\partial_\theta+\partial_\varphi\right]\right)$$
is positive for the orientation of the plane $TU/\RR.X$. This means that this angle decreases as the geodesic passes through $\widetilde{q}$, and therefore 
\(
\text{Int} \left( \sigma_{\text{Fiber}}, \sigma_{S_0} \right) = +1.
\)  
We conclude that 
\(
\text{Twist} \left( W_{\text{loc}}^s (\mathscr{c}, \mathscr{X}^t), \mathscr{S}_0 \right) = +1\). 
\end{proof}

 \subsection{Monodromy of the fibration and proof of Theorem~\ref{p.perorb-lk}}
 \label{ss.Monodromy}
 
In the previous sections, we have constructed a closed oriented $3$-manifold $\mathscr{U}$, equipped with a transitive Anosov flow $\mathscr{X}^t$ so that there is a fibration $\mathscr{p}:\mathscr{U}\to\SS^1$. Our next task is to determine the monodromy of this fibration: we will prove that it is isotopic to the right Dehn twist $\tau_{\mathscr{d}_0}^{-2}$ along the curve $\mathscr{d}_0\subset\mathscr{S}_0=\mathscr{p}^{-1}(\{0\})$(see Notation \ref{notation-fibered-manifold} for $\mathscr{d}_0$). Actually, while doing this, we will construct a quite explicit diffeomorphism between $\mathscr{U}$ and the mapping torus of the Dehn twist $\tau_{\mathscr{d}_0}^{-2}$, which will allow us to complete the proof of Theorem~\ref{p.perorb-lk}. 

Recall that the fibers of the fibration $\mathscr{p}$ are the surfaces $(\mathscr{S}_\nu)_{\nu\in\SS^1}$. In order to compute the monodromy of the fibration $\mathscr{p}:\mathscr{U}\to\SS^1$, we will construct a non-singular vector field $\mathscr{W}$ on $\mathscr{U}$ transverse to these fibers. The isotopy class of the monodromy of the fibration will therefore be the isotopy class of the Poincaré return map $\mathscr{f}_{\mathscr{W}} : \mathscr{S}_0 \to \mathscr{S}_0$ of $\mathscr{W}$ on the fiber $\mathscr{S}_0$. The vector field $\mathscr{W}$ will be chosen so that $\mathscr{f}_{\mathscr{W}}$ will be the identity everywhere except on a small annular neighbourhood of the curve $\mathscr{d}_0$. 

Before proceeding to the construction of the vector field $\mathscr{W}$, recall that there is a canonical identifcation
$$\mathscr{i}:\left(\mathscr{U}-\mathscr{d}^\pm,\mathscr{X}_{|\mathscr{U}-\mathscr{d}^\pm}\right)\longrightarrow\left(U-\tilde d^\pm,X_{|U-\tilde d^\pm}\right).$$
Also recall that we have defined a coordinate system $(\rho,\theta,\varphi):U_A\to [-\epsilon,\epsilon]\times\SS^1\times\SS^1$ where $U_A=T^1_{|A}S$ is a neighbourhood of the torus $T=T^1_{|d} S$ in the unit tangent bundle $U=T^1 S$. Now we pick $\epsilon'$ so that $0<\epsilon'<\epsilon$, and we denote by  $U_{A'}$ the subset of $U_A$ corresponding $-\epsilon'\leq \rho\leq\epsilon'$. Then we denote by $\mathscr{U}_{A}$ and $\mathscr{U}_{A}'$ the regions of $\mathscr{U}$ corresponding to $U_{A}$ and $U_{A}'$, \emph{i.e.},
\begin{align*}
\mathscr{U}_A &= \mathscr{i}\left(U_A -\left(\tilde{d}^+ \cup \tilde{d}^-\right)\right) \cup (\mathscr{d}^{+} \cup \mathscr{d}^{-}), \\
\mathscr{U}_{A}' &= \mathscr{i}\left(U_{A'} - \left(\tilde{d}^{+} \cup \tilde{d}^-\right)\right) \cup (\mathscr{d}^{+} \cup \mathscr{d}^{-}).
\end{align*}
  
\begin{lemma}\label{l.Z-tr-Sig}
There exists a non-singular vector field $\mathscr{W}$ on $\mathscr{U}$ such that:
\begin{enumerate}
    \item $\mathscr{W}$ is transverse to the fibers $\{\mathscr{S}_\nu\}_{\nu \in \SS^1}$.
    \item The three regions $\mathscr{U}_A'$, $\mathscr{U}_A-\mathscr{U}_A'$ and  $\mathscr{U}-\mathscr{U}_A$ and are invariant under the flow of $\mathscr{W}$.
    \item Every orbit of $\mathscr{W}$ in $\mathscr{U}_A'$ and in $\mathscr{U}-\mathscr{U}_A$ is closed, and intersects the surface $\mathscr{S}_\nu$ at exactly one point for each $\nu\in\SS^1$. Actually the orbits of $\mathscr{W}$ in $\mathscr{U}-\mathscr{U}_A$ are precisely the image under the orbit equivalence $\mathscr{i}$ of the fibers of $U-U_A$.
    \item On $U_A-U_A'=\mathscr{i}^{-1}(\mathscr{U}_A-\mathscr{U}_A')$ the vector field $W:=\mathscr{i}^{-1}_*\mathscr{W}$ reads:
    \[
    \mathscr{W}( {\rho},  {\theta},  {\varphi}) = \kappa( {\rho},  {\theta},  {\varphi}) \left( \sin(\chi( {\rho})) \partial_{ \varphi} + \cos(\chi( \rho)) \partial_{ \theta} \right),
    \]
    where $\chi : [-\epsilon, \epsilon] \to [-\frac{\pi}{2}, \frac{\pi}{2}]$ is an increasing $C^\infty$ function such that:
    \[
    \chi(-\epsilon) = -\frac{\pi}{2}, \quad \chi|_{[-\epsilon', \epsilon']} = 0, \quad \chi(\epsilon) = +\frac{\pi}{2},
    \]
    and $\kappa$ is a $C^{1}$ positive factor.
\end{enumerate}
\end{lemma}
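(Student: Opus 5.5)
The vector field $\mathscr{W}$ will be built separately on the three regions and then glued. On $\mathscr{U}-\mathscr{U}_A$, which $\mathscr{i}$ identifies with $U-U_A=T^1(S-A)$, we take $\mathscr{W}$ to be $\mathscr{i}_*$ of a vector field tangent to the oriented fibers of $U\to S$. By item~(4) of Proposition~\ref{p.SlSr-section}, for each $m\in S-d$ the map $\nu\mapsto V_\nu(m)$ is a homeomorphism $\SS^1\to T^1_mS$. So each fiber over $m\in S-A$ meets every $S_\nu$ exactly once and transversally. The field is nonsingular, and its orbits are the fibers, which are closed. Orienting it by $+\partial_\varphi$ over $S_\ell$ and $-\partial_\varphi$ over $S_r$ makes the crossing direction consistent with increasing $\nu$. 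The precise parametrization can be chosen so that the first return time to $\mathscr{S}_0$ is constant. This gives items (1) and (3) on $\mathscr{U}-\mathscr{U}_A$.

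On $U_A-U_{A'}$ we simply take the formula of item~(4). At $\rho=\pm\epsilon$ it equals $\pm\kappa\,\partial_\varphi$, which matches the fiber direction with the orientation chosen above, since $\rho<0$ is the $S_\ell$ side. Transversality to the $S_\nu$ is checked from the explicit graph equations of $S_{r,\nu}$ and $S_{\ell,\nu}$ in $(\rho,\theta,\varphi)$ coordinates. The field has no $\partial_\rho$ component, so it suffices to check that on each circle $\{\rho=\mathrm{const}\}$ the direction $(\cos\chi,\sin\chi)$ in the $(\theta,\varphi)$-torus is transverse to the curves $S_\nu\cap\{\rho=\mathrm{const}\}$. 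Those curves are graphs $\varphi=f_\nu(\theta)$, and they are monotone in $\nu$. The family is foliated monotonically in $\nu$ along any direction whose slope $\tan\chi$ stays on the correct side of $f_\nu'$. I expect to deduce this from item~(4) together with the fact that, for $0<|\rho|\le\epsilon$, the curves $\theta\mapsto V_\nu$ wind exactly once with controlled slope. The region is also invariant: $\rho$ is constant along orbits, so $U_A-U_{A'}$ is foliated by invariant tori. Item~(2) follows there.

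The main obstacle is $\mathscr{U}_A'$, which contains the surgered orbits $\mathscr{d}^\pm$. There $\chi=0$, so away from $\mathscr{d}^\pm$ the field is $\kappa\,\partial_\theta$. We must check three things:
\begin{itemize}
\item $\partial_\theta$ is transverse to the $S_\nu$ for $0<|\rho|\le\epsilon'$;
\item it extends continuously and nonsingularly across $\mathscr{d}^\pm$;
\item every orbit is closed and meets each fiber once.
\end{itemize}
For the first point, Proposition~\ref{p.par-scc} and the linearizations in its proof show that near $T$ the surfaces $S_\nu$ are graphs over $\theta$ with angle depending on $\theta-\nu$. Hence translation in $\theta$ maps $S_\nu$ to $S_{\nu+t}$ up to higher-order terms. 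I would try first to arrange exactly this, by replacing $\partial_\theta$ on $U_{A'}$ with the vector field generating the maps $S_\nu\to S_{\nu+t}$, interpolated with $\kappa\,\partial_\theta$. Its closed orbits then come from invariance of $\rho$ and the $2\pi$-periodicity in $\theta$.

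On the blown-up tori $N_{\tilde d^\pm}$, the lifted $\partial_\theta$ in $(\theta,\psi^\pm)$ coordinates is $\partial_\theta$. This has slope $0$, while the collapsed curves $\partial^\pm\widehat S_\nu$ have slope $1$ (Proposition~\ref{p.par-scc}). So the field is transverse to the collapsed circles and descends to a nonsingular field along $\mathscr{d}^\pm$, transverse to the images $r^\pm_\nu$. Along $\mathscr{d}^\pm$ the projection of $\partial_\theta$ traverses the points $r^\pm_\nu$ in order of $\nu$, so the orbits $\mathscr{d}^\pm$ are closed and meet each $\mathscr{S}_\nu$ once. The remaining work is choosing $\kappa$ as a $C^1$ normalization so that all pieces glue with the right regularity, and verifying by direct differentiation that the orientation signs agree across $\rho=\pm\epsilon'$ and $\rho=\pm\epsilon$.
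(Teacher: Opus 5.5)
Your decomposition into $\mathscr U-\mathscr U_A$, $\mathscr U_A-\mathscr U_A'$ and $\mathscr U_A'$ follows the paper. So does your handling of $\mathscr d^\pm$: on $N_{\tilde d^\pm}$ the lift of $\partial_\theta$ has slope $0$ and the collapsed circles have slope $1$. The proposal nevertheless breaks in two places.

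\emph{The gluing at $\rho=\pm\epsilon$.} You orient the fiber field by $+\partial_\varphi$ over $S_\ell$ and $-\partial_\varphi$ over $S_r$. But the formula of item (4) equals $-\kappa\,\partial_\varphi$ at $\rho=-\epsilon$, which is the $S_\ell$ side, and $+\kappa\,\partial_\varphi$ at $\rho=+\epsilon$. So your field reverses direction across $\partial U_A$ and is not continuous. The formula forces the opposite choice, which is the paper's: $+\partial_\varphi$ on $U_r-U_A$ and $-\partial_\varphi$ on $U_\ell-U_A$. Nothing is lost by this. For $\rho>0$, formula (3) gives $V_\nu\propto\rho\cos(\nu-\theta)\,\partial_\rho+\sin(\nu-\theta)\,\partial_\theta$, which turns positively as $\nu$ increases. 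Hence $\nu\mapsto V_\nu(m)$ preserves orientation on $S_r$ and reverses it on $S_\ell$, consistent with Lemma~\ref{g2:increasing} and the orientation-preserving $h$. Item (4) of Proposition~\ref{p.SlSr-section}, as worded, has the two halves interchanged, and that is what misled you. In any case, a continuous field topologically transverse to the fibers of $\mathscr p$ on the connected manifold $\mathscr U$ automatically crosses them in a constant direction, so no direction needs to be imposed by hand.

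\emph{Transversality on $\mathscr U_A$.} This is the more serious gap: item (1) on $\mathscr U_A$ is the heart of the lemma and is not proved. The phrases ``I expect to deduce this \dots\ with controlled slope'' and ``I would try first to arrange exactly this'' are placeholders, and they point to the wrong mechanism. What is needed is a sign, not a slope bound. On $\{\rho=\rho_0\}$ with $\rho_0\neq 0$, $S_\nu$ is the graph $\varphi=f_\nu(\theta)$ with $f_\nu'(\theta)=-\rho_0\cosh\rho_0\big/\bigl(\rho_0^2\cos^2(\theta-\nu)+\cosh^2\rho_0\,\sin^2(\theta-\nu)\bigr)$. Its sign is opposite to that of $\rho_0$. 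Meanwhile $W$ lies in the closed cone spanned by $\partial_\theta$ and $\mathrm{sign}(\rho_0)\,\partial_\varphi$, and no nonzero vector of that cone is tangent to such a graph. This is exactly the paper's argument (decreasing graphs against the upper-right quadrant, increasing graphs against the lower-right one). If you instead read off the sign of $f_\nu'$ from item (4) as worded, via $\partial_\nu\varphi=-f_\nu'$, you get the wrong sign and your plan stalls.

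\emph{The region $U_{A'}$.} Here no modification of $\partial_\theta$ is needed. Formula (3), the metric $d\rho^2+\cosh^2\rho\,d\theta^2$ and the angle $\varphi$ depend on $\theta$ only through $\theta-\nu$, or not at all. Hence $\theta\mapsto\theta+t$ maps $S_\nu\cap U_A$ \emph{exactly} onto $S_{\nu+t}\cap U_A$, not up to higher-order terms. Its lift $(\theta,\psi^\pm)\mapsto(\theta+t,\psi^\pm)$ maps $\partial^\pm\widehat S_\nu$ onto $\partial^\pm\widehat S_{\nu+t}$. So the descended flow of $\partial_\theta$ carries $\mathscr S_\nu\cap\mathscr U_A'$ onto $\mathscr S_{\nu+t}\cap\mathscr U_A'$. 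This gives at once transversality on all of $\mathscr U_A'$ and item (3) on $\mathscr U_A'$. It also covers $T\setminus(\tilde d^+\cup\tilde d^-)$, which your first bullet omits; the paper handles that part by noting that $\overline S_\nu\cap T$ consists of fiber arcs there. Your interpolated replacement field, by contrast, is unspecified and would force you to re-verify items (2) and (3).
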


\begin{proof}[Proof of Lemma~\ref{l.Z-tr-Sig}]
We will define a vector field $W$ on the unit tangent bundle $U$, and then push it by the homomeorphism $\mathscr{i}$ to get the vector field $\mathscr{W}$ on $\mathscr{U}$. 

Let $\chi: [-\epsilon_0, \epsilon_0] \to [-\frac{\pi}{2}, \frac{\pi}{2}]$ be a function satisfying the required properties in item (3). We first consider the vector field $W$ defined on $U_A \subset U$ by
$$W(\rho, \theta, \varphi) := \sin(\chi(\rho)) \partial_\varphi + \cos(\chi(\rho)) \partial_\theta.$$
We will extend $W$ to a vector field on $U$. For this purpose, recall that the torus $T$ divides $U$ into two halves $U_\ell$ and $U_r$, and that $U_A$ is a collar neighborhood of $T$ in $U$ saturated by fibers (see subsection~\ref{ss.coordinate-system}). On the torus $\{\rho = +\epsilon\}\subset U_r$, the vector field $W$ is equal to $\partial_\varphi$. Since $\partial_\varphi$ is positively tangent to the fibers of the unit tangent bundle $U$, we can extend $W$ on $U_r-U_A$ by assigning $W$ to be a non-zero vector field positively tangent to the fibers. Similarly, on the torus $\{\rho = -\epsilon\}\subset U_\ell$, the vector field $W$ equals $-\partial_\varphi$, hence it is negatively tangent to the fibers of the unit tangent bundle $U$. So we may extend $W$ on $U_\ell-U_A$ by setting it to a non-zero vector field negatively tangent to the fibers. Now $W$ is defined on the entire closed manifold $U$.

On $U_A$, the vector field $W$ is equal to $\partial_\theta$. This implies that the lifted geodesics $\tilde{d}^+$ and $\widetilde{d}^-$ are two orbits of $W$, up to orientations (indeed recall that $\widetilde{d}^+$ reads ($\rho = 0$, $\varphi = 0$) and $\widetilde{d}^-$ read  ($\rho = 0$, $\varphi = \pi$) in the $(\rho,\theta,\varphi)$ coordinate system). It also implies that, when we blow-up the geodesics $\widetilde{d}^\pm$, then the lift of $W$ also reads $\partial_\theta$ in the $(\theta, \psi)$ coordinate system on the projectivized normal bundle $N_{\widetilde{d}^{\pm}}$ of $\tilde d^\pm$ boundary tori $N_{\widetilde{d}^{\pm}}$. In particular, the lift of $W$ is transverse to the 
circles $(\psi=\theta + \text{const.})$ on $N_{\widetilde{d}^{\pm}}$. It follows that $W$ induces a vector field $\mathscr{W}$ on $\mathscr{U}$ (which is obtained by collapsing each circle $(\psi=\theta + \text{const.})$ to a point). More precisely, $W$ induces a half-line field on $\mathscr{U}$, and we choose a vector field $\mathscr{W}$ tangent to this half-line field. By construction, $\mathscr{d}^\pm$  are two periodic orbits of $\mathscr{W}$, and the canonical identification $\mathscr{i}:U-\tilde d^\pm\to \mathscr{U}-\mathscr{d}^\pm$ maps the orbits of $W$ different from $\tilde d^\pm$ to the orbits of $\mathscr{W}$ different form $\mathscr{d}^\pm$. Moreover, item (4) is an immediate consequence of the construction. 

By construction, the regions $U_A'$ and $U-U_A$ are invariant under the flow of the vector field $W$, and all the orbits of $W$ in these regions are periodic: 
\begin{itemize}
\item on $U-U_A$, the reason is that the orbits of $W$ are fibers (up to orientation)~;
\item on $U_A'$, the reason is that $W$ is equal to $\partial_\theta$, hence the orbits are the circles $(\rho = \text{const.}$, $\varphi = \text{const.})$.
\end{itemize}
It follows that the regions $\mathscr{U}_A'$ and $\mathscr{U}-\mathscr{U}_A'$ are invariant under the flow of $\mathscr{W}$, and all the orbits of $\mathscr{W}$ in these regions are periodic. This proves items (2) and (3).

So we are left to check item (1), i.e. the transversality of the vector field $\mathscr{W}$ to the fibers $\{\mathscr{S}_\nu\}_{\nu \in \SS^1}$. Let us fix $\nu\in\SS^1$.
\begin{itemize}
    \item Let us consider the region $\mathscr U-\mathscr U_A$. The vector field $W$ is tangent to the fibers of the unit tangent bundle $U \to S$ in $U - U_A$. Therefore $W$ is transverse to the surfaces $S_{r,\nu}$ and $S_{\ell,\nu}$ on $U - U_A$ (recall that $S_{r,\nu}$ and $S_{r,\nu}$ are partial sections of the unit tangent bundle). It follows that $\mathscr W$ is transverse to the surface $\mathscr{S}_\nu$ on $\mathscr{U} -\mathscr{U}_A$.
    \item Now we turn to the region $\mathscr{U}_A$. Observe that $W$ has no component in the $\partial_\rho$ direction. This implies that the transversality of $W$ to the surface $\overline S_\nu$ on $U_A$ can be checked inside each torus $T_{\rho_0}=\{\rho=\rho_0\}$ for $\rho_0\in [-\epsilon,\epsilon]$. 
    \begin{itemize}
        \item For $\rho_0>0$, we observe that the restriction of the vector field $W$ to the torus $T_{\rho_0}$ is pointing in the upper-right quadrant with respect to the basis $\left(\partial\theta,\partial\varphi\right)$. On the other hand, formula~\eqref{e.boundary-r} implies that $S_\nu\cap T_{\rho_0} = S_{r,\nu} \cap T_{\rho_0}$ is the graph of a decreasing function $\varphi=s_{r,\nu}(\theta)$. It follows that $W$ is transverse to $\overline S_\nu$ in $T_{\rho_0}$.
        \item For $\rho_0<0$, the argument is similar, but the restriction of $W$ to $T_{\rho_0}$ is pointing in the lower-right quadrant, and formula~\eqref{e.boundary-l} implies that $\overline S_\nu\cap T_{\rho_0} = S_{\ell,\nu} \cap T_{\rho_0}$ is the graph of an increasing function $\varphi=s_{\ell,\nu}(\theta)$, which implies that $W$ is again transverse to $S_\nu$ in $T_{\rho_0}$.
        \item Finally, for $\rho_0=0$, the restriction of $W$ to the torus $T_0=T$ is equal to $\partial_\theta$. Our analysis in Subsection~\ref{ss.Clo-Sv} shows that $(\overline S_\nu\cap T)-(\tilde d^-\cup\tilde d^+)$ is made of two arcs of fibers, hence tangent to $\partial_\varphi$. In particular, $W$ is transverse to $S_\nu$ in $T_0-\tilde d^\pm$.
    \end{itemize}
    The three items above show that the vector field $W$ is transverse to the surface $\overline S_\nu$ on $U_A-\tilde d^\pm$. Pushing all the objects by $\mathscr{i}$, one gets that the vector field $\mathscr{W}$ is transverse to the surface $\mathscr{S}_\nu$ on $\mathscr U_A - \mathscr{d}^\pm$. 
    \item Finally, the vector field $\mathscr{W}$ is also transverse to the surface $\mathscr{S}_\nu$ near the orbits $\mathscr{d}^\pm$ since $\mathscr{S}_\nu$ is transverse to $\mathscr{d}^\pm$, and since by construction, $\mathscr{d}^\pm$ are orbits of $\mathscr{W}$. 
\end{itemize}
So we have proved that the vector field $\mathscr{W}$ is transverse to the surface $\mathscr{S}_\nu$ everywhere. This completes the proof of item (1) and the proof of the lemma.
\end{proof}

We will further denote $\mathscr{A}_0 = \mathscr{U}_A\cap\mathscr{S}_0\mbox{ and }\mathscr{A}_0' = \mathscr{U}_A'\cap\mathscr{S}_0$. Clearly, $\mathscr{A}_0$ and $\mathscr{A}_0'$ are two collar neighbourhoods of the curve $\mathscr{d}_0$ in the surface $\mathscr{S}_0$, and $\mathscr{A}_0'\subset\mathscr{A}_0$. 

\begin{proposition}
\label{prop:monodromy}
The Poincaré return map $\mathscr{f}_\mathscr{W}:\mathscr S_0\to\mathscr S_0$ is supported in $\mathscr{A}_0-\mathscr{A}_0'$ and is isotopic, by an isotopy supported in $\mathscr{A}_0$, to $\tau_{\mathscr{d}_0}^{-2}$, where $\tau_{\mathscr{d}_0}$ is the left Dehn twist along the curve $\mathscr{d}_0$ in the surface $\mathscr{S}_0$. 
\end{proposition}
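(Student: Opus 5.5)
The plan is to read off the Poincaré return map $\mathscr{f}_\mathscr{W}$ directly from the description of $\mathscr{W}$ in Lemma~\ref{l.Z-tr-Sig}, region by region.

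\textbf{Support.} On $\mathscr{U}-\mathscr{U}_A$ and on $\mathscr{U}_A'$, item~(3) of Lemma~\ref{l.Z-tr-Sig} says that every orbit of $\mathscr{W}$ is closed and meets each fiber $\mathscr{S}_\nu$ exactly once. Hence a point of $\mathscr{S}_0$ in these regions returns to itself after one turn, so $\mathscr{f}_\mathscr{W}$ is the identity there. By item~(2), the remaining region $\mathscr{A}_0-\mathscr{A}_0'$ is invariant. This region is the union of two annuli, $\mathscr{A}_0^r=\{\epsilon'\le\rho\le\epsilon\}$ and $\mathscr{A}_0^\ell=\{-\epsilon\le\rho\le-\epsilon'\}$; both lie away from $\mathscr{d}^\pm$, so we may work in $U$ through $\mathscr{i}^{-1}$. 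Since $W$ has no $\partial_\rho$-component (item~(4)), the flow preserves each torus $T_{\rho_0}=\{\rho=\rho_0\}$. Therefore $\mathscr{f}_\mathscr{W}$ preserves each circle $C_{\rho_0}:=S_0\cap T_{\rho_0}$ and is the identity on the boundary circles $\rho_0=\pm\epsilon',\pm\epsilon$. This gives the support statement. It also shows that, on each annulus, $\mathscr{f}_\mathscr{W}$ is isotopic rel boundary to $\tau^{n}$ for some integer $n$, where $n$ is the total variation of the (lifted) rotation of the circle maps $\mathscr{f}_\mathscr{W}|_{C_{\rho_0}}$ as $\rho_0$ runs across the annulus.

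\textbf{Computing the twist on $\mathscr{A}_0^r$.} Work in the torus $T_{\rho_0}$ with coordinates $(\theta,\varphi)$ and the classes $[\partial_\theta]=(1,0)$, $[\partial_\varphi]=(0,1)$.
\begin{enumerate}
\item From formula~\eqref{e.boundary-r}, the curve $C_{\rho_0}$ is the graph of a decreasing function $\varphi=s_{r,0}(\theta)$. The vector $\rho_0\cos\theta\,\partial_\rho-\sin\theta\,\partial_\theta$ winds once negatively, so $[C_{\rho_0}]=\pm(1,-1)$.
\item The direction of $W$ rotates monotonically from $\partial_\theta$ at $\rho_0=\epsilon'$ to $\partial_\varphi$ at $\rho_0=\epsilon$. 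At the two ends the orbits are closed, of class $(1,0)$ and $(0,1)$ respectively, and each meets $C_{\rho_0}$ once.
\item Lift to the universal cover $\RR^2$ of $T_{\rho_0}$ and measure the first-return displacement along the lifted line $\widetilde C_{\rho_0}$. This displacement is an affine translation by $0$ at $\rho_0=\epsilon'$ and by exactly one period of $\widetilde C_{\rho_0}$ at $\rho_0=\epsilon$. The reason is that the deck translations by $(1,0)$ and by $(0,1)$ differ by the period vector $(1,-1)$ of $C_{\rho_0}$.
\item In between, the displacement varies continuously and monotonically, since the slope of $W$ is monotone in $\rho_0$. Hence $\mathscr{f}_\mathscr{W}$ restricted to $\mathscr{A}_0^r$ is a single twist map $\tau^{\pm1}$.
\end{enumerate}

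\textbf{The left annulus and the sign.} On $\mathscr{A}_0^\ell$ the computation is the mirror one: $W$ goes from $-\partial_\varphi$ to $\partial_\theta$, and $C_{\rho_0}$ is the graph of an increasing function, of class $\pm(1,1)$. The involution $\varsigma$ (Proposition~\ref{p.SlSr-section}(5), Fact~\ref{f.expression-symmetry}) exchanges the two sides and reverses the orientation of $S$. It also reverses the orientation of the fibers and of the surface $\mathscr{S}_0$ locally; these two reversals cancel, so the twist on the left annulus has the same sign as on the right. The two annuli are parallel to $\mathscr{d}_0$, so their twists add, giving $\tau_{\mathscr{d}_0}^{\pm2}$. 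The isotopy is supported in $\mathscr{A}_0$ because each annular piece can be straightened rel boundary.

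\textbf{Main obstacle: fixing the sign.} The delicate step is showing that the exponent is $-2$ rather than $+2$, that is, that each annulus contributes a right twist. I would settle this on the right annulus by taking the arc $\mathscr{c}\cap\mathscr{A}_0^r$, which is a radial arc $\theta=\pi$ crossing the annulus. I would orient $\mathscr{S}_0$ as in the proof of Proposition~\ref{p.intnumber1}, with $(\widetilde{\partial_\rho},-\partial_\varphi)$ direct near the fiber arcs. Then I would check the direction in which $\mathscr{f}_\mathscr{W}$ drags the outer endpoint of this arc relative to the inner one, namely in the direction of $W$ projected along $C_{\rho_0}$ as $W$ turns from $\partial_\theta$ toward $\partial_\varphi$. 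Finally I would compare this with the convention that a left twist turns to the left when crossing the curve.
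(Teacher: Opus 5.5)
\textbf{The sign of the twist is never determined.} Your structure is the paper's: support from items (2)--(3) of Lemma~\ref{l.Z-tr-Sig}, the invariant circles $C_{\rho_0}$, one full turn of rotation across each component of $\mathscr{A}_0-\mathscr{A}_0'$, and two parallel twists adding up. But the proposal stops before the one thing the statement asserts. You show that each annulus contributes $\tau_{\mathscr{d}_0}^{\pm1}$ with a common sign and leave the sign as a plan, so you only obtain $\tau_{\mathscr{d}_0}^{\pm 2}$, not $\tau_{\mathscr{d}_0}^{-2}$.

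Your own step 3 already sets up the missing computation; it just has to be done quantitatively. On $T_{\rho_0}$, with $\rho_0\in[\epsilon',\epsilon]$, $W$ is proportional to $(\cos\chi,\sin\chi)$ in $(\theta,\varphi)$. Use the straight-line model of $C_{\rho_0}$, which is isotopic to it through curves transverse to $W$. A point of $\widetilde C_{\rho_0}$ reaches the next lift $\widetilde C_{\rho_0}+(2\pi,0)$ after time $t=2\pi/(\cos\chi+\sin\chi)$. Translating back by $-(2\pi,0)$ gives the displacement $\frac{2\pi\sin\chi}{\cos\chi+\sin\chi}(-1,1)$. This is a fraction $\frac{\sin\chi}{\cos\chi+\sin\chi}$ of a turn in the direction $-\xi^+$, where $\xi^+=\partial_\theta-\partial_\varphi$. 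It is exactly the paper's rewriting $W=\lambda\big(\partial_\theta-\frac{\sin\chi}{\cos\chi+\sin\chi}\,\xi^+\big)$.

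On the right half, the orientation of $\mathscr{S}_0$ is the lift of that of $S$, so $(\partial_\rho,\xi^+)$ is direct. Hence $-\xi^+$ lies to the right of $\partial_\rho$. As $\rho$ increases from $\epsilon'$ to $\epsilon$, the displacement grows from $0$ to one full turn in that direction, so arcs crossing the annulus turn right. This is a right twist $\tau_{\mathscr{d}_0}^{-1}$.

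The paper treats the left annulus directly in the same way, with $\xi^-=-\partial_\theta-\partial_\varphi$. There the rotation angle $-2\pi\frac{\sin\chi}{\cos\chi-\sin\chi}$ goes from $2\pi$ to $0$ as $\rho$ goes from $-\epsilon$ to $-\epsilon'$. Also $(\partial_\rho,\xi^-)$ is direct, because on that side the orientation of $\mathscr{S}_0$ is opposite to the lift of that of $S$. This again gives $\tau_{\mathscr{d}_0}^{-1}$.

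\textbf{The symmetry argument for the left annulus needs correcting.} The symmetry route is viable, but your orientation bookkeeping is wrong as written.
\begin{enumerate}
\item The differential $d\varsigma$ maps $S_0$ to $S_\pi$, not to $S_0$. The map that preserves $S_0$, by item (5) of Proposition~\ref{p.SlSr-section}, is $(m,v)\mapsto(\varsigma(m),-d\varsigma(v))$. In coordinates it reads $(\rho,\theta,\varphi)\mapsto(-\rho,\theta,\pi-\varphi)$.
\item This map reverses both $S$ and the fibers, hence preserves the orientation of $U$.
\item If you choose $\chi$ odd, it maps $W$ to $W$. This is harmless, since only the boundary values of $\chi$ affect the isotopy class.
\item It therefore preserves the co-orientation of $\mathscr{S}_0$ given by $\mathscr{W}$, and so it \emph{preserves} the orientation of $\mathscr{S}_0$. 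Indeed it sends $(\partial_\rho,\xi^+)$ to $(-\partial_\rho,-\xi^-)$.
\end{enumerate}
That is why the two twists have the same sign. Your sentence says the symmetry reverses the orientation of $\mathscr{S}_0$; a conjugacy commuting with the flow and reversing the orientation of $\mathscr{S}_0$ would give twists of opposite signs.
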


As a straightforward consequence, we obtain:

\begin{corollary}\label{coro:mono}
The monodromy of the fibration $\mathscr{p}:\mathscr{U}\to\SS^1$ is the Dehn twist $\tau_{\mathscr{d}_0}^{-2}$.
\end{corollary}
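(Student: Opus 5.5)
The corollary follows from Proposition~\ref{prop:monodromy} and the standard relation between a fibration and the first-return map of a transverse vector field. The plan is to identify $\mathscr{U}$ explicitly with the mapping torus of $\mathscr{f}_\mathscr{W}$, and then replace $\mathscr{f}_\mathscr{W}$ by the isotopic map $\tau_{\mathscr{d}_0}^{-2}$.

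\textbf{Step 1: $\mathscr{U}$ is the mapping torus of $\mathscr{f}_\mathscr{W}$.} By Lemma~\ref{l.Z-tr-Sig}, $\mathscr{W}$ is transverse to all fibers $\mathscr{S}_\nu$. After rescaling by a positive function, we may assume $d\mathscr{p}(\mathscr{W})=1$, which is possible because of this transversality. The flow of the rescaled field then maps $\mathscr{S}_\nu$ onto $\mathscr{S}_{\nu+t}$ in time $t$. The map
$$\mathscr{S}_0\times[0,2\pi]\to\mathscr{U},\qquad (x,t)\mapsto \mathscr{W}^t(x),$$
is therefore surjective. It is injective except at the ends, where it identifies $(x,2\pi)$ with $(\mathscr{f}_\mathscr{W}(x),0)$. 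So it descends to a homeomorphism from the mapping torus of $\mathscr{f}_\mathscr{W}$ onto $\mathscr{U}$, and this homeomorphism carries the projection to the second coordinate onto $\mathscr{p}$. Hence the monodromy of $\mathscr{p}$ is the isotopy class of $\mathscr{f}_\mathscr{W}$.

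One detail needs care here: the direction convention. We must check that with the paper's convention $(x,2\pi)\sim(\psi(x),0)$, the monodromy is $\mathscr{f}_\mathscr{W}$ and not its inverse. Under the parametrization above, the point $(x,2\pi)$ is $\mathscr{W}^{2\pi}(x)=\mathscr{f}_\mathscr{W}(x)$, which is the point $(\mathscr{f}_\mathscr{W}(x),0)$. This is consistent with the convention, with the identification of $\mathscr{S}_0$ at time $0$.

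\textbf{Step 2: replace the return map by the Dehn twist.} By Proposition~\ref{prop:monodromy}, $\mathscr{f}_\mathscr{W}$ is isotopic to $\tau_{\mathscr{d}_0}^{-2}$. Isotopic gluing maps give homeomorphic mapping tori by the usual straight-line-in-time argument: an isotopy $h_s$ from $\mathscr{f}_\mathscr{W}$ to $\tau_{\mathscr{d}_0}^{-2}$ defines a fiber-preserving homeomorphism between the two mapping tori, given on the last portion of the interval by $(x,t)\mapsto (h_{s(t)}\circ\mathscr{f}_\mathscr{W}^{-1}\cdots)$. The conclusion is that $\mathscr{U}\cong (\mathscr{S}_0\times[0,2\pi])/(x,2\pi)\sim(\tau_{\mathscr{d}_0}^{-2}(x),0)$, by a homeomorphism commuting with the projections to $\SS^1$.

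\textbf{Main obstacle.} Once Proposition~\ref{prop:monodromy} is granted, essentially no difficulty remains. The only points needing attention are the direction convention in Step~1 and the observation that the left Dehn twist along $\mathscr{d}_0$ does not depend on the orientation of $\mathscr{d}_0$, so the sign of the exponent is unambiguous.
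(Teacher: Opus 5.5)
Your argument is correct and is essentially the paper's own: the flow of the transverse field $\mathscr{W}$ identifies $\mathscr{U}$ fiberwise with the mapping torus of the return map $\mathscr{f}_\mathscr{W}$, and Proposition~\ref{prop:monodromy} then replaces $\mathscr{f}_\mathscr{W}$ by $\tau_{\mathscr{d}_0}^{-2}$.

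The one gap is your claim that the rescaling factor can be taken positive "because of this transversality". Transversality only tells you that $\mathscr{W}$ crosses the fibers in a constant direction. If that direction were decreasing in $\nu$, you would obtain $\mathscr{f}_\mathscr{W}^{-1}\simeq\tau_{\mathscr{d}_0}^{2}$ instead. The direction is in fact increasing in $\nu$. To see this, look at $\mathscr{U}_A'$: there $\mathscr{W}$ corresponds to $\partial_\theta$, and $S_\nu\cap U_A$ is a graph $\varphi=F(\rho,\theta-\nu)$. So the flow carries $S_\nu$ into $S_{\nu+t}$, which is the check you need.
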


\begin{proof}[Proof of Proposition~\ref{prop:monodromy}] 
Item (2) of Lemma~\ref{l.Z-tr-Sig} implies that the annuli  $\mathscr{A}_0$ and $\mathscr{A}_0'$ are invariant under the return map $\mathscr{f}_{\mathscr{W}}$. Item~(3) implies that $\mathscr{f}_{\mathscr{W}}$ is supported on $\mathscr{A}_0-\mathscr{A}_0'$. 

So we are left to analyse the restriction of $\mathscr{f}_{\mathscr{W}}$ to each of the two connected components of $\mathscr{A}_0-\mathscr{A}_0'$. 
Using the homeomorphism $\mathscr{i}$, it is equivalent to analyse the return map $f_W$ of the vector field $W$ on $\mathscr{i}^{-1}(\mathscr{A}_0-\mathscr{A}_0')\subset (S_0\cap (U_A-U_A')$. For that purpose, we will use the explicit expression of the vector field $W=\mathscr{i}^{-1}_*\mathscr{W}$ will use the $(\rho, \theta, \varphi)$-coordinate system (item (4) of Lemma~\ref{l.Z-tr-Sig}). 

The surface $S_0\cap U_0$ is a graph over the annulus $A=\{(\rho,\theta)\in [-\epsilon,\epsilon]\times \SS^1\}$, hence, the tori $\{ \rho = \text{constant} > 0 \}$ induce a circle foliation of the annulus $S_{r,0} \cap (U_A- U_A')\subset S_{r,0}$. The leaves are isotopic to $\partial \overline{S}_{r,0}$, the boundary of $S_{r,0}$ in $U$. The description of the boundary of $\overline{S}_{r,0}$ in the proof Proposition \ref{p.top-bund} implies the leaves of this circle foliation are in the homotopy class of the circle tangent to $\xi_+:=\partial_\theta - \partial_\varphi$. 

According to item (4) of Lemma \ref{l.Z-tr-Sig}, the restriction of the vector field $W$ to $U_r - U_1$ reads
\[
    W(\rho, \theta, \varphi) = \lambda(\rho, \theta, \varphi) \left( \partial_\theta - \frac{\sin(\chi(\rho))}{\cos(\chi(\rho))+\sin(\chi(\rho))} \xi^+ \right).
\]  
This shows the return map of $W$ on the circle ($\rho = \text{constant}$),  oriented by $\xi^+$, is the rigid rotation of angle:
\[
-2\pi \frac{\sin(\chi(\rho))}{\cos(\chi(\rho))+\sin(\chi(\rho))}
\]
When $\rho$ ranges from $\epsilon'$ to $\epsilon$, the angle decreases from $0$ to $-2\pi$. Recall that the orientation of $S_{0}$ was set to be defined by the lift to $S_{r,0}$ of the orientation of $S$. This orientation is such that $\left(\partial_\rho, \xi^+\right)$ is a direct basis. It follows that the isotopy class of the restriction of $P_{W}$ to the annulus $S_0 \cap \{ \epsilon' \leq \rho \leq \epsilon \}$ is  the right-Dehn twist $\tau^{-1}$ along the core of the annulus. As a further consequence, the isotopy class of the restriction of $\mathscr{W}$ to the annulus $\mathscr{S}_0 \cap \{ \epsilon' \leq  \rho \leq -\epsilon \}$ is $\tau_{\mathscr{d}_0}^{-1}$.

Similarly, $\{ \rho = \text{constant} \}$ induces a circle foliation on the annulus $S_0 \cap \{ -\epsilon \leq \rho \leq -\epsilon' \}\subset S_{\ell,0}$, with leaves homotopic to the circles tangent to $\xi^-:=-\partial_\theta - \partial_\varphi$. Writing
\[
W(\rho, \theta, \varphi) = \lambda(\rho, \theta, \varphi) \left( \partial_\theta - \frac{\sin(\chi(\rho))}{\cos(\chi(\rho)) - \sin(\chi(\rho))} \xi^- \right),
\]
we see that the return map on the circle ($\rho = \text{constant}$) oriented by the vector $\xi^-$ is the rotation of angle:
\[
-2\pi \frac{\sin(\chi(\rho))}{\cos(\chi(\rho)) - \sin(\chi(\rho))}.
\]
As $\rho$ ranges from $-\epsilon$ to $-\epsilon'$, this angle decreases from $2\pi$ to $0$. The orientation $S_0$ is such that $\left(\partial_\rho, \xi^-\right)$ is a direct basis. It follows that the return map of $\mathscr{W}$ on the annulus $\{-\epsilon\leq \rho\leq -\epsilon'\}$ is isotopic to the right Dehn twist $\tau_{\mathscr{d}_0}^{-1}$. 

Finally, the $\mathscr{f}_{\mathscr{W}}$ is isotopic in $\mathscr{A}_0$ to:
$$\tau_{\mathscr{d}_0}^{-1} \circ \tau_{\mathscr{d}_0}^{-1}=\tau_{\mathscr{d}_0}^{-2}.$$
This completes the proof of Proposition~\ref{prop:monodromy}.
\end{proof}

\begin{proof}[Proof of Theorem~\ref{p.perorb-lk}] 
The flow of the vector field $\mathscr{W}$ provides a explicit diffeomorphism 
$$\Phi_{\mathscr{W}}:\mathscr{U}\to \mathscr{M}:=\mathscr{S}_0\times [0,2\pi]_{/(x,2\pi)\sim (\mathscr{f}_\mathscr{W}(x),0)}.$$
By Proposition~\ref{prop:monodromy}, $\mathscr{f}_\mathscr{W}$ is supported in the annulus $\mathscr{A}_0$, and is isotopic to the Dehn twist $\tau_{\mathscr{d}_0}^{-2}$ in this annulus (in particular, $\mathscr{M}$ can be seen as the mapping torus of $\tau_{\mathscr{d}_0}^{-2}$). Pushing the Anosov vector field $\mathscr{X}$ by $\Phi_{\mathscr{W}}$, we get an Anosov vector field $\mathscr{Y}$ on $\mathscr{M}$. According to Proposition~\ref{p.horizontalorbits} and Proposition~\ref{p.intnumber1}, the curves $\Phi_{\mathscr{W}}(\mathscr{a}_{\ell})$, $\Phi_{\mathscr{W}}(\mathscr{a}_r)$, $\Phi_{\mathscr{W}}(\mathscr{b}_\ell)$, $\Phi_{\mathscr{W}}(\mathscr{b}_r)$ and $\Phi(\mathscr{c})$ are periodic orbits of the Anosov vector field $\mathscr{Y}$, contained in the surfaces $\mathscr{S}_0\times\{0\}$, $\mathscr{S}_0\times\{0\}$, $\mathscr{S}_0\times\{\frac{3\pi}{2}\}$, $\mathscr{S}_0\times\{\frac{3\pi}{2}\}$ and  $\mathscr{S}_0\times\{0\}$ respectively. Proposition~\ref{p.horizontalorbits} and Proposition~\ref{p.intnumber1} also guarantee that the local stable manifolds of the four first orbits above are horizontal, and that the twist number of local stable manifold of the orbit $\Phi_{\mathscr{W}}(\mathscr{c})$ with respect to the surface $\mathscr{S}_0\times\{0\}$ is $+1$. 

Let $\mathrm{pr}:\mathscr{S}_0\times [0,2\pi)\to \mathscr{S}_0$ be the vertical projection, and denote $\mathscr{a}_{\ell,0}, \mathscr{a}_{r,0}, \mathscr{b}_{\ell,0}, \mathscr{b}_{r,0}, \mathscr{c}_{0}\subset \mathscr{S}_0$ the images of the periodic orbits $\Phi_{\mathscr{W}}(\mathscr{a}_{\ell})$, $\Phi_{\mathscr{W}}(\mathscr{a}_r)$, $\Phi_{\mathscr{W}}(\mathscr{b}_\ell)$, $\Phi_{\mathscr{W}}(\mathscr{b}_r)$ and $\Phi(\mathscr{c})$ under this projection. 

\begin{claim}
\label{claim:diffeo-between-surfaces}
The surface $\mathscr{S}_0$ equipped with the curves $\mathscr{a}_{\ell,0}, \mathscr{a}_{r,0}, \mathscr{b}_{\ell,0}, \mathscr{b}_{r,0}, \mathscr{c}_{0}, \mathscr{d}_{0}$ is diffeomorphic to the surface $S$ equipped with the curves $a_\ell,b_\ell,a_r,b_r,c,d$. 
\end{claim}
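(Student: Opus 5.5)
Since $S$ and $\mathscr{S}_0$ are both closed genus two surfaces, the claim reduces to comparing the combinatorics of the two curve systems. By the classification of surfaces cut along curves (the "change of coordinates principle"), it suffices to exhibit a homeomorphism; it can then be smoothed. We build it piece by piece, following the decomposition of Proposition~\ref{c.Anosov-tor-fib}: $\mathscr{S}_0$ is cut by $\mathscr{d}_0$ into the once-punctured tori $\mathscr{S}_{\ell,0}$ and $\mathscr{S}_{r,0}$, just as $S$ is cut by $d$ into $S_\ell$ and $S_r$.

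\textbf{Step 1: the halves.} The projection $\pi\colon U\to S$ restricts to a diffeomorphism $S_{r,0}\to S_r$, since $S_{r,0}$ is the graph of $V_0$ over $S_r$. Composing with $\mathscr{i}$, we get a diffeomorphism $\pi_r\colon\mathscr{S}_{r,0}\to S_r$, and similarly $\pi_\ell\colon\mathscr{S}_{\ell,0}\to S_\ell$. Next we check that these maps send the curves correctly:
\begin{itemize}
\item by Corollary~\ref{c.geodesics-in-section}, $\tilde a^+_r$ and $\tilde c_r^+$ lie in $S_0$, so $\pi_r$ maps $\mathscr{a}_r\cap\mathscr{S}_{r,0}$ onto $a_r$ and the arc of $\mathscr{c}$ onto $c_r$;
\item the flow $\mathscr{W}$ is vertical outside $\mathscr{U}_A$ (item~(3) of Lemma~\ref{l.Z-tr-Sig}), and $A$ is disjoint from $a_\ell,a_r,b_\ell,b_r$, so the projections $\mathscr{b}_{r,0}$ and $\mathscr{b}_{\ell,0}$ along $\mathscr{W}$ are the curves $\{(m,V_0(m)) : m\in b_r\}$ and $\{(m,V_0(m)) : m\in b_\ell\}$, which $\pi_r$ and $\pi_\ell$ map onto $b_r$ and $b_\ell$.
\end{itemize}
Here the projection $\mathrm{pr}$ used in the claim is understood as projection along $\mathscr{W}$-orbits to $\mathscr{S}_0$, that is, the identity on $\mathscr{S}_0$ composed with $\Phi_{\mathscr{W}}$; one must check that it does not move the curves lying in $\mathscr{S}_{3\pi/2}$ into $\mathscr{A}_0$. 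The same argument applies on the left.

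\textbf{Step 2: the gluing.} The two maps $\pi_\ell$ and $\pi_r$ do not obviously glue along $\mathscr{d}_0$. Moreover, $\pi$ itself does not extend, because $\mathscr{d}_0$ is made of the two fiber arcs $d^\pm_0$ over the points $p,q$ together with the collapsed points $r_0^\pm$. Instead we argue abstractly. The closure of $\mathscr{S}_{r,0}$ is a torus with one disc removed, with boundary $\mathscr{d}_0$, and it contains $\mathscr{a}_{r,0}$, $\mathscr{b}_{r,0}$ and the arc $\mathscr{c}\cap\overline{\mathscr{S}_{r,0}}$. This arc meets $\mathscr{d}_0$ at its two endpoints $\tilde p\in d_0^-$ and $\tilde q\in d_0^+$, which are distinct points of $\mathscr{d}_0$ that separate $r_0^-$ from $r_0^+$. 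The same points $\tilde p,\tilde q$ are the endpoints of the left arc of $\mathscr{c}$.

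On $S$ the configuration is: $d$ is separating, $c$ meets $d$ in two points $p,q$, and each side contains a one-holed torus with $a,b$ meeting once, $c$ crossing $b$ once and disjoint from $a$. We have the same intersection pattern on $\mathscr{S}_0$, transported by $\pi_\ell,\pi_r$ on the interiors. So we take $h_r\colon\overline{\mathscr{S}_{r,0}}\to \overline{S_r}$ extending $\pi_r$ near the curves and sending $\mathscr{d}_0$ to $d$ with $\tilde p\mapsto p$ and $\tilde q\mapsto q$. Then we choose $h_\ell$ similarly and adjust it by an isotopy of the boundary circle fixing $p$ and $q$, so that the two maps agree on $\mathscr{d}_0$. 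This adjustment is possible because both maps preserve orientation on the boundary relative to the two marked points.

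\textbf{Main obstacle: orientations.} The delicate point is orientation consistency. The orientation on $\overline{S_0}$ induced from $S_r$ is not the one induced from $S_\ell$ (see the remark in the proof of Proposition~\ref{p.intnumber1}), because $V_\nu$ anticommutes with $\varsigma$. We will therefore use $\pi_r$ on the right and $\varsigma\circ\pi_\ell$ on the left; by item~(5) of Proposition~\ref{p.SlSr-section}, $\varsigma\circ\pi_\ell$ maps the left curves to the right ones. We then compose back with $\varsigma$, or equivalently glue $h_r$ with $\varsigma\circ h'_\ell$, to obtain a homeomorphism of $\mathscr{S}_0$ onto $S$. This homeomorphism may reverse orientation. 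That is harmless for the claim, since only the diffeomorphism type of the pair (surface, curve system) is asserted, and curve orientations are irrelevant by Remark~\ref{r.incoherent-orientations}. If an orientation-preserving map is needed, we compose with the involution $\varsigma$, which preserves the system $\{a,b,c,d\}$.
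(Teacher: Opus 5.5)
Your route is the paper's: cut $\mathscr{S}_0$ along $\mathscr{d}_0$ (Proposition~\ref{c.Anosov-tor-fib}), transport $a$, $b$ and the two halves of $c$ by $\pi$ using that $\mathscr{W}$ is vertical off $\mathscr{U}_A$, note that $\mathscr{c}_0$ meets $\mathscr{d}_0$ exactly at $\tilde p\in d_0^-$ and $\tilde q\in d_0^+$, and conclude by classification. Step~1 and your location of $\tilde p,\tilde q$ on $\mathscr{d}_0$ are correct. \textbf{The explicit gluing in Step~2, however, fails as you set it up.}

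\emph{Why it fails.} Orient $\mathscr{S}_0$ by lifting the orientation of $S$ through $S_{r,0}$. Then $\pi_r$ preserves orientation and $\pi_\ell$ reverses it. The paper says so right after the proof of Theorem~\ref{p.perorb-lk}. You can also see it directly: near the fibre arc $d_0^-$, the surface $\overline{S_0}$ is, to first order, the strip $(\rho,\theta)=s(\cos\alpha,\sin\alpha)$ with $s\in\RR$, and on it $\pi$ has Jacobian $s$. Consequently no homeomorphism $\mathscr{S}_0\to S$ can agree with $\pi_r$ near the right curves and with $\pi_\ell$ near the left curves, since its local degree would be $+1$ on one half and $-1$ on the other.

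Concretely, your $h_r$ and $h_\ell$ both send $\tilde p\mapsto p$ and $\tilde q\mapsto q$. But they induce opposite orientations on $\mathscr{d}_0\to d$, so they send the arc of $\mathscr{d}_0\setminus\{\tilde p,\tilde q\}$ through $r_0^+$ to different components of $d\setminus\{p,q\}$. Hence no isotopy rel $\{p,q\}$ can make them agree. Your sentence ``both maps preserve orientation on the boundary relative to the two marked points'' is exactly the false step.

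The $\varsigma$-manoeuvre does not repair this. Near the curves, $\varsigma\circ h'_\ell$ coincides with $\varsigma\circ\varsigma\circ\pi_\ell=\pi_\ell$, so it has the same defect. Likewise, ``this homeomorphism may reverse orientation'' misdiagnoses the obstruction. The problem is a sign change between the two halves, not a global reversal.

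\emph{The missing ingredient and the fix.} You need an orientation-reversing self-homeomorphism of one half-configuration that preserves each curve and fixes $p$ and $q$. The involution $\varsigma$ is the wrong symmetry, because it exchanges the halves. A suitable one exists:
\begin{itemize}
\item In the model of Lemma~\ref{l.diffh}, the reflection $(x,y)\mapsto(x,-y)$ of $\RR^2/\ZZ^2$ preserves $\mathrm{pr}(\RR\times\{\frac{1}{2}\})$, $\mathrm{pr}(\{\frac{1}{2}\}\times\RR)$ and $\mathrm{pr}([-\frac{1}{2},\frac{1}{2}]\times\{0\})$.
\item On the circle of directions at the puncture, i.e.\ on $d$, it acts by $\theta\mapsto-\theta$. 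It therefore fixes $p$ ($\theta=0$) and $q$ ($\theta=\pi$) and swaps the two arcs between them.
\item Conjugating it by $h\circ\varsigma$ gives such a symmetry $R_\ell$ of $(\overline{S_\ell};a_\ell,b_\ell,c_\ell)$.
\end{itemize}
If you extend $R_\ell\circ\pi_\ell$ instead of $\pi_\ell$, both halves become orientation-preserving. The boundary maps then agree up to isotopy rel $\{p,q\}$, and you obtain an orientation-preserving homeomorphism $\mathscr{S}_0\to S$ that respects all the labels.

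Orientation preservation, for the orientation lifted from $S_{r,0}$, is also what the paper needs next. It is required to identify the monodromy $\tau_{\mathscr{d}_0}^{-2}$ with $\tau_d^{-2}$ and to keep the twist number of $c$ equal to $+1$. The paper only lists the (unoriented) intersection pattern and appeals to standard surface topology, which implicitly relies on this reflection symmetry.
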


Claim~\ref{claim:diffeo-between-surfaces} provides a diffeomorphism 
$$\Psi:\mathscr{M}\to M:=S\times [0,\pi]/_{(x,2\pi)\sim(\tau_d^{-2}(x),0)}$$
such that:
\begin{itemize}
\item[--] the push forward of $\mathscr{Y}$ by $\Psi$ is a transitive Anosov vector field $Y$ on $M$, 
\item[--] the images under $\Psi$ of the orbits $\Phi_{\mathscr{W}}(\mathscr{a}_{\ell})$, $\Phi_{\mathscr{W}}(\mathscr{a}_r)$, $\Phi_{\mathscr{W}}(\mathscr{b}_\ell)$, $\Phi_{\mathscr{W}}(\mathscr{b}_r)$ and $\Phi(\mathscr{c})$ of $\mathscr{Y}$ are precisely the curves $a_\ell\times\{0\}$, $b_\ell\times\{\frac{3\pi}{2}\}$, $a_r\times\{0\}$, $b_r\times\{\frac{3\pi}{2}\}$ and $c\times\{0\}$; of course, these curves are periodic orbits of the Anosov vector field $Y$,
\item[--] the local stable manifolds of the four first orbits above are horizontal, and that the twist number of local stable manifold of the orbit $c\times\{0\}$ with respect to the surface $S\times\{0\}$ is $+1$,
\end{itemize}
completing the proof of Theorem~\ref{p.perorb-lk}. So we are left to prove Claim~\ref{claim:diffeo-between-surfaces}. By standard surface topology argument, it is enough to prove that the ``configuration" of the curves $\mathscr{a}_{\ell,0}, \mathscr{a}_{r,0}, \mathscr{b}_{\ell,0}, \mathscr{b}_{r,0}, \mathscr{c}_{0}, \mathscr{d}_{0}$ in the surface $\mathscr{S}_0$ is the same as ``configuration" of the curves $a_\ell,b_\ell,a_r,b_r,c,d$ in the surface $S$. More precisely,  Claim~\ref{claim:diffeo-between-surfaces} is a consequence from the following facts:
\begin{enumerate}
\item $\mathscr{S}_0$ is a closed orientable genus two surface;
\item $\mathscr{d}_0$ is a simple closed curve cutting $\mathscr{S}_0$ into two once-punctured tori, one of them containing the curves $\mathscr{a}_{\ell,0}$ and  $\mathscr{b}_{\ell,0}$, the other containing the curves $\mathscr{a}_{r,0}$ and  $\mathscr{b}_{r,0}$;
\item $\mathscr{a}_{\ell,0}$ (resp. $\mathscr{a}_{r,0}$) intersects transversally $\mathscr{b}_{\ell,0}$ (resp. $\mathscr{b}_{r,0}$) at a single point and does not intersect $\mathscr{a}_{r,0}$, $\mathscr{b}_{r,0}$ and $\mathscr{c}_0$ (resp. $\mathscr{a}_{\ell,0}$, $\mathscr{b}_{\ell,0}$ and $\mathscr{c}_0$);
\item $\mathscr{b}_{\ell,0}$ (resp. $\mathscr{b}_{r,0}$) intersects transversally  $\mathscr{a}_{\ell,0}$ (resp. $\mathscr{a}_{r,0}$) and $\mathscr{c}_{0}$ at a single point and does not intersect $\mathscr{a}_{r,0}$ and $\mathscr{b}_{r,0}$ (resp. $\mathscr{a}_{\ell,0}$ and $\mathscr{b}_{\ell,0}$);
\item $\mathscr{c}_0$ intersects transversally $\mathscr{d}_0$ at two points.
\end{enumerate}
Items~1 and 2 follow from Proposition~\ref{c.Anosov-tor-fib}. Item~2 implies that $\mathscr{a}_{\ell,0}$ and $\mathscr{b}_{\ell,0}$ do not intersect $\mathscr{a}_{r,0}$ and $\mathscr{b}_{r,0}$. Recall that in the region $\mathscr{U}-\mathscr{U}_A$, the orbits of the vector field $\mathscr{W}$ coincide with the fibers of the unit tangent bundle, and the surface $\mathscr{S}_0$ coincides with the surface section of the unit tangent bundle $S_0$. The curves $\mathscr{a}_{\ell,0}$ and $\mathscr{b}_{\ell,0}$ are contained in this region. It follows that the curves $\mathscr{a}_{\ell,0}$ and $\mathscr{b}_{\ell,0}$ coincide with the projections of the lifted geodesics $\widetilde a_\ell$ and $\widetilde b_\ell$ on $S_0$ along the fibers of the unit tangent bundle, which intersect transversally at a single point. Hence,  $\mathscr{a}_{\ell,0}$ and $\mathscr{b}_{\ell,0}$ intersect transversally at a single point. The same argument applies for $\mathscr{a}_{r,0}$ and $\mathscr{b}_{r,0}$. Similarly, the part of $\mathscr{c}_{0}$ contained in $\mathscr{U}_A$ coincides with the lifted geodesic $\widetilde c\subset S_0$. It follows that $\mathscr{a}_{\ell,0}$ (resp. $\mathscr{a}_{r,0}$) does not intersect $\mathscr{c}_{0}$, and that $\mathscr{b}_{\ell,0}$ (resp. $\mathscr{a}_{r,0}$) intersects $\mathscr{c}_{0}$ transversally at a single point. 
The lifted geodesic $\tilde c$ is in $\overline{S}_0-\tilde d^\pm$, hence $\mathscr{c}$ is in $\mathscr{S}_0$, hence $\mathscr{c}_0=\mathscr{c}$. On the other hand, $\mathscr{d}_0$ is the image in $\mathcal{U}$ of $\partial \overline{S}_0$ (through the canonical identification $\mathscr{i}:U-\tilde d^\pm\to\mathscr{U}-\mathscr{d}^\pm$. We know that $\tilde c$ intersects transversally $\partial \overline{S}_0$ at exactly two points (namely the points $\tilde p$ and $\tilde q$). Hence, $\mathscr{c}_0$ and  $\mathscr{d}_0$ intersect transversally at two points. This completes the proof of items 3 and 4, and of the theorem.
\end{proof}

\begin{remark}
The difficulty in the preceding proof comes from the fact that there is no natural identification between the surface $\mathscr{S}_0$ and the abstract surface $S$. Indeed, the composition of the restriction  of natural identification $\mathscr{i}^{-1}:\mathscr{U}-\mathscr{d}^\pm$ and the unit tangent bundle projection $\pi:U\to S$ provides a  natural diffeomorphism between $\mathscr{S}_0-\mathscr{d}_0\to S-d$. But this diffeomorphism does not extend to a diffeomorphism from $\mathscr{S}_0$ to $S$, since its restriction to the right halves of the surfaces is orientation-preserving whereas its restriction to the left halves of the surfaces is orientation-reversing. This is due to the fact that the fibers of the unit tangent bundle $U$ intersect the section $S_0$ in opposite directions (for the orientation of $S_0$) in the left part and the right part. A consequence is that the diffeomorphism of Claim~\ref{claim:diffeo-between-surfaces} cannot preserve the orientations of all the curves (see Remark~\ref{r.incoherent-orientations}). 
\end{remark}

\section{From genus two to arbitrary genus} 
\label{s.cover}
In Section~\ref{s.g-2-fiber}, we have constructed a particular fibered manifold, with genus two fibers, carrying a transitive Anosov flow. In the present section, we will use a covering trick to deduce an analogous result with fibers of arbitrarily large genus. 

For $g\geq 3$, let $S_g$ be the genus $g$ closed, oriented surface equipped with the system of unoriented simple closed curves represented  on Figure~\ref{arbi.abcd} below. Denote by $\tau_{a_i}$, $\tau_{b_j}$, $\tau_{c_k}$ and $\tau_{d_l}$ the left Dehn twists on $S_g$ along the curves $a_i$, $b_j$, $c_k$ and $d_l$ for $1\leq i,j\leq g$ and $1\leq k,l\leq g-1$. Let $\widehat\tau_d:=\tau_{d_1}^{-2}\tau_{d_2}^{-2}\dots,\tau_{d_{g-1}}^{-2}$. Note that the product is commutative since the support of the Dehn twists in this product are pairwise disjoint.  

\begin{figure}[htb]
    \centering
    \includegraphics[scale=0.25]{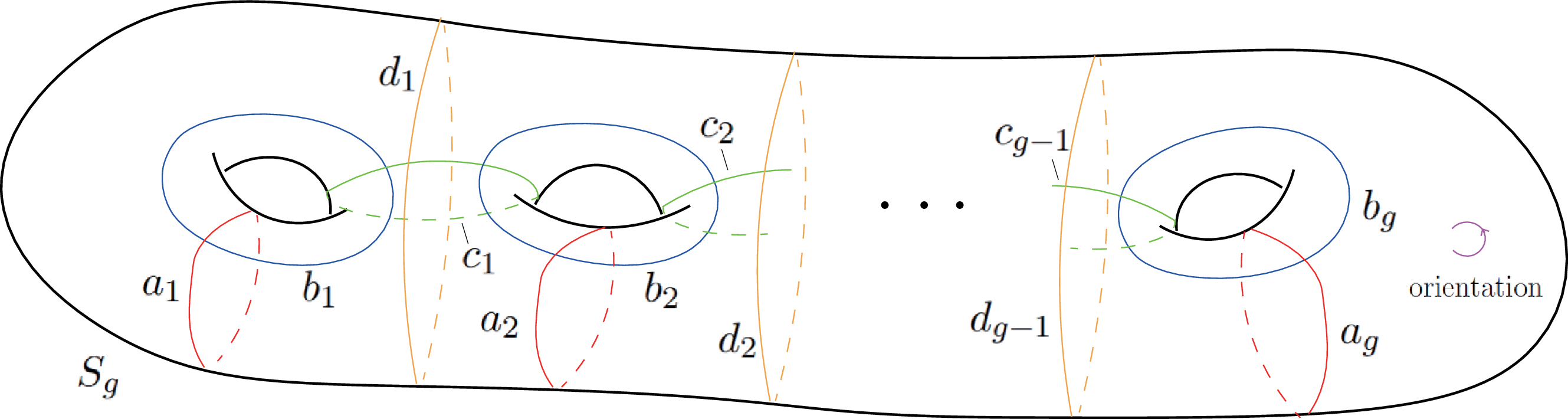}
    \caption{The simple closed curves  $ a_i,b_j, c_k, d_l$ on $ S_g $ for $g$.}
    \label{arbi.abcd}
\end{figure}

\begin{theorem}\label{cover.perorb-lk}
The mapping torus 
$$S_g \times [0,2\pi]_{/ (x,2\pi) \sim (\widehat\tau_d(x),0)}$$
carries a transitive Anosov flow $\widetilde Y$ with orientable stable and unstable foliations such that, denoting by $p: M \to \SS^1=\mathbb{R}/2\pi\mathbb{Z}$ the fibration induced by the projection of $S_g \times [0,2\pi]$ on the second coordinate, for $1 \leq i,j \leq g, 1 \leq  k \leq g-1$,
\begin{enumerate}
    \item the horizontal curves $a_i \times \{0\}$, $b_j \times \{\frac{3\pi}{2}\}$ and $c_k \times \{0\}$ on $M$ are periodic orbits of $\widetilde{Y}^t$,
    \item the local stable manifolds of the orbits $a_i \times \{0\}$ and $b_j \times \{\frac{3\pi}{2}\}$ are horizontal with respect to the fibration $p$,
    \item the twist number of the local stable manifold of the orbit $c_k \times \{0\}$ with respect to the fibration $p$ is equal to $1$.
\end{enumerate}
\end{theorem}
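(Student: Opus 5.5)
The plan is to pull back the genus two construction of Theorem~\ref{p.perorb-lk} through a finite cyclic covering. First, choose a $(g-1)$-fold cyclic covering $\varpi: S_g\to S$ of the genus two surface. Euler characteristics match, since $2-2g=(g-1)(2-4)$. The covering should be dual to a nonseparating curve lying in the right-hand once-punctured torus $S_r$ of $S$, say the class of $a_r$: cut $S$ along $b_r$ (or along the arc dual to $a_r$) and glue $g-1$ copies cyclically. In the resulting picture, $S_g$ is a chain of $g-1$ copies of the genus-two picture, arranged so that:
\begin{itemize}
\item each $d_l$ and each $c_k$ is one of the $g-1$ lifts of $d$ and of $c$;
\item each $a_i,b_j$ with $2\le i,j\le g-1$, together with the extreme ones, is a lift of $a_\ell$, $b_\ell$, $a_r$ or $b_r$.
\end{itemize}
Since $d$ and $c$ do not meet the cutting curve, they lift homeomorphically. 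I would instead use the covering dual to $d$-disjoint data if this arrangement fails. Concretely, I would pick the cyclic cover associated to the homomorphism $\pi_1(S)\to\ZZ/(g-1)$ given by algebraic intersection with a suitable curve, chosen so that $a_\ell,b_\ell,a_r,b_r,c,d$ all have zero intersection with it and hence lift to closed curves. I would then check, by drawing the cover, that the preimage configuration is exactly the one in Figure~\ref{arbi.abcd}, possibly after discarding some lifts (e.g.\ only one lift of $a_r$ per copy is needed, and lifts may be identified with neighbours). This combinatorial identification is the step I expect to be the main obstacle. If the cyclic cover does not produce exactly the pattern of Figure~\ref{arbi.abcd}, I would first try a cover of the sphere-with-handles type: cut $S$ along $a_\ell\cup a_r$-parallel curves, or along the pair of curves forming the ``ends'' of the chain, and glue copies in a line.

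Second, lift the fibration. The monodromy $\tau_d^{-2}$ lifts to $\widehat\tau_d=\prod_l\tau_{d_l}^{-2}$ under $\varpi$, because the Dehn twist is supported near $d$, whose preimage is the disjoint union of the $d_l$. The homomorphism defining the cover is $\tau_d$-invariant since $d$ is separating. Hence there is a $(g-1)$-fold covering of mapping tori
$$S_g\times[0,2\pi]_{/(x,2\pi)\sim(\widehat\tau_d(x),0)}\to M,$$
given fiberwise by $\varpi$, and it respects the fibrations over $\SS^1$. Lift the Anosov flow $Y$ of Theorem~\ref{p.perorb-lk} to an Anosov flow $\widetilde Y$. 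Orientability of the stable and unstable foliations lifts. Transitivity of the lift follows since a finite cover of a transitive Anosov flow with orientable foliations on a connected manifold is transitive: the nonwandering set lifts to an open closed nonempty set, and Anosov flows on connected manifolds with full nonwandering set are transitive.

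Third, read off the properties. Each of $a_i\times\{0\}$, $b_j\times\{3\pi/2\}$ and $c_k\times\{0\}$ is a component of the preimage of the corresponding periodic orbit of $Y$ lying in the fiber over the same parameter. It is therefore a periodic orbit of $\widetilde Y$, with positive multipliers, since the covering map has degree $1$ on it. Twist numbers are local invariants: the covering is a local diffeomorphism sending fibers to fibers and weak stable leaves to weak stable leaves, and it identifies the projectivized normal bundles $N_\gamma$ orientation-preservingly. Consequently $\mathrm{Int}(\lambda^s,\sigma_S)$ is unchanged, which gives twist $0$ for the lifts of $a_\ell,a_r,b_\ell,b_r$ and twist $1$ for the lifts of $c$.
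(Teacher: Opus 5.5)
There is a genuine gap at the step you flag, and it cannot be closed by a cyclic covering. For $g\ge 4$ no cyclic cover, and indeed no regular cover, gives the configuration of Figure~\ref{arbi.abcd}.

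In your framework you need $\varpi^{-1}(d)=d_1\cup\dots\cup d_{g-1}$, with every $d_l\to d$ of degree $1$. The reason is that the lift of $\tau_d^{-2}$ twists along \emph{every} component of $\varpi^{-1}(d)$, and near a component of degree $m>1$ it is not $\tau_{\tilde d}^{-2}$. So lifts of $d$ can never be discarded. The pieces $A_1,\dots,A_g$ of $S_g$ cut along the $d_l$ are then the components of $\varpi^{-1}(S_\ell)\cup\varpi^{-1}(S_r)$, and consecutive pieces lie over opposite halves. By Euler characteristic, $A_1$ and $A_g$ have degree $1$, while $A_2,\dots,A_{g-1}$ have degree $2$. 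In a regular cover the deck group permutes the components of $\varpi^{-1}(S_\ell)$ transitively, so they all have the same degree, and likewise for $S_r$. But for $g\ge4$, $A_1$ and $A_3$ lie over the same half with degrees $1$ and $2$.

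Your two concrete candidates also fail directly.
\begin{enumerate}
\item The curve $c$ does meet $b_r$, once (cf.\ Lemma~\ref{l.diffh}). So in the cover cut along $b_r$, the preimage of $c$ is a single curve of degree $g-1$, whose twist number is $g-1\neq 1$. Moreover, for $g\ge4$ the lifts of $d$ form a star around the connected preimage of $S_r$ rather than a chain.
\item A homomorphism $\pi_1(S)\to\ZZ/(g-1)$ vanishing on $a_\ell,b_\ell,a_r,b_r$ is trivial, since these classes generate $H_1(S;\ZZ)$.
\end{enumerate}
For $g=3$ a cyclic cover does work, but it is the one cut along $a_r$, not $b_r$.

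What is missing is the non-regular cover of Proposition~\ref{thm:finitecover}. Your last fallback gestures at it without specifying it. Cut $S$ along curves parallel to $a_\ell$ and $a_r$ (these are disjoint from $c$ and $d$). Glue $g-1$ copies using the permutation $(1\,2)(3\,4)\cdots$ across the first curve and $(2\,3)(4\,5)\cdots$ across the second. This produces exactly the chain: once-punctured tori at the two ends and the double covers of Lemma~\ref{lemma:doublecover} in the middle.

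In that cover $c$, $d$ and the $a_i$ lift with degree $1$, but the middle $b_j$ cover $b_\ell$ or $b_r$ with degree $2$. So your claims that the covering has degree $1$ on each orbit, and that it identifies the bundles $N_\gamma$, fail for those $b_j$. You need the extra remark, made in the paper, that a degree-$m$ lift multiplies the twist number by $m$; this is harmless because those twists are $0$.

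With that cover, the rest of your argument goes through as in the paper: lifting $\tau_d^{-2}$ via its support, lifting the flow with its orientability and transitivity, and locality of twist numbers for the degree-one lifts $a_i$ and $c_k$.
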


\subsection{An explicit covering}

For $g\geq 3$, let $S_g$ be the genus $g$ closed oriented surface with labeled unoriented curves $a_i$, $b_j$, $c_k$ and $d_l$, with $1\leq i,j\leq g,\;1\leq k,l\leq g-1$, represented on Figure \ref{arbi.abcd}. Let $S_2$ be the genus $2$ closed oriented surface with labeled unoriented curve $a_\ell, a_r, b_\ell, b_r, c, d$ represented on Figure~\ref{C.S}. 

\begin{figure}[htb]
    \centering
    \includegraphics[scale=0.3]{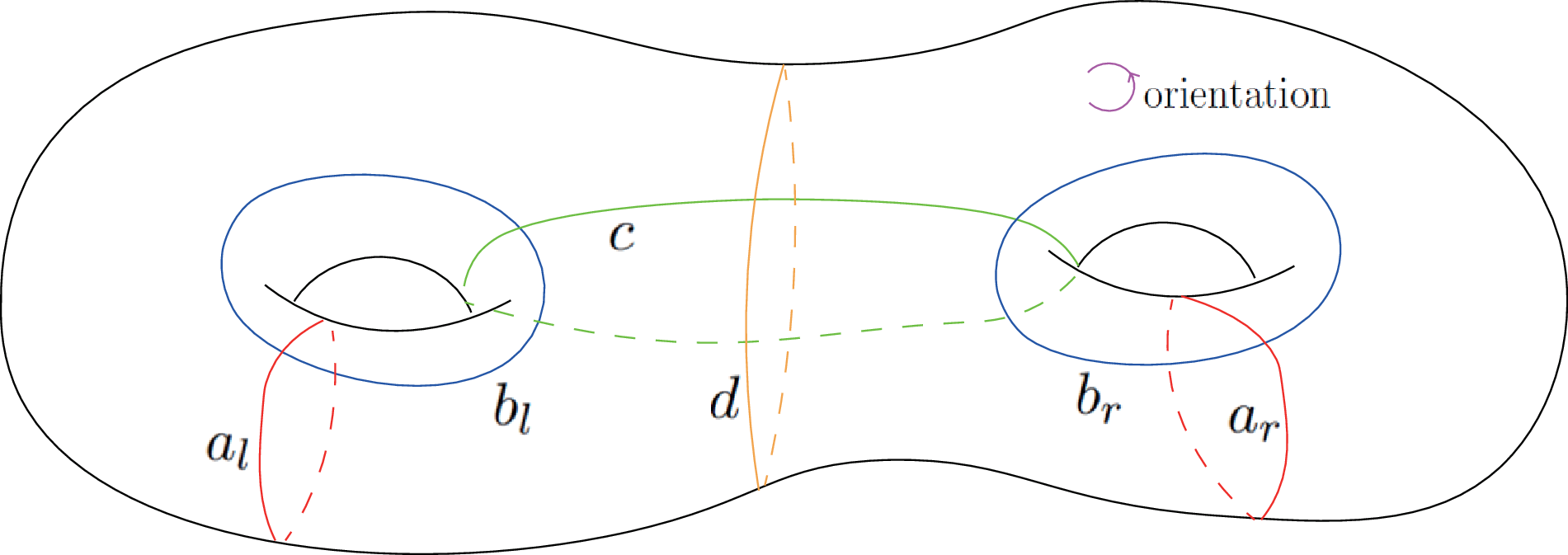}
    \caption{The genus-2 surface \(S_2\) with the labeled curves}
    \label{C.S}
\end{figure}

\begin{proposition}\label{thm:finitecover}
    For every $g\geq 3$, there exists an orientation preserving covering map $\Psi: S_g \to S_2 $ that maps: 
    \begin{itemize}
    \item the curve $a_i$ to the curve $a_\ell$ when $i$ is odd and to the curve $a_r$ when $i$ is even,
    \item the curve $b_j$ to the curve $b_\ell$ when $j$ is odd and to the curve $b_r$ when $j$ is even, 
    \item the curve $c_k$ to the curve $c$ for every $k$, 
    \item the curve $d_l$ to the curve $d$ for every $l$.
    \end{itemize}
\end{proposition}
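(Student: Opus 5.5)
The covering will be a $(g-1)$-sheeted cyclic cover of $S_2$. Its degree is forced by Euler characteristic, since $\chi(S_g)=2-2g=(g-1)\chi(S_2)$. I plan to build $S_g$ by gluing $g-1$ copies of a fundamental piece cut out of $S_2$.

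\emph{Step 1: cut $S_2$ along $d$.} Cutting $S_2$ along $d$ gives the two once-punctured tori $S_\ell\supset a_\ell,b_\ell$ and $S_r\supset a_r,b_r$. The curve $c$ becomes two arcs, $c_\ell\subset S_\ell$ and $c_r\subset S_r$, each with both endpoints on $d$ at the two points $p,q$. I would rather use a cyclic cover dual to a non-separating curve. The natural candidate is the homomorphism $\pi_1(S_2)\to\ZZ/(g-1)$ given by algebraic intersection with a curve $e$ such that $e$ meets $c$ once and is disjoint from $a_\ell,b_\ell,a_r,b_r,d$. The point is that a curve disjoint from the defining curve lifts homeomorphically, with $g-1$ disjoint lifts. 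A curve crossing $e$ once lifts to a single connected curve covering it $(g-1)$ times, which is not what we want for $c$.

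\emph{Step 2: choose the cover so that every curve lifts homeomorphically.} Since the statement asks each $c_k$ to map to $c$ (presumably homeomorphically), every curve $a_\ell,\dots,d$ must have trivial image under the covering homomorphism. Since $a_\ell,b_\ell,a_r,b_r,d$ together with $c$ generate too much of homology, I would instead use a cover that is not determined by homology classes of these curves alone. The plan is to give the cover explicitly by cut-and-paste. Cut $S_2$ along the arc $c_r$ and the curve $b_r$, or more concretely along $d$. Take $g-1$ copies of $S_\ell$ and $g-1$ copies of $S_r$, and glue them cyclically along copies of $d$, using the two halves of $d$ (from $p$ to $q$ and from $q$ to $p$) as separate gluing arcs. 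Copy $k$ of the right half is glued to copy $k$ of the left half along one arc and to copy $k+1$ along the other. The result is a chain in which left and right tori alternate, which matches Figure~\ref{arbi.abcd} with the parity labelling. The $d_l$ are the lifts of $d$ assembled from one arc of each neighbouring copy. The $c_k$ are formed from $c_\ell$ in one copy and $c_r$ in an adjacent copy, and they close up because $p,q$ are the switching points.

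\emph{Step 3: verify the combinatorics.} I would check that the glued surface is closed, connected and orientable, and that it has genus $g$ by the Euler characteristic count. I would also check that the configuration of lifted curves matches Figure~\ref{arbi.abcd}: the $a_i,b_i$ lie in the $i$-th torus, $c_k$ meets $b_k$ and $b_{k+1}$ once each and $d_k$ twice, and $d_k$ separates. The change of classification of surfaces with curve systems, as in Claim~\ref{claim:diffeo-between-surfaces}, then identifies the glued surface with the labelled $S_g$. Orientation preservation holds because all copies are glued by orientation-preserving identifications of the original pieces.

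\emph{Main obstacle.} The hard step is getting the gluing pattern along $d$ right. The pattern must simultaneously make each $d_l$ a closed curve mapping homeomorphically to $d$, make each $c_k$ connect consecutive tori, and produce a linear chain of tori rather than a cyclic one. My first attempt would be to take the cyclic cover and then observe that $g-1$ lifts of $d$ cannot all be separating in a cyclic chain. I would therefore adjust by letting the two arcs of $d$ in the end copies glue to themselves, so the chain opens up. If that fails to give a genuine covering, I would fall back on the homological description, with the covering given by the kernel of $\pi_1(S_2)\to\ZZ/(g-1)$ sending a suitable curve crossing $d$ to a generator. The figures would then determine which curves lift homeomorphically.
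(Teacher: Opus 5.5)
There is a genuine gap: the cut-and-paste construction of Step~2 does not produce a covering map, and the premise behind it cannot be satisfied. Your gluing attaches $R_k$ to $L_k$ along one half of $d$ and to $L_{k+1}$ along the other, cyclically, and this is branched. A small disc around $p$ in $S_2$ consists of a half-disc in $S_\ell$ and a half-disc in $S_r$. Their common diameter is split at $p$ into a piece of each half of $d$, and your gluing chains all $2(g-1)$ copies of these half-discs cyclically around a single point. So $p$ and $q$ each have one preimage, with ramification index $g-1$. The preimage of $d$ is then a graph with two vertices of valence $2(g-1)$, not $g-1$ disjoint circles $d_l$. Riemann--Hurwitz gives $\chi=-2(g-1)-2(g-2)$, i.e.\ genus $2g-2\neq g$.

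More fundamentally, you require that $a_\ell,b_\ell,a_r,b_r$ have trivial image under the covering homomorphism, so that all their lifts are homeomorphic copies. Since these curves generate $\pi_1(S_\ell)$ and $\pi_1(S_r)$, this forces the cover to be trivial over both halves. That would place $2(g-1)$ disjoint once-punctured tori in $S_g$, forcing genus at least $2g-2>g$ for $g\ge 3$. No change of gluing pattern can fix this. The statement also does not ask for it: it only requires the particular curves $b_j$ to map onto $b_\ell$ or $b_r$.

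Here is what the statement actually forces. The degree is $g-1$, and the $g-1$ disjoint curves $d_l$ all map onto $d$. Hence each $d_l$ maps homeomorphically, and together they form the whole preimage of $d$. The pieces $A_1,\dots,A_g$ of $S_g$ cut along the $d_l$ are therefore exactly the components of the preimages of $S_\ell$ and $S_r$. The end pieces $A_1,A_g$ have one boundary circle, so they map with degree $1$. Each middle piece $A_s$ is a torus with two boundary circles, so it must be a $2$-fold cover of $S_\ell$ or $S_r$, in which $b_s$ double covers $b_\ell$ or $b_r$. For $g\ge 4$ this mixes degrees $1$ and $2$ over the same half, so the cover is not regular. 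This rules out your fallback via $\ker(\pi_1(S_2)\to\ZZ/(g-1))$, which at best handles $g=3$.

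The missing ingredient is the paper's $2$-fold cover $S_{1,2}\to S_\ell$ (resp.\ $S_r$). It is the quotient by a free rotation by $\pi$, equivalently the cover associated with $a_\ell\mapsto 0$, $b_\ell\mapsto 1$ in $\ZZ/2$. This cover:
\begin{itemize}
\item is a homeomorphism from each boundary circle onto $d$;
\item sends two curves homeomorphically onto $a_\ell$;
\item sends one curve $2$-to-$1$ onto $b_\ell$;
\item sends two arcs, each with both endpoints on a single boundary circle, onto $c_\ell$.
\end{itemize}
Map $A_1$ and $A_g$ homeomorphically, map the middle pieces by these double covers with alternating parity, and glue along the full circles $d_l$. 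This gives the covering, with degree $g-1$ over each half (e.g.\ $1+2\cdot\frac{g-2}{2}$ for $g$ even).
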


Recall that $S_{\ell}$ and $S_d$ are respectively the left and the right connected components of $S_2 - d$. Let $S_{1,2}$ be the genus-$1$ surface with two boundary components with the labeled unoriented arc system represented on Figure \ref{cover.S12} on the left. Denote by \(c_\ell=c\cap S_\ell\) and \(c_r=c\cap S_r\) the left and right halves of the curve $c\subset S_2$.
\begin{figure}[htb]
    \centering
    \includegraphics[scale=0.42]{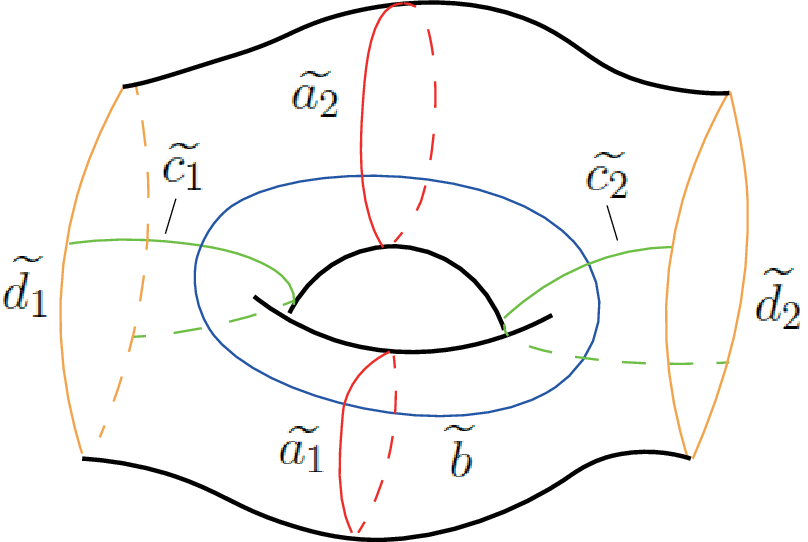}
    \caption{$S_{1,2}$ with the labeled  arc system $\widetilde{a_1},\widetilde{a_2},\widetilde{b},\widetilde{c_1},\widetilde{c_2},\widetilde{d_1},\widetilde{d_2}$}
    \label{cover.S12}
\end{figure}

\begin{lemma}\label{lemma:doublecover}
    There is a two-sheeted orientation preserving covering map $\iota_\ell: S_{1,2} \to S_\ell$  that sends  $\widetilde{a_i}$ to $a_\ell$, $\widetilde{b}$ to $b_\ell$, $\widetilde{d_i}$ to $d$ and $ \widetilde{c_i}$ to $c_\ell$. Similarly, there is a two-sheeted orientation preserving covering map $\iota_r: S_{1,2} \to S_r$  that sends  $\widetilde{a_i}$ to $a_r$, $\widetilde{b}$ to $b_r$, $\widetilde{d_i}$ to $d$ and $ \widetilde{c_i}$ to $c_r$.
\end{lemma}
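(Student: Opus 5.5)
We construct $\iota_\ell$ by hand, using the standard description of double covers. A connected two-sheeted covering of $S_\ell$ is determined by a surjective homomorphism $\omega:\pi_1(S_\ell)\to\ZZ/2\ZZ$, equivalently by a class $\omega\in H^1(S_\ell;\ZZ/2)$. Since $S_\ell$ is a once-punctured torus, $H_1(S_\ell;\ZZ/2)$ is generated by $[a_\ell]$ and $[b_\ell]$, and the boundary curve $d$ is null-homologous in it. We choose $\omega$ with $\omega(a_\ell)=0$ and $\omega(b_\ell)=1$; this is the cover obtained by cutting $S_\ell$ along $a_\ell$ and gluing two copies crosswise. We then check the topology of the total space $\widetilde S$. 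Its Euler characteristic is $2\chi(S_\ell)=-2$. The boundary curve $d$ has $\omega(d)=0$, so it lifts to two closed curves, giving two boundary components. Since $\widetilde S$ is orientable and connected, it has genus $1$, hence $\widetilde S\cong S_{1,2}$.

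Next we identify the preimages of the curves.
\begin{itemize}
\item Since $\omega(a_\ell)=0$, the curve $a_\ell$ lifts to two disjoint closed curves $\widetilde{a_1},\widetilde{a_2}$, each mapping homeomorphically onto $a_\ell$.
\item Since $\omega(b_\ell)=1$, the preimage of $b_\ell$ is a single closed curve $\widetilde b$ double covering $b_\ell$. It meets each $\widetilde{a_i}$ exactly once, because $b_\ell$ meets $a_\ell$ once and each of the two preimage points lies on a different lift.
\item The boundary $d$ lifts to the two boundary circles $\widetilde{d_1},\widetilde{d_2}$.
\item The arc $c_\ell$ is simply connected, so it lifts to two disjoint arcs $\widetilde{c_1},\widetilde{c_2}$. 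Both endpoints of $c_\ell$ lie on $d$. Choosing the lifts appropriately (using the choice of $\omega$ and where $c_\ell$ crosses $b_\ell$), each $\widetilde{c_i}$ runs from one boundary component to the other and crosses $\widetilde b$ once.
\end{itemize}

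The main point is to confirm that the resulting arc system on $\widetilde S$ is the one drawn in Figure~\ref{cover.S12}. The intersection pattern listed above (incidences and intersection numbers) together with the fact that the complement of the system is a union of discs should suffice. Then the classification of surfaces, in the form of a change-of-coordinates argument, gives a diffeomorphism $\widetilde S\to S_{1,2}$ carrying lifts to the labeled arcs. To fix which endpoints of $\widetilde{c_i}$ lie on $\widetilde{d_1}$ versus $\widetilde{d_2}$, we would track the two points $p,q$ of $c\cap d$ along the cut-and-glue model. If needed, we relabel or replace $\omega$ by $\omega+\omega'$ with $\omega'(a_\ell)=1$ to match the picture. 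We equip $\widetilde S$ with the pulled-back orientation, which makes $\iota_\ell$ orientation preserving, and we compose with an orientation-preserving identification to $S_{1,2}$. If the figure forces the opposite orientation, we precompose with an orientation-reversing symmetry of $S_{1,2}$ preserving the arc system.

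For $\iota_r$ we do not repeat the construction. The symmetry $\varsigma$ of Lemma~\ref{l.g-sigma} maps $S_\ell$ to $S_r$, $a_\ell\mapsto a_r$, $b_\ell\mapsto b_r$, $c_\ell\mapsto c_r$, and $d\mapsto d$. However, $\varsigma$ reverses orientation, so $\varsigma\circ\iota_\ell$ is orientation reversing. To fix this, we precompose with an orientation-reversing involution of $S_{1,2}$ preserving each labeled arc class, for instance a reflection of the standard picture. Alternatively, we simply rerun the construction above on $S_r$, taking $\omega(a_r)=0$ and $\omega(b_r)=1$.
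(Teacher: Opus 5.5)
Your route is valid in outline: you describe the double cover by a class $\omega\in H^1(S_\ell;\ZZ/2)$, whereas the paper exhibits $S_{1,2}$ with a free order-two rotation and takes the quotient. Your checks on $\chi$, the boundary components, $\widetilde{a_i}$, $\widetilde b$ and $\widetilde{d_i}$ are correct.

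\textbf{The gap.} The step identifying the lifts of $c_\ell$ is wrong, and this is exactly the step that matters. There is no freedom in ``choosing the lifts'': the preimage of $c_\ell$ is two determined arcs, and their endpoints are forced.
\begin{itemize}
\item Let $\delta\subset d$ be an arc from $q$ to $p$. The loop $c_\ell\cup\delta$ misses $a_\ell$ and meets $b_\ell$ once. Since the mod~$2$ intersection form on $H_1(S_\ell;\ZZ/2)=\langle a_\ell,b_\ell\rangle$ is nondegenerate, this loop is homologous to $a_\ell$, so $\omega(c_\ell\cup\delta)=\omega(a_\ell)=0$.
\item Hence the loop lifts to closed loops. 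The lift of $\delta$ stays in one boundary circle, so the lift of $c_\ell$ starting on $\widetilde{d_1}$ must end on $\widetilde{d_1}$. Since each $\widetilde{d_i}$ maps homeomorphically onto $d$, the two lifts start on different circles.
\item So each $\widetilde{c_i}$ has \emph{both} endpoints on the same circle $\widetilde{d_i}$, which is the opposite of what you assert.
\end{itemize}

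You can see this in the flat model of Lemma~\ref{l.diffh} (transported to $S_\ell$ by $\varsigma$): $a$ is horizontal, $b$ vertical, and $c$ is the horizontal line through the puncture. The cover is $\RR^2/(\ZZ\times 2\ZZ)$ minus discs around $(0,0)$ and $(0,1)$. The lifts of $c$ are the horizontal lines at heights $0$ and $1$, each returning to its own puncture.

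This is also what Proposition~\ref{thm:finitecover} needs. The curve $c_k$ meets only $d_k$, and it meets it twice. So in $A_s$ the arc of $c_{s-1}$ has both ends on $d_{s-1}$, and the arc of $c_s$ has both ends on $d_s$.

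\textbf{Your fallback does not help.} The requirement that $a_\ell$ have two lifts $\widetilde{a_1},\widetilde{a_2}$ and $b_\ell$ a single lift $\widetilde b$ forces $\omega(a_\ell)=0$ and $\omega(b_\ell)=1$. The cover is therefore unique. Replacing $\omega$ by $\omega+\omega'$ with $\omega'(a_\ell)=1$ would make the preimage of $a_\ell$ connected.

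\textbf{The repair.} Replace the endpoint claim by the computation above; the rest of your argument then goes through.
\begin{itemize}
\item The complement of the lifted system is a union of discs, so change of coordinates matches it with Figure~\ref{cover.S12}.
\item Your construction of $\iota_r$ works: take $\varsigma\circ\iota_\ell$ and precompose with an orientation-reversing involution preserving each lifted curve, e.g.\ $(x,y)\mapsto(-x,y)$ in the model.
\end{itemize}

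\textbf{Comparison with the paper.} The paper simply exhibits the deck involution (a free rotation by $\pi$; in the model, translation by $(0,1)$), so the images of the drawn arcs can be read off the picture. Your cohomological description additionally shows the cover is unique. The cost is that you must compute the lift pattern explicitly, and that is where the error crept in.
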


\begin{proof}
    The covering maps $\iota_\ell$ and  $\iota_r$ are given by the quotient map of the rotation of angle $\pi$ around the axis represented on Figure \ref{cover.S12coverT}.
\end{proof}

\begin{figure}[htb]
    \centering
    \includegraphics[scale=0.39]{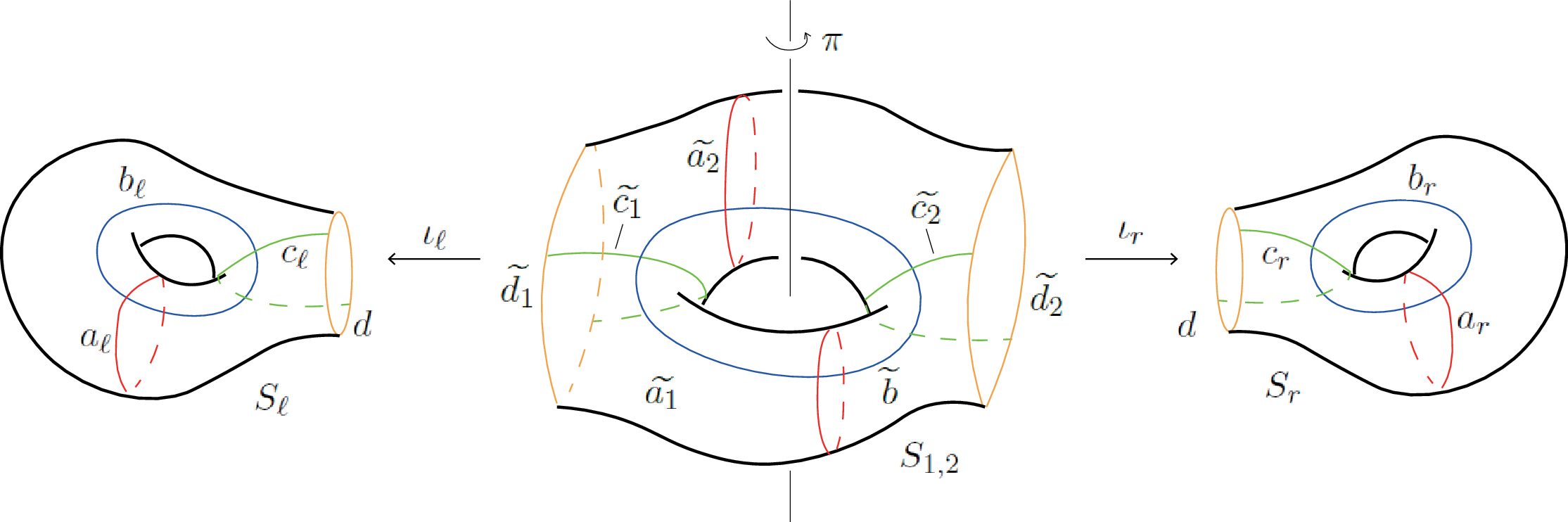}
    \caption{The covering maps $\iota_\ell:S_{1,2} \to S_{\ell}$ and $\iota_r:S_{1,2} \to S_r$}
    \label{cover.S12coverT}
\end{figure}

\begin{proof}[Proof of Proposition \ref{thm:finitecover}]
We assume $g$ is even. The argument is similar when $g$ is odd. The covering map $\Psi$ is given as follows. Cutting $S_g$ along the curves $d_1,\dots,d_{g-1}$, we will get $g$ surfaces with boundary. Label these surfaces by $A_1,A_2,\dots,A_g$, where $A_1$ is bounded by $d_1$, $A_s$ is bounded by $d_{s-1}$ and $d_s$ for $2\leq s\leq g-1$, and $A_g$ is bounded by $d_{g-1}$. The restriction of $\Psi$ to $A_1$ will be the trivial one-sheeted covering map mapping $A_1$ to $S_\ell$. For $s$ even in $\{2,\dots,g-1\}$, the restriction of $\Psi$ to $A_s$ will be the two-sheeted covering map $\iota_r$ mapping $A_s$ to $S_r$.  For $s$ odd in $\{2,\dots,g-1\}$, the restriction of $\Psi$ to $A_s$ will be the two-sheeted covering map $\iota_\ell$ mapping $A_s$ to $S_\ell$. Finally, the restriction of $\Psi$ to $A_g$ will be the trivial one-sheeted covering map mapping $A_g$ to $S_r$. See Figure \ref{f.cover-S4S2} for an illustration in the case $g =4$. One can check that verify that the map $\Psi$ defined in this manner is indeed a covering map. By construction, the curves $a_i, b_j, c_k, d_l$ as required by the statement of Proposition~\ref{thm:finitecover}.
\end{proof}

\subsection{Proof of Theorem \ref{cover.perorb-lk}}

\begin{figure}[htb]
    \centering
    \includegraphics[scale=0.39]{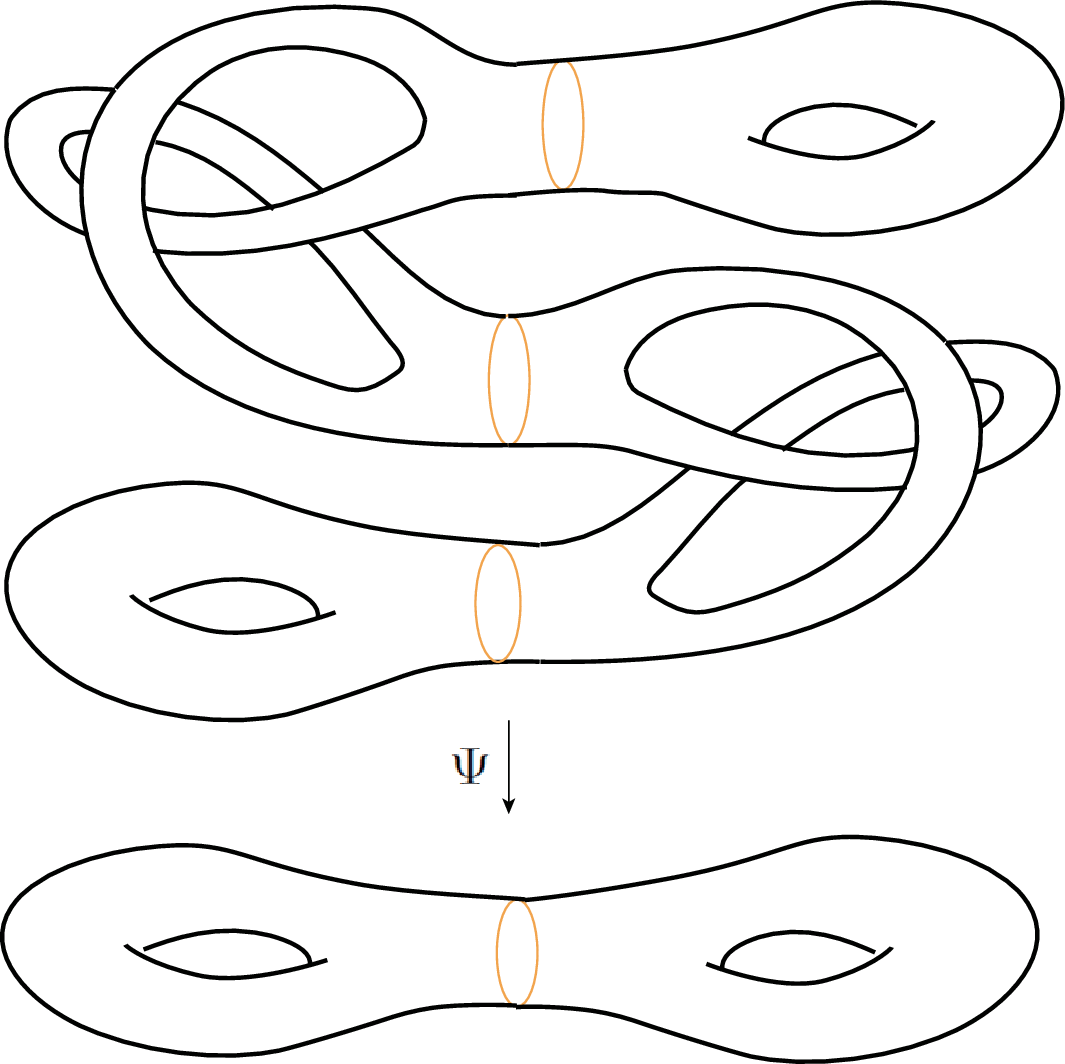}
    \caption{The covering map $\Psi:S_4 \to S_2$}
    \label{f.cover-S4S2}
\end{figure}

\begin{proof}[Proof of Theorem \ref{cover.perorb-lk}]
Let $\Psi: S_g \to S_2$ be the covering map provided by Proposition \ref{thm:finitecover}. The product of left Dehn twists $\widehat\tau_d := \tau_{d_1}^{-2} \tau_{d_2}^{-2} \cdots \tau_{d_{g-1}}^{-2} : S_g \to S_g$ is a lift under $\Psi$ of the Dehn twist $\tau^{-2}_d: S_2 \to S_2$, that is $\Psi \circ \tau_d^{-2} = \widehat\tau_d \circ \Psi$. 
It follows that $\Psi$ induces a covering map 
   $$\widetilde{\Psi}:\begin{array}[t]{rcl}
    M_{S_g,\widehat\tau_d}  & \longrightarrow & M_{S_2,\tau^{-2}_d} \\
    (x,t) & \longmapsto &  (\Psi(x),t)
    \end{array}$$  
where 
$$M_{S_g,\widehat\tau_d}:=S_g\times [0,2\pi]_{/(x,2\pi)\sim (\widehat\tau_d(x),0)}\quad\mathrm{and}\quad
M_{S_2,\tau^{-2}_d}:=S_2\times [0,2\pi]_{/(x,2\pi)\sim (\tau_d^{-2}(x),0)}.$$ 
Let $Y^t$ be the transitive Anosov flow on $M_{S_2,\tau^{-2}_d}$ satisfying the properties listed in Theorem \ref{p.perorb-lk}. Lifting $Y^t$ by $\widetilde\Psi$, we obtain an Anosov flow $\widetilde Y^t$ on $M_{S_g,\widehat\tau_d}$. We are left to check that items $(1),(2),(3)$ in Theorem \ref{cover.perorb-lk} hold. The oriented curves $a_i \times \{0\}, b_j \times \{\frac{3\pi}{2}\}, c_k \times\{0\}$ are lifts under $\Psi$ of the oriented curves $a_\ell \times \{0\}$, $a_r \times \{0\}$, $b_{\ell} \times \{\frac{3\pi}{2}\}$, $b_r \times \{0\}$ and $c_k \times\{0\}$ which are periodic orbits of the flow $Y^t$. It follows that $a_i \times \{0\}, b_j \times \{\frac{3\pi}{2}\}, c_k \times\{0\}$ are periodic orbits of the flow $\widetilde Y^t$. Moreover, by the construction of $\Psi$, each of the orbits $a_i \times \{0\}$ and $c_k \times\{0\}$ admits on neighbourhood on which $\widetilde{\Psi}$ is a one to one (\emph{i.e.} is a homeomorphism on its image). Since the twist number is a local property, it implies that item $(3)$ and the part of item $(2)$ concerning $a_i \times \{0\}$ hold. 
By construction, $\widetilde{\Psi}$ is a $2$-sheeted or $1$-sheeted covering map on a neighborhood of $b_j \times \{\frac{3\pi}{2}\}$ (cf. the proof of Theorem \ref{thm:finitecover}). If $\widetilde{\Psi}$ is $1$-sheeted, then $b_j \times \{\frac{3\pi}{2}\}$ is horizontal by the same argument as above for  $a_i \times \{0\}$ and $c_k \times\{0\}$. If $\widetilde{\Psi}$ is $2$-sheeted on a neighborhood of $b_j \times \{\frac{3\pi}{2}\}$, the twist number of the local stable manifold with respect to the horizontal fibration on $M_{S_g,\widehat\tau_d}$ is twice of the twist number of the local stable manifold of $b_{\ell/r}\times \{\frac{3\pi}{2}\}$ with respect to the horizontal fibration on $M_{S_2,\tau_d^{-2}}$. The later being equal zero, the former is also equal to zero. This completes the proof.
\end{proof}

\section{Proof of Theorem~\ref{t.generators}}
\label{s.proof}
\begin{proof}[Proof of Theorem~\ref{t.generators}]
Let $S_g$ be the closed oriented surface of genus $g$, and $a_1,\dots,a_g,b_1,\dots,b_g,c_1,\dots,c_{g-1},d_1,\dots,d_{g-1}$ be the simple closed curves represented on Figure~\ref{arbi.abcd}. Let \( \tau_{a_i} \), \( \tau_{b_j} \), \( \tau_{c_k} \), \( \tau_{d_l} \) be the left Dehn twists along the curves \( a_i, b_j, c_k, d_l \), and let \( \widehat\tau_d = \tau_{d_1}^{-2} \tau_{d_2}^{-2} \cdots \tau_{d_{g-1}}^{-2} \). Recall that we consider the family of product of Dehn twists 
$$\mathcal{T}:=\left\{\widehat\tau_{d}^{\small{-2}}
\tau_{b_1}^{q_1}\dots\tau_{b_{g}}^{q_g}
\tau_{c_1}^{r_1}\dots\tau_{c_{g-1}}^{r_{g-1}}
\tau_{a_1}^{p_1}\dots\tau_{a_{g}}^{p_g}\right\}_{\substack{r_1,\dots,r_{g-1}\,=\,0\; \mathrm{or}\;-2 \\
q_1,\dots,q_g\,\in\,\ZZ \hfill \\  p_1,\dots,p_q\,\in\,\ZZ\hfill }}$$
and a non-trivial product $\varphi\in\mathrm{Homeo}^+(S_g)$ of elements of $\mathcal{T}$. For notational simplicity, we will prove Theorem~\ref{t.generators} for 
$$\varphi= \sigma_1 \circ \sigma_2 \circ \dots \circ \sigma_l$$
where each $\sigma_i$ belongs to the subfamily 
$$\mathcal{T}_0:=\{\widehat\tau_d\}\cup \{\widehat\tau_d\tau_{a_i}^{\pm 1}\}_{1 \leq i \leq g}\cup \{\widehat\tau_d\tau_{b_j}^{\pm 1}\}_{1 \leq j \leq g}\cup \{\widehat\tau_d\tau_{c_k}^{-2 }\}_{1 \leq k \leq g-1},$$ 
of $\mathcal{T}$, and explain how to get the general case at the end of the proof.

By Theorem~\ref{cover.perorb-lk}, there exists a transitive Anosov flow \(Y^t\) on the mapping torus
\[
M=S_g\times [0,2\pi]_{/(x,2\pi)\sim(\widehat\tau_d(x),0)}.
\]
The mapping torus 
\[
\widehat M=\left(\bigcup_{s=0}^{l-1} S_g\times [2s\pi,2(s+1)\pi]\right)_{\big{/}\begin{array}[t]{ll}(x,2s\pi^-)\sim(\widehat\tau_d(x),2s\pi^+),\; 1\leq s\leq l-1\\
(x,2l\pi)\sim (\widehat\tau_d(x),0).
\end{array}}
\]
is naturally a \(l\)-sheeted cover of $M$. Lifting the transitive Anosov flow $Y^t$, we get a transitive Anosov flow $\widehat Y^t$ on $\widehat M$. Let $\mathrm{pr}$ be the projection of $S_g \times [0,2\pi]$ to $M$ and $\widehat{\mathrm{pr}}$ be the projection of $S_g \times [0,2l\pi]$ to $\widehat M$. Since taking a finite cover in the $\SS^1-$direction of a fibration $M \to\SS^1$ does not change the twist number for a periodic orbit $\gamma$ contained in a fiber with respect to the fibration, Theorem~\ref{cover.perorb-lk} implies that: 
\begin{itemize}
    \item for every \(i,j\in \{1,\dots,g\}\), the periodic orbits 
    \(\mathrm{pr}(a_i\times \{0\})\) and  \(\mathrm{pr}(b_j\times \{\frac{3\pi}{2}\})\) of the vector field $Y$ lift to periodic orbits 
    \[
    \widehat{\mathrm{pr}}(a_i\times \{s\pi\}) \mbox{ and }\ \widehat{\mathrm{pr}}(b_j\times \{\tfrac{3\pi}{2}+s\pi\}),~ s\in\{1,\dots,k\},
    \]
    of the vector field $\widehat{Y}$ and the local stable manifolds of these lifted orbits are horizontal with respect to the fibration $\widehat M\to \SS^1$.
    \item for every \(k \in \{1,\dots,g-1\}\), the periodic orbit \(\mathrm{pr}(c_k\times \{0\})\) of the vector field $Y$ lift to periodic orbits 
    \[
    \widehat{\mathrm{pr}}(c_l\times \{s\pi\}), ~s\in\{1,\dots,k\},
    \]
    of the vector field $\widehat{Y}$ and the twist number of the local stable manifolds of these lifted orbits with respect to the fibration $\widehat M \to S^1$ are all equal to \(+1\).
\end{itemize}
For each $s\in\{1,\dots,l\}$, we perform a Dehn–Fried surgery as follows.
\begin{itemize}
\item  If $\sigma_s = \widehat\tau_d\tau_{a_i}^{\omega}$ for some $i\in\{1,\dots,g\}$ and $\omega=\pm 1$ (resp. $\widehat\tau_d\tau_{b_j}^{\omega}$ for some $j\in\{1,\dots,g\}$ and $\omega=\pm 1$), then we perform an index $\omega$  Dehn–Fried surgery on the periodic orbit $\widehat{\mathrm{pr}}(a_i\times \{s\pi\})$ (resp. the orbit $\widehat{\mathrm{pr}}(b_j\times \{\frac{3\pi}{2}+s\pi\})$). 
\item If $\sigma_s = \widehat\tau_d\tau_{c_k}^{-2}$ for some $k$, we perform an index \(+2\) Dehn–Fried surgery on the periodic orbit $\widehat{\mathrm{pr}}(c_k\times \{s\pi\})$. 
\end{itemize}
We observe that all the Dehn-Fried surgeries above satisfy the hypotheses of Proposition~\ref{proposition:preserves-fibration-I}~and Proposition~\ref{proposition:preserves-fibration-II}: 
\begin{itemize}
\item since the local stable manifolds of the orbits $\widehat{\mathrm{pr}}(a_i\times \{s\pi\})$ and $\widehat{\mathrm{pr}}(b_j\times \{\frac{3\pi}{2}+s\pi\})$ are horizontal, any Dehn-Fried surgery on these orbits falls in Proposition~\ref{proposition:preserves-fibration-I}~;
\item since the twist number of the local stable manifold of the orbit $\widehat{\mathrm{pr}}(c_k\times \{s\pi\})$ with respect to the fibration is equal to $+1$, the index $+2$ Dehn-Fried surgery on this orbit falls in Case 1 of Proposition~\ref{proposition:preserves-fibration-II}.
\end{itemize}
Let \((\widehat M',\widehat Y'^t)\) be the manifold and the topological Anosov flow obtained by performing the above Dehn–Fried surgeries for every $s$ on  \((\widehat M,\widehat Y^t)\). According the generalization of Proposition~\ref{p.multi-Dehn-Fried} stated in Remark~\ref{r.pieces-of-monodromy}, $\widehat M'$ is homeomorphic to the mapping torus of $\varphi$. And by the work of Shannon (\cite{shannon2020dehn}), $\widehat Y'^t$ is orbit equivalent to a genuine transitive Anosov flow. Hence, the  mapping torus of $\varphi$ carries a transitive Anosov flow. This completes the proof of Theorem~\ref{t.generators} in the case where $\varphi$ is a product of elements of $\mathcal{T}_0$.

In the general case, where $\varphi$ is a product of elements $\sigma_1,\dots,\sigma_l$ of $\mathcal{T}$ (rather than $\mathcal{T}_0$), one just needs to perform more Dehn-Fried surgeries. Namely, for each $s$, if $$\sigma_s=\widehat\tau_{d}^{\small{-2}}
\tau_{b_1}^{q_{s,1}}\dots\tau_{b_{g}}^{q_{s,g}}
\tau_{c_1}^{r_{s,1}}\dots\tau_{c_{g-1}}^{r_{s,g-1}}
\tau_{a_1}^{p_{s,1}}\dots\tau_{a_{g}}^{p_{s,g}},$$ 
then one must perform an index $p_{s,i}$ Dehn-Fried  surgery
on the orbit $a_i\times\{s\}$ if $p_{s,i}\neq 0$, an index $-r_{s,i}=2$ Dehn-Fried surgery on the orbit $c_i\times\{s\}$ if  $r_{s,i}\neq 0$, and an index $q_{s,i}$ Dehn-Fried surgery on the orbit $b_i\times\{\frac{3\pi}{2}\}$ if  $q_{s,i}\neq 0$. All these orbits are pairwise disjoint, allowing to apply Proposition~\ref{p.multi-Dehn-Fried} and Remark~\ref{r.pieces-of-monodromy}. The rest of the proof follows exactly the same arguments as the previous case.
\end{proof}

\section{The Dehn twists generate a finite index subgroup of \(Sp(2g,Z)\)}
\label{s.F-index-subgp}
Let $ g \geq 2 $ and $S_g$ be the closed connected orientable surface of genus $g$. Consider simple closed curves  $ a_1, \dots, a_g, b_1, \dots, b_g, c_1, \dots, c_{g-1}$ on $S_g$ as Figure \ref{f.aibici} shows. We equip $S_g$ with the orientation chosen so that $\mathrm{Int}(a_i,b_i)=-1$. We denote by $ \tau_{a_i}, \tau_{b_i}, \tau_{c_i}$ the left Dehn twists along $a_i, b_i$ and $c_i$.  

For any element $f$ of $\M(S_g)$, we denote by $ \bar f \in \Sp(\rm{H}_1(S_g, \ZZ)) $ the action of $f$ on the first homology group of $S_g$ with coefficients in $\ZZ$ (the symplectic structure on $ \rm{H}_1(S_g, \ZZ)$ being given by the intersection form). We denote by $\Gamma$ the subsemigroup of $\Sp(\rm{H}_1(S_g, \ZZ))$ which is generated by the Dehn twists
$$\bar{\tau}_{a_1}^{\pm1}, \dots, \bar{\tau}_{a_g}^{\pm1},\, \bar{\tau}_{b_1}^{\pm1}, \dots, \bar{\tau}_{b_g}^{\pm1},\,  \bar{\tau}_{c_1}^{-2}, \dots, \bar{\tau}_{c_{g-1}}^{-2}.$$ 
(note that the power $-2$ on the $\bar{\tau}_{c_i}$'s). A priori, $\Gamma$ is a sub-semigroup, but needs not be a subgroup, since the set of generators is not symmetric ($\bar{\tau}_{c_1}^{-2}$ is a generator, but not $\bar{\tau}_{c_1}^{2}$). Our goal is to prove:  

\begin{figure}[htb]
    \centering
    \includegraphics[scale=0.3]{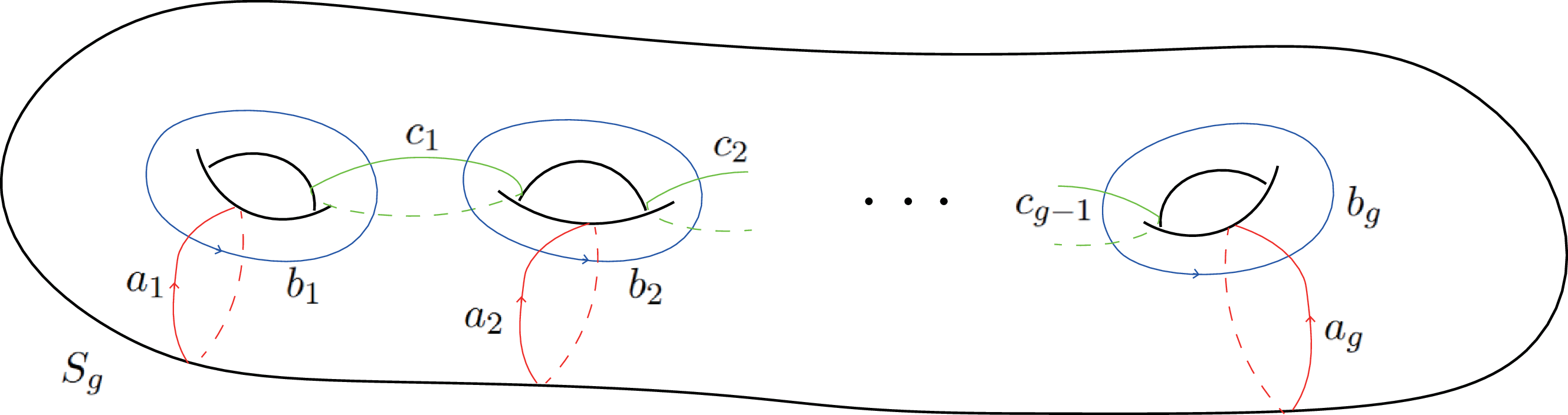}
    \caption{The simple closed curves  $ a_1,   \dots, a_g$, $ b_1,  \dots, b_g$, $ c_1, \dots, c_{g-1} $ on $ S_g $}
    \label{f.aibici}
\end{figure}

\begin{proposition}\label{proposition:subgroup-finiteindex}
 $\Gamma$ is a subgroup of finite index in $ \Sp(\rm{H}_1(S_g, \ZZ))$.  
\end{proposition}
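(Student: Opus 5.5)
The plan has two parts: first show that $\Gamma$ is closed under inverses, then show that it contains a principal congruence subgroup. This is the strategy announced in the introduction.

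\textbf{$\Gamma$ is a subgroup.} The generators $\bar\tau_{a_i}^{\pm1}$ and $\bar\tau_{b_j}^{\pm1}$ already come in inverse pairs, so only $\bar\tau_{c_k}^{2}$ needs attention. Every element of $\Sp(2g,\ZZ)$ has finite order in the finite quotient $\Sp(2g,\ZZ/N)$. Hence as soon as $\Gamma$ contains a principal congruence subgroup $\Gamma(N)$ (normal, of finite index), we get $\bar\tau_{c_k}^{-2m}\in\Gamma(N)\subset\Gamma$ for a suitable $m$. The inverse $\bar\tau_{c_k}^{2}=\bar\tau_{c_k}^{-2(m-1)}\cdot\bar\tau_{c_k}^{-2m}\cdots$ can then be written using the generator $\bar\tau_{c_k}^{-2}$ and elements of $\Gamma(N)$. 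More precisely, $\bar\tau_{c_k}^{2}=\bar\tau_{c_k}^{-2(m-1)}\cdot h$ with $h=\bar\tau_{c_k}^{2m}\in\Gamma(N)$. This requires $\Gamma(N)\subset\Gamma$ as a set, with inverses included; since $\Gamma(N)$ is a group, that is automatic. So both statements reduce to showing that $\Gamma\supset\Gamma(2^{2g-2})$.

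\textbf{$\Gamma$ contains a congruence subgroup.} Work in a symplectic basis $(a_1,\dots,a_g,b_1,\dots,b_g)$. The transvections $\bar\tau_{a_i},\bar\tau_{b_i}$ act on the span of $a_i,b_i$ and generate a copy of $\mathrm{SL}(2,\ZZ)$. These give the full one-parameter root subgroups for the long roots $\pm2e_i$. The class of $c_k$ is $\pm(b_k-b_{k+1})$, up to the paper's conventions; I would check this against Figure~\ref{f.aibici}. Therefore $\bar\tau_{c_k}^{-2}$ is the transvection $x\mapsto x+2\,\mathrm{Int}(x,c_k)c_k$, which is the element $2$ (up to sign) of a root subgroup of type $\pm(e_k-e_{k+1})$ combined with long-root parts. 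I would then take commutators and conjugates inside $\Gamma$ to produce all short root subgroups $U_\alpha$, at times $2^{j}$. Conjugating $\bar\tau_{c_k}^{-2}$ by elements of the $\mathrm{SL}(2,\ZZ)$ factors yields transvections along $\pm a_k\pm a_{k+1}$ and $\pm a_k\pm b_{k+1}$, etc. Conjugation by $\Gamma$-elements stays in the semigroup, since $\Gamma$ contains the full groups generated by $\bar\tau_{a_i},\bar\tau_{b_i}$. Commutators $[x_\alpha(2s),x_\beta(2t)]$ of adjacent short roots give long or non-adjacent short roots at time $4st$ or $2st$. To reach $e_k-e_l$ for $|k-l|$ large, I would iterate along the chain $c_k,c_{k+1},\dots$. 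Each step doubles the level, which yields every root element $x_\alpha(2^{g-1}t)$ for all $t\in\ZZ$. The inverses are obtained by conjugating with the Weyl element in the rank-one $\mathrm{SL}(2,\ZZ)$s, which sends $x_\alpha(s)$ to $x_{-\alpha}(\pm s)$. That resolves the sign issue.

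\textbf{Conclusion and main obstacle.} With all $x_\alpha(2^{g-1}\ZZ)$ available, I would invoke Bass--Milnor--Serre (congruence subgroup property for $\Sp_{2g}$, $g\ge2$) together with Tits' result. The group $E(q)$ generated by the elementary root elements of level $q$ contains $\Gamma(q^2)$, and hence $\Gamma\supset\Gamma(2^{2g-2})$. Finite index and the index bound $|\Sp(2g,\ZZ/2^{2g-2})|\le 2^{8g^3}$ follow. The main obstacle is the bookkeeping in the middle step. It involves getting exact signs of intersections and commutator formulas so that each root subgroup is obtained with both signs of its parameter using only the semigroup generators. I would handle this first in genus 2 by explicit $4\times4$ matrices, then induct along the chain of $c_k$.
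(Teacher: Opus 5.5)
Your architecture is legitimate and differs from the paper's. You want the group property to follow once the \emph{semigroup} $\Gamma$ contains $\Gamma(2^{2g-2})$. The paper instead first proves $\bar\tau_{c_k}^{2}\in\Gamma$ from the chain relation for the $3$-chain $c_k,b_k,a_k$, whose right-hand side $\tau_d\tau_{d'}$ becomes $\bar\tau_{a_{k+1}}^2$ in homology. Only afterwards does it run the Tits/Bass--Milnor--Serre argument, using $C_k^{\pm2}$ freely.

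The price of your route is that every level-$2^{g-1}$ root element must be built from the semigroup generators alone. The step where this matters is the one you settle in one sentence, and as written it fails. A Weyl element of a rank-one $\mathrm{SL}(2,\ZZ)$ sends $x_\alpha(s)$ to $x_{-\alpha}(\pm s)$ only for the long root of that factor, which was never a problem. For a short root $\alpha$ it either centralizes $U_\alpha$ or moves it to $U_\beta$ with $\beta\neq\pm\alpha$. In any case $x_{-\alpha}(\pm s)$ is not the inverse of $x_\alpha(s)$.

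Moreover, no conjugation at all can invert a transvection. For $T=I+n\,v\,\mathrm{Int}(\cdot,v)$, the form $x\mapsto\mathrm{Int}(x,(T-I)x)=n\,\mathrm{Int}(x,v)^2$ is semidefinite with the sign of $n$, and this sign is a conjugacy invariant in $\Sp$. So the transvections along $\pm a_k\pm a_{k+1}$, $\pm a_k\pm b_{k+1},\dots$ that you get by conjugating $\bar\tau_{c_k}^{-2}$ all have the same sign as $\bar\tau_{c_k}^{-2}$. Your commutators $[x_\alpha(2s),x_\beta(2t)]$ need $x_\alpha^{-1}$ and $x_\beta^{-1}$, which are not yet known to lie in the semigroup.

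The repair is one step beyond what you wrote: use the \emph{square} $\delta_k=(\bar\tau_{a_k}\bar\tau_{b_k}\bar\tau_{a_k})^2$. It acts as $-1$ on $\langle a_k,b_k\rangle$ and as the identity on the other handles. It lies in $\Gamma$ together with its inverse. It sends $[c_k]=\pm(a_k-a_{k+1})$ (as the paper's matrix $C_k$ shows) to $\pm(a_k+a_{k+1})$. Transvections along vectors of the Lagrangian $\langle a_1,\dots,a_g\rangle$ commute and satisfy $T_{u+w}^nT_{u-w}^n=T_u^{2n}T_w^{2n}$. Hence
\[
\bar\tau_{c_k}^{2}=\delta_k\,\bar\tau_{c_k}^{-2}\,\delta_k^{-1}\,\bar\tau_{a_k}^{\pm4}\,\bar\tau_{a_{k+1}}^{\pm4},
\]
with the sign fixed by the twist convention, and this is a positive word in the generators. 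Equivalently, strip the long-root parts off $\bar\tau_{c_k}^{-2}$ with $\bar\tau_{a_k}^{\pm2},\bar\tau_{a_{k+1}}^{\pm2}$ to get a short-root element $Z_{k,k+1}^{\pm2}$. Its invariant form is indefinite, and its sign \emph{is} reversed by conjugation by $\delta_k$.

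This proves directly that $\Gamma$ is a group, by a purely homological substitute for the paper's chain relation, and makes your finite-order detour unnecessary. From there your second step coincides with the paper's: Claims A--F, doubling of the level along the chain, and Tits' Proposition 4 with $\mathfrak q=2^{g-1}\ZZ$, giving $\Gamma(2^{2g-2})\subset\Gamma$.
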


We will proceed in two steps : first prove that $\Gamma$ is a subgroup (not only a sub-semigroup), then prove that this subgroup has finite index in $ \Sp(\rm{H}_1(S_g, \ZZ))$.

\subsection{$\Gamma$ is a subgroup}

\begin{proposition}
\label{proposition:subgroup}
    The semigroup $\Gamma$ is actually a subgroup of $\Sp(\rm{H}_1(S_g, \ZZ))$.
\end{proposition}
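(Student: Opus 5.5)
The plan is to show that the inverse of every generator of $\Gamma$ already lies in $\Gamma$. Once this holds, $\Gamma$ is closed under inverses. Indeed, the inverse of a product $g_1\cdots g_n$ of generators is $g_n^{-1}\cdots g_1^{-1}$, which is then a product of elements of $\Gamma$. So $\Gamma$ is a group.

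For $\bar\tau_{a_i}^{\pm1}$ and $\bar\tau_{b_j}^{\pm1}$ there is nothing to prove, since the generating set is symmetric there. The only issue is $\bar\tau_{c_k}^{2}$, the inverse of the generator $\bar\tau_{c_k}^{-2}$.

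The key observation is that $\Sp(\rm{H}_1(S_g,\ZZ))$ is residually finite, but we want something more concrete. I would show that $\bar\tau_{c_k}$ has infinite order and find an explicit identity. The cleanest route is to use conjugation. The twists $\tau_{a_i}^{\pm1},\tau_{b_j}^{\pm1}$ generate a group $G_0\subset\Gamma$. Conjugating $\bar\tau_{c_k}^{-2}$ by an element $h\in G_0$ gives $\overline{\tau_{h(c_k)}}^{-2}$. I would look for $h$ in $G_0$ (acting on homology) sending $[c_k]$ to $-[c_k]$ modulo the relevant ambiguity, or more usefully mapping $\tau_{c_k}$ to a twist whose square inverts it.

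This cannot work directly, since $\bar\tau_{h(c)}$ depends only on $\pm[h(c)]$ and $\bar\tau_{v}$ is the transvection $x\mapsto x+\langle v,x\rangle v$ (up to sign convention), so conjugation always yields a transvection of the same sign. Instead I will use the homology class: $[c_k]=\pm([a_k]-[a_{k+1}])$ or $\pm([b_k]-[b_{k+1}])$, depending on the picture. Take the case where $[c_k]$ is a difference of $a$-classes, say $v=[a_k]-[a_{k+1}]$. The transvections $T_{a_k},T_{a_{k+1}}$ and the transvection $T_v$ all lie in the abelian unipotent group of symmetric matrices acting on the Lagrangian decomposition $\langle a\rangle\oplus\langle b\rangle$. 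There, $T_v^{m}$ corresponds to adding $m\,v v^{T}$ to the symmetric block, and $v v^T=e_ke_k^T+e_{k+1}e_{k+1}^T-(e_ke_{k+1}^T+e_{k+1}e_k^T)$.

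I then get $T_v^{2}=T_v^{-2}\cdot T_{v}^{4}$, and I need to express $T_v^{4}$, or more directly $T_v^{2}$, inside the semigroup. The natural target is the identity
$$\bar\tau_{c_k}^{2}\;=\;\bar\tau_{c_k}^{-2}\cdot X,$$
where $X$ is a product of $\bar\tau_{a}^{\pm1},\bar\tau_{b}^{\pm1}$ and $\bar\tau_{c}^{-2}$'s. Rather than searching blindly, I would conjugate by an element of $G_0$ mapping $v$ to a vector $w$ with $\langle w,v\rangle$ controlled. A better trick is to conjugate by $h\in G_0$ with $\bar h(v)=v+2u$ or similar, so that $T_{\bar h v}^{-2}$ combined with $T_v^{-2}$ and twists along the $a$- and $b$-classes produces $T_v^{2}$.

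Concretely, I expect to use the relation in the abelian group of symmetric matrices. Conjugating $\bar\tau_{c_k}^{-2}$ by $\bar\tau_{b}$-type elements moves $v$ into the $b$-coordinates. Using products of $\bar\tau_{a_k}^{\pm1}$ and $\bar\tau_{a_{k+1}}^{\pm1}$ (which realize $e_ke_k^T$ and $e_{k+1}e_{k+1}^T$ freely), the problem reduces to producing the off-diagonal element $+2(e_ke_{k+1}^T+e_{k+1}e_k^T)$ from $-2\,vv^T$ and diagonal terms. Since $-2vv^T=-2e_ke_k^T-2e_{k+1}e_{k+1}^T+2(\text{off})$, we have $-2vv^T$ plus diagonal twists $=+2(\text{off})$. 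So the off-diagonal symmetric element $2(\text{off})$ lies in $\Gamma$, and then $2vv^T=2e_ke_k^T+2e_{k+1}e_{k+1}^T-2(\text{off})$.

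This means I need $-2(\text{off})$ in $\Gamma$. I get it by conjugating by $\bar\tau_{\ldots}$ elements realizing $a_{k+1}\mapsto -a_{k+1}$, $b_{k+1}\mapsto -b_{k+1}$. The rotation by $\pi$ in the $k+1$ handle is $(\bar\tau_{a}\bar\tau_{b})^{3}$ up to sign, so it lies in $G_0$. This map preserves each diagonal term and negates the off-diagonal one.

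Thus $\bar\tau_{c_k}^{2}\in\Gamma$. The main obstacle is matching the sign and orientation conventions so that the identification of $\bar\tau_{c_k}$ with a unipotent symmetric-block element and the handle rotation lying in $G_0$ are correct. I would verify these with explicit $4\times4$ matrices for $g=2$; the general case reduces to the adjacent handles $k,k+1$.
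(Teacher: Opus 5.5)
Your argument is correct, but it takes a genuinely different route from the paper.

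The paper works in $\M(S_g)$. It applies the $3$-chain relation $(\tau_{c_i}^2\tau_{b_i}\tau_{a_i})^3=\tau_d\tau_{d'}$ to the chain $c_i,b_i,a_i$. Using figures, separately for $i=1$, $i=g-1$ and $2\le i\le g-2$, it identifies the boundary curves $d,d'$ with $a_{i+1}$ and a curve $a'_{i+1}$ homologous to it. Only at the end does it pass to homology, obtaining $\bar\tau_{c_i}^{2}=\bar\tau_{a_{i+1}}^2(\bar\tau_{a_i}^{-1}\bar\tau_{b_i}^{-1}\bar\tau_{c_i}^{-2})^2\bar\tau_{a_i}^{-1}\bar\tau_{b_i}^{-1}$.

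You never leave $\Sp(\mathrm{H}_1(S_g,\ZZ))$. Everything happens in the abelian group of matrices $\left(\begin{smallmatrix} I & S\\ 0 & I\end{smallmatrix}\right)$ with $S$ symmetric, in the basis $([a_1],\dots,[a_g],[b_1],\dots,[b_g])$. Your proof reduces to the identity $vv^{T}+ww^{T}=2(e_ke_k^{T}+e_{k+1}e_{k+1}^{T})$. Here $v=\pm e_k\pm e_{k+1}$ is the class of $c_k$ and $w=Dv$, where $D$ negates the $(k+1)$-st coordinate. The element $h=(\bar\tau_{a_{k+1}}\bar\tau_{b_{k+1}})^3=\mathrm{diag}(D,D)$ conjugates $S$ to $DSD$. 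In group form this is the single relation $\bar\tau_{c_k}^{2}=\bar\tau_{a_k}^{4}\bar\tau_{a_{k+1}}^{4}\,h\,\bar\tau_{c_k}^{-2}\,h^{-1}$. This uses that all left twists act on homology by transvections along their classes with one and the same sign.

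What each route buys:
Your route is uniform in $k$ and needs no figure-based identification of the boundary curves of a chain neighbourhood. It also gives a shorter explicit word, in the same linear-algebra spirit as the matrix computations the paper later uses for finite index. The paper's route derives the relation from a genuine relation in the mapping class group. Homology enters only at the last step, through $\bar\tau_{a'_{i+1}}=\bar\tau_{a_{i+1}}$.

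When you write it up, discard the false starts: residual finiteness, infinite order, and conjugating into the $b$-coordinates. You can also drop both hedges. First, $c_k$ is disjoint from every $a_j$ and meets each of $b_k,b_{k+1}$ once. So $[c_k]$ lies in the Lagrangian $\langle[a_1],\dots,[a_g]\rangle$ and equals $\pm[a_k]\pm[a_{k+1}]$; your argument is insensitive to the relative sign. Second, $(\bar\tau_{a_{k+1}}\bar\tau_{b_{k+1}})^3$ is exactly $-\mathrm{Id}$ on $\langle[a_{k+1}],[b_{k+1}]\rangle$ and the identity on its symplectic orthogonal, with no sign ambiguity.
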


\begin{remark}
    Observe that $\Gamma$ is not a normal subgroup of $\Sp(\rm{H}_1(S_g, \ZZ))$. Indeed, by definition, $\Gamma$ contains the Dehn twist along the simple closed curve $a_1$. It is well-known that, for every non-separating simple close curve $e$, there exists a homeomorphism $f$ of the surface $S_g$ mapping $a_1$ on $e$, which implies that $\bar\tau_e=\bar f\bar\tau_{a_1}\bar f^{-1}$. Hence, if $\Gamma$ were a normal subgroup, then it would contain the Dehn twist along any simple closed curve of $S_g$. But it is well-known that the mapping class group of $S_g$ is generated by the Dehn along the simple closed curves, so $\Gamma$ would be equal to the whole group $\Sp(\rm{H}_1(S_g, \ZZ))$, which is obviously not true. Indeed, since ${\tau}^{-2}_{c_i}$ acts identically on $\rm{H}_1(S_g, \mathbb{Z}/2\mathbb{Z})$, any element of $\Gamma \mod 2$ has the form $\rm{diag}\{M_1,M_2,\cdots,M_g \}$ with respect to the basis $\{[a_1],[b_1],[a_2],\cdots, [a_g],[b_g]\}$, where $M_i$ is a $2\times2$ symplectic matrix. But we know that not all elements in $\Sp(\rm{H}_1(S_g, \ZZ/2\ZZ))$ are of this form.
\end{remark}

\begin{proof}[Proof of Proposition~\ref{proposition:subgroup}.]
Recall that $\Gamma$ is, by definition, the subsemigroup of $\Sp(\rm{H}_1(S_g, \ZZ))$ generated by $\bar{\tau}_{a_1}^{\pm1}, \dots, \bar{\tau}_{a_g}^{\pm1},\, \bar{\tau}_{b_1}^{\pm1}, \dots, \bar{\tau}_{b_g}^{\pm1},\,  \bar{\tau}_{c_1}^{-2}, \dots, \bar{\tau}_{c_{g-1}}^{-2}.$ So, in order to prove that $\Gamma$ is a subgroup, it is enough to prove that $\bar\tau_{c_i}^{2}$ can be expressed as a product of the above generators for every $i\in \{1,\dots,g-1\}$. 

We start with the case of $\bar{\tau}_{c_1}^{2}$. We consider the chain of curves $c_1, b_1, a_1$ (see Figure \ref{f.boundaryofa1b1c1}). The boundary of a small regular neighborhood of $c_1 \cup b_1 \cup a_1$ is made of two curves $d$ and $d'$, where the two boundary curves are oriented as boundaries of the neighbourhood (see Figure \ref{f.boundaryofa1b1c1}).

\begin{figure}[htb]
    \centering
    \includegraphics[scale=0.4]{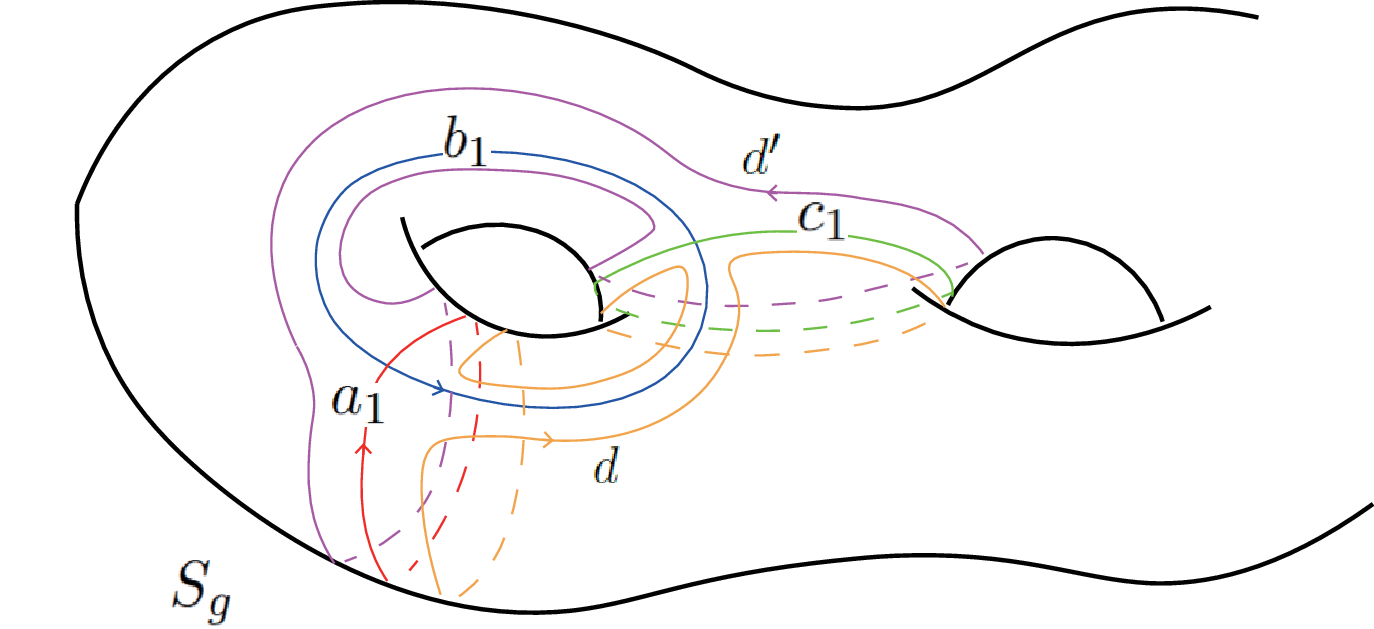}
    \caption{The boundary curves of $ a_1 \cup b_1 \cup c_1$ on $ S_g$}
    \label{f.boundaryofa1b1c1}
\end{figure}

The chain relation in $\M(S_g)$ (see \cite[Section 4.4.1]{FarbMargalit12}) states that
$$
\left( \tau^{2}_{c_1} \tau_{b_1} \tau_{a_1} \right)^3 = \tau_d \tau_{d'}.
$$ 
We observe that $d$ is homotopic to an initial curve $a_2$, and $d'$ is homotopic to the curve $a_2'$ as drawn in Figure \ref{f.isotopyboundaries}. Hence, in $\M(S_g)$, 
$$
\left( \tau^2_{c_1} \tau_{b_1} \tau_{a_1} \right)^3 = \tau_{a_2} \tau_{a_2'}.
$$

\begin{figure}[htb]
    \centering
    \includegraphics[scale=0.35]{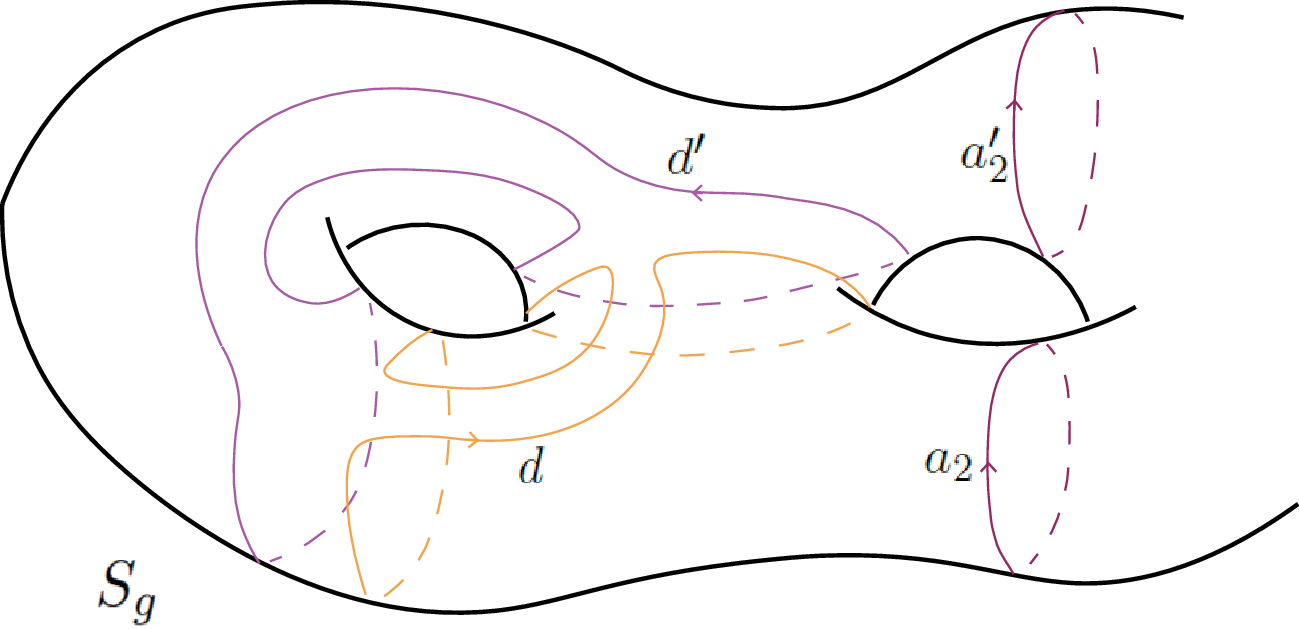}
    \caption{The curves $d,d'$ are homotopic to the curves $a_2,a'_2$ respectively}
    \label{f.isotopyboundaries}
\end{figure}

Then we observe that the curves $a_2$ and $a_2'$ bound a subsurface of $S_g$, and therefore $a_2'$ is homologous to $a_2^{-1}$. The left Dehn twist along a curve does not depend on the orientation of this curve, and the Dehn twists along two homologous curves have the same image in $\Sp(\rm{H}_1(S_g, \ZZ))$ (see e.g., \cite[Proposition 6.3]{FarbMargalit12}). Hence, in $\Sp(\rm{H}_1(S_g, \ZZ))$,
$$
\bar{\tau}_{a_2} = \bar{\tau}_{a_2'}.
$$
Hence
$$
\left( \bar{\tau}_{c_i}^2 \bar{\tau}_{b_i} \bar{\tau}_{a_i} \right)^3 = \tau_{a_2}^2
$$
or equivalently
$$
\bar{\tau}_{a_2}^2\left(\bar{\tau}_{a_1}^{-1}\bar{\tau}_{b_1}^{-1}\bar{\tau}_{c_1}^{-2}\right)^2\bar{\tau}_{a_1}^{-1}\bar{\tau}_{b_1}^{-1}
 = \bar{\tau}_{c_1}^{2}
$$
In particular, $\bar{\tau}_{c_1}^{2} \in \Gamma$ as desired.

The case of $\bar{\tau}_{c_{g-1}}^{2}$ is entirely similar (replace $a_1, b_1, c_1$ by $a_g, b_g, c_{g-1}$), hence it is in $\Gamma$.

So we are left to consider the case of $\bar{\tau}_{c_i}^{2}$ for $2 \leq i \leq g-2$. It will follow from similar arguments as above, but we need to slightly change the curves we consider. To this end, consider the chain of curves $a_i, b_i, c_i$ (see Figure \ref{f.boundaryofaibici}), and the two curves $d, d'$ that bounding a regular neighborhood of $c_i \cup b_i \cup a_i$ (see Figure \ref{f.boundaryofaibici}).

\begin{figure}[htb]
    \centering
    \includegraphics[scale=0.35]{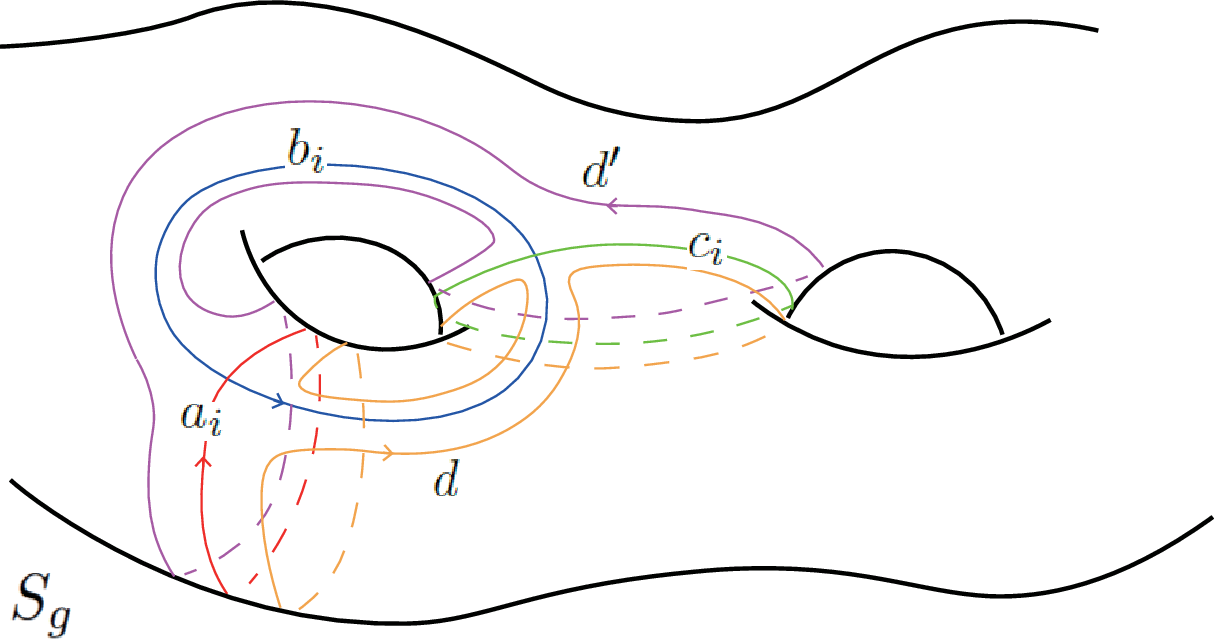}
    \caption{The boundary curves $ d,d'$ on $ S_g $}
    \label{f.boundaryofaibici}
\end{figure}

By the chain relation already used in the case of $\bar\tau_{c_1}^{2}$ (see \cite[Section 4.4.1]{FarbMargalit12}), we have in $\M(S_g)$,
$$\left( \tau_{c_i}^2 \tau_{b_i} \tau_{a_i} \right)^3 = \tau_d \tau_{d'}.$$
Then, we notice that the curves $d$ and $d'$ are homotopic to the curves $a_{i+1}$ and $a_{i+1}'$ drawn in Figure \ref{f.isotopydi}. Hence, in $\M(S_g)$,
$$\left( \tau_{c_i}^2 \tau_{b_i} \tau_{a_i} \right)^3 = \tau_{a_{i+1}} \tau_{a_{i+1}'}$$

\begin{figure}[htb]
    \centering
    \includegraphics[scale=0.34]{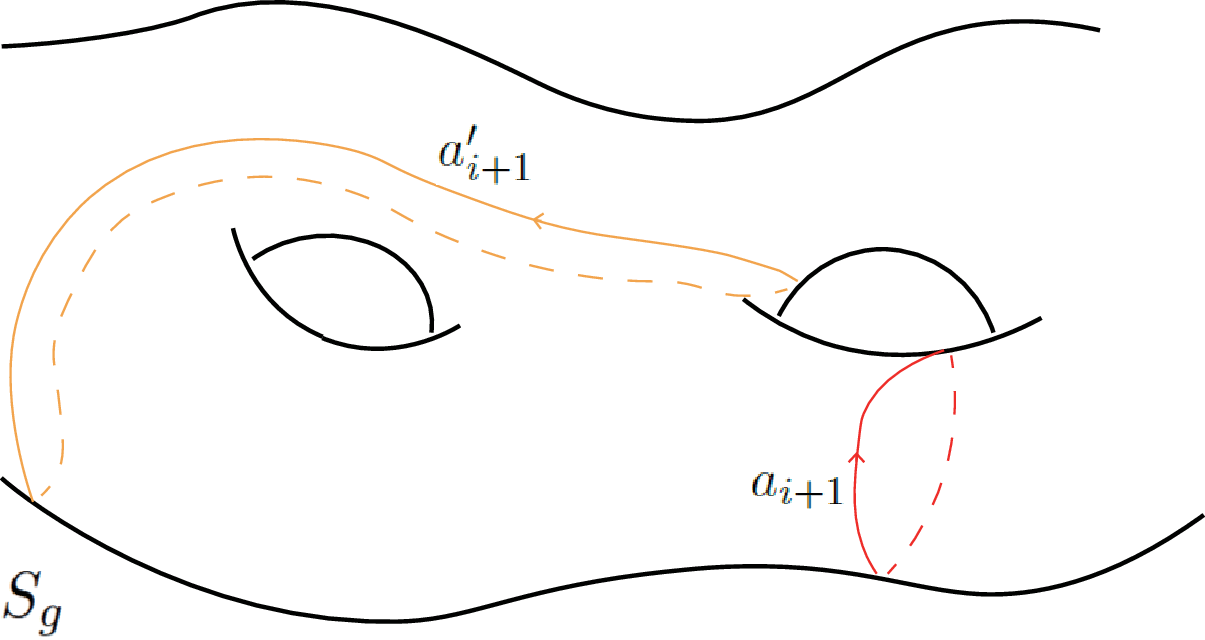}
    \caption{The curves curves $d,d'$ are homotopic to the curves $a_{i+1},a'_{i+1}$}
    \label{f.isotopydi}
\end{figure}

The curves $a_{i+1}$ and $a_{i+1}'$ bound a subsurface (a twice punctured torus), hence in $\Sp(\rm{H}_1(S_g, \ZZ))$
$$\bar{\tau}_{a_{i+1}} = \bar{\tau}_{a'_{i+1}}.$$
As a consequence,
$$\left( \bar{\tau}_{c_i}^2 \bar{\tau}_{b_i} \bar{\tau}_{a_i} \right)^3 = \bar{\tau}_{a_{i+1}}^2,$$
or equivalently,
$$
\bar{\tau}_{a_{i+1}}^2\left(\bar{\tau}_{a_i}^{-1}\bar{\tau}_{b_i}^{-1}\bar{\tau}_{c_i}^{-2}\right)^2\bar{\tau}_{a_i}^{-1}\bar{\tau}_{b_i}^{-1}
 = \bar{\tau}_{c_i}^{2}
$$
which implies that $\bar{\tau}_{c_i}^{2} \in \Gamma$.

So we have proved that $\bar{\tau}_{c_i}^{2} \in \Gamma$ for $i=1,\dots,g-1$. It follows that $\Gamma$ is actually the subgroup of $\Sp(\rm{H}_1(S_g, \ZZ))$ generated by $\bar{\tau}_{a_1}^{\pm 1}, \dots, \bar{\tau}_{a_g}^{\pm 1}, \bar{\tau}_{b_1}^{\pm 1}, \dots, \bar{\tau}_{b_g}^{\pm 1}, \bar{\tau}_{c_1}^{\pm 2}, \dots, \bar{\tau}_{c_{g-1}}^{\pm 2}.$
\end{proof}

\subsection{$\Gamma$ has finite index}

Recall that, for $q\geq 2$, the principal congruence subgroup of level $q$ of $\Sp(\rm{H}_1(S_g, \ZZ))$, which will be denoted by $\Gamma(q)$, is by definition the kernel of the reduction modulo $p$ homomorphism $\Sp(\rm{H}_1(S_g, \ZZ)) \to \Sp(\rm{H}_1(S_g, \ZZ/q\ZZ))$. We will prove the following 

\begin{proposition}\label{proposition:finiteindex}
    $\Gamma$ contains the principal congruence subgroup of level $2^{2g-2}$ of $\Sp(\rm{H}_1(S_g,\ZZ))$. In particular, $\Gamma$ has  finite index in $\Sp(\rm{H}_1(S_g,\ZZ))$.
\end{proposition}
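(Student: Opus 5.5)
We will follow the strategy announced in the introduction: produce inside $\Gamma$ enough elementary root unipotents of $\Sp(2g,\ZZ)$ raised to a fixed power $N=2^{g-1}$, and then invoke the congruence subgroup results of Bass--Milnor--Serre (for $g\ge 2$) together with Tits' theorem on unipotent generation. Concretely, we fix the symplectic basis $[a_1],[b_1],\dots,[a_g],[b_g]$ of $\mathrm{H}_1(S_g,\ZZ)$ and write $x_\alpha(t)$ for the root subgroups of $\Sp(2g)$, where the roots are $\pm 2e_i$ (long) and $\pm e_i\pm e_j$ (short). Tits' result states that, for $N\ge 1$, the subgroup $E(N)$ generated by all $x_\alpha(N\ZZ)$ contains a principal congruence subgroup $\Gamma(N')$ with $N'$ depending only on $N$ (here $N'=N^2$ suffices). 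By the Bass--Milnor--Serre solution of the congruence subgroup problem for $\Sp_{2g}$, $g\ge2$, one gets concretely $\Gamma(N^2)\subset E(N)$. It therefore suffices to show that $x_\alpha(N)\in\Gamma$ for every root $\alpha$, with $N=2^{g-1}$, which yields level $2^{2g-2}$.

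The long roots are immediate. By the Picard--Lefschetz formula, $\bar\tau_{a_i}$ and $\bar\tau_{b_i}$ are transvections $v\mapsto v\pm \mathrm{Int}(v,a_i)a_i$, which are exactly $x_{\pm2e_i}(\pm1)$. These lie in $\Gamma$ with both signs, hence so do all their powers, in particular the $N$-th ones. For the short roots we use $\bar\tau_{c_k}^{\pm2}\in\Gamma$, which is legitimate since $\Gamma$ is a group by Proposition~\ref{proposition:subgroup}. The class $[c_k]$ equals $\pm([a_k]-[a_{k+1}])$ (or the analogous combination according to Figure~\ref{f.aibici}), so $\bar\tau_{c_k}^{2}$ is the transvection $v\mapsto v+2\,\mathrm{Int}(v,c_k)c_k$. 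Modulo the long-root elements $x_{2e_k}(\cdot)$ and $x_{2e_{k+1}}(\cdot)$, which lie in $\Gamma$, this yields the short root element $x_{e_k+e_{k+1}}(\pm 2)$ for the adjacent pair $(k,k+1)$; a direct $4\times4$ computation confirms this. Conjugating by the elements $\bar\tau_{a_k}\bar\tau_{b_k}\bar\tau_{a_k}$, which represent the Weyl reflection $[a_k]\mapsto[b_k]$ and $[b_k]\mapsto-[a_k]$, changes the signs in front of $e_k$ and $e_{k+1}$. This produces $x_{\pm e_k\pm e_{k+1}}(2)$ for all adjacent pairs.

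The step we expect to be the main obstacle is reaching non-adjacent short roots $e_i\pm e_j$ with $|i-j|\ge2$, because $\Gamma$ does not contain permutations of the indices. The tool here is the Chevalley commutator formula, $[x_{e_i-e_j}(s),x_{e_j\pm e_k}(t)]=x_{e_i\pm e_k}(\pm st)$ (with additional long-root terms in the symplectic case that are already in $\Gamma$). Inductively, the elements we obtain for roots $e_i\pm e_j$ have parameter $2^{|i-j|}$. Since $|i-j|\le g-1$, and a multiple of $2^{m}$ is also obtained as a power, every short root gets parameter dividing $2^{g-1}=N$. We then take appropriate powers so that every $x_\alpha(N)\in\Gamma$. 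The care needed is in tracking signs, verifying that the commutators of symplectic short roots produce only root elements already available, and checking that the correction terms can be cancelled inside $\Gamma$. We then conclude by the cited theorem that $\Gamma(2^{2g-2})\subset\Gamma$, and finite index follows because $\Gamma(q)$ has finite index (bounded by $|\Sp(2g,\ZZ/q\ZZ)|$).
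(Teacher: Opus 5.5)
Your proposal is correct and follows essentially the paper's route. Both arguments apply Tits' Proposition~4 (built on Bass--Milnor--Serre) with $\mathfrak{q}=2^{g-1}\ZZ$, take the long-root elements from $\bar\tau_{a_i}^{\pm1},\bar\tau_{b_i}^{\pm1}$, extract adjacent short-root elements with parameter $2$ from $\bar\tau_{c_k}^{\pm2}$ using long-root corrections and conjugation by $\bar\tau_{a_k}\bar\tau_{b_k}\bar\tau_{a_k}$, and run a commutator induction that gives parameter $2^{|i-j|}$, which divides $2^{g-1}$. The only difference is minor: you get $Z_{k,k+1}^{2}=C_k^{2}A_k^{2}A_{k+1}^{2}$ directly inside the abelian Siegel unipotent radical, whereas the paper first builds $D_i=X_{i,i+1}^{2}$ and then obtains $Z_{k-1,k}^{2}=[V_k,D_{k-1}^{-1}]V_{k-1}^{4}$ as a commutator.
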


A straightforward counting argument shows that the index of the principal congruence subgroup $\Gamma(q)$ of $\Sp(\rm{H}_1(S_g, \ZZ))$ satisfies 
$$[\Sp(\rm{H}_1(S_g, \ZZ) : \Gamma(q) ] = |\Sp(\rm{H}_1(S_g, \ZZ/q\ZZ))| = |\Sp(2g, \ZZ/q\ZZ))| \leq |M_{2g}(\ZZ/q\ZZ)| = q^{4g^2}.$$
Together with Proposition~\ref{proposition:finiteindex}, this formula yields the following upper bound for the index of our subgroup $\Gamma$
$$[\Sp(\rm{H}_1(S_g, \ZZ) : \Gamma ] \leq 2^{8g^3}.$$
Note that this bound is quite large even when $g$ is very small (about ten billion for $g=2$). More careful estimates of the index of the principal congruence subgroup $\Gamma(2^{2g-2})$ yield a better bound $2^{7g^2}$, but in any case, the actual value of the index of $\Gamma$ is probably much smaller than any bound given by this type of argument.

\begin{proof}[Proof of Proposition~\ref{proposition:finiteindex}]
We will denote by $(e_1,\dots,e_{2g})$ the canonical basis of $\ZZ^{2g}$, and denote by $E_{i,j}$ the matrix in $M_{2g,2g}(\ZZ)$ with a 1 at position $(i,j)$ and 0s elsewhere. We choose $([a_1], \ldots, [a_g], [b_1], \ldots, [b_g])$ as a basis of $\rm{H}_1(S_g, \ZZ)$ (the order of the elements in the basis matters). This basis provides an identification of $\rm{H}_1(S_g, \ZZ)$ with $\ZZ^{2g}$. Observe that: 
\begin{itemize}
    \item  the homology class $[a_i]$ is identified with the vector $e_i$ and the homology class $[b_i]$ is identified with the vector $e_{g+i}$; 
    \item the intersection form on $\rm{H}_1(S_g, \ZZ)$ is identified with the symplectic form $$\sum_{i=1}^g [e_i]^{*} \wedge [e_{g+i}]^{*}$$ on $\ZZ^{2g}$. The matrix of this symplectic form is $$J = \begin{pmatrix} 0 & I_g \\ -I_g & 0 \end{pmatrix};$$
    \item the group $\Sp(\rm{H}_1(S_g, \ZZ))$ is identified  with the group  
    $$\Sp(2g, \ZZ) = \{ M \in \operatorname{GL}(2g, \ZZ) \mid M^T J M = J \};$$
    \item for any $q\geq 2$, the principal congruence subgroup of level $q$ of $\Sp(\rm{H}_1(S_g, \ZZ))$ is identified with the subgroup of $\Sp(2g, \ZZ)$
    $$\{ M \in \Sp(2g, \ZZ) \mid M \equiv I_{2g} \pmod{q} \}$$
    \item the matrices of the Dehn twists $\bar{\tau}_{a_i}$, $\bar{\tau}_{b_i}$, $\bar{\tau}_{c_i}$ with respect to the basis $([a_1], \ldots, [a_g], [b_1], \ldots, [b_g])$ are respectively
\begin{eqnarray*}
A_i & := & I_{2g} + E_{i,g+i}, \\
B_i &  := & I_{2g} - E_{g+i,i}, \\
C_{i} & := & I_{2g} - E_{i,g+i} - E_{i+1,g+i+1} + E_{i+1,g+i} + E_{i,g+i+1}.
\end{eqnarray*}
Since $\Gamma$ is generated by the Dehn twists $\bar{\tau}_{a_1}^{\pm 1}, \dots, \bar{\tau}_{a_g}^{\pm 1}$, $\bar{\tau}_{b_1}^{\pm 1}, \dots, \bar{\tau}_{b_g}^{\pm 1}$ and $\bar{\tau}_{c_1}^{\pm 2}, \dots, \bar{\tau}_{c_{g-1}}^{\pm 2}$, it follows that $\Gamma$ is identified with the subgroup of $\Sp(2g, \ZZ)$ generated by the matrices 
$$A_1^{\pm 1},\dots,A_g^{\pm 1},\; B_1^{\pm 1},\dots,B_g^{\pm 1}\mbox{ and } C_1^{\pm 2},\dots,C_{g-1}^{\pm 2}$$ (notice the power $\pm 2$ on the $C_i$'s ; it is the source of all our troubles).
\end{itemize}

\medskip

We will use a particular case of a general result proved by Tits in \cite{Tits}, relying on some previous work by Bass, Milnor and Serre (\cite{BassMilnorSerre}). We will first quote faithfully Tits' general statement, before explaining how it can be particularized to apply to our situation. Readers unfamiliar with the language of algebraic group schemes need only understand the particularization to our setting (Corollary~\ref{corollary:Tits}), without necessarily fully understanding the statement of Proposition~\ref{proposition:Tits}. 

For every ring $B$, every ideal $\mathfrak{p}$ of $B$, and every group scheme $X$ on $B$, one denotes by $X^{(\mathfrak{p})}$ the ``congruence subgroup" obtained as the kernel of the reduction modulo $\mathfrak{p}$ morphism $X(B)\to X(B/\mathfrak{p})$. 

\begin{proposition}[Proposition 4 of~\cite{Tits}]
\label{proposition:Tits}
Let $A$ be a Dedekind domain of arithmetic type and $\mathfrak{q}$ be a non-zero ideal of $A$. Let $G$ be an almost simple Chevalley scheme of rank $\geq 2$ and $\Phi$ be a root system of $G$. For $\alpha\in\Phi$, denote by $U_\alpha$ the one-parameter subgroup of $G(A)$ associated with $\alpha$, and recall that $U_\alpha^{(\mathfrak{q})}$ is the kernel of the reduction modulo $\mathfrak{q}$ of $U_\alpha$. Denote by $F(\mathfrak{q})$ the subgroup of $G(A)$ generated by the $U_\alpha^{(\mathfrak{q})}$'s when $\alpha$ ranges over $\Phi$.   

Then $F(\mathfrak{q})$ has finite index in $G(A)$. Moreover, if $A$ is not the ring of integers of a totally imaginary field and $G$ is simply connected, then $F(\mathfrak{q})$ is the subgroup of $G(A)$ made of the elements whose reduction modulo $\mathfrak{q}^2$ belongs to the (direct) product, for $\alpha\in\Phi$, of the subgroups $U_\alpha^{(\mathfrak{q}/\mathfrak{q}^2)}$ of $G(A/\mathfrak{q}^2)$.
\end{proposition}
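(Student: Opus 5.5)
We would not reprove this from scratch; it is Tits' Proposition~4, and the plan below is to reduce it to the congruence subgroup theorem of Bass--Milnor--Serre (and Matsumoto for general Chevalley groups). The central object is the elementary congruence subgroup $E(\mathfrak{q}')$, defined as the normal closure in $G(A)$ of the group generated by the $U_\alpha^{(\mathfrak{q}')}$. We take as the external input the theorem that, for rank $\geq 2$ and $A$ a Dedekind domain of arithmetic type, $E(\mathfrak{q}')$ has finite index in $G(A)$. Moreover, when $G$ is simply connected and $A$ is not the ring of integers of a totally imaginary field, the congruence kernel is trivial, so $E(\mathfrak{q}')=G^{(\mathfrak{q}')}$.

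The key internal step is to show that $F(\mathfrak{q})\supseteq E(\mathfrak{q}^2)$. First we would check that $F(\mathfrak{q}^2)\subseteq F(\mathfrak{q})$. Then we would show that conjugating any $x_\alpha(t)$ with $t\in\mathfrak{q}^2$ by an arbitrary element of $G(A)$ stays inside $F(\mathfrak{q})$. Since $G(A)$ is generated by elementary root elements in this setting (again Bass--Milnor--Serre/Matsumoto for rank $\geq 2$), it suffices to handle conjugation by $x_\beta(s)$ with $s\in A$. For $\beta\neq -\alpha$, the Chevalley commutator formula $[x_\beta(s),x_\alpha(t)]=\prod x_{i\beta+j\alpha}(c_{ij}s^it^j)$ with $j\geq 1$ writes the conjugate as a product of root elements with parameters in $\mathfrak{q}^2\subseteq\mathfrak{q}$. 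For $\beta=-\alpha$, we use rank $\geq 2$: choose a root $\gamma$ with $\alpha=\gamma+\delta$ and factor $t=t_1t_2$ with $t_1,t_2\in\mathfrak{q}$. Then $x_\alpha(t)$ is, up to terms of higher level, a commutator $[x_\gamma(t_1),x_\delta(\pm t_2)]$. Conjugating that commutator by $x_{-\alpha}(s)$ reduces to the previous case, since neither $\gamma$ nor $\delta$ equals $\alpha$. This is the step where working at level $\mathfrak{q}^2$ rather than $\mathfrak{q}$ is essential. We expect it to be the main obstacle, because the $G_2$ and $B_2=C_2$ cases carry structure constants $2$ and $3$ and need a case-by-case check. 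We would first try the uniform argument above, and then patch the doubly-laced cases by hand.

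Given $E(\mathfrak{q}^2)\subseteq F(\mathfrak{q})$, finite index of $F(\mathfrak{q})$ follows immediately from finite index of $E(\mathfrak{q}^2)$. In the simply connected, non-totally-imaginary case we get $G^{(\mathfrak{q}^2)}\subseteq F(\mathfrak{q})$. It then remains to identify the image of $F(\mathfrak{q})$ in $G(A/\mathfrak{q}^2)$. That image is generated by the images of the $U_\alpha^{(\mathfrak{q})}$, namely the groups $U_\alpha^{(\mathfrak{q}/\mathfrak{q}^2)}$. By the commutator formula, any two of these commute modulo $\mathfrak{q}^2$, because every commutator term has parameter in $\mathfrak{q}^2$. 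They also intersect trivially, as one sees on the Lie algebra, which is $\mathfrak{g}\otimes\mathfrak{q}/\mathfrak{q}^2$. Hence the image is exactly the direct product of the $U_\alpha^{(\mathfrak{q}/\mathfrak{q}^2)}$. Since $F(\mathfrak{q})$ contains the kernel $G^{(\mathfrak{q}^2)}$ of reduction modulo $\mathfrak{q}^2$, it equals the full preimage of this product, which is the stated characterization.
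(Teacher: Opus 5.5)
The paper does not prove this statement; it quotes it from Tits, who builds on Bass--Milnor--Serre and Matsumoto. So your sketch has to stand on its own. The architecture is sound: put a relative elementary congruence subgroup inside $F(\mathfrak{q})$, use finite index and the trivial congruence kernel, then read off the image modulo $\mathfrak{q}^2$. Two steps, however, do not go through as written.

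\textbf{First gap: the reduction to single conjugations.} The claim that ``it suffices to handle conjugation by a single $x_\beta(s)$'' is false, because $F(\mathfrak{q})$ is not normal. For $\beta\neq-\alpha$ the conjugate of $x_\alpha(t)$, with $t\in\mathfrak{q}^2$, lies in $F(\mathfrak{q}^2)$, so that case iterates. For $\beta=-\alpha$, your argument only places $x_{-\alpha}(s)x_\alpha(t)x_{-\alpha}(-s)$ in $F(\mathfrak{q})$, as a commutator of products of level-$\mathfrak{q}$ root elements. A second conjugation by some $x_{\beta'}(s')$ then requires conjugating level-$\mathfrak{q}$ root elements by opposite root elements, which you do not control. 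What you actually need is the theorem of Stein, Tits and Vaserstein: in rank $\geq 2$, the normal closure of $\langle x_\alpha(\mathfrak{q}')\rangle$ in $E(A)$ is generated by the elements $z_\alpha(t,s)=x_{-\alpha}(s)x_\alpha(t)x_{-\alpha}(-s)$ with $t\in\mathfrak{q}'$ and $s\in A$. This must be cited or proved. Separately, $G(A)$ is not generated by root elements when $G$ is not simply connected. Take normal closures in $E(A)$ instead, which has finite index in $G(A)$.

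\textbf{Second gap: long roots of $C_n$.} The $\beta=-\alpha$ step at level $\mathfrak{q}^2$ fails for the long roots of $C_n$, $n\geq 2$. That is exactly $\mathrm{Sp}(2g)$, the case this paper uses. $G_2$ is not the problem: there every root is $\gamma+\delta$ with $N_{\gamma\delta}=\pm1$.
\begin{itemize}
\item In $C_n$, the only way to write $2e_i$ as a sum of two roots is $(e_i-e_j)+(e_i+e_j)$. Then $[x_{e_i-e_j}(t_1),x_{e_i+e_j}(t_2)]=x_{2e_i}(\pm2t_1t_2)$ exactly, so you only obtain $z_{2e_i}(t,s)$ for $t\in 2\mathfrak{q}^2$.
\item The other route is $2e_i=2(e_i-e_j)+2e_j$, via $[x_{e_i-e_j}(u),x_{2e_j}(v)]=x_{e_i+e_j}(\pm uv)x_{2e_i}(\pm u^2v)$. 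This helps only when $v\in\mathfrak{q}$, which gives level $\mathfrak{q}^3$. For $v\notin\mathfrak{q}$, conjugating by $x_{-2e_i}(s)$ reintroduces a level-$\mathfrak{q}^2$ $z$-element for $2e_j$, so the argument is circular.
\end{itemize}
When $\mathfrak{q}$ lies in a prime above $2$ (for instance $\mathfrak{q}=2^{g-1}\ZZ$), $2\mathfrak{q}^2+\mathfrak{q}^3\neq\mathfrak{q}^2$. So no check of structure constants closes this gap.

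\textbf{A repair.} Aim only for $z_\alpha(t,s)\in F(\mathfrak{q})$ with $t\in\mathfrak{q}^3$.
\begin{itemize}
\item This holds for every root. For $2e_i$, use the second decomposition with $u,v\in\mathfrak{q}$; the elements $u^2v$ span $\mathfrak{q}^3$ in a Dedekind domain.
\item With the generation theorem this gives $E(A,\mathfrak{q}^3)\subset F(\mathfrak{q})$. Hence $F(\mathfrak{q})$ has finite index, and in the simply connected, non-totally-imaginary case $G^{(\mathfrak{q}^3)}\subset F(\mathfrak{q})$.
\item Now pass from $\mathfrak{q}^3$ to $\mathfrak{q}^2$ inside $G(A/\mathfrak{q}^3)$. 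Set $J=\mathfrak{q}^2/\mathfrak{q}^3$; it is nilpotent. By the big-cell decomposition, the kernel of $G(A/\mathfrak{q}^3)\to G(A/\mathfrak{q}^2)$ is $U^-(J)T(J)U(J)$.
\item The unipotent factors come from $x_\alpha(\mathfrak{q}^2)\subset F(\mathfrak{q})$.
\item Since $G$ is simply connected, $T(J)$ is generated by the $h_\alpha(1+uv)$ with $u,v\in\mathfrak{q}$. These lie in the image of $F(\mathfrak{q})$ by the rank-one identity
$h_\alpha(1+uv)=x_{-\alpha}\bigl(-\tfrac{v}{1+uv}\bigr)x_\alpha(u)x_{-\alpha}(v)x_\alpha\bigl(-\tfrac{u}{1+uv}\bigr)$.
\end{itemize}
Your final paragraph then completes the proof. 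One small point there: the Chevalley commutator formula does not cover opposite roots, but you do not need it, since the kernel of $G(A/\mathfrak{q}^2)\to G(A/\mathfrak{q})$ is abelian.
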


Note that the last sentence implies in particular that $F(\mathfrak{q})$ contains the congruence subgroup $G^{(\mathfrak{q}^2)}$. We apply Proposition~\ref{proposition:Tits} in the following particular setting.
\begin{itemize}
\item The ring $A$ will be $\mathbb{Z}$, which is indeed a Dedekind domain of arithmetic type (see \emph{e.g}~\cite[page 83]{BassMilnorSerre}), and is not the ring of integers of  totally imaginary.
\item The ideal $\mathfrak{q}$ will be $2^{g-1}\ZZ$. 
\item The group scheme $G$ will be $\mathrm{Sp}(2g)$, which is indeed an almost simple, simply connected, Chevalley scheme of rank $g$ (see \emph{e.g.} \cite[chapitre 22]{Chevalley} or \cite[paragraph 18.25]{Milne})  
\item Hence the group $G(A)$ will be $\mathrm{Sp}(2g,\ZZ)$ and the congruence subgroup $G^{(\mathfrak{q}^2)}$ will be the principal congruence subgroup $\Gamma(2^{2g-2})$ of level $2^{2g-2}$ of $\mathrm{Sp}(2g,\ZZ)$.
\item The one-parameter subgroups of $\mathrm{Sp}(2g,\ZZ)$ associated to the elements of the classical root system $\mathrm{Sp}(2g)$ are (see \emph{e.g.} \cite[chapitre 22]{Chevalley} or \cite[paragraph 18.25]{Milne} or \cite[Page 110]{Rossmann}):
\begin{eqnarray*}
&& \left\{V_i^t= I_{2g}+tE_{i,g+i}\right\}_{t\in\ZZ},  \text{ for } 1 \leq i \leq g,  \\
&& \left\{W_i^t=I_{2g}+tE_{g+i,i})\right\}_{t\in\ZZ}   \text{ for } 1 \leq i \leq g, \\
&& \left\{X_{j,k}^t= I_{2g}+t(E_{j,k} - E_{g+k,g+j})\right\}_{t\in\ZZ}   \text{ for } 1 \leq j \neq k \leq g, \\
&& \left\{Y_{j,k}^{t} = I_{2g} + t (E_{g+j,k} + E_{g+k,j})\right\}_{t\in\ZZ}   \text{ for } 1 \leq j < k \leq g, \\
&& \left\{Z_{j,k}^{t} = I_{2g} + t (E_{j,g+k} + E_{k,g+j})\right\}_{t\in\ZZ}  \quad \text{ for } 1 \leq j < k \leq g.
\end{eqnarray*}
Hence, the subgroup $F(\mathfrak{q})$ considered in Proposition~\ref{proposition:Tits} will be, in our setting, the subgroup of $\mathrm{Sp}(2g,\ZZ)$ generated by the matrices 
\begin{eqnarray*}
&& V_i^{2^{g-1}} = I_{2g}+2^{g-1}E_{i,g+i}  \text{ for } 1 \leq i \leq g,  \\
&& W_i^{2^{g-1}} = I_{2g}+2^{g-1}E_{g+i,i}   \text{ for } 1 \leq i \leq g, \\
&& X_{j,k}^{2^{g-1}}= I_{2g}+2^{g-1}(E_{j,k} - E_{g+k,g+j}) \text{ for } 1 \leq j \neq k \leq g, \\
&& Y_{j,k}^{2^{g-1}} = I_{2g} + 2^{g-1} (E_{g+j,k} + E_{g+k,j})   \text{ for } 1 \leq j < k \leq g, \\
&& Z_{j,k}^{2^{g-1}} =  I_{2g} + 2^{g-1} (E_{j,g+k} + E_{k,g+j})  \quad \text{ for } 1 \leq j < k \leq g.
\end{eqnarray*} 
\end{itemize}
In this specific setting, Proposition~\ref{proposition:Tits} particularizes as follows:

\begin{corollary}
\label{corollary:Tits} 
The subgroup of $\mathrm{Sp}(2g,\ZZ)$ generated by the matrices $V_i^{2^{g-1}}$ and $W_i^{2^{g-1}}$ for \hbox{$1\leq i\leq g$,} the matrices $X_{j,k}^{2^{g-1}}$ for $1\leq j\neq k\leq g$ and the matrices $Y_{j,k}^{2^{g-1}}$ and $Z_{j,k}^{2^{g-1}}$ for $1\leq j< k\leq g$  contains the principal congruence subgroup $\Gamma(2^{2g-2})$. 
\end{corollary}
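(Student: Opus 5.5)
This is a direct specialization of Proposition~\ref{proposition:Tits}, so the plan is to check its hypotheses in our setting and then read off the conclusion.

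First, I take $A=\ZZ$, $\mathfrak{q}=2^{g-1}\ZZ$ and $G=\mathrm{Sp}(2g)$. $\ZZ$ is a Dedekind domain of arithmetic type (\cite{BassMilnorSerre}). It is not the ring of integers of a totally imaginary field, since $\QQ$ has a real embedding. $\mathrm{Sp}(2g)$ is an almost simple, simply connected Chevalley scheme of rank $g$. The rank hypothesis $\geq 2$ therefore holds for $g\geq 2$.

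Second, I identify the root subgroups. Fix the basis $(e_1,\dots,e_{2g})$ and the symplectic form $J$. The roots of $\mathrm{Sp}(2g)$ relative to the diagonal torus are the following:
\begin{itemize}
\item the long roots $\pm 2\varepsilon_i$, whose root subgroups are $V_i^t$ and $W_i^t$;
\item the short roots $\varepsilon_j-\varepsilon_k$, whose root subgroups are $X_{j,k}^t$;
\item the short roots $\pm(\varepsilon_j+\varepsilon_k)$ with $j<k$, whose root subgroups are $Z_{j,k}^t$ and $Y_{j,k}^t$.
\end{itemize}
For each family, one checks $M^TJM=J$, which is routine. The only subtlety is sign conventions, for example $E_{j,k}-E_{g+k,g+j}$ rather than a plus sign.

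Third, I apply the proposition. The kernel of reduction modulo $\mathfrak{q}$ of each $U_\alpha\simeq\ZZ$ consists exactly of the parameters $t\in 2^{g-1}\ZZ$. Hence $U_\alpha^{(\mathfrak{q})}$ is generated by the time-$2^{g-1}$ element of the corresponding family, and $F(\mathfrak{q})$ is exactly the group generated by the listed matrices. By the final sentence of Proposition~\ref{proposition:Tits}, $F(\mathfrak{q})$ consists of the elements whose reduction modulo $\mathfrak{q}^2$ lies in the product of the $U_\alpha^{(\mathfrak{q}/\mathfrak{q}^2)}$. That product contains the identity of $G(A/\mathfrak{q}^2)$, so $F(\mathfrak{q})\supseteq G^{(\mathfrak{q}^2)}=\Gamma(2^{2g-2})$.

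The main point to be careful about is that the quoted description really holds with these normalizations. I will trust Tits' statement as given and only verify that the chosen one-parameter subgroups are the root subgroups of a Chevalley basis, up to sign and reindexing. Neither change affects the subgroup each family generates.
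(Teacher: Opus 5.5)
Your proposal is correct and follows the paper's route. Like the paper, you specialize Proposition~\ref{proposition:Tits} to $A=\ZZ$, $\mathfrak{q}=2^{g-1}\ZZ$ and $G=\mathrm{Sp}(2g)$, identify $F(\mathfrak{q})$ with the group generated by the time-$2^{g-1}$ root elements $V_i,W_i,X_{j,k},Y_{j,k},Z_{j,k}$, and observe that the identity lies in the product of the $U_\alpha^{(\mathfrak{q}/\mathfrak{q}^2)}$, so that $\Gamma(2^{2g-2})=G^{(\mathfrak{q}^2)}\subseteq F(\mathfrak{q})$; your explicit check that these five families are the $C_g$ root subgroups, with the correct sign in $X_{j,k}^t$, adds detail the paper only cites.
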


In view of Corollary~\ref{corollary:Tits}, we are left to prove that:

\begin{lemma}
\label{lemma:product-of-generators}
The matrices $V_i^{2^{g-1}}$ and $W_i^{2^{g-1}}$ for \hbox{$1\leq i\leq g$,} the matrices $X_{j,k}^{2^{g-1}}$ for $1\leq j\neq k\leq g$ and the matrices $Y_{j,k}^{2^{g-1}}$ and $Z_{j,k}^{2^{g-1}}$ for $1\leq j< k\leq g$ belong to our subgroup $\Gamma$, \emph{i.e.} that all these matrices can be written as products of the generators $A_1^{\pm 1},\dots,A_g^{\pm 1}$, $B_1^{\pm 1},\dots,B_g^{\pm 1}$, $C_1^{\pm 2},\dots,C_{g-1}^{\pm 2}$ of $\Gamma$.
\end{lemma}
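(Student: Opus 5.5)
The plan is to build the root-subgroup elements out of the generators of $\Gamma$ by commutators and conjugations, doubling the parameter at each step so that the power $2^{g-1}$ suffices.

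\textbf{Step 1 (the $V$'s and $W$'s).} These come for free: $V_i^t=A_i^t$ and $W_i^t=B_i^{-t}$ for every $t\in\ZZ$. So every $V_i^{2^{g-1}}$ and $W_i^{2^{g-1}}$ lies in $\Gamma$.

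\textbf{Step 2 (the rank-two blocks, $k=j+1$).} Next I produce the mixed root elements between consecutive indices $i,i+1$. A direct computation gives $C_i=I+N_i$ with $N_i=-E_{i,g+i}-E_{i+1,g+i+1}+E_{i+1,g+i}+E_{i,g+i+1}$. One checks $N_i^2=0$, hence $C_i^{2}=I+2N_i$. Moreover $C_i^2A_i^{2}A_{i+1}^{2}=I+2(E_{i,g+i+1}+E_{i+1,g+i})=Z_{i,i+1}^2$, since all these matrices are upper unipotent with commuting $E_{\ast,g+\ast}$ parts. Thus $Z_{i,i+1}^{2}\in\Gamma$.

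Conjugating by products of $A$'s and $B$'s does not help much. In the $(a_i,b_i)$ plane, $A_iB_i^{-1}A_i$ (a $\mathrm{SL}_2$ Weyl element) swaps $e_i$ and $\pm e_{g+i}$. Conjugating $Z_{i,i+1}^2$ by Weyl elements in the $i$-th and/or $(i+1)$-th blocks turns it into $X_{i,i+1}^{\pm2}$, $X_{i+1,i}^{\pm2}$ and $Y_{i,i+1}^{\pm2}$. Since all of $A_i^{\pm1},B_i^{\pm1}$ are in $\Gamma$, every root element attached to a pair of adjacent indices lies in $\Gamma$ with parameter $2$.

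\textbf{Step 3 (non-adjacent pairs by commutators).} For $j<l<k$ I use the Chevalley commutator relations in $\mathrm{Sp}(2g)$: $[X_{j,l}^{s},X_{l,k}^{t}]=X_{j,k}^{st}$, together with analogous relations $[X_{j,l}^s,Z_{l,k}^t]=Z_{j,k}^{\pm st}$ and similarly for $Y$. I need to check signs and that no extra long-root factors appear. For $X$-type roots $e_j-e_l$ and $e_l-e_k$ the sum is a root and no other positive combination is, so the commutator is exactly a single root element. By induction on $k-j$: if all root elements for index-distance $<m$ are in $\Gamma$ with parameter $2^{m-1}$, then distance $m$ pairs, split as $(j,j+1)$ and $(j+1,k)$, give parameter $2\cdot 2^{m-1}=2^{m}$. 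Starting from distance $1$ with parameter $2$, distance $m$ gets parameter $2^m\le 2^{g-1}$. Any multiple of an available parameter is also available, since the subgroups $\{X^t\}$ are cyclic. Hence all $X_{j,k}^{2^{g-1}}$, $Y_{j,k}^{2^{g-1}}$, $Z_{j,k}^{2^{g-1}}$ are in $\Gamma$. Combined with Step 1 and Corollary~\ref{corollary:Tits}, this proves the lemma.

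\textbf{Main obstacle.} The hardest part is the bookkeeping in Steps 2--3: verifying the Weyl-conjugation formulas and the commutator identities with correct signs in the chosen basis and form $J$. The key worry is commutators like $[X,Z]$, where a long root $2e_j$ could in principle contribute an extra $V$-type factor. If it does, that factor is a $V_j^{\ast}$ with an even (indeed sufficiently divisible) parameter, already in $\Gamma$ by Step 1, so it can be cancelled. I would do the explicit $4\times4$ computation for Step 2 first, then rely on $2\times2$ block conjugations for the rest.
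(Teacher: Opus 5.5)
Your proof is correct once one slip is fixed. Its base step differs from the paper's, while the doubling-by-commutators induction is the same as Claims C, C$'$, E.

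\textbf{How the routes differ.} The paper proceeds as follows:
\begin{itemize}
\item It builds $X_{i,i+1}^2$ as the explicit word $D_i=A_i^2B_{i+1}^2(A_{i+1}B_{i+1}A_{i+1})C_i^2(A_{i+1}B_{i+1}A_{i+1})^{-1}$ (Claim B), with a mirror word for $X_{i+1,i}^{-2}$ (Claim B$'$).
\item It obtains $Z_{k-1,k}^2=[V_k,D_{k-1}^{-1}]V_{k-1}^4$ from a commutator with a long-root subgroup. This forces the correcting factor $V_{k-1}^4$ (Claim D).
\item It gets the $Y$'s by conjugating $Z^{-1}$ by $J=\prod_i A_iB_iA_i$ (Observation F).
\end{itemize}
You instead notice that $A_i$, $A_{i+1}$ and $C_i$ all lie in the abelian group of matrices $\begin{pmatrix} I & S\\ 0 & I\end{pmatrix}$ with $S$ symmetric. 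This gives $Z_{i,i+1}^2=C_i^2A_i^2A_{i+1}^2$ in one line. You then reach the other adjacent roots by conjugating with block Weyl elements.

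Your version replaces Claims B, B$'$, D and Observation F by a single mechanism and never meets a long-root correction. The paper's version has the merit of writing out every word explicitly and checking it by hand.

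Your worry about extra long-root factors in Step 3 is unfounded. Take three distinct indices and let $M=I+sE_{j,l}$, with $E_{j,l}$ here the $g\times g$ matrix unit. Conjugating $\begin{pmatrix} I&S\\0&I\end{pmatrix}$ by $\begin{pmatrix}M&0\\0&M^{-T}\end{pmatrix}$ gives exactly $[X_{j,l}^s,Z_{l,k}^t]=Z_{j,k}^{st}$. The analogous computation on the lower-left block gives $[X_{l,j}^s,Y_{l,k}^t]=Y_{j,k}^{-st}$.

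\textbf{The slip.} With the paper's convention $B_i=I-E_{g+i,i}$, your element $A_iB_i^{-1}A_i$ acts on $\langle e_i,e_{g+i}\rangle$ by $\begin{pmatrix}2&3\\1&2\end{pmatrix}$. That matrix is hyperbolic, not a Weyl element. The right choice is $w_i=A_iB_iA_i$, which sends $e_i\mapsto -e_{g+i}$ and $e_{g+i}\mapsto e_i$. With it one checks, as you intended:
\begin{itemize}
\item $w_{i+1}Z_{i,i+1}^2w_{i+1}^{-1}=X_{i,i+1}^2$;
\item $w_iZ_{i,i+1}^2w_i^{-1}=X_{i+1,i}^2$;
\item $(w_iw_{i+1})Z_{i,i+1}^2(w_iw_{i+1})^{-1}=Y_{i,i+1}^{-2}$.
\end{itemize}

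\textbf{Smaller points.}
\begin{itemize}
\item The sentence saying that conjugation by $A$'s and $B$'s ``does not help much'' contradicts what you then do.
\item The inductive hypothesis should read ``pairs at distance $d$ are reached with parameter $2^d$''.
\item Corollary~\ref{corollary:Tits} plays no role in this lemma; it is only used afterwards to deduce finite index.
\end{itemize}
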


\begin{proof}[Proof of Lemma~\ref{lemma:product-of-generators}]
The case of the matrices $V^{2^{g-1}}_i$ and $W^{2^{g-1}}_i$ is immediate, as pointed by the following observation:

\medskip

\noindent \textbf{Observation A.} \textit{For $i=1,\dots,g$, we have $V_i = A_i$ and $W_i = B^{-1}_i$.}

\medskip

In order to deal with the case of the matrix $X^{2^{g-1}}_{j,k}$, we introduce, for $i=1,\dots,g-1$, the matrix 
$$D_i := A^2_i B^2_{i+1} (A_{i+1} B_{i+1} A_{i+1}) C_i^2 (A_{i+1} B_{i+1} A_{i+1})^{-1}.$$ 
We first prove 

\medskip

\noindent \textbf{Claim B.} For $i=1,\dots, g-1$, we have 
$$D_i = I_{2g}+ 2 (\Emat{i}{i+1} - \Emat{g+i+1}{g+i}) = X_{i,i+1}^{2}.$$ 

\medskip

\begin{proof}
The equality $D_i = I_{2g}+ 2 (\Emat{i}{i+1} - \Emat{g+i+1}{g+i})$ can be obtained by brute force, considering the definition of the matrices $A_i$, $A_{i+1}$, $B_i$, $B_{i+1}$ and $C_i$, and making the product to obtain $D_i$. Yet we shall try to provide a slightly less obscure explanation below. 
\begin{itemize}
\item Let us first notice that $D_i$ preserves each element of our basis except maybe $e_i, e_{i+1}, e_{g+i}$ and $e_{g+i+1}$, since this is the case for $A_i$, $A_{i+1}^{\pm 1}$, $B_{i+1}^{\pm 1}$, $C_i$.  
\item A crucial observation is that $A_{i+1} B_{i+1} A_{i+1}$ is the rotation of angle $\pi/2$ in the plane $\langle e_i, e_{g+i} \rangle$. Hence $A_{i+1} B_{i+1} A_{i+1}$ maps $e_{i+1}$ to $-e_{g+i+1}$, maps $e_{g+i+1}$ to $e_{i+1}$, and preserves $e_i$ and $e_{g+i}$.
\item $C_i^2$ maps $e_{g+i}$ to $e_{g+i} - 2e_i + 2e_{i+1}$, maps $e_{g+i+1}$ to $e_{g+i+1} + 2e_{i} - 2e_{i+1}$, and preserves $e_i$ and $e_{i+1}$.
\item $B^{2}_{i+1}$ maps $e_{i+1}$ to $e_{i+1} - 2e_{g+i+1}$.  
\item $A^2_i$ maps $e_{g+i}$ to $e_{g+i} + 2e_i$.  
\end{itemize}
Hence  
\begin{eqnarray*}
&& e_i \xmapsto{(A_{i+1}B_{i+1}A_{i+1})^{-1}} e_i \xmapsto{C_i^2} e_i\xmapsto{A_{i+1}B_{i+1}A_{i+1}} e_i \xmapsto{B^2_{i+1}} e_i \xmapsto{A^2_{i}} e_i, \\
&& e_{g+i}  \xmapsto{(A_{i+1}B_{i+1}A_{i+1})^{-1}} e_{g+i} \xmapsto{C_i^2} e_{g+i} -2e_i +2e_{i+1} \xmapsto{A_{i+1}B_{i+1}A_{i+1}}\dots\\
&& \dots e_{g+i}-2e_i - 2e_{g+i+1} \xmapsto{B^2_{i+1}} e_{g+i}-2e_{i}-2e_{g+i+1} \xmapsto{A^2_{i}} e_{g+i}-2e_{g+i+1},\\
&& e_{i+1}\xmapsto{(A_{i+1}B_{i+1}A_{i+1})^{-1}} e_{g+i+1} \xmapsto{C_i^2} e_{g+i+1} +2e_i-2e_{i+1} \xmapsto{A_{i+1}B_{i+1}A_{i+1}} \dots\\
&& \dots e_{i+1}+2e_{i}+2e_{g+i+1}  \xmapsto{B^2_{i+1}} e_{i+1}+2e_{i} \xmapsto{A^2_{i}} e_{i+1}+2e_{i},\\
&& e_{g+i+1} \xmapsto{(A_{i+1}B_{i+1}A_{i+1})^{-1}} -e_{i+1} \xmapsto{C_i^2} -e_{i+1}\xmapsto{A_{i+1}B_{i+1}A_{i+1}} e_{g+i+1} \xmapsto{B^2_{i+1}}\dots\\
&&\dots  e_{g+i+1} \xmapsto{A^2_{i}} e_{g+i+1}.
\end{eqnarray*}
So $D_i$ maps $e_{i+1}$ to $e_{i+1} + 2e_i$, maps $e_{g+i}$ to $ e_{g+i} - 2e_{g+i+1}$, and preserves other elements of the basis. It follows that $D_i = I_{2g}+ 2 (\Emat{i}{i+1} - \Emat{g+i+1}{g+i}) = X_{i,i+1}^{2}$.
\end{proof}

Now we prove

\medskip

\noindent \textbf{Claim C.} For $1\leq j < j+\ell\leq g-1$, we have 
$$\left [ X_{j,j+\ell}^{2^\ell} , X_{j+\ell,j+\ell+1}^2 \right ] = X_{j,j+\ell+1}^{2^{\ell+1}}.$$

\medskip

\begin{proof}
Recall that 
$$X_{j,j+\ell}^{\pm 2^\ell} = I_{2g} \pm 2^\ell \Emat{j}{j+\ell} \mp 2^\ell \Emat{g+j+\ell}{g+j}$$ 
and
$$X_{j+\ell,j+\ell+1}^{\pm 2} = I_{2g} \pm 2 \Emat{j+\ell}{j+\ell+1} \mp 2 \Emat{g+j+\ell+1}{g+j+\ell}.$$
Hence 
\begin{eqnarray*}
X_{j,j+\ell}^{2^\ell} X_{j+\ell,j+\ell+1}^{2} 
& = & I_{2g} +  2^\ell \Emat{j}{j+\ell} -2^\ell \Emat{g+j+\ell}{g+j} \\
& & + 2 \Emat{j+\ell}{j+\ell+1}  - 2 \Emat{g+j+\ell+1}{g+j+\ell} + 2^{\ell+1} \Emat{j}{j+\ell+1}
\end{eqnarray*}
hence 
\begin{eqnarray*}
X_{j,j+\ell}^{2^\ell} X_{j+\ell,j+\ell+1}^{2} X_{j,j+\ell}^{-2^\ell} & = &  I_{2g} + 2 \Emat{j+\ell}{j+\ell+1}   - 2 \Emat{g+j+\ell+1}{g+j+\ell} \\
 & & + 2^{\ell+1} \Emat{j}{j+\ell+1} - 2^{\ell+1} \Emat{g+j+\ell+1}{g+j}
\end{eqnarray*}
and finally 
\begin{eqnarray*}
\left [ X_{j,j+\ell}^{2^\ell} , X_{j+\ell,j+\ell+1}^2 \right ]  & = & X_{j,j+\ell}^{2^\ell} X_{j+\ell,j+\ell+1}^{2} X_{j,j+\ell}^{-2^\ell} X_{j+\ell,j+\ell+1}^{-2}\\
& = & I_{2g} - 2^{\ell+1} \Emat{g+j+\ell+1}{g+j} + 2^{\ell+1} \Emat{j}{j+\ell+1} =  X_{j,j+\ell+1}^{2^{\ell+1}}
\end{eqnarray*}
as announced.
\end{proof}

Claims B and C, the definition of the matrix $D_i$ and a trivial induction imply that, for $1 \leq j < k \leq g$, the matrix $X_{j,k}^{2^{k-j}}$ (hence the matrix $X_{j,k}^{2^{g-1}}$) can be written a product of the matrices $A_1^{\pm 1},\dots,A_g^{\pm 1}$, $B_1^{\pm 1},\dots,B_g^{\pm 1}$, $C_1^{\pm 2},\dots,C_{g-1}^{\pm 2}$. The case of $1 \leq k < j \leq g$ can be treated by similar arguments using the following variations on Claims B and C:

\medskip

\noindent \textbf{Claim B'.} For $i=2,\dots, g$, if we denote 
$$D_i' := (A_{i-1} B_{i-1} A_{i-1})C_{i-1}^{-2}(A_{i-1} B_{i-1} A_{i-1})^{-1}B^{-2}_{i-1}A^{-2}_i$$
then 
$$D_i' = I_{2g} - 2 (\Emat{i}{i-1} - \Emat{g+i-1}{g+i}) = X_{i,i-1}^{-2}$$ 

\medskip

\noindent \textbf{Claim C'.} For $2\leq j-\ell < j\leq g$, we have 
$$\left [ X_{j,j-\ell}^{2^\ell} , X_{j-\ell,j-(\ell+1)}^2 \right ] = X_{j,j-(\ell+1)}^{2^{\ell+1}}.$$

\medskip

Now we turn to the case of the matrix $Z_{j,k}^{2^{g-1}}$. The following claim deals with the case where $j=k-1$.

\medskip

\noindent \textbf{Claim D.} For $2\leq k \leq g$, we have $$Z_{k-1,k}^{2} = \left[ V_k , D_{k-1}^{-1}\right] V_{k-1}^{4}.$$ 

\medskip

\begin{proof}
Recall that $$V_k^\pm = I_{2g} \pm E_{k,g+k}\quad\mbox{and}\quad D_{k-1}^\pm = X_{k-1,k}^{\pm 2} = I_{2g}\pm 2 E_{k-1,k}\mp 2 E_{g+k,g+k-1}.$$
Hence 
$$V_kD_{k-1}^{-1} = I_{2g} + E_{k,g+k} - 2 E_{k-1,k} + 2 E_{g+k,g+k-1} + 2 E_{k,g+k-1},$$
hence
$$V_kD_{k-1}^{-1}V_k^{-1} = I_{2g} - 2 E_{k-1,k} + 2 E_{g+k,g+k-1} + 2 E_{k,g+k-1} + 2 E_{k-1,g+k},$$
hence 
$$\left[ V_k , D_{k-1}^{-1}\right] = V_kD_{k-1}^{-1}V_k^{-1}D_{k-1} =  I_{2g} + 2 E_{k,g+k-1} + 2 E_{k-1,g+k} - 4 E_{k-1,g+k-1},$$
and finally 
$$\left[ V_k , D_{k-1}^{-1}\right] V_{k-1}^{4} = I_{2g} + 2 E_{k,g+k-1} + 2 E_{k-1,g+k} = Z_{k-1,k}^{2}.$$
\end{proof}

The general case (\emph{i.e.} the case of $Z_{j,k}^{2^{g-1}}$ for $1\leq j<k\leq g$) will follow from Claim~D and from Claim E below: 

\medskip

\noindent \textbf{Claim E.} For $1\leq k-(\ell-1) < k \leq g$, we have $$Z_{k-\ell,k}^{2^\ell} = \left[ Z_{k-(\ell-1),k}^{2^{\ell-1}} , D_{k-\ell}^{-1}\right].$$ 

\medskip

\begin{proof}
Recall that 
$$Z_{k-(\ell-1),k}^{\pm 2^{\ell-1}} = I_{2g} \pm 2^{\ell-1} E_{k-(\ell-1),g+k} \pm 2^{\ell-1} E_{k,g+k+(\ell-1)}$$ 
and 
$$D_{k-\ell}^{\pm 1} = X_{k-\ell,k-(\ell-1)}^{\pm 2} = I_{2g}\pm 2 E_{k-\ell,k-(\ell-1)}\mp 2 E_{g+k-(\ell-1),g+k-\ell}.$$
Hence 
\begin{eqnarray*}
Z_{k-(\ell-1),k}^{2^{\ell-1}}D_{k-\ell}^{-1}
& = & I_{2g} + 2^{\ell-1} E_{k-(\ell-1),g+k} + 2^{\ell-1} E_{k,g+k+(\ell-1)} \\
& & - 2 E_{k-\ell,k-(\ell-1)} + 2 E_{g+k-(\ell-1),g+k-\ell} + 2^\ell E_{k,g+k-\ell} ,
\end{eqnarray*}
hence 
\begin{eqnarray*}
&& Z_{k-(\ell-1),k}^{2^{\ell-1}}D_{k-\ell}^{-1}Z_{k-(\ell-1),k}^{-2^{\ell-1}} \\
& = & I_{2g} - 2 E_{k-\ell,k-(\ell-1)} + 2 E_{g+k-(\ell-1),g+k-\ell} + 2^\ell E_{k,g+k-\ell} + 2^\ell E_{k-\ell,g+k},
\end{eqnarray*}
hence 
\begin{eqnarray*}
\left[ Z_{k-(\ell-1),k} , D_{k-\ell}^{-1}\right] & = & Z_{k-(\ell-1),k}^{2^{\ell-1}}D_{k-\ell}^{-1}Z_{k-(\ell-1),k}^{-2^{\ell-1}}D_{k-\ell} \\
& = & I_{2g} + 2^\ell E_{k,g+k-\ell} + 2^\ell E_{k-\ell,g+k} = Z_{k-\ell,k}^{2^\ell}.
\end{eqnarray*}
\end{proof}

Claims A, D and E, the definition of the matrix $D_{k-\ell}$ and a straighforward induction over $\ell$ imply that, for $1 \leq j < k \leq g$, the matrix $Z_{j,k}^{2^{k-j}}$ (hence the matrix $Z_{j,k}^{2^{g-1}}$ which is a power of $Z_{j,k}^{2^{k-j}}$) can be written a product of the matrices $A_1^{\pm 1},\dots,A_g^{\pm 1}$, $B_1^{\pm 1},\dots,B_g^{\pm 1}$ and $C_1^{\pm 2},\dots,C_{g-1}^{\pm 2}$. 

Finally, the case of the matrix $Y_{j,k}^{2^{g-1}}$ will be obtained using the following observation: 

\medskip

\noindent \textbf{Observation F.} For $1\leq j < k \leq g$, the matrix $Y_{j,k}$ is the conjugate of the matrix $Z_{j,k}^{-1}$ by the matrix $\prod_{i=1}^g A_i B_i A_i$.

\medskip

\begin{proof}
We have already observed that, for $i=1,\dots,g$, the matrix $A_i B_i A_i$ maps $e_{i}$ to $-e_{g+i}$, maps $e_{g+i}$ to $e_{i}$, and preserves the other elements of the basis. As a consequence, the matrix $\prod_{i=1}^g A_i B_i A_i$ maps $e_{i}$ to $-e_{g+i}$ and  maps $e_{g+i}$ to $e_{i}$ for every $i\in \{1,\dots,g\}$\footnote{This means that $\prod_{i=1}^g A_i B_i A_i$ is actually equal to the matrix $J$ of the symplectic form.}. As a consequence, the conjugate of the matrix $Z_{j,k}^{-1} =  I_{2g} - E_{j,g+k} - E_{k,g+j}$ under the matrix $\prod_{i=1}^g A_i B_i A_i$ is the matrix $I_{2g} + E_{g+j,k} + E_{g+k,j}=Y_{j,k}$.
\end{proof}

Since we already know that, for $1 \leq j < k \leq g$, the matrix $Z_{j,k}^{2^{g-1}}$ can be written a product of the generators $A_1^{\pm 1},\dots,A_g^{\pm 1}$, $B_1^{\pm 1},\dots,B_g^{\pm 1}$ and $C_1^{\pm 2},\dots,C_{g-1}^{\pm 2}$, we deduce from Observation~F that this is also the case for the matrix $Y_{j,k}^{2^{g-1}}$. This completes the proof of lemma~\ref{lemma:product-of-generators}.
\end{proof}

Lemma~\ref{lemma:product-of-generators} and  Corollary~\ref{corollary:Tits} imply that our subgroup $\Gamma$ contains the principal congruence subgroup of level $2^{2g-2}$ of $\mathrm{Sp}(H_1(S_g,\ZZ))$ concluding the proof of Proposition~\ref{proposition:finiteindex}.
\end{proof}

Of course, Proposition~\ref{proposition:subgroup-finiteindex} follows by combining Propositions~\ref{proposition:subgroup} and~\ref{proposition:finiteindex}.

\newpage
\bibliographystyle{plain}
\bibliography{bibli}

\newpage 

\noindent François Béguin 

\noindent{\small Université Sorbonne Paris Nord} 

\noindent{\small Laboratoire Analyse Géométrie et Applications, CNRS, UMR 7539} 

\noindent{\small F‐93430, Villetaneuse, France}

\noindent{\footnotesize{E-mail: beguin@math.univ-paris13.fr }}\\

\noindent Christian Bonatti

\noindent{\small Université Bourgogne Europe}

\noindent {\small CNRS, IMB (UMR 5584)}

\noindent {\small 21000 Dijon, France}

\noindent{\footnotesize{E-mail: bonatti@u-bourgogne.fr }}\\

\noindent Biao Ma

\noindent {\small School of Mathematical Sciences}

\noindent {\small Key Laboratory of Intelligent Computing and
Applications (Tongji University), Ministry of Education}

\noindent{\small Tongji University, Shanghai 200092, China}

\noindent{\footnotesize{E-mail: 24024@tongji.edu.cn }}\\

\noindent Bin Yu

\noindent {\small School of Mathematical Sciences}

\noindent {\small Key Laboratory of Intelligent Computing and
Applications (Tongji University), Ministry of Education}

\noindent{\small Tongji University, Shanghai 200092, China}

\noindent{\footnotesize{E-mail: binyu1980@gmail.com }}

\end{document}